\newtheorem{coro}{Corollary}[section]
\newtheorem{defi}{Definition}[section]
\newtheorem{prop}{Proposition}[section]
\newtheorem{theo}{Theorem}
\newtheorem{lemm}{Lemma}[section]
\newtheorem{ques}{Question}[section]
\newtheorem{prob}{Problem}
\newtheorem{rema}{Remark}
\newtheorem{clai}{Claim}
\newtheorem*{theorem*}{Theorem}
\def\R{I\kern -0.37 em R}
\def\N{I\kern -0.37 em N}
\def\Z{I\kern -0.37 em Z}
\def\supess_#1{\mathop{\rm supess}\limits_{#1}}
\def\infess_#1{\mathop{\rm infess}\limits_{#1}}
\def\DD{{\mathbb D}}
 \def\NN{{\mathbb N}} 
 \def\RR{{\mathbb R}} 
\def\TT{{\mathbb T}}
 \def\ZZ{{\mathbb Z}}
\def\La{\Lambda}
\def\De{\Delta}
\def\Ga{\Gamma}
\def\cA{{\mathcal A}}  \def\cG{{\mathcal G}}  \def\cS{{\mathcal S}} \def\cY{{\mathcal Y}}
   \def\cN{{\mathcal N}}  
  \def\cI{{\mathcal I}}  \def\cU{{\mathcal U}}
\def\cD{{\mathcal D}}   \def\cP{{\mathcal P}} 
\def\cE{{\mathcal E}}    
\def\cF{{\mathcal F}}   \def\cR{{\mathcal R}} \def\cX{{\mathcal X}}
\title[Anosov flows obtained by surgeries.]{\bf Anosov flows on $3$-manifolds: the surgeries and the foliations %On the $\RR$-covered and non-$\RR$-covered Anosov flows obtained by surgeries on the suspensions
}
\begin{document}

%\begin{frontmatter}

\maketitle
\date{}
%%{\rm Draft}}

\author{Christian Bonatti, Ioannis Iakovoglou}
\begin{abstract}To any Anosov flow $X$ on a $3$-manifold \cite{Fe1} associated a bi-foliated plane (a plane endowed with two transverse foliations $F^s$ and $F^u$) which reflects the normal structure of the flow endowed with the center-stable and center unstable foliations. A flow is \emph{$\RR$-covered} if $F^s$ (or equivalently $F^u$) 
is trivial. 

On the other hand, from one Anosov flow one can build infinitely many others by Dehn-Goodman-Fried surgeries. This paper investigates how these surgeries modify the bi-foliated plane. 

We first noticed that surgeries along some specific periodic orbits do not modify the bi-foliated plane: for instance,
\begin{itemize}\item surgeries on families of orbits corresponding to disjoint simple closed geodesics
do not affect the bi-foliated plane associated to the geodesic flow of a hyperbolic surface (Theorem~\ref{t.geodesic});
\item \cite{Fe2} associates a (non-empty) finite family of periodic orbits, called \emph{pivots}, to any non-$\RR$-covered Anosov flow. Surgeries on pivots do not affect the branching structure of the bi-foliated plane  (Theorem~\ref{t.pivot0})
\end{itemize}

We consider the set ${\cS}urg(A)$ of Anosov flows obtained by Dehn-Goodman-Fried surgery 
from the suspension flows of Anosov automorphisms $A\in SL(2,\ZZ)$ of the torus $\TT^2$. 

Every such surgery is associated to a finite set of couples $\lbrace(\gamma_i,m_i)\rbrace_{i}$, where the $\gamma_i$ are periodic orbits and the $m_i$ integers. When all the $m_i$ have the same sign, Fenley proved that the induced Anosov flow is $\RR$-covered and twisted according to the sign of the surgery. We analyse here the case where the surgeries are positive on a finite set of periodic points $X$ and negative on another set $Y$. In particular we build non-$\RR$-covered Anosov flows on hyperbolic manifolds.

Among other results, we show that given any flow $X\in {\cS}urg(A)$ :
\begin{itemize}
 \item there exists $\epsilon>0$ such that for every $\varepsilon$-dense periodic orbit $\gamma$, 
every flow obtained from $X$ by a non trivial surgery along $\gamma$ is $\RR$-covered (Theorem~\ref{t.epsilon}). 
\item there exist periodic orbits $\gamma_+,\gamma_-$ such that every flow obtained from $X$ by surgeries with distinct signs on $\gamma_+$ and $\gamma_-$ is non-$\RR$-covered (Theorem~\ref{t.nonRcovered}). 
\end{itemize}

\end{abstract}
\vskip 5mm
\footnotesize{
\textbf{Keywords:} Classification of Anosov flows, Fried surgery, hyperbolic 3-manifolds

\textbf{Codes AMS: 37D20-37D40-57M10}}

%\tableofcontents

%\input{Flots-d-Anosov}
\section{Introduction}
%%%%%%%%%%%%%%%%%%%%%%%%%%%%%%%%%%%
\subsection{General setting}
%%%%%%%%%%%%%%%%%%%%%%%%%%%%%%%%%%%
In this paper we consider Anosov flows on closed $3$-manifolds, up to topological (orbital) equivalence.  

Following the pioneer works of Handel and Thurston \cite{HT} for the geodesic flow, Goodman \cite{Go} proved that for any Anosov flow $X$ on a manifold $M$ and any periodic orbit $\gamma$, one can build a new  Anosov flow on a manifold obtained from $M$ by a Dehn surgery along $\gamma$. In Goodman's construction, the dynamics of the new Anosov flow was not easy to understand. In \cite{Fri},  Fried proposed an alternative of the Dehn-Goodman surgery, for which the dynamics of the flow obtained from $X$ is topologically conjugated to $X$ except on $\gamma$. It was implicit in Fried's paper that his (topological) Anosov flow was indeed orbitally equivalent to the one obtained by Dehn-Goodman surgery and the math community generally admitted this during the 80's and 90's (see for instance \cite{Fe1}), before noticing that there was no explicit proof of such a statement. The orbital equivalence between Goodman's and Fried's surgery was indeed an open question. It was only recently that this was proven by Shannon, who proves that indeed Fried's surgery is orbitally equivalent to Dehn-Goodman surgery and that any topological Anosov flow is orbitally equivalent to an Anosov flow. \footnote{A first attempt to prove that a topological Anosov flow obtained by Fried's surgery is orbitally equivalent to a smooth Anosov flow was made by Marco Brunella in his thesis \cite{Bru}. However, Brunella's proof relied on the erroneous fact that isotopic pseudo-Anosov diffeomorphisms on surfaces with boundary are all conjugated.} 

Assume that $M$ is orientable and that $\gamma$ is a periodic orbit with positive eigenvalues. Then the boundary of a tubular neighbourhood of $\gamma$ is a torus endowed with a canonical \emph{meridian, parallel} basis of its fundamental group.  In this basis, the Dehn-Goodman-Fried surgery consists in keeping the same parallel and adding $n$ parallels to the meridian, we therefore speak of a \emph{surgery of characteristic number $n$}.  We also define a \emph{positive} or \emph{negative} surgery along $\gamma$ according to the sign of the characteristic number $n$.

One of the main open questions of this field (stated by Fried in \cite{Fri}) is 
\begin{ques}\label{c.Ghys} Any transitive Anosov flow is obtained through a finite sequence of Dehn-Goodman-Fried surgeries from the suspension flow of a hyperbolic linear automorphism of the torus $\TT^2$. 
\end{ques}

The aim of this paper is to study the Anosov flows obtained by a finite sequence of surgeries from a suspension Anosov flow, that is conjecturally, all the Anosov flows on $3$-manifolds. 

%%%%%%%%%%%%%%%%%%%%%%%%%%%%%%%%%%%%%%%%%%%%%%%%%%%%%%%%%%%%%%
\subsection{$\RR$-covered and non-$\RR$-covered Anosov flows}\label{ss.RnonR}
%%%%%%%%%%%%%%%%%%%%%%%%%%%%%%%%%%%%%%%%%%%%%%%%%%%%%%%%%%%%%%%

Our point of view here is to consider the effect of surgeries on the \emph{bi-foliated plane} associated to an Anosov flow $X$, in order to decide if the flow in \emph{$\RR$-covered} or not. Let us recall these notions.

 \cite{Fe1} shows that for any Anosov flow $X$ on a $3$-manifold $M$, its lift $\tilde{X}$ on the universal cover $\tilde M$  is conjugated to the constant vector field $\frac\partial{\partial x}$ on $\RR^3$. The space of orbits of $\tilde X$ is therefore a $2$-plane $\cP_X\simeq \RR^2$, endowed with the natural quotient of the lift of the weak stable and unstable manifolds of $X$ on $\tilde M$. In other words, any Anosov flow $X$ is naturally associated to a pair of transverse foliations $F^s_X, F^u_X$ on the plane $\cP_X$: we call $(\cP_X,F^s_X,F^u_X)$ \emph{the bi-foliated plane associated to $X$}. 

In \cite{Fe1}, Fenley proves that if the space of leaves of $F^s_X$ is Hausdorff, then the same happens to the space of leaves of $F^u_X$. In this case, we say that $X$ is \emph{$\RR$-covered}.

If $X$ is $\RR$-covered, \cite {Fe1} shows that the bi-foliated plane $(\cP_X,F^s_X,F^u_X$) is conjugated to one of the two models:
\begin{itemize}
 \item $\RR^2$ endowed with the two foliations by parallel horizontal and vertical straight lines. We say in this case that $\RR^2$ is \emph{trivially bi-foliated}.
 \item the restrictions of the trivial horizontal/vertical foliations of $\RR^2$ to the strip $\{(x,y)\in\RR^2, |x-y|<1 \}$.  We say in this case that $X$ is \emph{twisted $\RR$-covered}.
\end{itemize}

\cite{Fe1} shows that if $X$ is an Anosov flow on an non-orientable manifold $M$, then it cannot be twisted $\RR$-covered: it is either trivially bi-foliated or non $\RR$-covered. 
 
From now on, the manifold $M$ is assumed to be oriented. In this case, the bi-foliated plane is naturally oriented.  If $X$ is twisted $\RR$-covered, the bi-foliated plane $(\cP_X,F^s_X,F^u_X$) is conjugated to one of the two models by an orientation preserving homeomorphism:
\begin{itemize}
 \item the restrictions of the trivial horizontal/vertical foliations of $\RR^2$ to the strip $\{(x,y)\in\RR^2, |x-y|<1 \}$. In this case, we say that
 $X$ is \emph{positively twisted $\RR$-covered}.
 \item the restrictions of the trivial horizontal/vertical foliations of $\RR^2$ to the strip $\{(x,y)\in\RR^2, |x+y|<1 \}$.  In this case, we say that
 $X$ is \emph{negatively twisted $\RR$-covered}.
\end{itemize}

For instance, the geodesic flow of a hyperbolic closed surface (or orbifold) is  $\RR$-covered. 
Here is an example which is typical of the results we obtain :

\begin{theo}\label{t.geodesic} Let $S$ be a hyperbolic closed surface and $X$ the geodesic flow on $M= T^1(S)$.  By choosing the orientation of $M$, we can assume $X$ to be $\RR$-covered positively twisted. 

Let $a_1,\dots ,a_k$ be a set of simple closed disjoint geodesics and $\Ga=\{\pm\gamma_i\}$ be the set of corresponding orbits of $X$. 

Then any flow $Y$ obtained from $X$ by surgeries on $\Ga$ is $\RR$-covered positively twisted. 
\end{theo}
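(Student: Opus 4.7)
The plan is to show that for any flow $Y$ obtained from $X$ by surgeries on $\Gamma$, the bi-foliated plane $(\cP_Y, F^s_Y, F^u_Y)$ is conjugate, via an orientation-preserving homeomorphism, to $(\cP_X, F^s_X, F^u_X)$. The latter identifies with the strip $B = \{(x,y) \in \RR^2 : |x-y| < 1\}$ equipped with the horizontal and vertical foliations, which is the model of the positively twisted $\RR$-covered case. Equivalently, $\cP_X$ identifies with the space of oriented geodesics of $\HH^2$, the stable (resp.\ unstable) leaf through a geodesic being the set of oriented geodesics sharing its forward (resp.\ backward) endpoint on $S^1_\infty$.

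First, I would record the key geometric input. Each orbit $\gamma_i \in \Gamma$ lifts to a $\pi_1(S)$-orbit $P_i \subset \cP_X$, in bijection with the lifts of the closed geodesic $a_i$ to $\HH^2$. Since the $a_i$ are simple and pairwise disjoint, their lifts in $\HH^2$ form a family of pairwise disjoint geodesic lines. Consequently, the union $P := \bigsqcup_i P_i$ satisfies a strong unlinking property: distinct points of $P$ lie on distinct stable and distinct unstable leaves, and the pairs of endpoints at infinity associated to distinct points of $P$ are mutually unlinked in the cyclic order of $S^1_\infty$. In the strip $B$, this means that any finite subset of $P$ can be simultaneously separated by horizontal and vertical lines in a compatible way.

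Next, I would describe the surgery's effect on $\cP_X$. A Fried surgery of characteristic $n_i$ on $\gamma_i$ is a local modification of $\cP_X$ near each $p \in P_i$: a small ``diamond'' around $p$ bounded by arcs of $F^s_X$ and $F^u_X$ is cut out and reglued after a shear whose amplitude is controlled by $n_i$. Away from these diamonds the bi-foliated plane is unchanged, and across the diamond boundaries the foliations $F^s_Y, F^u_Y$ extend continuously. Using the unlinking property, I would then construct an equivariant orientation-preserving homeomorphism $\Phi : \cP_Y \to B$ sending $F^s_Y$ to the horizontal foliation and $F^u_Y$ to the vertical one. On each connected component of $B \setminus \bigcup_{p \in P} \left( F^s_X(p) \cup F^u_X(p) \right)$, the restriction of $\Phi$ is an affine shear whose amplitude cancels the shears of the adjacent diamond regluings; the unlinking property ensures that these local pieces fit together into a continuous, $\pi_1(M)$-equivariant self-map of $B$ preserving the strip and its foliations.

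The main obstacle is the construction of $\Phi$, in particular its continuity up to $\partial B$ and its equivariance under the deck group. This requires an inductive definition over an exhaustion of $B$ by finite subsets of $P$, with control on how the local shears accumulate near $\partial B$. The unlinking property of the lifts of simple disjoint geodesics is essential at each stage: it prevents the local shears from interfering, and it ensures that the construction preserves the orientation of both foliations, so that $Y$ is indeed positively twisted $\RR$-covered.
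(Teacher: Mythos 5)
There is a genuine gap, and it lies in your model of what the surgery does to the orbit space. A Dehn--Goodman--Fried surgery on $\gamma_i$ is \emph{not} a compactly supported modification of $\cP_X$ near each lift $p\in P_i$ (``a small diamond cut out and reglued after a shear, unchanged away from the diamonds''). Its effect on the bi-foliated plane is a shear along the \emph{entire} stable and unstable leaves of every lift of the surgered orbits: crossing $F^s_X(p)$ (or $F^u_X(p)$) from one side to the other, the identification of the two half-planes gets composed with a power $P_{p}^{\pm n}$ of the first return map of $p$ (this is the content of Propositions~\ref{p.franchissement} and \ref{p.franchissement2} and of Theorem~\ref{t.newholonomies}). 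These separatrices are unbounded and there are infinitely many lifts, so the change is global; indeed, for other choices of $\Ga$ the very same operation destroys the $\RR$-covered property or flips the twist, which your ``cancel the local shears by affine shears on the complementary regions'' scheme could never detect. Moreover, the step you yourself flag as the main obstacle --- assembling the local pieces into a continuous, $\pi_1(M)$-equivariant homeomorphism onto the strip, with control of the accumulation of shears near $\partial B$ --- is exactly where the difficulty of the theorem sits, and it is left unproved; without it the argument does not establish that $\cP_Y$ is again a positively twisted strip rather than, say, the trivial plane or a non-$\RR$-covered plane.

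The geometric input you isolate (lifts of disjoint simple closed geodesics are pairwise disjoint, hence unlinked at infinity) is the right one, but it must be fed into a holonomy argument rather than into a cut-and-paste conjugacy. The paper's route: for each $x\in\tilde\Ga_X$, the product region $\De_+(x)$ (points $y$ with $F^s_+(x)\cap F^u_-(y)\neq\emptyset$ and $F^u_+(x)\cap F^s_-(y)\neq\emptyset$) is identified with the set of geodesics crossing $\gamma_x$ transversely and positively (Lemma~\ref{l.geodesic}); disjointness of the $a_i$ then gives $\De_\pm(x)\cap\tilde\Ga_X=\emptyset$ (Proposition~\ref{p.geodesic}). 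By Theorem~\ref{t.newholonomies} the unstable holonomies inside $\De_\pm(x)$ never cross a positive stable separatrix of a surgered lift, so they are unaffected by the surgery and the quadrants $C_{+,+}(x)$, $C_{-,-}(x)$ stay complete at every surgered point (Proposition~\ref{p.caractR1}); completeness then propagates to all points of $\cP_Y$ (Proposition~\ref{p.caractR}), and Corollary~\ref{c.caractR} yields that $Y$ is $\RR$-covered and positively twisted, with no need to build an explicit conjugacy. If you want to salvage your approach, you would have to replace the ``diamond'' picture by the correct leafwise-shear description and then prove the completeness of the $C_{+,+}$ and $C_{-,-}$ quadrants anyway --- at which point you are doing the paper's proof.
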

In other words, surgeries on non intersecting closed geodesics have no effect on the bi-foliated plane.  

Non-$\RR$-covered flows also admit a specific set of orbits for which the corresponding surgeries have a very limited effect on the bi-foliated plane. More precisely, \cite{Fe2} defined the notion of pivot points in the bi-foliated plane $\cP_X$ of an non-$\RR$-covered Anosov flow and he proved that they correspond to a finite set ${\cP}iv(X)$ of periodic orbits.  We prove (see Theorems~\ref{t.pivot} and~\ref{t.cS} for more precise and stronger statements)
\begin{theo}\label{t.pivot0}
 Let $X$ be a non-$\RR$-covered Anosov flow and let $Y$ be obtained from $X$ by a finite number of Dehn surgeries along orbits of ${\cP}iv(X)$.  Then (up to the natural identification of the orbits of $Y$ with the orbits of $X$) one has ${\cP}iv(Y)={\cP}iv(X)$ and the stable (resp. unstable) leaves associated to two orbits $\gamma_{Y,1}$, $\gamma_{Y,2}$ of $Y$ admit representatives on $\cP_Y$ which are not separated if and only if the same happens for the correspondig orbits $\gamma_{X,1}$ and $\gamma_{X,2}$. 
\end{theo}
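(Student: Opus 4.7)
The plan is to use Fried's explicit model of the Dehn-Goodman-Fried surgery, which realises $Y$ as an Anosov flow whose orbits are canonically in bijection with those of $X$. Working in the universal cover $\tilde M$, I would lift each pivot $\gamma_i\in{\cP}iv(X)$ to the $\pi_1(M)$-invariant family $\{\tilde\gamma_{i,j}\}$ of orbits of $\tilde X$. Fried's construction amounts to cutting $\tilde M$ along tubular neighbourhoods of the $\tilde\gamma_{i,j}$ and regluing by an integer number of periods of the flow; passing to the space of orbits, this yields a description of $\cP_Y$ as the result of cutting $\cP_X$ along the stable and unstable leaves of the $\tilde\gamma_{i,j}$ and regluing by the shearing prescribed by the characteristic numbers $m_i$. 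This gives a canonical homeomorphism between $\cP_X$ minus the pivot leaves and $\cP_Y$ minus the surgered leaves that maps local leaves of $F^s_X,F^u_X$ to local leaves of $F^s_Y,F^u_Y$.

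With this identification in hand, the core of the proof is to show that the non-separation structure of the two foliations is unchanged by the cut-and-reglue. A non-separation between two stable leaves $F^s(\tilde\alpha)$ and $F^s(\tilde\beta)$ is witnessed by a sequence of unstable leaves accumulating on both; by Fenley's analysis in \cite{Fe2}, near a pivot leaf $F^s(\tilde\gamma_{i,j})$ such a branching takes place in a canonical ``lozenge'' whose corners are $F^s(\tilde\gamma_{i,j})$ and a partner leaf $F^s(\tilde\gamma')$, connected by a family of transverse leaves. The key point is that the surgery shear is a translation along $F^s(\tilde\gamma_{i,j})$ (and along $F^u(\tilde\gamma_{i,j})$), which preserves the lozenge as a subset of $\cP$ and transports its witnessing sequence consistently, so the non-separation survives in $\cP_Y$. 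Conversely, any two leaves separated in $\cP_X$ by a transverse leaf $L$ remain separated in $\cP_Y$: if $L$ is not a pivot leaf it is untouched, while if $L$ is a pivot leaf the shear is tangent to $L$, so its separating role is preserved.

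Once the branching structure is shown to be preserved, the equality ${\cP}iv(Y)={\cP}iv(X)$ follows from the combinatorial characterisation of pivots in \cite{Fe2}: an orbit is a pivot iff its stable and unstable leaves lie on the ``extremal'' side of a non-Hausdorff branching in the respective leaf space, a property depending only on the branching combinatorics we have just shown to be invariant. The second assertion of the theorem, concerning non-separation of pairs of leaves, is exactly what has been established en route. The main obstacle is the second paragraph: one must simultaneously rule out the destruction of existing non-separations and the creation of new ones, which forces a careful local analysis of the lozenge configuration near a pivot and of how the discrete shear along a pivot orbit interacts with the accumulating transverse leaves that witness the branching. Once the single-pivot case is understood, the general case of several simultaneous surgeries follows by induction, since distinct pivot orbits of $X$ have pairwise disjoint stable and unstable leaves in $\tilde M$ and the corresponding cut-and-reglue operations commute.
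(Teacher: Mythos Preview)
Your overall architecture is reasonable, but the heart of your argument---the claim that ``the surgery shear is a translation along $F^s(\tilde\gamma_{i,j})$ \dots\ which preserves the lozenge as a subset of $\cP$''---is where the real content lies, and as stated it is both imprecise and not the actual mechanism. The shear induced by surgery at $\tilde\gamma$ is not a translation but an application of a power of the first return map $P_{\tilde\gamma}$ on $F^s(\tilde\gamma)$ (a contraction or expansion toward $\tilde\gamma$), and whether it preserves a given lozenge pair depends entirely on the position of $\tilde\gamma$ relative to that pair. If a surgery point sat in the \emph{interior} of a pair of adjacent lozenges, the shear would genuinely alter the holonomies witnessing the non-separation, and your argument would fail. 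You have not ruled this out; your ``tangency'' reasoning applies only to the pivot's own lozenge pair, not to the other lozenge pairs in $\cP_X$, and you must handle all of them simultaneously since you are doing surgery on all of $\mathrm{Piv}(X)$.

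The paper's proof supplies exactly the missing ingredient: a combinatorial lemma (Lemma~\ref{l.pivot}) stating that \emph{every} pivot is disjoint from the interior of \emph{every} pair of adjacent lozenges (associated to any pair of successive non-separated leaves, stable or unstable). This is proved by a short case analysis: if a pivot of one lozenge pair lay in the interior of another, one of the pairs would have to contain a ``missing corner'' of the other, which is impossible. Once this is established, the general transfer principle (Proposition~\ref{p.rectangle-closure}: a trivially bi-foliated closed domain whose interior avoids $\tilde\Gamma_X$ maps bi-foliatedly to $\cP_Y$) applies directly to each lozenge pair, so the entire branching structure is carried over intact. This also handles both directions at once and all surgeries simultaneously, so no induction is needed. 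Your plan would become correct if you replaced the ``shear is tangent'' heuristic by this disjointness lemma and invoked the rectangle-closure proposition rather than arguing about shears leaf by leaf.
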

In \cite{Fe2}, Fenley proved that a non-$\RR$-covered Anosov flow has non-separated leaves in both $F^s_X$ and $F^u_X$ and that the non-separated leaves of $F^{s,u}_X$ correspond to finitely many periodic orbits. Theorem~\ref{t.cS} shows that surgeries on those periodic orbits also have a very limited effect on the bi-foliated plane. As a corollary of this fact one gets the following:
\begin{theo}\label{t.cS0} Let $X$ be a non-$\RR$-covered Anosov flow and let $Y$ be obtained from $X$ by a finite number of Dehn surgeries along periodic orbits whose stable leaves are not separated in $\tilde M$.  Then $Y$ is not $\RR$-covered and the set of periodic orbits whose unstable leaves are not separated coincides with the one of $X$.
\end{theo}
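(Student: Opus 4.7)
The plan is to derive Theorem~\ref{t.cS0} as a formal corollary of the stronger Theorem~\ref{t.cS} (announced in the paragraph preceding its statement) combined with Fenley's structure theorem from \cite{Fe2}. The role of Theorem~\ref{t.cS} here is to guarantee that Dehn--Goodman--Fried surgeries performed on orbits with non-separated stable leaves preserve, up to the canonical identification of orbits provided by Fried's construction, the full combinatorial branching structure of the unstable foliation on the bi-foliated plane.

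Concretely, I would proceed as follows. Denote by $\cN^u(X)$ the set of periodic orbits of $X$ admitting a lift in $\tilde M$ whose unstable leaf is non-separated (in the leaf space of $F^u_X$ on $\cP_X$) from some other unstable leaf, and define $\cN^s(X)$ analogously. By Fenley's theorem in \cite{Fe2}, an Anosov flow is $\RR$-covered if and only if $\cN^s=\emptyset$, equivalently $\cN^u=\emptyset$, and when non-empty these sets are finite. The hypothesis of Theorem~\ref{t.cS0} is that the surgery is performed along orbits in $\cN^s(X)$. Applying Theorem~\ref{t.cS} produces the canonical orbit correspondence between $X$ and $Y$ and identifies $\cN^u(X)$ with $\cN^u(Y)$ under this bijection (with the pairing of non-separated unstable leaves preserved). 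Since $X$ is assumed non-$\RR$-covered, Fenley's criterion yields $\cN^u(X)\neq\emptyset$, hence $\cN^u(Y)\neq\emptyset$; applying Fenley's criterion to $Y$ then shows $Y$ is non-$\RR$-covered. The two assertions of the statement follow simultaneously.

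The genuine difficulty of this strategy is entirely contained in Theorem~\ref{t.cS}: one must establish that a Fried surgery along an orbit $\gamma\in\cN^s(X)$ does not alter the global branching structure of $F^u$ on $\cP_X$, neither by creating new non-separated unstable leaves nor by destroying existing ones. A particularly subtle point is the behaviour of the unstable leaf of the surgery orbit $\gamma$ itself, which may a priori belong to $\cN^u(X)$ and whose non-separation relations could be affected by the surgery precisely at $\gamma$; Theorem~\ref{t.cS} must cover this case in its conclusion. Once that stronger statement is in hand, the present corollary reduces to the short reduction above, so my writing effort on Theorem~\ref{t.cS0} itself would be limited to spelling out the Fenley dichotomy cleanly and invoking the orbit bijection.
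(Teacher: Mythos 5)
Your proposal is correct and takes essentially the same route as the paper: Theorem~\ref{t.cS0} is obtained there exactly as a corollary of Theorem~\ref{t.cS} (whose proof, via Fenley's lozenge description and Lemma~\ref{l.pivot}, carries all the real content) combined with Fenley's result that a non-$\RR$-covered flow has non-separated leaves in both foliations, corresponding to finitely many periodic orbits. The only detail to make explicit is that Theorem~\ref{t.cS} is stated for surgeries on ${\cS}^s_-(X)$, so for surgeries on all of ${\cS}^s(X)$ one must invoke it together with its symmetric counterpart for ${\cS}^s_+(X)$, which preserves the unstable branching data $\cS^u_\pm$ just as well — the same gloss the paper makes.
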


Our main results concern Anosov flows obtained by surgeries from a suspension. 

In this paper, $A\in SL(2,\ZZ)$ will denote a hyperbolic matrix (not necessarily of positive trace) and $f_A\colon \TT^2\to \TT^2$ the induced linear automorphism. We denote by $M_A,X_A$ the mapping torus manifold $M_A$ endowed with the suspension flow $X_A$. 
We will consider the set ${\cS}urg(A)$ of  Anosov flows obtained from $(M_A, X_A)$ through a finite sequence of Dehn-Goodman-Fried surgeries. A recent result, announced  by Minakawa \cite{Mi} and recently written by Dehornoy and Shannon \cite{DS} shows that if $A,B\in SL(2,\ZZ)$ are two hyperbolic matrices with positive eigenvalues then 
$$ {\cS}urg(A)={\cS}urg(B).$$
We will denote by ${\cS}urg_+$ this set. It is known that ${\cS}urg_+$ contains the geodesic flows of hyperbolic surfaces.

The aim of this paper is to describe the bi-foliated plane $(\cP_X, F^s_X,F^u_X)$ for $X\in {\cS}urg(A)$, as a  function of the surgeries (periodic orbits and characteristic numbers) performed on $X_A$ in order to obtain $X$. 

In \cite{Fe1}, Fenley shows that if $Y$ is an Anosov flow obtained from $X_A$ by performing finitely many Dehn surgeries, all positive, then $Y$ is twisted $\RR$-covered. 

Here we consider the effect of positive and negative surgeries on the bi-foliated plane.

%%%%%%%%%%%%%%%%%%%%%%%%%%%%%%%%%%%%%%%%%%%%%%%%%%%%%%%%%%%%
\subsection{Statement of our main results for surgeries on an arbitrary set of periodic orbits}
%%%%%%%%%%%%%%%%%%%%%%%%%%%%%%%%%%%%%%%%%%%
%In this section, we assume that the hyperbolic matrix $A\in SL(2,\ZZ)$ has %positive trace (or equivalently has positive real eigenvalues). 
%We fix an orientation of the stable and unstable directions $E^s, E^u$ of $A$.  These orientations naturally fix the orientations of the foliations $F^s_X$ and $F^u_X$ for any flow obtained by surgeries on $X_A$. 

%This allows us to divide the twisted $\RR$-covered flows $X$ in two families: \emph{positively} or \emph{negatively twisted} according to if the bi-foliated plane $\cP_X, F^s_X,F^u_X$  is conjugated  the strip  $\{(x,y)\in\RR^2, |x-y|<1 \}$ or 
%the strip $\{(x,y)\in\RR^2, |x+y|<1 \}$ (endowed with the horizontal and vertical foliations, by an orientation preserving homeomorphism.

Our most surprising result in this paper would certainly be the following: 

\begin{theo}\label{t.epsilon}  Let $A\in SL(2,\ZZ)$ be a hyperbolic matrix  and $X\in {\cS}urg(A)$. Then there is $\varepsilon>0$ such that for any periodic orbit $\gamma$ which is $\varepsilon$-dense all the flows $Y$ obtained from $X$ by surgeries along $\gamma$ are $\RR$-covered twisted positively or negatively according to the sign of the surgery on $\gamma$. 
\end{theo}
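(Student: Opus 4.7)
The plan is to reduce to Fenley's theorem that all surgeries of the same sign on $X_A$ produce a twisted $\RR$-covered Anosov flow (of matching sign), via the observation that a surgery along a sufficiently dense orbit introduces enough same-sign twists in the bi-foliated plane to dominate the finitely many twists of the opposite sign coming from the expression of $X$ as a surgery on $X_A$. More precisely, write $X$ as obtained from $X_A$ by surgeries of characteristics $n_1,\dots,n_k$ along periodic orbits $\delta_1,\dots,\delta_k$. Fried's construction identifies each periodic orbit $\gamma$ of $X$ (distinct from the images of the $\delta_i$, which is automatic when $\gamma$ is $\varepsilon$-dense with $\varepsilon$ smaller than the density thresholds of each $\delta_i$) with a periodic orbit $\gamma'$ of $X_A$. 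Then the flow $Y$ obtained from $X$ by a surgery of characteristic $m$ along $\gamma$ coincides with the flow obtained from $X_A$ by the combined surgery data $\{(\delta_i,n_i)\}_{i=1}^k\cup\{(\gamma',m)\}$.

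First I would describe precisely how a finite family of Dehn-Goodman-Fried surgeries modifies the trivially bi-foliated plane $\cP_{X_A}\simeq\RR^2$: each lift of a surgered orbit becomes a local twist point around which the horizontal and vertical foliations rotate, the direction of rotation being given by the sign of the characteristic. In this picture, Fenley's theorem asserts that uniformly-signed rotations preserve the $\RR$-covered character and produce a strip model of matching orientation. I would then establish a local criterion: if between every pair of opposite-signed twist points (along either the horizontal or the vertical foliation of $\cP_{X_A}$) one can insert a twist point of sign $\mathrm{sgn}(m)$, then the composite bi-foliated plane is $\RR$-covered twisted with sign $\mathrm{sgn}(m)$. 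This criterion should reduce to Fenley's theorem by interpreting the inserted same-sign twists as shields partitioning the plane into regions each governed by a single sign.

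Finally I would produce $\varepsilon>0$ by a density argument in $\cP_{X_A}$: using a Markov partition of $X_A$ with bounded geometry, an $\varepsilon$-dense periodic orbit of $X$ corresponds to a periodic orbit $\gamma'$ of $X_A$ whose lifts cover every compact region of $\cP_{X_A}$ at a scale comparable to $\varepsilon$. For $\varepsilon$ small enough, depending only on the positions of the lifts of the finite family $\{\delta_i\}$ in $\cP_{X_A}$, the lifts of $\gamma'$ will satisfy the separation condition of the local criterion. The main obstacle is proving the local criterion itself, since under mixed-signed surgeries the bi-foliated plane may have an intricate branching structure; I expect this step to require combining Fenley's finite-type description of pivots and non-separated leaves from \cite{Fe2} (as already exploited in Theorems~\ref{t.pivot0} and~\ref{t.cS0}) with an explicit analysis showing that each inserted same-sign twist cancels (rather than merely displaces) the corresponding non-separated pair without creating new ones.
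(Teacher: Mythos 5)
Your opening reduction (compose the surgery data over $X_A$, and use density of $\gamma$ to ``shield'' the finitely many orbits used to produce $X$) is indeed the paper's starting point, but the heart of the matter is precisely the ``local criterion'' you leave as an acknowledged obstacle, and your plan for it does not work as stated. First, the dangerous configuration is not a pair of \emph{opposite}-signed twist points: in the quadrant $C_{+,+}$ (say, with a positive surgery on $\gamma$) an unstable holonomy can fail to be defined only after two \emph{consecutive} expansions, i.e.\ two lifts of old surgery orbits in increasing diagonal position spanning a rectangle containing no lift of $\gamma$; the correct positional condition is therefore that every \emph{positive} (resp.\ negative) $\cE$-rectangle meets $\tilde\gamma$, irrespective of the signs carried by $\cE$. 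Second, there is no localization of Fenley's theorem to ``regions governed by a single sign'': the paper does not reduce to Fenley at all, but proves completeness of the $C_{+,+}$ and $C_{-,-}$ quadrants at the surgered points by a quantitative comparison of holonomies (Theorem~\ref{t.newholonomies} and the dynamical game), propagates completeness to every point (Proposition~\ref{p.complete-partout}), and concludes by the quadrant criterion (Corollary~\ref{c.caractR}); Fenley's result from \cite{Fe1} only serves to exclude the suspension case and fix the twist sign. Your alternative route through the pivot/non-separated-leaf machinery of \cite{Fe2} (``each inserted twist cancels the corresponding non-separated pair'') is not carried out and, being purely positional, cannot be correct as stated: a twist point of the right sign but of small strength inserted between two strong opposite twists does not restore completeness, so any such criterion must carry a quantitative hypothesis.

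This leads to the two quantitative steps your proposal does not address. (i) Primitive $\cE$-rectangles have unbounded diameter (they are stretched by $A$), so a naive ``$\varepsilon$-dense orbits hit every rectangle'' argument gives no uniform $\varepsilon$; one needs the finiteness of orbits of primitive rectangles under $A$ and integer translations (Lemma~\ref{l.orbitesfinies}) together with the $f_A$-invariance of the lift of $\gamma$ (Lemma~\ref{l.epsilon}). (ii) The statement allows \emph{any} nonzero characteristic number on $\gamma$, while what one proves first (Theorem~\ref{t.twisted}, Proposition~\ref{p.complete}) requires the twist on the shielding orbit to exceed a threshold $N$ which depends on the pair of orbit families --- in particular on $\gamma$ itself --- so one cannot simply say that a large period makes even a characteristic-number-one surgery strong enough: that would be circular. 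The paper resolves this (Proposition~\ref{p.complete-epsilon}) by fixing an auxiliary dense orbit $\cY_0$ and its threshold $N$, and showing that any sufficiently dense $\gamma$, with arbitrary positive characteristic numbers, produces holonomies \emph{more} contracting than those of the benchmark $(\cY_0,N)$, because many crossings of high-period separatrices compound. Without an argument of this kind your scheme proves at best the ``strong surgery'' analogue, not Theorem~\ref{t.epsilon}.
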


Conjecturally every Anosov flow on a oriented $3$-manifold belongs to ${\cS}urg_+$. It is therefore natural to ask if it is possible to prove: 
\begin{ques}\label{c.epsilon} Let $X$ be a transitive Anosov flow on a oriented $3$-manifold. Then there is $\varepsilon>0$ such that for any periodic orbit $\gamma$ which is $\varepsilon$-dense all the flows $Y$ obtained from $X$ by surgeries along $\gamma$ are $\RR$-covered twisted positively or negatively according to the sign of the surgery on $\gamma$. 
\end{ques}

Question~\ref{c.epsilon} is a straightforward consequence of Question~\ref{c.Ghys} and Theorem~\ref{t.epsilon}. Indeed, one can see Question~\ref{c.epsilon} as an intermediary step for proving Question~\ref{c.Ghys}. 

Contrary to Theorem \ref{t.epsilon} that describes a process of construction  of $\RR$-covered flows, our next result goes in the opposite direction, leading to the construction of non-$\RR$-covered Anosov flows. 

\begin{theo}\label{t.nonRcovered}Let $A\in SL(2,\ZZ)$ be a hyperbolic matrix  and $X\in {\cS}urg(A)$. Then there exist periodic orbits  $\gamma_+$ and $\gamma_-$ such that all the flows $Y$ obtained from $X$ by surgeries of distinct signs along $\gamma_+$ and $\gamma_-$  are not $\RR$-covered. 
\end{theo}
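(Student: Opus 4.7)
The plan is to exploit the classification of orientable $\RR$-covered Anosov flows into three types (trivially bi-foliated, positively twisted, negatively twisted) and to show that a well-chosen pair of orbits with opposite-sign surgeries produces a bi-foliated plane incompatible with all three models.

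If $X$ is already non-$\RR$-covered, then by Theorem~\ref{t.cS0} one can take $\gamma_+, \gamma_-$ to be two periodic orbits whose stable leaves are non-separated in $\cP_X$: surgeries on such orbits preserve the branching structure, and in particular non-$\RR$-coveredness, regardless of the signs. The interesting case is therefore when $X$ is $\RR$-covered. The strategy here is to attach a \emph{sign} to each lozenge appearing in the bi-foliated plane of a flow in ${\cS}urg(A)$---positive lozenges opening in the direction of the positive twist, negative ones in the direction of the negative twist---and to prove that a surgery of characteristic number $n$ on a periodic orbit $\gamma$ creates, at the new orbit $\gamma_Y$, a chain of $|n|$ lozenges whose sign matches that of $n$. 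This sign is encoded in Fried's local cut-and-paste construction and is readable in the lift of the surgery torus to the universal cover.

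Next, I would choose $\gamma_+, \gamma_-$ as disjoint $\varepsilon$-dense periodic orbits, with $\varepsilon$ provided by Theorem~\ref{t.epsilon}. That theorem guarantees that the positive surgery on $\gamma_+$ alone gives a positively twisted $\RR$-covered flow $Y_+$, and the negative surgery on $\gamma_-$ alone gives a negatively twisted $\RR$-covered flow $Y_-$. Since Fried surgeries on disjoint orbits commute, the mixed-sign flow $Y$ is also obtained from $Y_+$ by a single negative surgery on $\gamma_-$, and from $Y_-$ by a single positive surgery on $\gamma_+$. Sign persistence of the lozenges---stemming from the locality of Fried's construction within a tubular neighborhood of the surgery orbit---then forces $\cP_Y$ to contain both a positive lozenge (attached to a lift of $\gamma_+$) and a negative one (attached to a lift of $\gamma_-$). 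Since any $\RR$-covered bi-foliated plane admits lozenges of at most one sign (none if trivially bi-foliated, only positive if positively twisted, only negative if negatively twisted), $Y$ cannot be $\RR$-covered. Because the sign of the produced lozenges depends only on the sign of the characteristic number and not on its magnitude, this argument applies uniformly to every choice of non-zero characteristic numbers of opposite signs.

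The main obstacle is the sign-persistence step: showing that a lozenge created by the surgery on $\gamma_+$ survives a further surgery on the disjoint orbit $\gamma_-$ with its sign intact, and vice versa. This requires a careful local analysis in the universal cover, exploiting that each Fried surgery modifies the bi-foliated plane only within the lifts of a tubular neighborhood of its own surgery orbit and therefore leaves the local structure at lifts of other periodic orbits unchanged up to canonical identification. A secondary delicate point is verifying that the existing twist of $Y_+$ (respectively $Y_-$) cannot reorient the newly created lozenge of the opposite sign into one compatible with that twist---this should follow from the fact that the lozenge sign is intrinsically determined by the surgery rather than by the ambient foliated structure.
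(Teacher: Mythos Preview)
Your treatment of the non-$\RR$-covered case via Theorem~\ref{t.cS0} is fine. The $\RR$-covered case, however, has a genuine gap at exactly the point you flag as the ``main obstacle'': the sign-persistence of lozenges under a second surgery is not just unproven, it rests on a false locality principle. Fried surgery is local in $M$, but its effect on the bi-foliated plane $\cP_Y$ is global. Section~\ref{s.chirurgies} of the paper makes this explicit: comparing $\cP_X$ and $\cP_Y$ requires tracking holonomies across the entire plane (the ``holonomy game''), and each time an unstable leaf crosses a stable separatrix of \emph{any} surgery point it is contracted or expanded. There is no tubular neighborhood in $\cP_X$ outside of which the surgery on $\gamma_-$ leaves things unchanged. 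Worse, Theorem~\ref{t.Epsilon} shows that surgeries on a sufficiently dense orbit $\cY$ can dominate arbitrary surgeries on another set: if your $\gamma_-$ happened to be $\varepsilon'$-dense for the $\varepsilon'$ associated to $Y_+$, then the negative surgery on $\gamma_-$ would produce an $\RR$-covered negatively twisted flow, erasing every trace of the positive twist you hoped to preserve. So the proposed mechanism can fail outright for a poor choice of $\gamma_+,\gamma_-$.

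The paper's route is quite different and avoids this trap. It writes $X$ as $X_A$ plus surgeries on a finite $f_A$-invariant set $\cE$, then constructs $\gamma_+$ and $\gamma_-$ inside two \emph{disjoint} hyperbolic basic sets $\La_+,\La_-\subset\TT^2$, both disjoint from $\cE$. The geometric input is not Theorem~\ref{t.epsilon} but the existence of positive $\gamma_+$-strings disjoint from $\tilde\cE\cup\tilde\La_-$ (and symmetrically for $\gamma_-$), from which one builds explicit \emph{staircases} of rectangles in $\cP_{X_A}$ (Lemma~\ref{l.exists staircase}). Playing the holonomy game along such a staircase, one shows directly that a negative surgery on $\gamma_+$ forces the $C_{+,+}$ quadrant at a lift of $\sigma_+\in\La_+$ to be incomplete, \emph{regardless} of the surgeries performed on $\cE\cup\gamma_-$ (Proposition~\ref{p.undertwisted}); the symmetric statement gives an incomplete $C_{+,-}$ quadrant from the positive surgery on $\gamma_-$. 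Two incomplete quadrants of adjacent types force non-$\RR$-coveredness. The robustness here comes not from any locality of the surgery, but from the staircase being built inside a region of $\cP_{X_A}$ that the lifts of $\cE\cup\La_-$ simply do not enter.
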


We proved indeed a much stronger result:

\begin{theo}\label{t.notRcoveredatall} Let $A\in SL(2,\ZZ)$ a hyperbolic matrix and $\cE$ be a finite $A$-invariant set.  Then there exist periodic orbits  $\gamma_+$ and $\gamma_-$ such that every flow $Y$ obtained from $X_A$ by any surgery on $\cE$ and any two surgeries of distinct signs along $\gamma_+$ and $\gamma_-$ is not $\RR$-covered.
\end{theo}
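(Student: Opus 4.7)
The plan is to reformulate the non-$\RR$-covered condition in terms of the bi-foliated plane: by the discussion in Section~\ref{ss.RnonR}, $Y$ fails to be $\RR$-covered precisely when at least one of the foliations $F^s_Y$ or $F^u_Y$ on $\cP_Y$ admits a pair of distinct non-separated leaves. Since $X_A$ is trivially bi-foliated, any such branching in $Y$ must come from the surgeries on $\cE$, $\gamma_+$ and $\gamma_-$. The goal is to select $\gamma_\pm$ so that the opposite-sign surgeries along them force this branching, no matter what the surgeries on $\cE$ are.

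First I would establish a precise \emph{local} description of the effect of a single Dehn--Goodman--Fried surgery on the bi-foliated plane. Lifting to $\tilde M$, the surgery along a periodic orbit $\gamma$ with parameter $n$ replaces an equivariant tubular neighborhood of the orbits $\tilde\gamma_i$ (the lifts of $\gamma$) by a sheared piece, and at the level of $\cP$ this amounts to a cut-and-paste along the stable and unstable leaves through each $\tilde\gamma_i$, with shear magnitude determined by $n$. In particular, outside any open saturated neighborhood of the union of those leaves, the bi-foliated plane remains canonically identified with its previous version.

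Next I would isolate the branching mechanism produced by two opposite surgeries on $X_A$. On the trivially bi-foliated plane of $X_A$, choose two periodic orbits $\gamma_+$ and $\gamma_-$ whose lifts $\tilde\gamma_+,\tilde\gamma_-$ bound a topological rectangle $R$ (with stable and unstable sides carried by their leaves) such that the $\pi_1$-translates of $\tilde\gamma_+$ accumulate on one stable side of $R$ and the translates of $\tilde\gamma_-$ on the opposite one. A positive surgery along $\gamma_+$ and a negative surgery along $\gamma_-$ then shear the two stable half-leaves bordering $R$ against each other in incompatible directions, producing a pair of distinct non-separated leaves of $F^s$. Because periodic orbits of $X_A$ are dense and $\cE$ is finite, such a configuration can be found so that $R$, together with a closed saturated neighborhood $U$ of the leaves involved in the branching, is disjoint from every lift of $\cE$.

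The final step is to verify robustness under arbitrary surgeries on $\cE$. Since the lifts of $\cE$ form a discrete subset of $\cP$ and $U$ is disjoint from all of them, the local description above guarantees that the surgeries on $\cE$ do not modify the bi-foliated plane on $U$. Hence the non-separated pair of leaves created by the opposite-sign surgeries along $\gamma_\pm$ survives in $(\cP_Y,F^s_Y,F^u_Y)$, and Theorem~\ref{t.notRcoveredatall} follows. The main obstacle is this last robustness claim: making the cut-and-paste description precise enough that one can show it really commutes, on $U$, with the cut-and-paste performed for the surgeries on $\cE$, and controlling how the deck action on $\cP$ is altered so that the half-leaves producing the branching remain distinct after all surgeries have been performed.
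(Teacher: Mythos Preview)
Your proposal rests on a locality assumption that is false, and this is where the argument breaks down.

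You claim that a surgery along $\gamma$ only modifies the bi-foliated plane inside a saturated neighborhood of the leaves through the lifts $\tilde\gamma_i$, and that outside such a neighborhood the plane is ``canonically identified with its previous version''. But for the suspension $X_A$ the lifts of any periodic orbit form a set whose stable (and unstable) leaves are \emph{dense} in $\cP_{X_A}$: in the eigenbasis coordinates the lattice $\ZZ^2$ projects densely onto each eigendirection. Hence any open set saturated by $F^s$ or $F^u$ that contains all these leaves is the whole plane, and there is no region $U$ on which you can ignore the surgeries on $\cE$. The correct statement, proved in Section~\ref{s.chirurgies} (Proposition~\ref{p.rectangle-closure} and Theorem~\ref{t.newholonomies}), is that the identification between $\cP_X$ and $\cP_Y$ is canonical only on rectangles whose interior avoids \emph{all} surgery points; the moment an unstable leaf crosses the positive stable separatrix of a lift of a point of $\cE$, the holonomy is modified by a factor $\lambda^{\pm k}$ depending on the (unknown) characteristic number at that orbit. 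Since non-separation of leaves is detected by the unstable holonomy failing to be defined over an unbounded segment of $F^u_+$, you must control infinitely many such crossings, and a single rectangle $R$ together with a disjointness condition cannot do this.

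The paper's proof is organized around exactly this difficulty. One first uses the criterion that $Y$ is non-$\RR$-covered iff two \emph{adjacent} quadrants are incomplete, and then shows incompleteness via the ``holonomy game'' of Section~\ref{ss.game}. The key geometric objects are not a single rectangle but infinite \emph{staircases} of $\gamma_+$-rectangles disjoint from $\tilde\cE\cup\tilde\Lambda_-$ (Lemmas~\ref{l.exists staircase} and~\ref{l.stairacase implies incomplete}), together with ``safety zones'' that guarantee the game never meets a point of $\tilde\cE$ or $\tilde\gamma_-$ while climbing the staircase. To remove the ``large characteristic number'' hypothesis one embeds $\gamma_\pm$ in disjoint hyperbolic basic sets $\Lambda_\pm$ (both disjoint from $\cE$) and takes $\gamma_\pm$ to be $\varepsilon$-dense in $\Lambda_\pm$ with large period, so that a single crossing already produces enough expansion (Proposition~\ref{p.undertwisted}). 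None of this can be replaced by a local cut-and-paste picture: the robustness under arbitrary surgeries on $\cE$ comes from the combinatorial placement of the staircase, not from any neighborhood being untouched.
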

\textbf{Addendum to Theorem \ref{t.notRcoveredatall}.}
\textit{If $\cE$ is the union of the periodic orbits $p_1,...,p_n$ and we parametrise the surgeries performed on $\gamma_+ \cup \gamma_- \cup p_1 ...\cup p_n$ by $\ZZ^{n+2}$, there exist $D_+,D_-,P_1,...,P_n$ finite subsets of $\ZZ$ such that except $(D_+ \times \ZZ^{n+1}) \cup (\ZZ \times  D_-\times \ZZ^n)\cup (\ZZ^2 \times  P_1 \times \ZZ^{n-1})... \cup( \ZZ^{n+1} \times P_n)$ all the above surgeries yield non-$\RR$-covered flows $Y$ on hyperbolic manifolds.\vspace{0.2cm} }

Interestingly enough, Theorem~\ref{t.notRcoveredatall} implies that the surgeries on $\cE$ seem negligible in comparison with the ones on $\gamma_+$ and $\gamma_-$.  This is also the case for Theorem~\ref{t.epsilon} which also admits a stronger version: 

\begin{theo}\label{t.Epsilon} Let $\cE\subset \TT^2$ be a finite $f_A$-invariant set. There is $\varepsilon>0$ such that for any finite, $\varepsilon$-dense and $f_A$-invariant set $\cY\subset \TT^2$ one has the following: 

Let $Y$ be any flow obtained from $X_A$ by surgeries along $\cE\cup\cY$ and such that the characteristic numbers of the surgeries on $\cY$ are non zero and have the same signs $\omega_{\cY}\in\{+,-\}$.  Then $Y$ is $\RR$-covered and twisted, positively or negatively according to $\omega_{\cY}$. 
\end{theo}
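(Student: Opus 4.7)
The plan is to reduce to Fenley's theorem on same-sign surgeries of a suspension via a two-stage construction, then argue that sufficient density of $\cY$ neutralizes the arbitrary $\cE$-surgeries.

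First, I would let $Y_{\cY}$ denote the flow obtained from $X_A$ by performing only the $\cY$-surgeries. Since these surgeries are nonzero and all of the same sign $\omega_{\cY}$, Fenley's theorem (the same one invoked in the discussion before Theorem~\ref{t.epsilon}) yields that $Y_{\cY}$ is twisted $\RR$-covered with twist sign $\omega_{\cY}$, so its bi-foliated plane $\cP_{Y_{\cY}}$ is the oriented strip model. Since Dehn--Goodman--Fried surgeries preserve orbits outside the surgery locus, I can view $Y$ as obtained from $Y_{\cY}$ by additional surgeries, with arbitrary characteristic numbers, performed along the orbits of $Y_{\cY}$ that correspond to $\cE$. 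The problem reduces to showing that these finitely many additional surgeries cannot break the strip structure of $\cP_{Y_{\cY}}$ when $\cY$ is sufficiently dense around $\cE$.

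Second, I would use the criterion that $Y$ is twisted $\RR$-covered with sign $\omega_{\cY}$ if and only if $\cP_Y$ is a strip of the same orientation, or equivalently no two stable (resp.\ unstable) leaves of $\cP_Y$ are non-separated. I would then apply the descriptive machinery developed in the earlier sections of the paper, which tracks precisely how each surgery cuts and re-glues the bi-foliated plane along the lifts of the surgery orbit. The goal is to show that a pair of candidate non-separated leaves in $\cP_Y$ coming from an $\cE$-surgery at a lift $\tilde p$ of some $p\in\cE$ would have to avoid every lift of $\cY$ in a suitably defined rectangle around $\tilde p$; otherwise the same-signed $\cY$-surgery on that intermediate lift would already have forced the two candidate leaves into the strip configuration of $\cP_{Y_{\cY}}$. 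The choice of $\varepsilon$ then depends only on the finite set $\cE$: it is small enough that every such rectangle must contain a lift of an $\cY$-orbit.

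The main obstacle will be the uniformity of this argument over \emph{arbitrary} characteristic numbers of the $\cE$-surgeries. An arbitrarily large characteristic number produces an arbitrarily long shift in the bi-foliated plane, so a priori the cut-and-paste associated with a single $\cE$-surgery could reach across many $\cY$-lifts and create non-separated leaves far from $\tilde p$. The technical crux is therefore a geometric lemma showing that in the strip model of $\cP_{Y_{\cY}}$ the same-signed twists imposed by a $\cY$-dense set of lifts form a rigid barrier that no local surgery modification, regardless of its magnitude or sign, can breach: any leaf produced by an $\cE$-surgery is still forced to meet every transverse leaf of the strip. Once this barrier lemma is established, $\cP_Y$ remains a strip of sign $\omega_{\cY}$ and $Y$ is $\RR$-covered twisted with the claimed sign, completing the proof.
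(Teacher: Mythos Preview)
Your proposal identifies the right intuition—that the same-sign $\cY$-surgeries should ``absorb'' the arbitrary $\cE$-surgeries—but the argument has a genuine gap: the entire content of the theorem is hidden in your unproven ``barrier lemma,'' and your two-stage decomposition actually makes that lemma harder, not easier, to establish.

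Concretely, two problems. First, by passing to $\cP_{Y_{\cY}}$ you lose the explicit geometry that makes the result tractable. The paper works entirely in $\cP_{X_A}=\RR^2$, where the holonomies of $Y$ are computed by the explicit ``holonomy game'' of Theorem~\ref{t.newholonomies}: crossing a positive stable separatrix of a $\cY$-point multiplies the horizontal distance by $\lambda^{-\nu}$, crossing one of an $\cE$-point multiplies by $\lambda^{\mu}$. Completeness of the $C_{+,+}$ quadrant at $x\in\tilde\cE$ becomes a concrete question about whether contractions dominate expansions along a trajectory in $\RR^2$. In your setup, the strip $\cP_{Y_{\cY}}$ depends on $\cY$ (and on the characteristic numbers chosen on $\cY$), so the positions of the $\tilde\cE$-lifts in that strip vary with $\cY$; obtaining a single $\varepsilon$ that works uniformly for all $\cY$ and all choices of surgery coefficients on $\cY$ is not at all clear from this viewpoint.

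Second, your barrier idea does not engage with the actual mechanism that makes the theorem true. The paper's argument (Propositions~\ref{p.complete-epsilon} and~\ref{p.complete-partout}) shows that whenever the holonomy game is about to meet the stable separatrix of an $\tilde\cE$-point $x_i$, it must first pass through a primitive $\cE$-rectangle, and $\varepsilon$-density forces many $\tilde\cY$-points of large period into a fixed sub-rectangle of that rectangle; the resulting contraction by at least $\lambda^{-\pi(\gamma)}$ (with $\pi(\gamma)$ large) pushes the trajectory past $x_i$ on the \emph{negative} stable side, so the $\cE$-surgery at $x_i$ never fires. This is where uniformity over arbitrary $\cE$-coefficients comes from: the trajectory simply never sees the expansion. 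Your sketch asserts that a non-separated pair ``would have to avoid every lift of $\cY$ in a suitably defined rectangle,'' but you neither define that rectangle nor explain why avoidance is forced; and the correct statement is the opposite implication—one must show the trajectory \emph{cannot} avoid enough $\cY$-points to neutralize each $\cE$-point in turn. That is precisely the content of Lemma~\ref{l.complete} and the comparison argument in the proof of Proposition~\ref{p.complete-epsilon}, which your proposal does not supply.
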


\textbf{Addendum to Theorem \ref{t.Epsilon}.}
\textit{Furthermore, let $\cY$ (resp. $\cE$) be the union of the periodic orbits $d_1,...,d_n$ (resp. $p_1,...,p_m$). If we parametrize the surgeries performed on $d_1,...,d_n,p_1,...,p_m$ by $\ZZ^{n+m}$, then there exist $D_1,...,D_n, P_1,...,P_m$ finite subsets of $\ZZ$ such that except 
\begin{align*}
(&D_1  \times \ZZ^{n+m-1}) \cup (\ZZ \times  D_2 \times \ZZ^{n+m-2})...\cup(\ZZ^{n-1} \times D_n \times \ZZ^m)\\ &\cup (\ZZ^n \times P_1 \times \ZZ^{m-1}) \cup (\ZZ^{n+1} \times  P_2 \times \ZZ^{m-2})...\cup(\ZZ^{n+m-1} \times P_m )
\end{align*} all the above surgeries yield $\RR$-covered flows $Y$ on hyperbolic manifolds.\vspace{0.2cm}}

Thus Theorems~\ref{t.Epsilon} and~\ref{t.notRcoveredatall} consist in finding sets of periodic orbits $\cY$, on which well chosen surgeries dominate all surgeries on a given set $\cE$. Of course the sets $\cY \cup \cE$ are far from being generic. We are still very far from understanding the general case:

\begin{prob}\label{problem}
Consider a vector field $Y$ obtained from $X_A$ by performing positive surgeries on a finite $A$-invariant set $\Ga_+$, with strength $n_+\colon \Ga_+\to \NN^{*}$ and negative surgeries on a finite $A$-invariant set $\Ga_-$ with strength $n_-\colon \Ga_-\to -\NN^*$.   Knowing
$(\Ga_+,n_+),(\Ga_-,n_-)$, can we decide whether $Y$ is $\RR$-covered or not ?
 
\end{prob}

In the next section, we describe several settings where we can answer the previous question.
%%%%%%%%%%%%%%%%%%%%%%%%%%%%%%%%%%%%%%%%%%%%%%%%%%%%%%%%%%%%%%%%%%%%%
\subsection{Anosov flows obtained by surgeries from the suspension}\label{Anosov flows obtained from suspension by surgeries along $2$ orbits} 
%%%%%%%%%%%%%%%%%%%%%%%%%%%%%%%%%%%%%%%%%%%%%%%%%%%%%%%%%%%%%%%%%%%%%

Let us fix a hyperbolic matrix $A\in SL(2,\ZZ)$ (not necessarily of positive trace), $X_A$ its associated suspension Anosov flow and two disjoint finite $f_A$ invariant sets $\cX,\cY$. Consider $\cX$ (resp. $\cY$) to be the union of the periodic orbits $ \lbrace x_i\rbrace_{i\in I}$ (resp. $\lbrace y_j\rbrace_{j\in J}$). We denote by ${\cS}urg(X_A,\cX,\cY)$ the set of Anosov flows obtained by performing surgeries along $\cX \cup \cY$ and ${\cS}urg(X_A,\cX,\cY,(m_i)_{i\in I},\ast)$ the set of Anosov flows obtained by performing any kind of surgery along $\cY$ and surgeries with characteristic numbers $m_i$ along $x_i$. We give an analogous meaning to the notations ${\cS}urg(X_A,\cX,\cY,\ast, (n_j)_{j\in J})$ and ${\cS}urg(X_A,\cX,\cY,(m_i)_{i\in I}, (n_j)_{j\in J})$.

Let's first restrict to the simple case, where $\cX,\cY$ are respectfully equal to two distinct periodic orbits $\gamma_+$ et $\gamma_-$. There is a natural parametrization of ${\cS}urg(A,\gamma_+,\gamma_-)$ by the characteristic numbers of the surgeries along $\gamma_+$ and $\gamma_-$; therefore a parametrization by $\ZZ^2$.  

In this simple case, our goal is to describe in terms of $\gamma_+$ and $\gamma_-$, the regions of $\ZZ^2$ where we can decide whether ${\cS}urg(X_A,\gamma_+,\gamma_-,m,n)$ is $\RR$-covered or not, twisted (positively or negatively) or not.

Related to this problem is a question by Mario Shannon (that we do not answer here):
\begin{ques}[Shannon] Does there exist $A,\gamma_+,\gamma_-$, $\gamma_+\neq \gamma_-$ and $(m,n)\in\ZZ^2\setminus\{(0,0)\}$ such that ${\cS}urg(X_A,\gamma_+,\gamma_-,m,n)$ is a suspension flow? 
\end{ques}
Our results show that given $A,\gamma_+,\gamma_-$ there are at most finitely many $(m,n)$ for which the answer to the question can be positive. More generally, we think possible to prove that given a matrix $A$ there are at most finitely many $4$-uples $(\gamma_+,m,\gamma_-,n)$ for which the answer is positive. 

 According to \cite{Fe1} we know that:
\begin{itemize}
 \item if $m\geq 0$ and $n\geq 0$ and $(m,n)\neq (0,0)$ then ${\cS}urg(X_A,\gamma_+,\gamma_-,m,n)$ is $\RR$-covered and positively twisted. 
 \item if $m\leq 0$ and $n\leq 0$ and $(m,n)\neq (0,0)$ then ${\cS}urg(X_A,\gamma_+,\gamma_-,m,n)$ is $\RR$-covered and negatively twisted. 
\end{itemize}
When $m$ and $n$ have opposite signs, one could expect a competition between the effects of the surgeries along $\gamma_+$ and $\gamma_-$ on the bi-foliated plane, as they twist this plane in opposite directions: either one is dominating the other, leading to an $\RR$-covered twisted flow or the bi-foliated plane is positively twisted in some places and negatively in other places (whatever that means), leading to a non-$\RR$-covered flow.  We will see that the result of this competition depends on the mutual positions of the orbits $\gamma_+,\gamma_-$. Let us make this remark more precise and a bit more general.

We consider the plane $\RR^2$ (seen as the bi-foliated plane associated to $X_A$) endowed with the lattice $\ZZ^2$ and the oriented eigen directions $E^s_A, E^u_A$. One denotes by $F^s_A$ and $F^u_A$ the (trivial) foliations of $\RR^2$ by affine lines parallel to the eigen directions. We also denote by $\tilde{\cX},\tilde{\cY}$ the lifts of $\cX,\cY$ on $\RR^2$.  A \emph{rectangle} is a topological disc $R\subset \RR^2$, whose boundary consists of the union of two segments of leaves of $F^s_A$ and two segments of leaves of $F^u_A$. 

A rectangle $R$ has two \emph{diagonals}. The orientations of $E^s_A$ and $E^u_A$ allow us to speak of the \emph{increasing} and the \emph{decreasing} diagonal. We endow the diagonals with the transverse orientation of $E^s_A$, so that each diagonal has a \emph{first point} (or else \emph{origin}) and a \emph{last point}. 

If $\cE\subset \TT^2$ is a finite $f_A$-invariant subset of the torus $\TT^2= \RR^2/\ZZ^2$ we say that a rectangle $R$ is a 
\emph{positive} (resp. \emph{negative}) \emph{$\cE$-rectangle} if the endpoints of its increasing (resp. decreasing) diagonal belong to the lift $\tilde \cE$ on $\RR^2$ of $\cE$. 

A  positive or negative $\cE$-rectangle $R$ is \emph{primitive} if $R\cap\tilde \cE$ consists of the endpoints of its increasing or decreasing diagonal.  

Since $A$ is orientation preserving, $\cE$ is $f_A$-invariant and the foliations $F^s_A,F^u_A$ are invariant, one gets
\begin{rema}If $R$ is a rectangle, then $A(R)$ is a rectangle.  If $R$ is a positive $\cE$-rectangle, $A(R)$ is a positive $\cE$-rectangle. If $R$ is primitive, $A(R)$ is primitive. 
\end{rema}
In the same way, the notion of primitive (resp. positive, negative) $\cE$-rectangle is invariant under translations by elements of $\ZZ^2$. Also, as an immediate result of the previous remark we have:
\begin{rema}\label{r.periodicpoint}
If $\cE$ is the orbit of a periodic point and there exists a positive (resp. negative) $\cE$-rectangle with origin $x\in \tilde \cE$, then there exists a positive (resp. negative) $\cE$-rectangle with origin $y$ for all $y\in \tilde \cE$.
\end{rema}

\begin{lemm}\label{l.orbitesfinies} For any finite $f_A$ invariant set $\cE \subset \TT^2$, there are finitely many orbits of primitive $\cE$-rectangles, for the action of the group generated by $A$ and the integer translations.  
\end{lemm}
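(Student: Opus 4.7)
The plan is to associate to every positive primitive $\cE$-rectangle the pair $(p,q) \in \tilde\cE \times \tilde\cE$ of endpoints of its increasing diagonal, with displacement $u := q - p = a E^s_A + b E^u_A$ and $a, b > 0$, and to show that the orbits of such pairs under $G := \langle A \rangle \ltimes \ZZ^2$ are finite in number; the negative case is symmetric. The finiteness of $\cE$ on $\TT^2$ together with its $f_A$-invariance gives $\tilde\cE/G \simeq \cE/f_A$, a finite set. So, after a translation by $\ZZ^2$ and an iterate of $A$, every orbit has a representative whose origin $p$ lies in a fixed finite set of lifts of representatives of $\cE/f_A$; we fix one such $p$, of $f_A$-period $T \geq 1$.

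The stabilizer of $p$ in $G$ is generated by $g := (A^T,\, p - A^T p)$, whose action on $q$ sends $u$ to $A^T u$; in eigen coordinates this reads $(a, b) \mapsto (\lambda^{-T} a, \lambda^T b)$, where $\lambda > 1$ is the unstable eigenvalue of $A$. We may therefore normalize $b$ to lie in the bounded interval $[1, \lambda^T)$. The heart of the argument is then to show that primitivity forces $a \leq C$ for some constant $C = C(\cE, A)$. The rectangle $R$ is a strip of width $b \in [1, \lambda^T)$ in the $E^u_A$-direction and length $a$ in the $E^s_A$-direction; since the eigen directions of a hyperbolic $A \in SL(2,\ZZ)$ are quadratic irrationals, the $E^s_A$-flow on $\TT^2$ is uniquely ergodic of bounded type, and the projection of $R$ to $\TT^2$ covers $\TT^2$ with multiplicity proportional to the Euclidean area $ab \sin\theta$ (where $\theta$ is the angle between the eigen directions), uniformly in $b$. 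For $a$ large enough, $R$ therefore meets $\tilde\cE$ in at least three points, hence contains a point of $\tilde\cE$ distinct from $p$ and $q$, contradicting primitivity.

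Once $(a, b)$ is confined to the compact region $(0, C] \times [1, \lambda^T)$, the displacement $u$ lies in a bounded subset of $\RR^2$ meeting the discrete set $\tilde\cE - p$ in only finitely many points. Combined with the finite number of admissible origins, this yields finitely many $G$-orbits of primitive positive $\cE$-rectangles; the negative case follows identically by using the decreasing diagonal in place of the increasing one. The main obstacle is the primitivity bound on $a$, which requires a uniform coverage statement in the width $b$: the key input is that the eigen directions of $A$ are of bounded continued-fraction type, so the recurrence of the $E^s_A$-flow on $\TT^2$ is uniform and the primitivity threshold on $a$ depends only on the fixed lower bound $b \geq 1$ and on $\cE$.
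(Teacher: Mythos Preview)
Your proof is correct and follows the same overall strategy as the paper: normalize each orbit by the group action and then use primitivity together with an area argument to bound the remaining parameter. The difference lies in the normalization. The paper uses the $A$-action to force the aspect ratio $\ell^s(R)/\ell^u(R)$ into $[1,\lambda^2)$ and the $\ZZ^2$-action to place the origin in $[0,1)^2$; then if there were infinitely many orbits, the second corner would escape to infinity, so \emph{both} side lengths would grow, the rectangle would eventually contain a unit square, and hence arbitrarily many points of $\tilde\cE$ --- an elementary contradiction. You instead fix the origin $p$ and normalize the unstable width $b\in[1,\lambda^T)$, so only the stable length $a$ can grow; to conclude that a long strip of bounded width still meets $\tilde\cE$, you invoke the bounded-type (badly approximable) property of the eigendirections, which is indeed true since they are quadratic irrationals. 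This works, but it is a heavier input than the paper needs: with the aspect-ratio normalization the coverage statement becomes trivial and no Diophantine information about the eigendirections is required.
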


This lemma makes the hypotheses of Theorem~\ref{t.twisted} reasonable:

\begin{theo}\label{t.twisted}  Let $A\in SL(2,\ZZ)$ be a hyperbolic matrix and $\cX,\cY$ two disjoint finite $f_A$ invariant sets. 
Assume that every positive $\cX$-rectangle contains a point of $\tilde{\cY}$. 

Then there is $N>0$ such that every Anosov flow in ${\cS}urg(X_A,\cX,\cY,\ast, (n_j)_{j\in J})$ with $n_j\geq N$ is $\RR$-covered and positively twisted. 
\end{theo}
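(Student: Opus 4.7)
The plan is to analyze the bi-foliated plane $(\cP_Y,F^s_Y,F^u_Y)$ of a flow $Y$ produced by arbitrary surgeries along $\cX$ and positive surgeries of characteristic number $n_j\geq N$ along each orbit of $\cY$, and to show that for $N$ large this plane is orientation-preservingly conjugate to the positively twisted strip model $\{(x,y)\in\RR^2, |x-y|<1\}$.

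First I would make explicit how $\cP_Y$ is obtained from $\cP_{X_A}=\RR^2$ (with its trivial horizontal and vertical bi-foliation $F^s_A,F^u_A$). Each lift of a surgered periodic orbit is a single point of $\RR^2$, and a surgery of characteristic number $m$ on that orbit replaces this point by a multifurcation of order $|m|$ in both $F^s_Y$ and $F^u_Y$, with the branching opening along the increasing diagonal if $m>0$ and along the decreasing diagonal if $m<0$. Equivalently, $\cP_Y$ is obtained from $\RR^2$ by cutting along the stable and unstable leaves through each point of $\tilde\cX\cup\tilde\cY$ and regluing with the shears prescribed by the characteristic numbers, extending the construction of \cite{Fe1} for the all-positive case.

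The next step is a combinatorial dictionary. The flow $Y$ fails to be positively twisted $\RR$-covered only through either branching of $F^s_Y$ or $F^u_Y$, or a region of negative twisting. The claim to be proved is that every such minimal obstruction traces back, in $\cP_{X_A}=\RR^2$, to a primitive \emph{positive $\cX$-rectangle} $R$ whose interior contains no point of $\tilde\cY$ and whose two increasing diagonal endpoints carry negative surgeries. Indeed, only a negative surgery can create branching in the direction opposite to the overall positive twist, and any $\cY$-point interior to $R$, if endowed with a sufficiently large positive surgery, reglues that region in a way that forces the two potentially non-separated leaves to coincide after finitely many steps.

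Having established this dictionary, the hypothesis provides a $\cY$-point inside every positive $\cX$-rectangle, and Lemma~\ref{l.orbitesfinies} reduces the verification to finitely many orbits of primitive positive $\cX$-rectangles $R_1,\dots,R_k$ under the action of $A$ and $\ZZ^2$-translations. For each $R_i$, one selects a $\cY$-point $y_i\in R_i$ and computes an explicit threshold $N_i$ depending only on $R_i$, on the possible $\cX$-surgery coefficients at its corners, and on the geometric position of $y_i$. Setting $N:=\max_i N_i$, any $n_j\geq N$ produces at each $y_i$ a positive twist dominating the negative twists at the corners, killing every bad configuration. Invariance under $A$ and integer translations then propagates the conclusion to the whole plane, yielding that $\cP_Y$ is $\RR$-covered; the positivity of the twisting follows from the sign of the dominating contributions.

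The main obstacle will be the combinatorial dictionary above: one must verify that every minimal obstruction is indeed captured by a single primitive positive $\cX$-rectangle, rather than by a cascading configuration involving several orbits of $\cX\cup\cY$. The reduction relies on isolating the extremal stable and unstable leaves responsible for a given non-separation and tracing them back in $\cP_{X_A}$: the orbits producing them must carry negative surgeries (only these create branching in the wrong direction), and the minimality of the obstruction forces these orbits to sit at the endpoints of the increasing diagonal of a single rectangle supported on $\cX$.
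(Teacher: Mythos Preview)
Your proposal has a genuine gap that would prevent it from closing. The central issue is the claimed ``combinatorial dictionary'': you assert that every minimal obstruction to being positively twisted $\RR$-covered traces back to a primitive positive $\cX$-rectangle whose diagonal endpoints carry \emph{negative} surgeries and whose interior avoids $\tilde\cY$. But the theorem allows \emph{arbitrary} surgeries on $\cX$ (that is what the $\ast$ means), so the corners of your $\cX$-rectangles may carry any integer whatsoever, positive or negative. Consequently your threshold $N_i$, which you say depends on ``the possible $\cX$-surgery coefficients at its corners'', cannot be uniformly finite; and yet the statement requires a single $N$ working for \emph{all} choices of surgery on $\cX$. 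More fundamentally, a single surgery does not create a local ``multifurcation of order $|m|$'' at the surgery point in $\cP_Y$; branching of $F^{s/u}_Y$ is a global phenomenon produced by the accumulated holonomy, not a local replacement. Your dictionary, as stated, is therefore both unproven and incorrect.

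The paper takes a quite different route that sidesteps this difficulty entirely. Rather than classifying obstructions, it shows directly that the quadrants $C_{+,+}(x)$ (and $C_{-,-}(x)$) are complete for every $x\in\tilde\cX$, by playing the explicit \emph{holonomy game} of Section~\ref{ss.game}. The key mechanism is this: starting from a point $t\in F^s_+(x)$ and following $F^u_+$, the hypothesis guarantees that one meets the positive stable separatrix of a point of $\tilde\cY$ \emph{before} meeting that of any point of $\tilde\cX$ (this is exactly the content of ``every positive $\cX$-rectangle contains a point of $\tilde\cY$''). The positive surgery on $\cY$ then contracts the horizontal coordinate by a factor $\lambda^{-n}$; for $n\geq N$ large enough (Lemma~\ref{l.complete}, using the finiteness in Lemma~\ref{l.orbitesfinies}) this contraction pushes the trajectory onto the \emph{negative} stable side of the next $\tilde\cX$ point, so that the $\cX$-surgery there has no effect at all on this step of the game. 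Iterating gives completeness at $\tilde\cX$ points (Proposition~\ref{p.complete}); a short argument (Proposition~\ref{p.complete-partout}) propagates completeness to every point; and Corollary~\ref{c.twisted} rules out the trivial case. The independence from the $\cX$-surgeries is thus not obtained by domination of one twist over another, but by \emph{avoidance}: the $\cY$-contractions steer the holonomy away from ever feeling the $\cX$-surgeries.
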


Obviously, the same statement holds:
\begin{itemize}\item  by exchanging $\cX$ with $\cY$
 \item by replacing positive rectangle and positively twisted by negative rectangle and negatively twisted. 
\end{itemize}

Using the Lemma\ref{l.orbitesfinies}, we get that many pairs $(\cX,\cY)$ satisfying the hypotheses of Theorem~\ref{t.twisted}:

\begin{lemm}\label{l.epsilon} Given any $f_A$-invariant finite set $\cX$, there is $\varepsilon>0$ such that every $\varepsilon$-dense finite invariant set $\cY$ intersects every $\cX$-rectangle. Such a pair $(\cX,\cY)$ satisfies the hypotheses of Theorem~\ref{t.twisted}. 
\end{lemm}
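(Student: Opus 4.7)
The plan is to combine Lemma~\ref{l.orbitesfinies} with the fact that the lift $\tilde \cY$ is invariant under the group $G$ generated by $A$ and the integer translations, whenever $\cY \subset \TT^2$ is $f_A$-invariant. Indeed, $\tilde \cY$ is trivially $\ZZ^2$-invariant (as the preimage of $\cY$ under the projection), and $A(\tilde \cY) = \tilde \cY$ since $f_A(\cY) = \cY$. The crucial consequence is that $R \cap \tilde \cY \neq \emptyset$ if and only if $g(R) \cap \tilde \cY \neq \emptyset$ for every $g \in G$, so having non-empty intersection with $\tilde \cY$ depends only on the $G$-orbit of $R$.

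First I would reduce to \emph{primitive} $\cX$-rectangles. If $R$ is a positive $\cX$-rectangle with diagonal endpoints $a,b \in \tilde \cX$ which is not primitive, there exists a point $c \in R \cap \tilde \cX$ distinct from $a,b$; in the eigencoordinates, $c$ lies strictly between $a$ and $b$ in each direction, so the subrectangle with diagonal $(a,c)$ is a strictly smaller positive $\cX$-rectangle contained in $R$. Since $R \cap \tilde \cX$ is finite ($\tilde \cX$ being locally finite), iterating produces a primitive positive $\cX$-rectangle contained in $R$; the same argument handles negative $\cX$-rectangles. Hence it suffices to ensure that $\tilde \cY$ meets every primitive $\cX$-rectangle.

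Next, by Lemma~\ref{l.orbitesfinies}, I fix representatives $R_1, \dots, R_N$ of the finitely many $G$-orbits of primitive $\cX$-rectangles (positive and negative combined). Each $R_i$ is a topological disc with non-empty interior, so it contains an open Euclidean ball of some radius $\rho_i > 0$; set $\varepsilon = \min_i \rho_i > 0$. For any $\varepsilon$-dense $f_A$-invariant finite set $\cY \subset \TT^2$, the lift $\tilde \cY$ is $\varepsilon$-dense in $\RR^2$ for the Euclidean metric, hence $\tilde \cY \cap R_i \neq \emptyset$ for each $i$. By the $G$-invariance observation, $\tilde \cY$ meets every primitive $\cX$-rectangle, and by the first step every $\cX$-rectangle. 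The second assertion of the lemma follows immediately, since Theorem~\ref{t.twisted} only requires the positive-rectangle case.

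The main obstacle is that $A$ stretches the eigendirections without bound, so primitive $\cX$-rectangles can be arbitrarily elongated with arbitrarily small in-radius; a direct ball-finding argument applied rectangle-by-rectangle would never yield a uniform $\varepsilon$. The key to overcoming this is that although the shapes occurring within one $G$-orbit are unbounded, the orbits themselves are finite in number, and $\tilde \cY$ is $G$-invariant, so one needs to test intersection only on a single representative per orbit.
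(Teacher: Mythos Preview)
Your proof is correct and follows essentially the same route as the paper's: use Lemma~\ref{l.orbitesfinies} to pick finitely many representatives of primitive $\cX$-rectangles, choose $\varepsilon$ small enough that an $\varepsilon$-dense set meets each representative, and then invoke $G$-invariance of $\tilde\cY$ to propagate to all primitive (hence all) $\cX$-rectangles. You are simply more explicit than the paper about the reduction to the primitive case and about why the in-radius argument must be applied only to the finitely many representatives rather than to all rectangles.
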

Finally, as a corollary of Theorem~\ref{t.twisted} we get the following: 
\begin{lemm}\label{l.asymetric} If $\tilde{\cY}$ intersects every positive $\cX$-rectangle, then there is a negative $\cY$-rectangle disjoint from $\tilde{\cX}$. 
 
\end{lemm}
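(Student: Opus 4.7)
My plan is to deduce Lemma~\ref{l.asymetric} by contradiction from Theorem~\ref{t.twisted} together with the symmetric variants recorded in the remark immediately after it, so that the lemma appears as an easy corollary rather than requiring an independent direct proof.

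I argue as follows. Assume, for contradiction, that every negative $\cY$-rectangle contains a point of $\tilde{\cX}$; combined with the hypothesis this gives two statements at once: (i) $\tilde{\cY}$ meets every positive $\cX$-rectangle, and (ii) $\tilde{\cX}$ meets every negative $\cY$-rectangle. Applying Theorem~\ref{t.twisted} directly to (i), I obtain $N_1>0$ such that every Anosov flow in ${\cS}urg(X_A,\cX,\cY,\ast,(n_j)_{j\in J})$ with all $n_j\geq N_1$ is $\RR$-covered and positively twisted. Applying Theorem~\ref{t.twisted} after composing the two symmetries listed just after it (exchange $\cX \leftrightarrow \cY$, and simultaneously exchange ``positive'' with ``negative'' for both rectangles and twists) to hypothesis (ii), I obtain $N_2>0$ such that every Anosov flow in ${\cS}urg(X_A,\cX,\cY,(m_i)_{i\in I},\ast)$ with all $m_i\leq -N_2$ is $\RR$-covered and negatively twisted. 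A flow $Y$ obtained from $X_A$ by performing simultaneously surgeries of characteristic numbers $m_i\leq -N_2$ on the orbits of $\cX$ and $n_j\geq N_1$ on those of $\cY$ lies in both families, so both conclusions apply to it, forcing $Y$ to be at the same time positively twisted and negatively twisted $\RR$-covered. This contradicts Section~\ref{ss.RnonR}, which records that on an oriented $3$-manifold these two classes of bi-foliated planes are not conjugate by any orientation-preserving homeomorphism. The contradiction proves the lemma.

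The only subtle point I expect is the simultaneous use of the two symmetries of Theorem~\ref{t.twisted}. They are listed as two independent bullets in the remark immediately following the theorem and act on disjoint pieces of the data --- one permutes the finite invariant sets $\cX$ and $\cY$, the other reverses the sign convention on diagonals and twists --- so their composition is just another instance of the theorem with hypothesis ``every negative $\cY$-rectangle contains a point of $\tilde{\cX}$'' and conclusion ``large negative surgeries on $\cX$ give $\RR$-covered negatively twisted''. Once this combined variant is granted, the rest of the argument is purely formal.
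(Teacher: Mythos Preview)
Your proof is correct and follows essentially the same contradiction strategy as the paper: assume both rectangle conditions, perform strong surgeries of opposite signs on $\cX$ and $\cY$, and derive an impossibility. The only difference is cosmetic: the paper cites Proposition~\ref{p.complete} to conclude that all four quadrants are complete (hence the bi-foliated plane is trivial) and then invokes \cite{Fe1} for the contradiction, whereas you cite Theorem~\ref{t.twisted} directly to obtain simultaneously positively and negatively twisted, which is an immediate contradiction; your endgame is in fact slightly cleaner.
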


\begin{theo}\label{t.string}
Let $A\in SL(2,\RR)$ be a hyperbolic matrix and $\cX,\cY$ two disjoint finite $f_A$ invariant sets. Assume that for every $x\in \cX$ there exists a positive $\cX$-rectangle with origin $x$ disjoint from $ \tilde{\cY}$ and for every $y\in \cY$ a negative $\cY$-rectangle with origin $y$ disjoint from $ \tilde{\cX}$.

Then there exists $N>0$ such that every Anosov flow of the form ${\cS}urg(X_A,\cX,\cY,(m_i)_{i\in I}, (n_j)_{j\in J})$ with $m_i\leq -N$ and $n_j\geq N$ is not $\RR$-covered. 
\end{theo}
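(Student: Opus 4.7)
The plan is to argue by contradiction using Fenley's dichotomy: assuming that $Y\in{\cS}urg(X_A,\cX,\cY,(m_i)_{i\in I},(n_j)_{j\in J})$ with $m_i\le -N$ and $n_j\ge N$ is $\RR$-covered, I will exhibit two leaves of $F^u_Y$ in the bi-foliated plane $\cP_Y$ that are non-separated, forcing the leaf space of $F^u_Y$ to be non-Hausdorff and therefore $Y$ not $\RR$-covered.

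The first ingredient is a precise local description of how a Dehn-Goodman-Fried surgery of characteristic $n$ on a periodic orbit $\gamma$ acts on the bi-foliated plane $\cP_{X_A}=\RR^2$: near each lift $\tilde\gamma$, it amounts to cutting $\cP_{X_A}$ along $\tilde\gamma$ and regluing the two sides with a shear of $|n|$ units in the transverse leaf-space, oriented according to the sign of $n$. Outside the union of lifts of surgered orbits, leaves of $\cP_{X_A}$ and leaves of $\cP_Y$ are in canonical bijection. I then fix, using the hypothesis, a positive $\cX$-rectangle $R$ with origin $x_0\in\tilde\cX$ and opposite diagonal endpoint $x_1\in\tilde\cX$, with $R\cap\tilde\cY=\emptyset$. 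In $\cP_{X_A}$ the unstable half-leaf $\ell^u(x_0)\subset R$ and the stable half-leaf $\ell^s(x_1)\subset R$ meet at the corner of $R$ opposite to the increasing diagonal.

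The key claim is that after performing a negative surgery of characteristic $m\le -N$ at the orbits of $x_0$ and $x_1$ in $\cX$, the images of $\ell^u(x_0)$ and $\ell^s(x_1)$ in $\cP_Y$ are sheared past one another: the half-leaf $\ell^u_Y(x_0)$ no longer reaches $\ell^s_Y(x_1)$ inside (the image of) $R$, and by the twist mechanism it accumulates on a stable leaf that is also accumulated by a translate of $\ell^u_Y(x_0)$ under an appropriate deck transformation. This yields a pair of unstable leaves of $\cP_Y$ non-separated in $F^u_Y$. Because $R$ is disjoint from $\tilde\cY$, the surgeries on $\cY$ do not modify the interior of $R$ nor the position of $\ell^u_Y(x_0)$ relative to $\ell^s_Y(x_1)$ there, so the non-separation survives. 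The hypothesis on negative $\cY$-rectangles avoiding $\tilde\cX$ yields by a symmetric argument a pair of non-separated leaves in $F^s_Y$; either conclusion alone suffices, but combining them shows that $Y$ is "non-$\RR$-covered in both directions", which is useful for subsequent results.

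The main obstacle is making the shear-past-one-another step quantitative. I must show that once $N$ is large enough, the shear accumulating at $x_0$ and $x_1$ strictly exceeds the transverse width of $R$, so that $\ell^u_Y(x_0)$ and $\ell^s_Y(x_1)$ genuinely branch in $\cP_Y$ and are not merely repositioned; and furthermore that this excess is not consumed when the extended leaves re-enter regions affected by the remaining surgeries on $\cX$ or on $\cY$. Uniform control of the threshold $N$ requires working with a finite family of representatives for the positive $\cX$-rectangles given by Lemma~\ref{l.orbitesfinies} modulo the action of $A$ and integer translations, and combining this with the hyperbolic expansion of $f_A$ to bound the propagation of the shear along iterated pre-images and post-images of $R$.
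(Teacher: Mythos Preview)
Your argument has a genuine gap: a \emph{single} positive $\cX$-rectangle $R$ disjoint from $\tilde\cY$ is not enough to produce non-separated leaves, and the claim that ``the surgeries on $\cY$ do not modify the interior of $R$ \dots\ so the non-separation survives'' is where the argument breaks. Non-separation of leaves is a global property of $\cP_Y$; once $\ell^u_Y(x_0)$ exits the image of $R$ it will cross positive stable separatrices of points of $\tilde\cY$, and each such crossing contributes a \emph{contraction} in the holonomy game (Section~\ref{ss.game}). A single expansion step at $x_1$ can be wiped out by a subsequent contraction, and you have no mechanism preventing this. Your ``main obstacle'' paragraph acknowledges this difficulty but the proposed remedy (bounding the shear against the width of $R$) does not address it: the width of $R$ is irrelevant, what matters is the entire future of the leaf.

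The paper's proof avoids this by passing from one rectangle to an \emph{infinite} $\cX$-string $\{\De_i\}$ disjoint from $\tilde\cY$ (available from the hypothesis since $\cX$ is a union of periodic orbits), and then to an $\cX$-\emph{staircase} $\{R_i\}$ disjoint from $\tilde\cY$ with controlled geometry (Lemma~\ref{l.exists staircase}). The point of the staircase is that the unstable holonomy of $Y$ from $F^s_+(q_i)$ to $F^s_+(q_{i+1})$ stays inside $R_i$ (hence never meets $\tilde\cY$) until it crosses $F^s_+(x_{i+1})$, where the $\cX$-surgery produces a definite expansion; the bounded ratios $\ell^s(R_{i+1})/\ell^s(R_i)$ then guarantee, for $N$ large, that the expanded image covers the base of $R_{i+1}$ plus its $\cY$-safe zone, so the process continues inductively for all $i$ (Lemma~\ref{l.stairacase implies incomplete}). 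This yields an \emph{incomplete} quadrant $C_{+,+}(x)$, independently of what surgeries are done on $\cY$. Applying the same reasoning with the roles of $\cX,\cY$ interchanged (using the negative $\cY$-rectangle hypothesis) gives an incomplete $C_{+,-}(y)$ quadrant; two incomplete quadrants of adjacent type force $Y$ to be non-$\RR$-covered. The infinite staircase, not a single rectangle, is the missing idea in your proposal.
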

Once again the same statement holds by straightforward symmetries. 
\vspace{5mm}
\par{In the ``simple" case when $\cX,\cY$ are periodic orbits, the previous theorems and Remark \ref{r.periodicpoint} allow us to get an almost complete answer to the problem \ref{problem}. One first considers $16$ cases corresponding to the existence (or not) of positive (negative) $\cX$ (resp. $\cY$) rectangles disjoint from $\tilde \cY$ (resp $\tilde \cX$). Lemma \ref{l.asymetric} allows us to discard seven of the above cases.}

Up to exchanging $\cY$ with $\cX$, up to changing the orientation of $\RR^2$ and up to all other possible symmetries, we restrict the possible cases to the following ones: 

\begin{enumerate}
 \item There are positive and negative $\cX$-rectangles disjoint from $\tilde{\cY}$ and positive and negative $\cY$-rectangles disjoint from $\tilde{\cX}$.  In this case, there exists $N>0$ such that ${\cS}urg(X_A,\cX,\cY,m, n)$ is not $\RR$-covered when $m>N$ and $n <-N$ or $m<-N$ and $n>N$. It is $\RR$-covered when $n$ and $m$ have the same sign and it is twisted according to the sign of $m$ or $n$. 
 
 \item All positive $\cX$-rectangles intersect $\tilde{\cY}$ . There are negative $\cX$-rectangles disjoint from $\tilde{\cY}$ and positive and negative $\cY$-rectangles disjoint from $\tilde{\cX}$. 
 
 ${\cS}urg(X_A,\cX,\cY,m, n)$ is $\RR$-covered twisted when $m<0$ and $n>N$ where $N$ is positive and large enough or when $m$ and $n$ have the same sign. It is not $\RR$-covered when $m>0$, $n<0$ and both $|m|,|n|$ are large enough. 
 
 \item All positive and negative $\cX$-rectangles intersect $\tilde{\cY}$.
${\cS}urg(X_A,\cX,\cY,m, n)$ is $\RR$-covered when $mn>0$ or when $|n|$ is large enough. The direction of the twist in this case follows the sign of the $n$.

 \item All positive $\cX$-rectangles intersect $\tilde{\cY}$ and all positive $\cY$-rectangles intersect $\tilde{\cX}$. 
 
 ${\cS}urg(X_A,\cX,\cY,m, n)$ is $\RR$-covered when $mn>0$ or when either $n$ or $m$ is positive with large absolute value. 
 
\end{enumerate}

\subsection{Structure of the paper}
In Section~\ref{s.prelim} we remind basic definitions and properties of Anosov flows on $3$-manifolds. In particular, we remind Fenley's works on the bi-foliated plane, a characterization of $\RR$-covered and non $\RR$-covered Anosov flows and finally some properties of the Dehn-Goodman-Fried surgery. 

In Section~\ref{s.chirurgies} we remind  very basic facts allowing us to compare the bi-foliated planes associated to two Anosov flows $X$ and $Y$ obtained one from the other by surgeries. This leads to a general procedure defined in Theorem~\ref{t.newholonomies} for comparing the holonomies of the foliations of both bi-foliated planes. When $X$ is a suspension flow, the procedure in Theorem~\ref{t.newholonomies} can be made more explicit and will be called \emph{the dynamical game for computing the holonomies}.  

In Section~\ref{s.geodesic} we give a general criterion (see Corollary~\ref{c.caractR}), ensuring that surgeries on a finite set of periodic orbits cannot break the $\RR$-covered property. Then we apply Corollary~\ref{c.caractR} to the geodesic flow of hyperbolic surfaces and we prove Theorem~\ref{t.geodesic}. 

In Section~\ref{s.nonR} we prove Theorems~\ref{t.pivot} and~\ref{t.cS}, which are more precise and stronger versions of  Theorems~\ref{t.pivot0} and~\ref{t.cS0} concerning surgeries which do not change the branching structure of non-$\RR$-covered Anosov flows. It consists essentially in recalling the description of this branching structure given in \cite{Fe2} and in applying the general tools of Section~\ref{s.chirurgies}. 

From this point on, we focus on Anosov flows $X\in \cS urg_+$, i.e. Anosov flows obtained from a suspension flow by surgeries. 

More particularly, Section~\ref{s.rectangle} ends with the proof of  Theorems~\ref{t.epsilon} and Theorems~\ref{t.Epsilon} in which we prove that for $X\in \cS urg_+$ any surgery on an $\varepsilon$-dense periodic orbit, for $\varepsilon>0$ small enough, provides an $\RR$-covered flow. In order to prove the previous statement, we begin by proving Theorem~\ref{t.twisted} and we proceed by carefully replacing the \emph{strong enough surgeries} hypothesis in Theorem~\ref{t.twisted} by the $\varepsilon$-density hypothesis.

Section~\ref{s.string}, being the non-separated counterpart of Section~\ref{s.rectangle}, follows the same structure. We begin by proving Theorem~\ref{t.string} and we proceed by reclacing the \emph{strong enough surgeries} condition by an $\varepsilon$-density condition, thus proving Theorems~\ref{t.nonRcovered} and~\ref{t.notRcoveredatall}.  

In Section~\ref{s.deux} we consider the flows $X\in\cS urg_+$ obtained from a suspension by surgeries on $2$ periodic orbits.  In this case, by applying Theorems~\ref{t.twisted} or \ref{t.string} we get a complete overview of the flows $X$ obtained from $X_A$ by strong enough surgeries. 

Sections~\ref{s.rectangle} and~\ref{s.string} consider mostly surgeries on orbits of very large period (the period of $\varepsilon$-dense orbit tends to infinity as $\varepsilon$ goes to $0$).  In order to present explicit examples of low periods ($1$ or $3$), Section~\ref{s.explicit} focusses on the matrices 
$$A_n=\left(\begin{array}{cc}
             n&n-1\\
             1&1
            \end{array}\right)$$
and their cubes $B_n=A_n^3$. We will apply the criteria of Sections~\ref{s.rectangle} and~\ref{s.string} to the orbits of the points $(0,0)$ and $(\frac{1}{2},\frac12)$.

\subsubsection*{Acknowledgements} We would like to address a special thanks to Sergio Fenley for his interest and comments and also to Fran\c ois B\'eguin for organizing the \emph{groupe de travail sur les flot d'Anosov} -(by videoconference) during the 2020 quarantine that allowed us to present very early versions of the results.  This work is part of the \emph{stage de recherche de quatri\`eme ann\'ee} (4th year research training program at the ENS de Lyon) of the second author. 

\section{$\RR$-covered and non-$\RR$-covered Anosov flows on $3$-manifolds}\label{s.prelim}
%%%%%%%%%%%%%%%%%%%%%%%%%%%%%%%%%%%%%%%%%%%%%%%%%%%%%%%%%%%%%%%%%%%%%%%
%%%%%%%%%%%%%%%%%%%%%%%%%%%%%%%%%%%%%%%%%%%%%%%%%%%%%%%%%%%%%%%%%%%%%%%%%

%%%%%%%%%%%%%%%%%%%%%%%%%%%%%%%%%%%%%%%%%%%%%%%%%%%%%%%%%%%%%
\subsection{Anosov flows:  definitions, stability, orbital equivalence}\label{s.basic}
%%%%%%%%%%%%%%%%%%%%%%%%%%%%%%%%%%%%%%%%%%%%%%%%%%%%%%%%%%%%

\begin{defi}\label{d.Anosov} A $C^1$-vector field $X$ on a closed manifold $M$ is called \emph{an Anosov flow} if the tangent bundle $TM$ admits a splitting 
$$ TM= E^s\oplus \RR X\oplus E^u$$
satisfying the following properties:
\begin{itemize}
 \item the splitting is invariant under the natural action of the derivative $DX^t$ of the flow on $TM$: 
 $$ DX^t(E^s(x))=E^s(X^t(x)) \mbox{ and } DX^t(E^s(x))=E^s(X^t(x)).$$
 \item if $\|\cdot\|$ is a Riemannian metric on $M$, there is $C>0$ and $0<\lambda<1$ such that, for any $x\in M$ and any two vectors $u\in E^s(x)$ and $v\in E^u(x)$ one has
 $$\|DX^t(u)\|\leq C \lambda^{t}\|u\| \mbox{ and } \|DX^{-t}(v)\|\leq C \lambda^{t}\|v\|$$
 
\end{itemize}
\end{defi}

An important property of Anosov flows is 

\begin{theo}\cite{A} If $X$ is an Anosov flow, then there is $C^1$-neighborhood $\cU$ of $X$ such that every $Y \in \cU$ is topologically (orbitally) equivalent to $X$: there is a homeomorphism $h\colon M\to M$ such that for every $x\in M$  the image of the oriented orbit of $X$ through $x$ is the oriented orbit of $Y$ through $h(x)$.

 One says that $X$ is \emph{$C^1$-structurally stable}.
\end{theo}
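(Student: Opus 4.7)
The plan is to establish structural stability in the classical three-step manner: first promote hyperbolicity from $X$ to any nearby $Y$, then transfer the invariant foliations, and finally build the orbit equivalence by a fixed-point / shadowing construction in the right function space. The essential difficulty, as always with flows, is that one cannot hope for a topological conjugacy because the periods of corresponding closed orbits of $X$ and $Y$ generally differ; one must instead reparametrize along each orbit, which is exactly what the weak ``orbital'' equivalence absorbs.

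First I would prove persistence of the hyperbolic splitting via a cone field argument. Choose a Riemannian metric and a $DX^T$-strictly-invariant pair of transverse cone fields $C^u,C^s\subset TM$ (around $E^u$ and $E^s$ respectively) for some large $T>0$; strict invariance means $\overline{DX^T(C^u(x))}\subset \operatorname{int} C^u(X^Tx)\cup\{0\}$, and symmetrically for $C^s$ under $DX^{-T}$, together with uniform expansion/contraction estimates on those cones. Because these are $C^0$-open conditions on the derivative, every $Y$ sufficiently $C^1$-close to $X$ satisfies them with the same cone fields (with slightly worse constants), and the standard cone-invariance argument produces a unique continuous $DY^t$-invariant splitting $TM=E^s_Y\oplus\RR Y\oplus E^u_Y$, $C^0$-close to the splitting of $X$, with uniform contraction/expansion rates. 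Thus $Y$ is itself Anosov.

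Next I would invoke the stable/unstable manifold theorem applied to $Y$ to obtain weak stable and unstable foliations $\cF^{cs}_Y$ and $\cF^{cu}_Y$, with local leaves $C^0$-close (and in fact $C^0$ continuous) to those of $X$, together with the local product structure: for any $x\in M$ and any $y$ close enough to $x$, the leaves $\cF^{cs}_{Y,\mathrm{loc}}(x)$ and $\cF^{cu}_{Y,\mathrm{loc}}(y)$ intersect in a single $Y$-orbit segment, depending continuously on $(x,y)$. The analogous statement for $X$ holds with identical constants if $Y$ is close enough. This provides the geometric dictionary that will let us match orbits of $X$ with orbits of $Y$.

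Finally I would construct the orbital equivalence $h$. Consider the Banach space $\cC$ of continuous maps $H\colon M\to M$ that are $C^0$-close to the identity, and define an operator $\Phi$ which sends $H$ to the map $\Phi(H)(x)$ characterized as follows: $\Phi(H)(x)$ is the unique point near $H(x)$ lying in the local weak-stable leaf of $H(X^1(x))$ for $Y$ and in the local weak-unstable leaf of $H(X^{-1}(x))$ for $Y$ — equivalently, the unique ``$Y$-orbit segment'' that shadows the $X$-orbit segment $[X^{-1}(x),X^{1}(x)]$ through the product structure. The hyperbolic estimates of Step~1, combined with the local product structure of Step~2, make $\Phi$ a contraction on a suitable closed subset of $\cC$ (the contraction factor is precisely the hyperbolicity rate $\lambda$); its fixed point $h$ satisfies the orbit-matching condition along all times, and therefore sends oriented $X$-orbits to oriented $Y$-orbits. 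Symmetry of the argument (swapping $X$ and $Y$) gives a continuous inverse, so $h$ is a homeomorphism. The principal obstacle in carrying this out cleanly is the choice of the correct functional equation defining $\Phi$: one must break the time-reparametrization indeterminacy by fixing $h(x)$ on a transversal (e.g.\ requiring $h(x)$ to lie in the strong-stable leaf of some reference point along the $Y$-orbit), so that ``closeness to the identity'' becomes a well-posed condition on $h$ rather than on its graph as a subset of $M\times M$. Once this normalization is made, the contraction-mapping step is routine.
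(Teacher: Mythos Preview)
The paper does not give a proof of this theorem: it is stated with the citation \cite{A} and treated as a classical result due to Anosov, so there is no ``paper's own proof'' to compare against. Your sketch is a reasonable outline of one of the standard arguments for structural stability of Anosov flows, and in the context of this paper nothing more is expected.

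That said, if you were to turn this sketch into an actual proof, the weakest link is Step~3 as written. Your operator $\Phi$ is not well-defined as a map $\cC\to\cC$: the intersection of a local weak-stable leaf and a local weak-unstable leaf of $Y$ is a $Y$-orbit segment, not a point, so $\Phi(H)(x)$ is not a point of $M$. You recognize this and propose to break the indeterminacy by a transversal normalization, but the honest version of this step usually works differently: one either (i) works in the space of continuous sections of the normal bundle to the $X$-orbits (so the flow direction is quotiented out from the start) and obtains $h$ as $\exp$ of the fixed section, or (ii) uses the shadowing lemma directly to produce, for each $X$-orbit, a unique $Y$-orbit that $\varepsilon$-shadows it, and then shows that the resulting orbit-to-orbit correspondence is realized by a homeomorphism. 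Also, the ``symmetry gives a continuous inverse'' sentence is too quick: swapping $X$ and $Y$ gives a map $h'$ with $h'$ close to the identity, but one must still check that $h\circ h'$ and $h'\circ h$ send each orbit to itself and are close to the identity, and then invoke expansivity of Anosov flows to conclude they are reparametrizations of the identity on each orbit. None of this is deep, but it is where the real work hides.
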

The homeomorphism $h$ in the theorem can be chosen isotopic to the identity map. 

We denote by $\cA(M)$ the set of orbital equivalence classes of Anosov flows and by  $\cA_0(M)$ the set of equivalence classes of Anosov flows through orbital equivalence by homeomorphisms isotopic to the identity.
Theorem~\cite{A} implies that the set $\cA_0(M)$ is at most countable on any closed manifold $M$. The set $\cA(M)$, being a quotient of $\cA_0(M)$, is at most countable too.

There are simple examples of manifolds $M$ for which $\cA_0(M)$ is infinite (consider for instance the image of the geodesic flow of a hyperbolic surface by a vertical diffeomorphism of the unit tangent bundle).  Up to now it was unknown if there are manifolds for which $\cA(M)$ is infinite. There are $3$-manifolds for which $\cA(M)$ has a cardinal greater than any given number, see \cite{BeBoYu}.\footnote{An example of manifold $M$ for which $\cA(M)$ is infinite has been recently proposed in \cite{CP}. }
%let $M$ be the unit tangent bundle $T^1S$ of a closed hyperbolic surface $S$. Then any homeomorphism of $M$ is isotopic to a homeomorphism preserving the circle bundle $T^1S\to S$, and therefore induces a isotopy class of homeomorphism of $S$.  A homeomoprhism of $M$ is called \emph{vertical} if its projects on the identity map.  The images of the geodesic flow by vertical diffeomorphisms of $M$ is an infinite set in $\cA_0(M)$.  However E. Ghys proved (see \cite{Gh}) that every Anosov flow on $M$ is topologically equivalent to the geodesic flow, that is, $\cA(M)$ is reduced to a singleton. 

%%%%%%%%%%%%%%%%%%%%%%%%%%%%%%%%%%%%%%%%%%%%%%%%%%%%%%%%%%%%%%%%%%%%
\subsection{Foliations}
%%%%%%%%%%%%%%%%%%%%%%%%%%%%%%%%%%%%%%%%%%%%%%%%%%%%%%%%%

Another important property of Anosov flows is that the stable, center-stable unstable and center unstable fiber bundles 
$E^s,E^{cs}=E^s\oplus\RR X, E^u, E^{cu}=E^u\oplus\RR X$ are uniquely integrable.

\begin{theo} There are unique foliations $\cF^s,\cF^{cs}, \cF^u, \cF^{cu}$ tangent to $E^s,E^{cs}, E^u, E^{cu}$.  More precisely any $C^1$ curve tangent to one of these bundles is contained in a leaf of the corresponding foliation.  
These foliations are invariant under the flow of $X$. 
\end{theo}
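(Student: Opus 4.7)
The plan is to build $\cF^s$ first via local strong stable manifolds, then obtain $\cF^{cs}$ by flow-saturating $\cF^s$, and finally handle $\cF^u,\cF^{cu}$ by applying the same argument to $-X$. Flow invariance will be tautological from the construction, and unique integrability will follow from the asymptotic characterization of strong (un)stable leaves.

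First, for each $x \in M$, I would produce a local strong stable manifold
$$W^s_{\mathrm{loc}}(x) \;=\; \bigl\{y : d(X^t(y),X^t(x)) \leq \delta \text{ for all } t\geq 0\bigr\},$$
which the Hadamard--Perron stable manifold theorem (applied to the time-one map $X^1$, uniformly along orbits) yields as a $C^1$ embedded disk tangent to $E^s(x)$ at $x$, with $C^0$ dependence on $x$ as a family of $C^1$ disks. I would then globalize by setting
$$W^s(x) \;=\; \bigcup_{t\geq 0} X^{-t}\bigl(W^s_{\mathrm{loc}}(X^t x)\bigr),$$
which is an injectively immersed $C^1$ submanifold tangent to $E^s$. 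The collection $\{W^s(x)\}_{x\in M}$ assembles into a foliation $\cF^s$: the local product structure between $E^s$ and the transverse direction $E^{cu}$, together with uniqueness of $W^s_{\mathrm{loc}}$ for nearby points, gives foliated charts. For the center-stable leaves I take the flow saturation
$$W^{cs}(x) \;=\; \bigcup_{t\in\RR} X^t\bigl(W^s(x)\bigr),$$
which is tangent to $E^{cs} = E^s\oplus\RR X$ because $X$ preserves $E^s$ and is itself tangent to the leaf. Applying the same construction to the reverse flow $-X$ produces $\cF^u$ and $\cF^{cu}$, and $X$-invariance of all four foliations is immediate from their definition as unions of forward/backward orbits of local (strong) (un)stable disks.

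For unique integrability, suppose $\gamma$ is a $C^1$ arc tangent to $E^s$ through $x$. Uniform contraction of $E^s$ under $DX^t$ forces $\mathrm{length}(X^t\gamma)\to 0$ exponentially, so every $y\in\gamma$ satisfies $d(X^t y,X^t x)\to 0$ and hence lies in $W^s(x)$; the same argument handles $E^u$ via $-X$. For $E^{cs}$, a $C^1$ arc tangent to $E^{cs}$ can be flowed into a short arc tangent to $E^s$ (using that $\RR X\subset E^{cs}$ is spanned by the vector field itself), and then the previous case applies inside a single center-stable leaf; symmetrically for $E^{cu}$. The main technical step is the first one: producing $W^s_{\mathrm{loc}}(x)$ as a $C^1$ disk varying continuously in $x$. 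This is precisely the content of the classical Hadamard--Perron theorem for hyperbolic flows (see Anosov's original argument, or the graph-transform proof of Hirsch--Pugh--Shub), which I would invoke rather than redo; everything else in the statement is bookkeeping on top of it.
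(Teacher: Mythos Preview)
The paper does not give its own proof of this theorem: it is stated in the preliminaries section as a classical fact about Anosov flows, with no argument supplied. So there is no ``paper's proof'' to compare against.

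Your sketch is the standard route and is essentially correct. Building $\cF^s$ from the Hadamard--Perron local stable manifolds, globalizing by backward flow, and obtaining $\cF^{cs}$ by flow saturation is exactly how this is usually done; the unique integrability of $E^s$ via the exponential contraction of tangent curves is also the right idea. One step is phrased a bit loosely: for the unique integrability of $E^{cs}$, saying a $C^1$ arc tangent to $E^{cs}$ ``can be flowed into a short arc tangent to $E^s$'' is not quite a proof as stated. The cleaner argument is to observe that $\|DX^t|_{E^{cs}}\|$ stays bounded for $t\geq 0$ (since $E^s$ contracts and the $\RR X$ component is just carried by the flow), so any $C^1$ arc tangent to $E^{cs}$ has bounded length under forward iteration; meanwhile points in distinct $W^{cs}$-leaves separate exponentially along $E^u$, forcing the arc into a single leaf. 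With that adjustment your outline is fine, and it matches the textbook treatments (Anosov, Hirsch--Pugh--Shub) that the paper is implicitly invoking.
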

The foliations $\cF^s,\cF^{cs}, \cF^u, \cF^{cu}$ are respectively called \emph{stable, center-stable, unstable, center unstable} . 

In dimension $3$ the center stable and center unstable foliations provide the main known obstructions for a $3$-manifold $M$ to carry an Anosov flow. 

\begin{theo} A leaf $L$ of the center stable (or center unstable foliation) is
\begin{itemize}
 \item diffeomorphic to a plane $\RR^2$ if and only if $L$ does not contain a periodic orbit. 
 \item diffeomorphic to a cylinder $\RR\times S^1$ if it contains a periodic orbit of $X$ with positive stable eigenvalue; the periodic orbit in $L$ is unique. 
 \item diffeomorphic to a M\oe bius band if it contains a periodic orbit with negative stable eigenvalue; the periodic orbit in $L$ is unique. 
\end{itemize}
\end{theo}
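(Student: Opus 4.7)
The plan is to analyze a center-stable leaf $L$ by lifting to the universal cover $\tilde M$ and using Fenley's description of the lifted flow. Writing $\tilde L$ for a connected component of the preimage of $L$ in $\tilde M \simeq \RR^3$, the orbit projection $\tilde M \to \RR^2$ sends $\tilde L$ onto a single leaf of the stable foliation in the orbit space; since both that leaf and the $\tilde X$-orbits fibering over it are copies of $\RR$, one obtains $\tilde L \simeq \RR^2$. The stabilizer $G_L = \{g \in \pi_1(M):\, g\cdot\tilde L = \tilde L\}$ is naturally identified with $\pi_1(L)$, so $L = \tilde L / G_L$.

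Next I would choose coordinates $(t,s)$ on $\tilde L \simeq \RR^2$ so that the lifted flow acts by $(t,s)\mapsto(t+\tau,s)$ and the stable foliation is the family of horizontal lines $\{t = \mathrm{const}\}$. Every $g \in G_L$ commutes with the flow and preserves the stable foliation, so a direct computation forces $g(t,s)=(t+\alpha_g,\beta_g(s))$ for some real constant $\alpha_g$ and a homeomorphism $\beta_g$ of $\RR$. The crucial topological input is that a non-trivial $g$ with $\alpha_g=0$ would have its lifted loop contained in a single stable leaf of $\tilde M$, which is an embedded copy of $\RR$; such a loop is null-homotopic in $M$, contradicting $g\neq e$. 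Hence $\alpha_g\neq 0$ for every non-trivial $g\in G_L$. Moreover, fixed points of $\beta_g$ correspond exactly to orbits $\{s=s_0\}$ projecting to closed orbits of period $\alpha_g$ in $L$, so periodic orbits in $L$ are in bijection with $G_L$-orbits of $\beta_g$-fixed points.

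If $L$ contains no periodic orbit, no non-trivial $\beta_g$ has a fixed point. Using that the flow contracts the stable direction by the exponential factor $\lambda^t$, the requirement that $g$ be an isometry of the pulled-back metric yields the functional equation $\lambda^{\alpha_g}h(\beta_g(s))\beta_g'(s)=h(s)$, where $h(s)\,ds$ is the physical length element along the reference stable leaf. This forces the group $G_L$ to embed into an affine $ax+b$-type group, and a fixed-point-free $\beta_g$ with $\alpha_g\neq 0$ turns out to be impossible; hence $G_L=\{e\}$ and $L\simeq \RR^2$. If instead $L$ contains a periodic orbit $\gamma$ of period $T$ and stable eigenvalue $\mu$, there is $h\in G_L$ with $h(t,s_0)=(t+T,s_0)$, $\beta_h(s_0)=s_0$ and $\beta_h'(s_0)=\mu^{-1}$. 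The same kind of analysis, combined with an intermediate-value argument on $\beta_h(s)-s$ (any second fixed point would force an intermediate fixed point with $|\beta_h'|\leq 1$, producing a non-hyperbolic periodic orbit and contradicting the Anosov property), shows that $G_L=\langle h\rangle\simeq\ZZ$ and that $s_0$ is the unique fixed point of $\beta_h$. The action of $h$ on $\tilde L\simeq\RR^2$ preserves orientation if and only if $\beta_h$ does, i.e., if and only if $\mu>0$; the quotient is then a cylinder or a Möbius band accordingly, and uniqueness of $\gamma$ in $L$ follows from uniqueness of $s_0$.

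The principal obstacle is ruling out larger $G_L$ in the periodic case, in particular preventing $L$ from being a torus or a Klein bottle. This requires carefully exploiting both ingredients together: the semidirect-product structure of the deck action coming from exponential stable contraction (which cannot accommodate a $\ZZ^2$-subgroup with fixed-point-free $\beta$'s) and the topological rigidity ensuring $\alpha_g\neq 0$ for every non-trivial $g$.
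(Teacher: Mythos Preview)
The paper does not prove this theorem; it is quoted as a classical background fact in Section~\ref{s.prelim}, and immediately afterward the paper \emph{deduces} from it that $\tilde M\simeq\RR^3$ (via Novikov) and then recalls Fenley's description of the orbit space. Your argument therefore runs backwards through the paper's logic: you begin by assuming $\tilde M\simeq\RR^3$ and the orbit-space plane $\cP_X$ in order to conclude that $\tilde L\simeq\RR^2$, whereas in the paper (and in the standard development) those facts sit downstream of the very statement you are proving. If you want to keep your approach, you must justify $\tilde L\simeq\RR^2$ without invoking the global structure of $\tilde M$.

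Beyond the circularity, the crucial step in your non-periodic case is not actually carried out. Your functional equation $\lambda^{\alpha_g}h(\beta_g(s))\beta_g'(s)=h(s)$ does \emph{not} exclude a fixed-point-free $\beta_g$: taking $\beta_g(s)=s+1$ gives $h(s+1)=\lambda^{-\alpha_g}h(s)$, which is perfectly consistent for a positive $h$. So the sentence ``a fixed-point-free $\beta_g$ with $\alpha_g\neq 0$ turns out to be impossible'' is exactly the point that needs an argument, and the metric equation you wrote does not supply one. (You also assume a single uniform contraction rate $\lambda$, which is only available for algebraic Anosov flows.) The classical proof bypasses all of this by working inside $L$ itself: a strong stable leaf $W^s(x_0)\simeq\RR$ is a transversal to the flow in $L$, the map $(t,y)\mapsto X^t(y)$ from $\RR\times W^s(x_0)$ onto $L$ is a covering, and the deck group is generated by the first-return map, which is a hyperbolic contraction of $\RR$. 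Such a contraction has a unique fixed point and is orientation-preserving or reversing according to the sign of the stable eigenvalue, which yields the trichotomy directly and shows that a cylindrical or M\"obius leaf must contain a (unique) periodic orbit.
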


As a direct corollary of the above, the manifold $M$ carries foliations ($\cF^{cs}$ and $\cF^{cu}$) with no compact leaves and thus with no \emph{Reeb component}. Under these hypotheses, a consequence of Novikov's theorem implies that $M$ admits $\RR^3$ as a universal cover.

A simple argument allows also to check that the leaves have an exponential growth. As a consequence of this, the fundamental group of $M$ has exponential growth (see \cite{PlTh}).

%%%%%%%%%%%%%%%%%%%%%%%%%%%%%%%%%%%%%%%%%%%%%%%%%%%%%%%%%%%%%%%%%%
\subsection{The bi-foliated plane associated to an Anosov flow on a $3$-manifold, $\RR$-covered and non $\RR$-covered Anosov flows. } 
%%%%%%%%%%%%%%%%%%%%%%%%%%%%%%%%%%%%%%%%%%%%%%%%%%%%%%%%%%%%%%%%%%
Before beginning this section, the reader can refer to Section~\ref{ss.RnonR} for the definitions of
\begin{itemize}\item the bi-foliated plane $(\cP_X, F^s_X,F^u_X)$ associated to an Anosov flow $X$ on a $3$-manifold.
\item a non-$\RR$-covered Anosov flow
\item an $\RR$-covered Anosov flow
\item a twisted $\RR$-covered Anosov flow
\item a positively and negatively twisted $\RR$-covered Anosov flow (this notions are only defined on oriented manifolds and depend on a choice of the orientation )
\end{itemize}

The fundamental group $\pi_1(M)$ acts by the deck transformation group on the universal cover $\tilde M\simeq \RR^3$ of $M$. This action preserves the lift $\tilde X$ of $X$ on $\tilde M$ and also  preserves the lifts $\tilde\cF^{cs}_X,\tilde\cF^{cu}_X$ of the center stable and center unstable foliations. Therefore, the action passes to the quotient. By quotienting we obtain an action of $\pi_1(M)$ on $\cP_X$, which preserves the foliations $F^s_X$ and $F^u_X$. This action is called \emph{the natural action} of $\pi_1(M)$ on the bi-foliated plane $(\cP_X, F^s_X, F^u_X)$ and we denote it 
$$\theta_X\colon \pi_1(M)\to Homeo(\cP_X, F^s_X, F^u_X),$$ 
where $Homeo(\cP_X, F^s_X, F^u_X)$ is the group of homeomorphisms of the plane $\cP_X$ preserving the foliations $F^s_X$ and $F^u_X$. 

As we consider only Anosov flows on oriented manifolds, the action $\theta_X$ takes values in $Homeo_+(\cP_X, F^s_X, F^u_X)$. 

If $X$ is a transitive Anosov flow then the action on $\cP_X$ admits dense orbits.  Furthermore, the orbit of any half leaf of $F^s_X$ and of $F^u_X$ is dense in $\cP_X$.

Consider an element $\tilde \gamma\in\cP_X$  corresponding to a periodic orbit $\gamma\subset M$.  Then one has a well defined  element  $[\tilde \gamma]\in\pi_1(M)$  which is the homotopy class of a closed path obtained by the concatenation $\sigma\gamma\sigma^{-1}$ where $\sigma$ is a path joining the base-point in $M$ to a point $x$ of $\gamma$ and which is the projection of a path joining the base point in $\tilde M$ to a point of the orbit associated to $\tilde \gamma$.  

The following lemma is a classical result in the theory (see for instance \cite{Barbot}).
\begin{lemm}\label{l.centralizer} Let $\tilde \gamma\in\cP_X$ be a point corresponding to a periodic orbit $\gamma$ of $X$. We also denote by $\tilde\gamma$ the corresponding orbit in $\tilde M$.  Consider $G_{\tilde\gamma}\subset \pi_1(M)$ its stabilizer for the natural action of $\theta$. 

Then $G_{\tilde \gamma}$ is the cyclic group  generated by  the homotopy class  $[\tilde \gamma]$ .  
\end{lemm}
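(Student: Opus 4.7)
The plan is to prove the two inclusions $\langle[\tilde\gamma]\rangle\subseteq G_{\tilde\gamma}$ and $G_{\tilde\gamma}\subseteq \langle[\tilde\gamma]\rangle$ separately. The first is essentially tautological from the construction of $[\tilde\gamma]$: by definition, the deck transformation associated to $[\tilde\gamma]$ is the unique element of $\pi_1(M)$ which acts on the orbit $\tilde\gamma\subset \tilde M$ as the time-$T$ map of $\tilde X$, where $T>0$ is the period of $\gamma$. In particular it preserves $\tilde\gamma$ as a subset of $\tilde M$, hence fixes the corresponding point in the orbit space $\cP_X$.

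For the reverse inclusion, I would take $g\in G_{\tilde\gamma}$ and exploit the fact that $g$, being a deck transformation, commutes with the lifted flow $\tilde X$ on $\tilde M$. Since $g$ fixes the class of $\tilde\gamma$ in $\cP_X=\tilde M/\tilde X$, the deck transformation $g$ sends the orbit $\tilde\gamma\subset \tilde M$ to itself as a set. The restriction $g|_{\tilde\gamma}$ is then a homeomorphism of $\tilde\gamma$ commuting with $\tilde X|_{\tilde\gamma}$. Since $\tilde\gamma$ is a complete orbit of the non-vanishing vector field $\tilde X$, it is parametrized by $\RR$ through the flow, and any orientation-preserving homeomorphism of $\RR$ commuting with the standard translation is itself a translation; thus $g|_{\tilde\gamma}$ is a translation by some real number $s$ in the flow time parametrization.

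Next I would identify the possible values of $s$. The covering projection $\pi\colon \tilde M\to M$ sends $\tilde\gamma$ onto $\gamma$ and intertwines $\tilde X$ with $X$; on $\tilde \gamma \simeq \RR$ this projection becomes the quotient $\RR\to \RR/T\ZZ\simeq \gamma$. Since $g$ is a deck transformation, $\pi\circ g = \pi$, so the translation by $s$ must descend to the identity on $\gamma$; this forces $s=nT$ for some $n\in\ZZ$. Therefore $g$ and $[\tilde\gamma]^n$ are two deck transformations which coincide on some point of $\tilde\gamma$; as $\pi_1(M)$ acts freely on the universal cover $\tilde M$, this gives $g=[\tilde\gamma]^n\in\langle[\tilde\gamma]\rangle$, concluding the proof.

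The only step that requires care is the passage from ``$g$ fixes the point $\tilde\gamma\in\cP_X$'' to ``$g$ preserves the subset $\tilde\gamma\subset \tilde M$'': this uses the very definition of $\cP_X$ as the quotient of $\tilde M$ by the (properly discontinuous, free) action of the flow $\tilde X$, together with the fact that deck transformations commute with this flow. Everything else is formal.
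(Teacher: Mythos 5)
Your proposal is correct in substance, and it is worth noting that the paper itself does not prove this lemma at all: it is stated as a classical fact with a reference to Barbot's thesis. Your argument is essentially the standard covering-space proof one would find there: the inclusion $\langle[\tilde\gamma]\rangle\subseteq G_{\tilde\gamma}$ is built into the definition of $[\tilde\gamma]$, and the reverse inclusion follows from the fact that a deck transformation fixing the point $\tilde\gamma\in\cP_X$ must preserve the orbit $\tilde\gamma\subset\tilde M$, act on it as a flow-time translation, descend to the identity on $\gamma$, hence translate by an integer multiple of the period, and then coincide with a power of $[\tilde\gamma]$ by freeness of the deck action. One local justification should be repaired: the statement ``any orientation-preserving homeomorphism of $\RR$ commuting with the standard translation is itself a translation'' is false as written (for instance $x\mapsto x+\tfrac14\sin(2\pi x)$ commutes with $x\mapsto x+1$). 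What you actually have, and what you should invoke, is that $g$ commutes with the \emph{entire} one-parameter flow $\tilde X^t$, because deck transformations preserve the lifted vector field; then, writing $g(p_0)=\tilde X^{s}(p_0)$ for a chosen $p_0\in\tilde\gamma$, equivariance gives $g(\tilde X^{t}(p_0))=\tilde X^{t+s}(p_0)$ for all $t$, so $g|_{\tilde\gamma}$ is a translation by $s$ in the flow parametrization (and the orientation issue disappears, since $g$ preserves $\tilde X$ and hence the flow direction). With that one-line correction the proof is complete and self-contained, which is arguably a small service the paper does not render.
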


Let $\gamma$ be a periodic point. We say that a curve  $L^s\subset W^s(\gamma)$ is a \emph{complete transversal} if it transverse to $X$ such that the first return map of $X$ induces a homeomorphism $P_\gamma\colon L^s\to L^s$ (which is a contraction).  
One defines in the same way a \emph{complete transversal $L^u \subset W^u(\gamma)$} and the first return map $P_\gamma\colon L^u\to L^u$ which is a dilation. 

Now using the previous notations, $L^s$ and $L^u$ admit canonical lifts on $\tilde M$ through the point $\tilde \gamma$ and these lifts project on $\cP_X$ injectively. Let $h^s$ and $h^u$ be the projections from $L^s$ and $L^u$ on $F_X^s(\gamma)$ and $F_X^u(\gamma)$.   
We denote by $P_{\tilde \gamma}\colon F^s(\tilde \gamma)\to F^s(\tilde\gamma)$ and $P_{\tilde \gamma}\colon F^u(\tilde \gamma)\to F^u(\tilde \gamma)$ the homeomorphisms $h^s P_{\gamma} (h^s)^{-1}$ and $h^u P_{\gamma} (h^u)^{-1}$. We can easily be convinced that

\begin{lemm}\label{l.firstreturn} The homeomorphism $P_{\tilde\gamma}\colon F^s_{\tilde\gamma}\cup F^u_{\tilde \gamma}\to F^s_{\tilde\gamma}\cup F^u_{\tilde \gamma}$ is independent of the choices of $\tilde \gamma$, of $L^s$ and $L^u$, and is called the first return map of $X$.  
\end{lemm}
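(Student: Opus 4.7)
The plan is to identify $P_{\tilde\gamma}$, on each of $F^s_X(\tilde\gamma)$ and $F^u_X(\tilde\gamma)$, with the restriction of the natural action $\theta_X([\tilde\gamma]^{-1})$ of the deck transformation attached to the periodic orbit by Lemma~\ref{l.centralizer}. Since $\theta_X$ is defined entirely intrinsically on the bi-foliated plane and $[\tilde\gamma]$ depends only on the lift, this identification will simultaneously yield independence from the auxiliary transversals $L^s$, $L^u$, and will make transparent the dependence on the lift $\tilde\gamma$ (different lifts simply conjugate everything by the corresponding deck transformation). The lemma then reduces to an intrinsic reinterpretation of $P_{\tilde\gamma}$.

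First I would treat the stable side. Fix a lift $\tilde\gamma$ and a complete transversal $L^s\subset W^s(\gamma)$, lifted to an embedded arc $\tilde L^s\subset \tilde W^{cs}(\tilde\gamma)$ through $\tilde\gamma$. For $\tilde x\in \tilde L^s$ with image $x\in L^s$, let $\tau(x)>0$ be the first return time of $x$ to $L^s$ under $X$; then $\tilde X^{\tau(x)}(\tilde x)$ is a lift of $P_\gamma(x)$ and hence lies in $g_x\cdot \tilde L^s$ for a unique $g_x\in\pi_1(M)$. Because $\tau$ is continuous and $\pi_1(M)$ is discrete, $x\mapsto g_x$ is locally constant on the connected set $L^s$, so it equals a single element $g$. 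Evaluating at the base point $\tilde\gamma$ one has $\tilde X^T(\tilde\gamma)=[\tilde\gamma]\cdot\tilde\gamma$ (by the very definition of $[\tilde\gamma]$ in Lemma~\ref{l.centralizer}, up to the orientation convention), so $g=[\tilde\gamma]$. Hence the lifted first return map on $\tilde L^s$ is $\tilde x\mapsto \theta_X([\tilde\gamma]^{-1})\bigl(\tilde X^{\tau(x)}(\tilde x)\bigr)$. Composing with the projection $h^s\colon \tilde L^s\to F^s_X(\tilde\gamma)$, which quotients by the $\tilde X$-flow and is therefore blind to the flow time $\tau(x)$, one gets
$$h^s\circ P_\gamma\circ (h^s)^{-1}\;=\;\theta_X([\tilde\gamma]^{-1})\big|_{F^s_X(\tilde\gamma)},$$
a formula in which no choice of $L^s$ appears.

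The same argument applied to a complete transversal $L^u\subset W^u(\gamma)$ (with its lift inside $\tilde W^{cu}(\tilde\gamma)$) yields $h^u\circ P_\gamma\circ(h^u)^{-1}=\theta_X([\tilde\gamma]^{-1})\big|_{F^u_X(\tilde\gamma)}$, so that on $F^s_X(\tilde\gamma)\cup F^u_X(\tilde\gamma)$ the map $P_{\tilde\gamma}$ coincides with $\theta_X([\tilde\gamma]^{-1})$. For the remaining independence statement, if $\tilde\gamma'=g\cdot\tilde\gamma$ is another lift of $\gamma$ then $[\tilde\gamma']=g[\tilde\gamma]g^{-1}$, and $\theta_X(g)$ sends $F^s_X(\tilde\gamma)\cup F^u_X(\tilde\gamma)$ onto $F^s_X(\tilde\gamma')\cup F^u_X(\tilde\gamma')$ while conjugating $\theta_X([\tilde\gamma]^{-1})$ to $\theta_X([\tilde\gamma']^{-1})$. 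Thus the constructions at two different lifts are canonically conjugated by a deck transformation, and the first return map is a well-defined invariant of the orbit $\gamma$ itself.

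The only non-formal point is the uniformity $g_x\equiv[\tilde\gamma]$ across the whole transversal, which the discreteness-plus-connectedness observation takes care of; everything else is a routine translation between \emph{quotienting by $\tilde X$-orbits} and \emph{looking in $\cP_X$}.
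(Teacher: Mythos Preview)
Your argument is correct. The paper does not actually prove this lemma; it simply writes ``We can easily be convinced that'' before the statement and moves on, so there is no paper proof to compare against in any detailed sense.

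That said, your route is worth flagging because it does more than the paper asks at this point. You establish the identity
\[
P_{\tilde\gamma}=\theta_X\bigl([\tilde\gamma]^{-1}\bigr)\big|_{F^s_X(\tilde\gamma)\cup F^u_X(\tilde\gamma)},
\]
which is exactly the content of the \emph{next} lemma in the paper (for which the paper refers the reader to \cite{Barbot}). So you are effectively proving the two lemmas at once: the intrinsic description via $\theta_X$ gives independence from $L^s$, $L^u$ for free, and the conjugation $[\tilde\gamma']=g[\tilde\gamma]g^{-1}$ handles the change of lift. This is an efficient packaging, and your discreteness-plus-connectedness argument for the constancy of $g_x$ is the right way to pin down the deck transformation as $[\tilde\gamma]$. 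The only cosmetic point is the mild overloading of $\tilde\gamma$ (as a point in $\cP_X$, an orbit in $\tilde M$, and a basepoint on that orbit), but this is standard and causes no confusion here.
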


The first return map is related to the homotopy class $[\tilde \gamma]\in\pi_1(M)$ as follows:

\begin{lemm} The natural action  $\theta_{X,[\tilde\gamma]}$ of $[\tilde\gamma]$ on $\cP_X$ preserves $F^s(\tilde \gamma)$ and $F^u(\tilde \gamma)$ and its restriction to $F^s(\tilde \gamma)\cup F^u(\tilde \gamma)$ is $P_{\tilde\gamma}^{-1}$. 
\end{lemm}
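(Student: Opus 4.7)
The plan is to split the statement into two parts: first, the invariance of the leaves $F^s(\tilde\gamma)$ and $F^u(\tilde\gamma)$ under $\theta_{X,[\tilde\gamma]}$, and second, the explicit identification of the restriction as $P_{\tilde\gamma}^{-1}$.

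For the invariance, I would argue as follows. By Lemma~\ref{l.centralizer}, $[\tilde\gamma]$ belongs to the stabilizer $G_{\tilde\gamma}$, so $\theta_{X,[\tilde\gamma]}$ fixes the point $\tilde\gamma\in\cP_X$. Since $\theta_X$ takes values in $\text{Homeo}(\cP_X,F^s_X,F^u_X)$, the homeomorphism $\theta_{X,[\tilde\gamma]}$ permutes the leaves of $F^s_X$ and of $F^u_X$; having a fixed point on the unique stable and unstable leaves through $\tilde\gamma$, it must preserve each of those two leaves setwise.

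For the explicit identification, I would first pin down the deck transformation associated with $[\tilde\gamma]$. Pick $x_0\in\gamma$, a lift $\tilde x_0$ of $x_0$ on the orbit representing $\tilde\gamma$, and denote by $T>0$ the period of $\gamma$. Lifting the concatenation $\sigma\gamma\sigma^{-1}$ describing $[\tilde\gamma]$ from the basepoint, and using uniqueness of path lifting, I would check that the deck transformation $g:=\theta_X([\tilde\gamma])$ satisfies $g(\tilde x_0)=\tilde X^{T}(\tilde x_0)$. Since deck transformations commute with $\tilde X^t$, this already implies $g\cdot\tilde\gamma=\tilde\gamma$ in $\cP_X$, consistent with the first paragraph.

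Next I would compare $g$ to the first-return map on the chosen complete transversal $L^s$. Given $y\in L^s$ near $x_0$, let $\tilde y\in\tilde L^s$ be the lift on the transversal through $\tilde x_0$, and let $T'=T'(y)$ be the return time so that $P_\gamma(y)=X^{T'}(y)$. Then $\tilde X^{T'}(\tilde y)$ is a lift of $P_\gamma(y)$ lying on the flowed disc $\tilde X^{T'}(\tilde L^s)$, which sits inside $g(\tilde L^s)$ because $\tilde X^{T'}(\tilde x_0)\in\tilde X^{[0,T]}(g(\tilde x_0))$ and because $g$ is a deck transformation taking the transversal through $\tilde x_0$ to the one through $g(\tilde x_0)$. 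Consequently $g^{-1}(\tilde X^{T'}(\tilde y))\in\tilde L^s$, and projecting to $L^s$ shows
\[
\widetilde{P_\gamma(y)}=g^{-1}\bigl(\tilde X^{T'}(\tilde y)\bigr).
\]
Since $\tilde y$ and $\tilde X^{T'}(\tilde y)$ lie on the same orbit of $\tilde X$, they project to the same point of $\cP_X$, which is $h^s(y)$. Therefore
\[
h^s(P_\gamma(y))=g^{-1}\cdot h^s(y),
\]
and composing with $(h^s)^{-1}$ gives $P_{\tilde\gamma}=h^sP_\gamma(h^s)^{-1}=g^{-1}$ on $F^s(\tilde\gamma)$, i.e.\ $\theta_{X,[\tilde\gamma]}=P_{\tilde\gamma}^{-1}$ there. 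The same argument, using $L^u$ in place of $L^s$, handles the unstable leaf.

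The main obstacle, such as it is, will be bookkeeping: being careful about which lift of $P_\gamma(y)$ is which (the one on $\tilde L^s$ near $\tilde x_0$ versus the flowed image $\tilde X^{T'}(\tilde y)$ near $\tilde x_1$), and making the inversion $P_{\tilde\gamma}^{-1}$ (rather than $P_{\tilde\gamma}$) come out with the correct sign. Everything else is bookkeeping about path lifts and the fact that flow orbits collapse to points in $\cP_X$.
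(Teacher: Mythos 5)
Your argument is correct, and in fact the paper does not prove this lemma at all: it simply refers the reader to Barbot's thesis, so there is no in-text proof to compare against. Your route is the standard direct one and it checks out: the deck transformation $g$ attached to $[\tilde\gamma]$ (with the usual homomorphism convention $g_{\alpha\beta}=g_\alpha g_\beta$) satisfies $g(\tilde x_0)=\tilde X^{T}(\tilde x_0)$, commutes with the lifted flow, hence fixes $\tilde\gamma\in\cP_X$ and preserves the two leaves through it; and the identity $h^s(P_\gamma(y))=g^{-1}\cdot h^s(y)$ gives $P_{\tilde\gamma}=\bar g^{-1}$ on $F^s(\tilde\gamma)$, i.e.\ the restriction of $\theta_{X,[\tilde\gamma]}$ is $P_{\tilde\gamma}^{-1}$, which agrees with the convention later used in the paper for suspensions (the action of the loop around the mapping torus is $A^{-1}$ while the first return map is $A$). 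One step is loosely worded: the flowed disc $\tilde X^{T'}(\tilde L^s)$ does not literally sit inside $g(\tilde L^s)$, since $T'(y)$ varies with $y$ and the flowed transversal is not a lift of $L^s$. What you actually need is that the lift of $L^s$ containing the point $\tilde X^{T'(y)}(\tilde y)$ is $g(\tilde L^s)$; this follows because it holds at $y=x_0$ (where $T'=T$ and $\tilde X^{T}(\tilde x_0)=g(\tilde x_0)$), the return time depends continuously on $y$, $L^s$ is connected, and distinct lifts of $L^s$ are disjoint, so the assignment $y\mapsto(\text{lift containing }\tilde X^{T'(y)}(\tilde y))$ is locally constant. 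With that justification inserted, your proof is complete, and the same continuity argument handles the unstable transversal and the case of negative eigenvalues without change.
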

A proof of the previous lemma can be found in \cite{Barbot}.

%%%%%%%%%%%%%%%%%%%%%%%%%%%%%%%%%%%%%%%%%%%%%%%%%%%%%%%%%%%%%%%%%%%
\subsection{A charaterization of $\RR$-covered Anosov flows by complete and incomplete quadrants}
%%%%%%%%%%%%%%%%%%%%%%%%%%%%%%%%%%%%%%%%%%%%%%%%%%%%%%%%%%%%%%%%%%%%

Let $(\cF,\cG)$ be two oriented transverse foliations on the plane 
$\cP=\RR^2$. This defines  four quadrants at each point $x$:  
for any $\omega=(\omega_1,\omega_2)\in \{-,+\}^2$, the (closed) quadrant $C_\omega(x)$ is the closure of the connected component of 
$\cP\setminus (\cF(x)\cup \cG(x))$ bounded by the half leaves 
$\cF_{\omega_1}(x),\cG_{\omega_2}(x)$.

\begin{defi}
 \begin{itemize}
 \item  We say that the pair $(\cF, \cG)$ is \emph{undertwisted} or \emph{incomplete}  in the quadrant $C_{(+,+)}(x)$ if there are $y\in \cF^+(x)$ and $z\in\cG^+(x)$ such that 
 $$\cG^+(y)\cap\cF^+(z)=\emptyset.$$
 \item We say that the pair $(\cF, \cG)$ is \emph{complete} (or has \emph{the complete intersection property}) in the quadrant $C_{(+,+)}(x)$ if for all  $y\in \cF^+(x)$ and $z\in\cG^+(x)$  one has  
 $$\cG^+(y)\cap\cF^+(z)\neq \emptyset.$$
 
The complete case is divided in two subcases
 \item  $(\cF,\cG)$ is \emph{trivial} in the quadrant $C_{+,+}(x)$ if 
 $$ \bigcup_{y\in \cF_+(x)}\cG_+(y)=\bigcup_{z\in \cG_+(x)}\cF_+(z). $$
 
 \item The pair $(\cF,\cG)$  is \emph{overtwisted} in the quadrant $C_{+,+}(x)$  if it is complete but not trivial. In other words 
for all  $y\in \cF^+(x)$ and $z\in\cG^+(x)$ 
one has  $\cG^+(y)\cap\cF^+(z)\neq \emptyset$ but there is $p\in C_{+,+}(x)$ such that $\cF(x)\cap \cG(p)=\emptyset$ or $\cG(x)\cap \cF(p)=\emptyset$. 
 \end{itemize}
 
 One defines these notions in all the other quadrants in the same way, changing some $+$ into $-$ according to the quadrant. 
\end{defi}

\begin{rema}\label{r.Rcoveredquadrants}
 \begin{itemize}
  \item If $X$ is a suspension, every quadrant is trivial
  \item If $X$ is $\RR$-covered and positively twisted, then every quadrant $C_{+,+}(x)$, $C_{-,-}(x)$ are complete and overtwisted, and the quadrants $C_{+,-}(x)$ $C_{-,+}(x)$ are understwisted. 
  \item If $X$ is $\RR$ covered and negatively twisted, then the quadrants $C_{+,-}(x)$, $C_{-,+}(x)$ are complete and overtwisted and the quadrants $C_{+,+}(x)$ $C_{-,-}(x)$ are understwisted.
 \end{itemize}
\end{rema}

\begin{lemm}Let $(\cF,\cG)$ be a pair of oriented transverse foliations and assume that two leaves $L_1$ and $L_2$ in $\cF$ are not separated from above, i.e. there are two positively oriented $\cG$-leaf segments $\sigma_i\colon [0,1]\to \cP$ such that $\sigma_i(0)\in L_i$ and $\sigma_1(t)$ and $\sigma_2(t)$ belong to the same $\cF$-leaf for all $t>0$. 

Then there exist $x,y\in\cP$ such that $C_{-,-}(x)$ and $C_{+,-}(y)$ are incomplete (undertwisted). 
\end{lemm}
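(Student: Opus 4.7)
The plan is to exhibit explicit witnesses showing that the two specified quadrants are incomplete. Up to reversing the orientation of $\cF$ (a choice which is forced to be consistent for all $t \in (0, 1]$ by continuity), I may assume $\sigma_2(t) \in \cF^+(\sigma_1(t))$ for every such $t$. Let $M_t$ denote the common $\cF$-leaf of $\sigma_1(t)$ and $\sigma_2(t)$ for $t \in (0, 1]$; a quick check gives $M_t \neq L_1, L_2$. The $\cG$-leaves $G_i := \cG(\sigma_i(0))$ are distinct, for if they coincided then $\sigma_1(t) = \sigma_2(t)$ (the $\cG$-leaf would meet $M_t$ in at most one point), forcing $\sigma_1(0) = \sigma_2(0)$ and contradicting $L_1 \neq L_2$. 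Fix any $s_0 \in (0, 1)$ and set $x := \sigma_2(s_0)$, $y := \sigma_1(s_0)$. I claim $C_{-,-}(x)$ is incomplete with witnesses $a := \sigma_1(s_0) \in \cF^-(x)$ and $b := \sigma_2(0) \in \cG^-(x)$, and that $C_{+,-}(y)$ is incomplete with witnesses $c := \sigma_2(s_0) \in \cF^+(y)$ and $d := \sigma_1(0) \in \cG^-(y)$. The two arguments are symmetric; I only outline the first.

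Suppose for contradiction $q \in \cG^-(a) \cap \cF^-(b)$, so $q \in G_1 \cap L_2$ with $q$ lying in $\cG^-(\sigma_1(s_0))$ and in $\cF^-(\sigma_2(0))$. For $r \in (0, s_0]$ the leaf $M_r$ differs from $L_2$, so $q \neq \sigma_1(r)$; and $L_1 \neq L_2$ excludes $q = \sigma_1(0)$. Hence $q$ lies strictly below $\sigma_1(0)$ on $G_1$. Form the concatenation
\[
\gamma \;:=\; \sigma_1([0, s_0]) \,\cup\, M_{s_0}[\sigma_1(s_0), \sigma_2(s_0)] \,\cup\, \sigma_2([s_0, 0]) \,\cup\, L_2[\sigma_2(0), q] \,\cup\, G_1[q, \sigma_1(0)].
\]
Using transversality of the two foliations together with the identities $G_i \cap M_r = \{\sigma_i(r)\}$, $G_i \cap L_i = \{\sigma_i(0)\}$, and the disjointness of $L_j$ from $\sigma_i([0, s_0])$ for $j \neq i$, one checks that consecutive arcs of $\gamma$ meet only at their shared corner and non-consecutive arcs are disjoint. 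Hence $\gamma$ is a simple closed curve and, by the Jordan curve theorem, bounds a topological disk $D \subset \cP$.

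The leaf $L_1$ crosses $\partial D$ transversally at $\sigma_1(0)$, an interior point of the boundary arc $G_1[q, \sigma_1(s_0)]$, so one of the two half-leaves of $L_1$ emanating from $\sigma_1(0)$ enters the open disk $\mathrm{int}(D)$. This half-leaf is an unbounded, properly embedded ray in $\cP \cong \RR^2$, yet it must remain inside the bounded set $D$: it cannot re-meet $G_1$ (transversality gives $L_1 \cap G_1 = \{\sigma_1(0)\}$), cannot meet the $\cF$-tangent sides $M_{s_0}$ or $L_2$ (distinct $\cF$-leaves are disjoint), and cannot meet $\sigma_2([0, s_0]) \subset G_2$ (each $\sigma_2(r)$ lies in $M_r$ or $L_2$, neither equal to $L_1$). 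This contradiction forces $\cG^-(a) \cap \cF^-(b) = \emptyset$, so $C_{-,-}(x)$ is incomplete. The symmetric argument, invoking a hypothetical $q' \in G_2 \cap L_1 \cap \cG^-(c) \cap \cF^+(d)$ and the Jordan disk it bounds, yields the incompleteness of $C_{+,-}(y)$.

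The main subtlety is that non-separation of $L_1$ and $L_2$ does not by itself preclude $G_1$ from meeting $L_2$ somewhere in the plane; the argument must leverage the specific position (strictly below $\sigma_1(0)$) forced on any hypothetical intersection point in $\cG^-(a) \cap \cF^-(b)$ in order to produce the bounded disk $D$. The most delicate step to carry out cleanly is the verification that $\gamma$ is a simple closed curve --- namely the pairwise disjointness checks for its five constituent arcs --- after which the leaf-escape contradiction using $L_1$ is immediate.
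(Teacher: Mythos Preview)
Your proof is correct and uses the same choice of points $x=\sigma_2(s_0)$, $y=\sigma_1(s_0)$ as the paper, which simply writes ``$x$ and $y$ are points $\sigma_i(t),\sigma_j(t)$, $t>0$'' and stops. Where the paper leaves the verification implicit, you supply a complete argument via the Jordan curve theorem; this is a legitimate way to carry out the check, and your observation that non-separation alone does not forbid $G_1$ from meeting $L_2$ elsewhere is well taken---some topological argument of this flavour is genuinely needed.
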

\begin{proof} $x$ and $y$ are points $\sigma_i(t),\sigma_j(t)$, $t>0$. 
\end{proof}

\begin{lemm}A transitive Anosov flow $X$ is a suspension if and only if there is 
$x\in\cP_X$ and $\omega\in\{+,-\}^2$ such that the quadrants $C_\omega(x)$ and $C_{-\omega}(x)$ are trivially foliated. 
\end{lemm}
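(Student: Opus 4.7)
The plan is to establish the forward direction by direct inspection of the suspension model, and the converse by ruling out the two alternatives to ``suspension'' in Fenley's classification of transitive Anosov flows on oriented $3$-manifolds. For $(\Rightarrow)$: if $X$ is the suspension of a linear Anosov automorphism of $\TT^2$ then $(\cP_X,F^s_X,F^u_X)$ is $\RR^2$ equipped with the trivial horizontal/vertical bi-foliation, and by Remark~\ref{r.Rcoveredquadrants} every quadrant at every point is trivial, so any choice of $x$ and $\omega$ works. For $(\Leftarrow)$, I assume $C_\omega(x)$ and $C_{-\omega}(x)$ are trivially foliated; after relabeling the foliations take $\omega=(+,+)$.

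First I would rule out $X$ being $\RR$-covered twisted using Remark~\ref{r.Rcoveredquadrants}: in either the positively or negatively twisted model, every quadrant at every point is either overtwisted or undertwisted, hence never trivial, contradicting the hypothesis at $x$.

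The non-$\RR$-covered case is the substantive one. I would argue by contradiction: assuming without loss of generality that $F^s_X$ carries a pair of non-separated leaves, the preceding lemma supplies a point $p\in\cP_X$ at which $C_{-,-}(p)$ is incomplete. Using transitivity of $X$---which yields density of $\pi_1(M)$-orbits of half-leaves in $\cP_X$, and hence density of the orbit of any point $x$ whose $X$-orbit in $M$ is dense---I would select a sequence $g_n\in\pi_1(M)$ with $g_n x\to p$. By $\pi_1(M)$-equivariance of the bi-foliation, the quadrants $C_{+,+}(g_n x)$ and $C_{-,-}(g_n x)$ remain trivially foliated for every $n$. Choosing $y\in F^-(p)$, $z\in G^-(p)$ witnessing the incompleteness, along with nearby sequences $y_n\in F^-(g_n x)$, $z_n\in G^-(g_n x)$ converging to $y$ and $z$, the triviality of $C_{-,-}(g_n x)$ furnishes intersection points $m_n=G^-(y_n)\cap F^-(z_n)$ whose coordinates in the trivial charts $C_{-,-}(g_n x)\to(-\infty,0]^2$ are controlled by those of $y_n,z_n$ on their respective leaves.

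The hard part will be to verify that the $m_n$ remain in a compact subset of $\cP_X$: a priori the trivial charts could stretch to infinity as $g_n x\to p$. Once compactness is established, any subsequential limit $m$ satisfies $m\in G^-(y)\cap F^-(z)$, contradicting its emptiness. This compactness should follow from the full strength of triviality (rather than just completeness) of the quadrants at $g_n x$, together with continuity of the foliations in the plane, forcing the inverse trivial charts to converge on compact subsets of the target. Excluding the non-$\RR$-covered alternative, Fenley's classification leaves only the suspension case.
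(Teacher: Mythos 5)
Your forward direction and your exclusion of the twisted case are fine and agree with the paper, but the non-$\RR$-covered case contains the real content and your treatment of it has a genuine gap, in two places. First, you need deck transformations $g_n$ with $g_n x\to p$, i.e.\ density in $\cP_X$ of the $\pi_1(M)$-orbit of the particular point $x$ furnished by the hypothesis. Transitivity only gives that \emph{some} orbits (and the orbits of half-leaves) are dense; nothing says the $X$-orbit of the given $x$ is dense in $M$ -- $x$ could perfectly well correspond to a periodic orbit, whose $\pi_1(M)$-orbit in $\cP_X$ is a countable, non-dense set (think of the lattice orbit of a fixed point in the suspension model). The paper's proof is built precisely to avoid this: its key observation is that triviality of $C_{+,+}(x)$ propagates to $C_{+,+}(y)$ for \emph{every} $y\in C_{+,+}(x)$; one then picks a point with dense orbit \emph{inside} the quadrant, pushes the trivial quadrant below/left of every $z\in\cP_X$, and concludes that $C_{+,+}(z)$ and $C_{-,-}(z)$ are trivial everywhere, which forbids non-separated leaves. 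Your argument has no substitute for this propagation step.

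Second, even granting $g_nx\to p$, your limiting argument does not close. The compactness of the intersection points $m_n$, which you yourself flag as ``the hard part'', is exactly where the branching phenomenon bites: the product charts of the quadrants $C_{-,-}(g_nx)$ carry no uniformity as $n$ varies, and ``continuity of the foliations'' does not prevent the connecting leaf segments $[y_n,m_n]^u$ and $[z_n,m_n]^s$ from leaving every compact set -- this escape to infinity is precisely what happens near a pair of non-separated leaves, i.e.\ in the very situation you are trying to contradict. Moreover, even if $m_n$ converged to some $m$, you could only conclude $m\in F^u_-(y)\cap F^s_-(z)$ if those connecting segments stay in a compact set; otherwise $m$ may lie on a leaf that is merely non-separated from $F^u(y)$ (or from $F^s(z)$), and no contradiction with the incompleteness at $p$ follows. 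So the decisive step of your plan is unproved, and the obstruction it must overcome (non-Hausdorff limits of leaves) is the one it sets aside. The paper's propagation-plus-density argument never passes to a limit, which is why it succeeds where this approach stalls.
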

\begin{proof} An Anosov suspension flow has clearly trivially foliated quadrants; we only need to prove the converse. Assume that $X$ is a transitive Anosov flow, whose bi-foliated plane has a trivial quadrant, say $C_{+,+}(x)$. Note that $C_{+,+}(y)$ is trivial too for any $y\in C_{+,+}(x)$.  As $X$ is transitive, there is $y\in C_{+,+}(x)$ with a dense orbit. One deduces that for any $z\in \cP_X$, there is $\gamma\in \pi_1(M)$ such that $z\in C_{+,+}(\theta_\gamma(y))$.  Thus 
$C_{+,+}(z)$ is trivial for any $z\in \cP(x)$. Of course by the same exact method, we obtain that $C_{-,-}(z)$ is also trivial for any $z\in \cP(x)$.

As a consequence of this, $F^s_X$ and $F^u_X$ don't contain non-separated leaves, hence $X$ is $\RR$-covered. Finally, by Remark \ref{r.Rcoveredquadrants} it cannot be twisted, so the bi-foliated plane is trivial and $X$ is a suspension. 
\end{proof}

%According to Fenley's definition in \cite{Fe} one has

%\begin{lemm}\begin{enumerate}
% \item A Anosov flow is $\RR$-covered positively twisted if, at each point of $\RR^2$, the pair of stable unstable foliation is overtwisted in the quadrants $(+,+)$ and $(-,-)$ and undertwisted in the quadrants $(+,-)$ and $(-,+)$. 

% \item A Anosov flow is $\RR$-covered megatively twisted if, at each point of $\RR^2$, the pair of stable unstable foliation is incomplete in the quadrants $(+,+)$ and $(-,-)$ and is complete in the quadrants $(+,-)$ and $(-,+)$.
 
%\item  A Anosov flow is $\RR$-covered but not twisted if the associate bifoliated plane  is complete in any quadrant at any point.
 
%\item  In all other case, $X$ is not $\RR$-covered.
%\end{enumerate}
%\end{lemm}

We are now ready to state our criteria for deciding whether an Anosov flow is $\RR$-covered or not: 

\begin{coro}An Anosov flow $X$ is not $\RR$-covered if and only if there are $x,y\in\RR^2$  and 
$\omega_x,\omega_y\in \{-,+\}^2$ which are \emph{adjacent} (i.e distinct and not opposite) such that the quadrants $C_{\omega_x}(x)$ and $C_{\omega_y}(y)$ are incomplete. 
\end{coro}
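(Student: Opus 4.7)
The plan is to derive this corollary directly from Remark~\ref{r.Rcoveredquadrants} together with the non-separated-leaves lemma stated immediately above it; no deeper dynamics on $\tilde M$ are needed, and both implications reduce to bookkeeping of the four signs in $\{-,+\}^2$.

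For the $(\Leftarrow)$ direction I would argue the contrapositive. Assume $X$ is $\RR$-covered. By Fenley's classification recalled in Section~\ref{ss.RnonR}, one of three situations occurs. If the bi-foliated plane $(\cP_X,F^s_X,F^u_X)$ is trivially bi-foliated, then every quadrant at every point is trivial and hence complete, so no incomplete quadrant exists at all. If $X$ is positively twisted, Remark~\ref{r.Rcoveredquadrants} tells us that the incomplete quadrants at every $x$ are exactly $C_{(+,-)}(x)$ and $C_{(-,+)}(x)$. If $X$ is negatively twisted, they are exactly $C_{(+,+)}(x)$ and $C_{(-,-)}(x)$. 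In each of the three cases the set of signs $\omega\in\{-,+\}^2$ for which $C_\omega(\cdot)$ can be incomplete is contained in one of the two opposite pairs $\{(+,-),(-,+)\}$ or $\{(+,+),(-,-)\}$. Consequently, if $C_{\omega_x}(x)$ and $C_{\omega_y}(y)$ are both incomplete, then $\omega_x$ and $\omega_y$ are either equal or opposite, so they are not adjacent.

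For the $(\Rightarrow)$ direction, assume $X$ is non-$\RR$-covered. Then by \cite{Fe1} the leaf space of $F^s_X$ is non-Hausdorff, so there exist two distinct non-separated leaves $L_1,L_2\in F^s_X$. Up to reversing the orientation of $F^u_X$ (an operation which acts on signs by the involution $(\omega_1,\omega_2)\mapsto (\omega_1,-\omega_2)$, and hence preserves the adjacency relation), we may assume that $L_1$ and $L_2$ are non-separated from above in the precise sense used in the lemma preceding the corollary. That lemma then produces points $x,y\in\cP_X$ such that $C_{(-,-)}(x)$ and $C_{(+,-)}(y)$ are both incomplete. Since $(-,-)$ and $(+,-)$ differ in exactly one coordinate, they are distinct and not opposite, hence adjacent, which is the conclusion sought.

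The only subtle point is the orientation adjustment in the $(\Rightarrow)$ step, where one must observe that reversing the orientation of $F^u_X$ permutes the four signs by an involution that preserves ``adjacent'' versus ``opposite''; this is immediate. No serious obstacle is expected, since once Remark~\ref{r.Rcoveredquadrants} and the non-separated-leaves lemma are in hand, both implications are essentially a sign check.
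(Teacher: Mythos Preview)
Your proof is correct and follows exactly the approach the paper intends: the corollary is stated without proof because it is meant to follow immediately from Remark~\ref{r.Rcoveredquadrants} (for the contrapositive of $(\Leftarrow)$) and from the non-separated-leaves lemma just above (for $(\Rightarrow)$), and your write-up makes these two steps explicit. The orientation-reversal argument to reduce ``non-separated from below'' to ``non-separated from above'' is the natural way to handle the two cases, and your observation that the involution $(\omega_1,\omega_2)\mapsto(\omega_1,-\omega_2)$ preserves adjacency is exactly what is needed.
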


\begin{coro}\label{c.caractR} Let $X$ be an Anosov flow and assume that for every $x,y\in\RR^2$ the quadrants 
$C_{+,+}(x)$ and $C_{-,-}(y)$ are complete.  Then 
either the bi-foliation is trivial or the flow is $\RR$-covered and positively twisted. 
\end{coro}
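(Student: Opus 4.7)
The plan is to deduce this as a direct consequence of the preceding corollary (the characterization of non-$\RR$-covered flows via adjacent incomplete quadrants) and the trichotomy recorded in Remark~\ref{r.Rcoveredquadrants}, via a bookkeeping check on signs.

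First I would show that $X$ is $\RR$-covered. Two quadrants $C_{\omega_x}(x)$, $C_{\omega_y}(y)$ are adjacent when $\omega_x\neq\omega_y$ and $\omega_x\neq -\omega_y$, i.e.\ when the two elements of $\{-,+\}^2$ differ in exactly one coordinate. In particular any adjacent pair contains at least one quadrant whose sign pattern lies in $\{(+,+),(-,-)\}$. By hypothesis every such quadrant is complete, so the negation of the criterion in the preceding corollary cannot occur: there is no pair of adjacent incomplete quadrants. Hence $X$ must be $\RR$-covered.

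Once $X$ is known to be $\RR$-covered, I would invoke Remark~\ref{r.Rcoveredquadrants}, which partitions the $\RR$-covered case into three mutually exclusive subcases: the bi-foliation is trivial (suspension), $X$ is positively twisted, or $X$ is negatively twisted. In the negatively twisted case the $(+,+)$ and $(-,-)$ quadrants are undertwisted, hence incomplete at every point, contradicting the hypothesis. Therefore only the trivial case and the positively twisted case remain, which is the stated conclusion.

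The only obstacle is making sure the definition of adjacency is used correctly, which is immediate since the four sign patterns in $\{-,+\}^2$ split into two opposite pairs $\{(+,+),(-,-)\}$ and $\{(+,-),(-,+)\}$, so adjacency always crosses the two pairs. No further input beyond the two results quoted above is required.
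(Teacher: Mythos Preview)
Your proposal is correct and is exactly the intended argument: the paper states this result as an immediate corollary of the preceding characterization of non-$\RR$-covered flows via adjacent incomplete quadrants together with Remark~\ref{r.Rcoveredquadrants}, and provides no further proof. Your bookkeeping on adjacency (every adjacent pair in $\{-,+\}^2$ meets $\{(+,+),(-,-)\}$) and the exclusion of the negatively twisted case are precisely what is needed.
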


%\begin{lemm} Given a pair of foliations, the set of $x\in\RR^2$ such that the pair in incomplete in the quadrant $C_{(+,+)}(x)$ is open. 

%If $X$ is a transitive Anosov flow. The set of points of $\RR^2$ such that the pair of foliations $F^s_X, F^u_X$ is incomplete in the quadrant 
%$C_{(+,+)}(x)$ is either empty or a dense open subset. 
%\end{lemm}

\begin{rema}Given a transitive Anosov flow $X$, the set of $x\in \cP_X$ such that the pair $(F^s_X,F^u_X)$ is incomplete in $C_{(+,+)}(x)$ is either empty or is a dense open subset. 
\end{rema}

%%%%%%%%%%%%%%%%%%%%%%%%%%%%%%%%%%%%%%%%%%%%%%%%%%%%%%%%%%%%%
\subsection{Stable and unstable holonomies and the completeness of the quadrants}

%%%%%%%%%%%%%%%%%%%%%%%%%%%%%%%%%%%%%%%%%%%%%%%%%%%%%%%%%%%%%%%%

Fix $x\in \cP_X$ and consider $y\in F^u_{X}(x)$. We call \emph{unstable holonomy from $x$ to $y$} the map $h^u_{X,x,y}$ from $F^s_X(x)$ to $F^s(y)$  defined by 

$$\{h^u_{X,x,y}(z)\}= F^u_{X}(z)\cap F^s_X(y),\mbox{ for } z\in F^s_X(x), \mbox{ if } F^u_{X}(z)\cap F^s_X(y)\neq \emptyset$$
This definition is consistent as the intersection of a stable and an unstable leaves is at most one point.

The domain $\cD(h^u_{X,x,y})$ is an interval of $F^s_X(x)$ and the image is an interval of $F^s_X(y)$. 

One defines in a analogous way \emph{the stable holonomy $h^s_{X,x,z}$} for $z\in F_X^s(x)$.

\begin{rema} For $x\in\cP_X$ the quadrant $C_{+,+}(x)$ is complete if for any $y\in F^u_+(x)$ one has 
$$F^s_+(x)\subset \cD(h^u_{X,x,y})$$

The quadrant $C_{+,+}(x)$ is undertwisted if there is $y\in F^u_+(x)$ such that $\cD(h^u_{X,x,y})$ is a bounded interval in $F^s_+(x)$.

Analogous statements hold in every quadrant and by exchanging the unstable holonomy by the stable one. 
\end{rema}

%%%%%%%%%%%%%%%%%%%%%%%%%%%%%%%%%%%%%%%%%%%%%%%%%%%%%%%%%%%%
\subsection{Dehn-Goodman-Fried surgery}
%%%%%%%%%%%%%%%%%%%%%%%%%%%%%%%%%%%%%%%%%%%%%%%%%%%%%%%%%%%%
As explained in the introduction, it has been proved recently that the topological flow built by Fried's surgery is orbitally equivalent to the Anosov flow obtained by Goodman's surgery. Thanks to its explicitness, throughout the next pages we will rather make use of the action of  Fried's surgery on the bi-foliated plane instead of its general definition that we quickly remind now. 

Let $X$ be an Anosov flow on a oriented $3$-manifold $M$ and let $\gamma$ be a periodic orbit with positive eigenvalues. 

Consider the \emph{blow-up $\pi_\gamma\colon M_\gamma\to M$} of $M$ along $\gamma$, that is:
\begin{itemize}
\item $M_\gamma$ is a manifold with boundary, and $\partial M_\gamma$ is a torus $T_\gamma \simeq \TT^2$.
\item $\pi_\gamma$ induces a diffeomorphism from the interior of $M_\gamma$ to $M\setminus\gamma$. 
\item for every $x\in\gamma$ the fiber $\pi_\gamma^{-1}$ is a circle which is canonically identified with the unit normal bundle $N^1(x)$ of $\gamma$ in $M$ at the point $x$.

In other words, consider two segments $\sigma_1,\sigma_2$
in $M_\gamma$ transverse to the boundary $\partial M_\gamma$ at $\sigma_i(0)$.  Then $\sigma_1(0)=\sigma_2(0)$ if and only if 
$\pi_{\gamma}(\sigma_1(0))=\pi_{\gamma}(\sigma_2(0))=c$ and the segments $c_1=\pi_\gamma\circ \sigma_1$ and $c_2=\pi_\gamma\circ \sigma_2$ have the following property: the vector $\frac{\partial c_2}{\partial t}(0)$ belongs to the half plane of $T_{c(0)}M$ containing $\pm X(c(0))$ and $\frac{\partial c_1}{\partial t}(0)$. 

\end{itemize}
The vector field $\pi_\gamma^{-1}(X)$ is  well defined  on the interior of $M_\gamma$,  extends by continuity on the boundary $T_\gamma$ by the natural action of the derivative $DX^t$ on the normal bundle over $\gamma$. We denote by $X_\gamma$ this (smooth) vector field on $M_\gamma$.

The flow on $T_\gamma$ is a Morse-Smale flow with $4$ periodic orbits, which correspond to the normal vectors to $\gamma$ tangent to the stable and unstable manifolds of $\gamma$.  These $4$ periodic orbits are freely homotopic one to 	another and are non trivial in $\pi_1(T_\gamma)$.  The homotopy (or homology) class $b\in\ZZ^2=\pi_1(T_\gamma)$ of these periodic orbits is called \emph{the parallel} . 
 
On the other hand the fibers of $\pi_\gamma\colon T_\gamma\to \gamma$ inherit an orientation from the orientation of $M$, and the corresponding homotopy class $a\in\ZZ^2=\pi_1(T_\gamma)$ is called \emph{the meridian} . 

Given any integer $n\in\ZZ$, one easily checks the existence of foliations $\cG_n$  on $T_\gamma$, transverse to the flow $X_\gamma$, whose leaves are simple closed curves of homotopy class $a+nb$. By reparametrizing  the flow $X_\gamma$, one gets a new smooth vector field  $Y_\gamma$ on  $M_\gamma$  that leaves invariant the foliation $\cG_n$.

Let $M_{\gamma,n}$ be the manifold obtained from $M_\gamma$ by collapsing the leaves of $\cG_n$. The flow $Y_\gamma$ passes to the quotient and becomes a topological Anosov flow $X_{\gamma,n}$ on $M_{\gamma,n}$. 

It is easy to do this construction in a way such that $X_{\gamma,n}$ is a Lipschitz vector field, but it is not clear at all that it can be smooth and Anosov. It is not even clear that the orbital equivalence class of the construction does not depend on the choice of the foliation $\cG_n$. 
Shannon proved that it is orbitally equivalent (by a homeomorphism isotopic to identity) to an Anosov flow (the one built by Goodman), proving at the same time that the orbital equivalence class of this construction (in fact, the element of $\cA_0(M_{\gamma,n})$) is well defined.  This element of $\cA_0(M_{\gamma,n})$ is called \emph{the Anosov flow $X_{\gamma,n}$ obtained from $X$ by a surgery along $\gamma$ with characteristic number $n$}. 

\begin{rema}
\begin{itemize}
 \item If $\gamma_1,\gamma_2$ are periodic orbits of $X$ and $n_1,n_2$ 
 are integers, then 
 $$[X_{\gamma_1,n_1}]_{\gamma_2,n_2}=[X_{\gamma_2,n_2}]_{\gamma_1,n_1}.$$
 In other words, the surgeries are commutative operations.  This allows us to speak without any ambiguity of the  Anosov vector field $Y$ obtained from $X$ by performing surgeries on periodic orbits $\gamma_1,\dots,\gamma_k$ with characteristic numbers $n_1,\dots,n_k$. 
 \item  If $\gamma$ is a periodic orbit and $m,n\in \ZZ$ then
 $$[X_{\gamma,m}]_n=X_{\gamma,m+n}.$$
\end{itemize}

\end{rema}

\begin{rema} 
\begin{itemize}\item A similar construction holds for periodic orbits $\gamma$ with negative eigenvalues, but the parallel and meridian intersect twice and thus are not a basis of $\pi_1(T_\gamma)$. 
\item This construction is local and thus can be done on non-orientable manifolds for orientation preserving periodic orbits. However, the characteristic number depends on the local orientation and thus is not well defined for non-orientable manifolds. 
\end{itemize}
\end{rema}
\subsubsection{Fried surgeries leading to hyperbolic manifolds}
\par{In this article, we're interested in constructing Anosov flows on hyperbolic manifolds. There are two reasons behind this: first Anosov flows on hyperbolic manifolds satisfy additional structural properties -for instance any periodic orbit of any $\RR$-covered Anosov flow on a hyperbolic manifold is freely homotopic to infinitely many periodic orbits (see \cite{Fe1})- and second, to our knowledge, there are no examples of non-$\RR$-covered Anosov flows on hyperbolic manifolds. }

In this paper, we provide a construction of infinitely many $\RR$-covered and non-$\RR$-covered Anosov flows on hyperbolic manifolds. The most important step of the hyperbolic part of the construction, which also proves the addenda of Theorems \ref{t.notRcoveredatall} and \ref{t.Epsilon} is the following lemma:
\begin{lemm}\label{l.hypmanifold}
Let $X$ be the suspension flow of an Anosov diffeomorphism of the torus $\TT^2$ and $M$ its underlying manifold. Fix $\gamma_1,...\gamma_n$ periodic orbits of $X$. There exist $D_1,..D_n$ such that for any $(k_1,...,k_n) \in \ZZ^n- [(D_1\times \ZZ^{n-1}) \cup (\ZZ \times D_2 \times \ZZ^{n-2}) ... \cup (\ZZ^{n-1} \times D_n)]$, $[[[X_{\gamma_1,k_1}]_{\gamma_2,k_2}]...]_{\gamma_n,k_n}$ is an Anosov flow on a hyperbolic manifold.

\end{lemm}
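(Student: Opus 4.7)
The plan is to deduce the lemma from Thurston's hyperbolic Dehn surgery theorem applied to the multi-cusped manifold obtained by drilling all of the surgery orbits at once.

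First I would set $N = M \setminus U$, where $U$ is an open tubular neighborhood of $\gamma_1 \cup \cdots \cup \gamma_n$ with toroidal boundary components $T_1,\dots,T_n$, and establish that $N$ admits a complete finite-volume hyperbolic structure. Since $M$ is a torus bundle over $S^1$ with Anosov monodromy, $M$ is a closed Sol manifold whose only essential tori up to isotopy are the fibers. Every periodic orbit $\gamma_i$ of the suspension intersects every fiber torus (in exactly $p_i$ points, where $p_i$ is the $f_A$-period of the corresponding periodic point), so no fiber survives as a closed incompressible surface in $N$. One then verifies that $N$ is irreducible, atoroidal and has no essential annulus; by Thurston's geometrization theorem this yields a complete finite-volume hyperbolic structure on $N$.

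Once $N$ is known to be hyperbolic, Thurston's hyperbolic Dehn surgery theorem provides, for each boundary torus $T_i$, a finite set $\widehat{D}_i \subset H_1(T_i;\ZZ)$ of exceptional slopes such that any Dehn filling of $N$ whose filling slope on every $T_i$ avoids $\widehat{D}_i$ yields a closed hyperbolic $3$-manifold. In the canonical meridian-parallel basis $(a_i,b_i)$ of $\pi_1(T_i)$, the Fried surgery along $\gamma_i$ with characteristic number $k_i$ corresponds to the filling slope $a_i + k_i b_i$. Setting
\[
D_i \;=\; \{\,k \in \ZZ : a_i + k b_i \in \widehat{D}_i\,\}
\]
gives a finite subset of $\ZZ$, and the condition ``$k_i \notin D_i$ for every $i$'' is exactly the complement of $\bigcup_{i=1}^n \ZZ^{i-1} \times D_i \times \ZZ^{n-i}$. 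Under this condition the closed manifold underlying $[[\cdots[X_{\gamma_1,k_1}]_{\gamma_2,k_2}]\cdots]_{\gamma_n,k_n}$ is hyperbolic, which is the conclusion of the lemma.

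The main obstacle is the hyperbolicity of $N$. For a single orbit in the simplest suspension (e.g.\ $A = \bigl(\begin{smallmatrix}2&1\\1&1\end{smallmatrix}\bigr)$ with $\gamma_1$ the fixed-point orbit) one recovers the figure-eight knot complement and hyperbolicity is classical, but for a general $A$ and a general finite collection of periodic orbits one has to genuinely rule out essential tori and annuli in $N$ using the mixing/transversality properties of the Anosov suspension. If a direct verification proves awkward, a fallback is to proceed inductively on $n$: one drills $\gamma_1$ first, applies the base case plus Thurston's theorem to obtain a hyperbolic manifold for $k_1$ outside a finite $D_1$, checks that $\gamma_2,\dots,\gamma_n$ survive as closed orbits (and hence are isotopic to geodesics) in the new hyperbolic manifold, and iterates. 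The inductive variant, however, produces exceptional sets that a priori depend on the earlier choices, so the single global application of Thurston's theorem to $N$ is what gives the clean product-form of the exceptional set stated in the lemma.
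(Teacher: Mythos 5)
Your overall route --- drill all of $\gamma_1,\dots,\gamma_n$ simultaneously, put a complete finite-volume hyperbolic structure on the complement, and then invoke Thurston's hyperbolic Dehn surgery theorem to get finite exceptional sets $D_i$ in the stated product form --- is exactly the paper's, and your translation of the Fried surgery with characteristic number $k_i$ into the Dehn filling slope $a_i+k_ib_i$ is correct. The genuine gap is at the step you yourself call the main obstacle: hyperbolicity of $N=M\setminus(\gamma_1\cup\dots\cup\gamma_n)$. The argument you sketch (the only essential tori in the Sol manifold $M$ are the fibers, and every $\gamma_i$ meets every fiber) does not suffice: an essential torus or annulus in $N$ need not be isotopic to an essential surface of $M$ --- it may be compressible in $M$, or may be a torus or annulus running around the drilled orbits --- so irreducibility, atoroidality and the absence of essential annuli in $N$ are precisely what remains to be proved, and you do not prove them. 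Your inductive fallback does not repair this for the statement as given, since (as you note) it produces exceptional sets depending on the previously chosen $k_j$'s, i.e.\ it does not yield the product-form exceptional set of the lemma.

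The missing observation, which is how the paper disposes of this in one line, is that $N$ is itself a surface bundle over the circle: the orbits $\gamma_i$ correspond to a finite $f_A$-invariant set $E\subset\TT^2$, and $N$ is the mapping torus of the restriction of $f_A$ to the punctured torus $\TT^2\setminus E$. That restriction is pseudo-Anosov (the invariant stable/unstable foliations persist, the punctures being allowed as $1$-pronged singularities), so Thurston's hyperbolization theorem for $3$-manifolds fibering over the circle --- the reference [Th] cited in the paper --- gives directly that $N$ admits a complete hyperbolic structure of finite volume; no separate verification of atoroidality via geometrization of Haken manifolds is needed. With that input supplied, the remainder of your argument is correct and coincides with the paper's proof.
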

\begin{proof}
In \cite{Th} Thurston showed that $M -\bigcup_{i=1}^n\gamma_i$ admits a complete hyperbolic structure of finite volume and thanks to the hyperbolic Dehn surgery theorem (see \cite{Th1}) we obtain the desired result. 
\end{proof}

%%%%%%%%%%%%%%%%%%%%%%%%%%%%%%%%%%%%%%%%%%%%%%%%%%%%%%%%%%
\section{The surgeries and the bifoliated plane}\label{s.chirurgies}
%%%%%%%%%%%%%%%%%%%%%%%%%%%%%%%%%%%%%%%%%%%%%%%%%%%%%%%%

Let $X$ and $Y$ be two Anosov flows on closed $3$-manifolds $M$ and $N$ such that (the orbital equivalence class of) $Y$ is obtained from $X$ by performing finitely many surgeries along peridoc orbits:   there exist $k\in\NN$,  a finite set $\Ga=\{\gamma_1,\dots,\gamma_k\}$ of periodic orbits of $X$,  and   a finite set  $\cN=\{n_1,\dots,n_k\}\subset \ZZ$ such that $Y= X_{\Ga,\cN}$, that is $Y$ is the topological Anosov flow obtained from $X$ by performing a Fried surgery with caracterisctic number $n_i$ on each $\gamma_i$.

The aim of this section is to give a very partial answer to the following question: 
\begin{ques}
Knowing the bi-foliated plane $(\cP_X,F^s_X,F^u_X)$ what can we say about \\$(\cP_Y, F^s_Y,F^u_Y)$ ?
\end{ques}

A key remark for answering this question is that $\Ga$ may be included in $N$ and 
$$M\setminus \Ga = N\setminus \Ga \mbox{ and } X|_{M\setminus \Ga}= Y|_{N\setminus \Ga}.$$

%%%%%%%%%%%%%%%%%%%%%%%%%%%%%%%%%%%%%%%
\subsection{The key tool: a common cover}
%%%%%%%%%%%%%%%%%%%%%%%%%%%%%%%%%%%%%%%

We'll denote by $\tilde \Ga_X\subset \tilde M$ and $\tilde \Ga_Y\subset \tilde N$ the lifts of  $\Ga$ to the universal covers $\tilde M$ and $\tilde N$. 

By a convenient abuse of language, we'll still denote by $\tilde \Ga_X$ and $\tilde \Ga_Y$ the corresponding (discrete) sets in $\cP_X$ and $\cP_Y$. 

Let $V=M\setminus \Ga=N\setminus \Ga$ and $Z$ be the restriction of $X$ to $V$ (or equivalently of $Y$ to $V$). 
\begin{clai}The universal cover $(\tilde V,\tilde Z)$ is conjugated to $(\RR^3,\partial/\partial x)$,
\end{clai}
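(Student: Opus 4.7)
The plan is to reduce the claim to Fenley's trivialization $(\tilde M,\tilde X)\simeq(\RR^3,\partial/\partial x)$ and then take universal covers. Set $\bar V := \tilde M \setminus \tilde \Ga_X$; this is the preimage of $V$ in $\tilde M$, and therefore a (Galois) cover of $V$ with deck group $\pi_1(M)$. Each periodic orbit of $X$ lifts to a disjoint union of complete orbits of $\tilde X$, each projecting to a single point of $\cP_X$; consequently $\tilde \Ga_X$ appears in $\tilde M\simeq \RR^3$ as a discrete family of horizontal lines and in $\cP_X\simeq\RR^2$ as a discrete subset (the discreteness in $\cP_X$ follows from Lemma~\ref{l.centralizer} together with the finiteness of $\Ga$). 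In particular $\tilde X|_{\bar V}$ is conjugated to the restriction of $\partial/\partial x$ to $\RR^3$ minus this discrete family of horizontal lines.

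By Fenley's theorem the projection $\tilde M \to \cP_X$ is a trivial $\RR$-bundle, and this restricts to a trivial $\RR$-bundle $\pi\colon \bar V \to U$ where $U := \cP_X \setminus \tilde \Ga_X$; any section yields a diffeomorphism $\bar V \simeq \RR \times U$ sending $\tilde X|_{\bar V}$ to $\partial/\partial x$. Since $\bar V \to V$ is a covering, the universal cover $\tilde V$ of $V$ also is the universal cover of $\bar V$; as universal covers commute with products, this gives $\tilde V \simeq \RR \times \tilde U$, with $\tilde Z$ corresponding to translation in the first factor.

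It remains to identify $\tilde U$. Being an open connected subset of $\RR^2$, $U$ is a $2$-manifold without boundary, hence $\tilde U$ is a simply connected $2$-manifold without boundary; by the classification of simply connected surfaces, $\tilde U$ is homeomorphic to either $S^2$ or $\RR^2$. Non-compactness of $U$, and therefore of $\tilde U$, rules out the sphere. Thus $\tilde U \simeq \RR^2$, giving $\tilde V \simeq \RR^3$ with $\tilde Z$ conjugated to $\partial/\partial x$.

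The only non-routine ingredient is the classification of simply connected surfaces; the rest is a direct application of Fenley's trivialization combined with elementary covering-space considerations. The main point to be careful about is that $\tilde V$ genuinely coincides with the universal cover of $\bar V$ (so that we can compute it via the product decomposition of $\bar V$) and that $\tilde \Ga_X$ is indeed discrete in $\cP_X$, as these are the only places where one might be tempted to take something for granted.
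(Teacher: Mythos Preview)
Your proof is correct and follows essentially the same route as the paper: both pass through the intermediate cover $\bar V=\tilde M\setminus\tilde\Ga_X$, identify it (via Fenley's trivialization) with $\RR^3$ minus a discrete family of parallel lines, and then take its universal cover. The paper's proof is a single sentence that leaves the last step implicit, whereas you spell it out via the product decomposition $\bar V\simeq\RR\times U$ and the classification of simply connected surfaces; this is a welcome elaboration rather than a different argument.
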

\begin{proof} $\tilde V$ is the universal cover of $\tilde M\setminus \tilde \Ga_X$ which is conjugated to $\RR^3$ minus a discrete family of orbits of $\partial/\partial x$ which are parallel straight lines. 
\end{proof}

The space of orbits in $\tilde V$ is a bi-foliated plane, denoted by $(\cP_{\Ga}, F^s_\Ga, F^u_\Ga)$.  This bi-foliated plane is the universal cover of $(\cP_X,F^s_X,F^u_X)\setminus \tilde \Ga_X$ and of 
$(\cP_Y,F^s_Y,F^u_Y)\setminus \tilde \Ga_Y$.  We denote by $\Pi_X$ and $\Pi_Y$ the natural projections of $\tilde V$ onto $\cP_X$ and $\cP_Y$.

$$
\begin{array}{rcccl}
&  &\cP_\Ga & & \\
&\overset{\Pi_X\;\;\;}\swarrow& &\overset{\;\;\;\Pi_Y}\searrow& \\
\cP_X\setminus \tilde \Ga_X& & & &\cP_Y\setminus\tilde\Ga_Y
\end{array}
$$

This simple fact has an important (straighforward) consequence:
\begin{lemm} Let $R_X\subset \cP_X$ be a rectangle for $(F^s_X,F^u_X)$ disjoint from $\tilde \Ga_X$ and $R_\Ga$ a connected component of $\Pi_X^{-1}(R_X)$. 

Then $R_Y= \Pi_Y(R_\Ga)$ is a rectangle for $F^s_Y,F^u_Y$ and $\Pi_Y\circ \Pi_X^{-1}$ induces a homeomorphism from $R_X$ to $R_Y$ conjugating $(F^s_X,F^u_X)$ and $(F^s_Y,F^u_Y)$.  
 
\end{lemm}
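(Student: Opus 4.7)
The strategy is to use $(\cP_\Ga,F^s_\Ga,F^u_\Ga)$ as a common cover mediating between $\cP_X$ and $\cP_Y$, factoring the candidate map $\Pi_Y\circ\Pi_X^{-1}$ through $R_\Ga$. I would first prove that $\Pi_X|_{R_\Ga}\colon R_\Ga\to R_X$ is a bi-foliated homeomorphism, then that $\Pi_Y|_{R_\Ga}$ is an injective bi-foliated local homeomorphism, so a homeomorphism onto its image, which will automatically be a rectangle in $\cP_Y$.

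The first step is essentially a covering-space observation. Since $\Pi_X\colon\cP_\Ga\to\cP_X\setminus\tilde\Ga_X$ is a covering map and $R_X$ is simply connected (a topological disc) and disjoint from $\tilde\Ga_X$, the preimage $\Pi_X^{-1}(R_X)$ is a disjoint union of copies of $R_X$, each mapped homeomorphically onto $R_X$. In particular $\Pi_X|_{R_\Ga}$ is a homeomorphism; and because $F^s_\Ga, F^u_\Ga$ are by definition the pullback of $F^s_X,F^u_X$ via $\Pi_X$, this homeomorphism conjugates the bi-foliated structures, so $R_\Ga$ is a bi-foliated rectangle inside $\cP_\Ga$.

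Next set $f:=\Pi_Y\circ\Pi_X|_{R_\Ga}^{-1}\colon R_X\to\cP_Y$. The crucial remark is that, since the flows $X$ and $Y$ agree on $V$, their stable and unstable foliations also agree on $V$; hence $F^s_\Ga,F^u_\Ga$ are simultaneously the pullback of $F^{s,u}_X$ by $\Pi_X$ \emph{and} of $F^{s,u}_Y$ by $\Pi_Y$. Consequently $f$ is a local homeomorphism sending each $F^s_X$-leaf segment of $R_X$ into some leaf of $F^s_Y$ and each $F^u_X$-leaf segment into some leaf of $F^u_Y$. For any stable segment $\sigma\subset R_X$, the restriction $f|_\sigma$ is a continuous locally injective map from an interval into an $F^s_Y$-leaf (homeomorphic to $\RR$), hence strictly monotonic and therefore injective; the analogous statement holds for unstable segments.

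Global injectivity of $f$ then follows by a standard rectangle argument: if $f(p)=f(q)$ with $p\neq q$, the previous step forbids $p,q$ from lying on the same stable or unstable leaf; letting $r\in R_X$ be the intersection of the stable leaf through $p$ with the unstable leaf through $q$, the point $f(r)$ would lie on both the stable leaf of $f(p)$ and the unstable leaf of $f(q)$, forcing $f(r)=f(p)=f(q)$, and injectivity along the stable segment through $p$ would yield $r=p$, a contradiction. Thus $f$ is an injective bi-foliated local homeomorphism onto $R_Y:=\Pi_Y(R_\Ga)$, and $R_Y$ inherits from $R_X$ the structure of a rectangle bounded by two segments of $F^s_Y$-leaves and two segments of $F^u_Y$-leaves. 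The main point that requires real care is the simultaneous-pullback identification of the foliations on $\cP_\Ga$, which rests on the identity $X|_V=Y|_V$ and thus on the fact that $\tilde V$ is the same $\RR^3$ covering $\tilde M\setminus\tilde\Ga_X$ and $\tilde N\setminus\tilde\Ga_Y$ from both sides; all the foliated bookkeeping flows from this single observation.
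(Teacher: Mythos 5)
Your proof is correct and follows essentially the same route as the paper: transport the rectangle through the common cover $\cP_\Ga$, then get injectivity of $\Pi_Y$ on $R_\Ga$ from the fact that a stable and an unstable leaf of a (non-singular) bi-foliated plane meet in at most one point. Your version merely spells out the product structure of the rectangle and the monotonicity along leaves that the paper's one-line injectivity argument leaves implicit.
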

\begin{proof} $R_\Ga$ is a rectangle and $\Pi_X$ takes the bi-foliated $R_\Ga$ to the bi-foliated $R_X$. The only thing to check now is that $\Pi_Y$ restricted on $R_\Ga$ is a homeomorphism. It suffices to prove that $\Pi_Y$ is injective on the rectangle $R_\Ga$. If $\Pi_Y(x)=\Pi_Y(y)$ for $x\neq y\in R_\Ga$, then the stable and unstable leaves at $\Pi_Y(x)$ intersect twice which is impossible in a (non singular) bi-foliated plane. 
\end{proof}

One can use the following generalization of this argument: 

\begin{prop}\label{p.rectangle-closure}
Consider a closed domain $\De_X\subset \cP_X$ such that its interior is disjoint from $\tilde \Ga_X$ and $(\De_X, F^s_X|_{\De_X},F^u_X|_{\De_X})$ is conjugated to the trivially bi-foliated plane $\RR^2$.  

Let $\De_\Ga$ be a connected component of $\Pi_X^{-1}(\De_X)$  and let  $\De_Y$ be the closure of $\Pi_Y(\De_\Ga)$.  

Then  $\Pi_Y\circ \Pi_X^{-1}$ (defined on $\De_X\setminus \tilde \Ga_X$) extends on $\De_X$ to a homeomorphism conjugating $(F^s_X,F^u_X)$ to $(F^s_Y,F^u_Y)$.  
\end{prop}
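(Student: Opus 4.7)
The proof reduces the proposition to the preceding rectangle lemma via an exhaustion inside $\mathrm{int}(\Delta_X)$ and a continuous extension across the points of $\tilde \Ga_X\cap\Delta_X$.

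First, I would use the bi-foliated conjugacy $\Delta_X\simeq\RR^2$ to exhaust the $\cP_X$-interior of $\Delta_X$ by an increasing sequence of compact bi-foliated rectangles $R_n\subset\mathrm{int}(\Delta_X)$, each compactly contained in the next, whose union is $\mathrm{int}(\Delta_X)$. Since $\mathrm{int}(\Delta_X)\cap\tilde\Ga_X=\emptyset$, every $R_n$ is disjoint from $\tilde\Ga_X$, and the preceding lemma produces a bi-foliated homeomorphism $R_n\to R_n^Y\subset\cP_Y$. The consistent choice of lifts dictated by the fixed connected component $\Delta_\Ga$ makes these local homeomorphisms agree on overlaps, so they piece together into a single bi-foliated homeomorphism
$$\phi\colon\mathrm{int}(\Delta_X)\longrightarrow\Pi_Y(\Delta_\Ga).$$

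Next, I would extend $\phi$ continuously to each boundary point $p$ of $\Delta_X$ in $\cP_X$. Such a $p$ must belong to $\tilde\Ga_X$, for otherwise the trivial bi-foliation on $\Delta_X$ would provide an open $\cP_X$-neighborhood of $p$ inside $\Delta_X$, contradicting that $p$ is a boundary point. Using the trivial bi-foliation one finds a small closed rectangle $\cR_p\subset\Delta_X$ containing $p$ in its intrinsic interior and meeting $\tilde\Ga_X$ only at $p$. Splitting $\cR_p$ along the stable and unstable leaves through $p$ gives four closed sub-rectangles sharing $p$ as a corner; applying the preceding lemma to a compact exhaustion of each open sub-rectangle maps them by $\phi$ into four bi-foliated rectangles in $\cP_Y$. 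The fixed choice of $\Delta_\Ga$ forces their images to share a single common corner $q\in\cP_Y$, and setting $\phi(p):=q$ extends $\phi$ continuously at $p$, as a compactness argument confirms.

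The main obstacle is to identify the common corner $q$ as an element of $\tilde\Ga_Y$, produced from $p$ by the surgery, so that the extended map is a bijection $\Delta_X\to\Delta_Y=\overline{\Pi_Y(\Delta_\Ga)}$. This rests on the local nature of Fried's surgery: it is supported in an arbitrarily thin tubular neighborhood of each orbit in $\Ga$ and replaces each such orbit by a single new periodic orbit of $Y$, inducing a canonical correspondence $\tilde\Ga_X\leftrightarrow\tilde\Ga_Y$ compatible with $\phi$ on complements. Once this identification is in hand, the extended map $\Delta_X\to\Delta_Y$ is a continuous bijection between planes, hence a homeomorphism by invariance of domain, and it conjugates $(F^s_X,F^u_X)$ with $(F^s_Y,F^u_Y)$ because it does so on the dense open subset $\mathrm{int}(\Delta_X)$.
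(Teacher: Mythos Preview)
The paper does not give an explicit proof; it introduces the proposition with ``One can use the following generalization of this argument'', meaning the reader is expected to rerun the rectangle-lemma proof: $\Pi_X$ restricts to a homeomorphism $\Delta_\Ga\to\Delta_X\setminus\tilde\Ga_X$ (the target being simply connected), and $\Pi_Y$ is injective on $\Delta_\Ga$ because two distinct points with the same image would force a stable and an unstable leaf of $\cP_Y$ to meet twice. So $\Pi_Y\circ\Pi_X^{-1}$ is already a bi-foliated embedding on all of $\Delta_X\setminus\tilde\Ga_X$, and one only has to extend across the discrete set $\tilde\Ga_X\cap\Delta_X$. Your exhaustion by compact rectangles reaches the same map on $\mathrm{int}(\Delta_X)$, but by a longer route.

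There is a real problem in your second paragraph. You claim that every $\cP_X$-boundary point $p$ of $\Delta_X$ lies in $\tilde\Ga_X$, ``for otherwise the trivial bi-foliation on $\Delta_X$ would provide an open $\cP_X$-neighborhood of $p$ inside $\Delta_X$''. That argument nowhere uses $p\notin\tilde\Ga_X$---membership in the discrete set $\tilde\Ga_X$ carries no topological weight---so it would apply to \emph{every} $p\in\Delta_X$ and force $\Delta_X$ to be open, hence equal to $\cP_X$, trivialising the statement. Under the intended reading (a trivially bi-foliated closed region such as a rectangle or a quadrant), $\partial_{\cP_X}\Delta_X$ will typically contain many points outside $\tilde\Ga_X$: the entire sides of a rectangle, for instance. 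The fix is simple: $\Pi_Y\circ\Pi_X^{-1}$ is already defined on $\Delta_X\setminus\tilde\Ga_X$, so the only points requiring your four-sub-rectangle extension are those of $\tilde\Ga_X\cap\Delta_X$---which is exactly what your third paragraph handles, once the mis-identification of ``boundary points'' with ``points of $\tilde\Ga_X$'' is removed.
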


%\begin{prop}Let $\gamma$ be an orbit in $M\setminus \Ga$ let $\gamma_X$ be a point in $\cP_X$ corresponding to $\gamma$ and $\gamma_\Ga\in\Pi_X^{-1}(\gamma_X)\in\cO_\Gamma$ and  $\gamma_Y= \Pi_X^{-1}(\gamma_\Ga)$.  

%Consider a closed domain $\De_X\subset \cP_X$  so that its interior is dijoint from $\tilde \Ga_X$ and, endowed with $(F^s_X,F^u_X)$,  is conjugated to the  trivially bifoliated plane $\RR^2$.  

%Let $\De_\Ga$ be the connected component of $\Pi_X^{-1}(\De_X)$ containing $\gamma_\Ga$ and let  $\De_Y$ be the closure of $\Pi_Y(\De_\Ga)$.  

%Then  $\Pi_Y\circ \Pi_X^{-1}$ (defined on $\De_X\setminus \tilde \Ga_X$) extends on $\De_X$ as a homeomorphisms conjugating $(F^s_X,F^u_X)$ to $(F^s_Y,F^u_Y)$.  
%\end{prop}
\subsection{Two ways for associating a path in $\cP_X$ to a path in $\cP_Y$  \label{ss.2ways}}
Let $\sigma_X\colon \RR \to \cP_X$ be a locally injective continuous path, obtained by the concatenation of locally finitely many stable / unstable leaf segments. One can define a transverse orientation as follows : the transverse orientation followed by the orientation of $\sigma_X$ is the orientation of $\cP_X$.
\begin{rema}\label{r.orientation} For this choice of the orientation, 
\begin{itemize}\item the transverse orientation of a positively oriented unstable segment coincides with the orientation of the stable leaves intersecting it.
 \item the transverse orientation of a positively oriented stable segment coincides with the negative orientation of the unstable leaves intersecting it. 
\end{itemize}
\end{rema}

Assume that $p_X= \sigma_X(0) \notin \tilde \Ga_X$ and $p_Y\in \Pi_Y(\Pi_X^{-1}(p_X))$. 

Let $\sigma_{X,t}\colon \RR \to \cP_X$, $t\in[-1,1)$ be a continuous family of paths such that: 
\begin{itemize}
 \item  $\sigma_{X,0}=\sigma_X$
 \item $\sigma_{X,t}(0)=p_X$
 \item $\sigma_{X,t}$ is disjoint from $\tilde\Ga_X$
 \item $\sigma_{X,t}$ tends to $\sigma_{X,0}$ from the positive side as $t\to 0$ and  $t>0$ and from the negative side as $t\to 0$ and $t<0$. 
\end{itemize}

\begin{coro}\label{c.corresponding-pathes} Using the above notations, there are uniquely defined paths $\sigma_{Y,t}$ for $t>0$ (resp. $t<0$) such that 
\begin{itemize}
 \item $\sigma_{Y,t}(0)=p_Y$
 \item $\sigma_{Y,t}(s)\in\Pi_Y(\Pi_X^{-1}(\sigma_{X,t}(s)))$, $s\in\RR$
\end{itemize}
 
 and the limits
 $$\lim_{t\to 0_+}\sigma_{Y,t}(s)=\sigma_{Y,+}(s) \mbox{ and }\lim_{t\to 0_-}\sigma_{Y,t}(s)=\sigma_{Y,-}(s)$$
are well defined continuous paths which are a concatenation of locally finitely many stable/unstable segments. Furthermore, $\sigma_{Y,+}$ and $\sigma_{Y,-}$ only depend on the choice of $p_Y$ and not on the choice of the homotopies $\sigma_{X,t}$.  

\end{coro}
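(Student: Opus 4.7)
The plan is to lift the perturbed paths $\sigma_{X,t}$ (for $t \neq 0$) through the covering $\Pi_X\colon \cP_\Ga \to \cP_X\setminus \tilde \Ga_X$ and then project by $\Pi_Y$, then obtain the two limits by analysing the discrete set of crossings of $\sigma_X$ with $\tilde \Ga_X$. The choice of $p_Y$ singles out a unique lift $\tilde p \in \Pi_X^{-1}(p_X)$ with $\Pi_Y(\tilde p)=p_Y$. Since $\sigma_{X,t}$ avoids $\tilde \Ga_X$ for $t\neq 0$, standard covering space theory provides a unique lift $\tilde\sigma_t\colon \RR \to \cP_\Ga$ with $\tilde\sigma_t(0)=\tilde p$, and $\sigma_{Y,t}:=\Pi_Y\circ \tilde\sigma_t$ satisfies the two required identities by construction. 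It is a locally finite concatenation of stable/unstable segments because both $\Pi_X$ and $\Pi_Y$ carry leaves of $F^s_\Ga,F^u_\Ga$ to leaves of the respective bi-foliations.

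The heart of the argument is convergence as $t\to 0^+$ (the case $t\to 0^-$ is symmetric). Because $\tilde \Ga_X$ is discrete in $\cP_X$ and any leaf of $F^s_X$ or $F^u_X$ meets $\tilde\Ga_X$ in at most one point, the set $S:=\sigma_X^{-1}(\tilde \Ga_X)\subset \RR$ is locally finite. On each connected component $I$ of $\RR\setminus S$, the restriction $\sigma_X|_I$ avoids $\tilde\Ga_X$, so continuity of lifting in $\Pi_X$ implies that $\tilde\sigma_t|_I$ converges uniformly on compact subsets to a specific lift $\tilde\sigma_{+}|_I$ of $\sigma_X|_I$. To patch these pieces into a global continuous lift $\tilde\sigma_+\colon \RR \to \cP_\Ga$, one must verify that for consecutive intervals $(s_{i-1},s_i)$ and $(s_i,s_{i+1})$ with $q_i=\sigma_X(s_i)\in \tilde \Ga_X$, the one-sided limits of $\tilde\sigma_+$ at $s_i$ agree. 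This is exactly where the positive-side hypothesis enters: for $t>0$ small, $\sigma_{X,t}$ passes on a definite side of $q_i$, determining the homotopy class (in a small bi-foliated rectangle $R_i\setminus\{q_i\}$ around $q_i$) of the local detour of $\sigma_{X,t}$ around $q_i$. This homotopy class uniquely determines the sheet of $\Pi_X^{-1}(R_i\setminus\{q_i\})$ on which the lift continues after $s_i$, providing the required continuity of $\tilde\sigma_+$. Setting $\sigma_{Y,+}:=\Pi_Y\circ \tilde\sigma_+$, a local application of Proposition~\ref{p.rectangle-closure} on trivial rectangles away from $S$, together with a direct local analysis at each $q_{i,Y}\in \tilde\Ga_Y$, shows that $\sigma_{Y,+}$ is again a locally finite concatenation of stable/unstable leaf segments.

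Independence from the homotopy family follows because any two admissible families $\sigma_{X,t}$ and $\sigma'_{X,t}$ have traces that, for sufficiently small $t,t'>0$, lie in an arbitrarily narrow one-sided neighborhood of $\sigma_X$, hence are homotopic rel endpoints in $\cP_X\setminus \tilde \Ga_X$; unique path lifting then forces their lifts to $\cP_\Ga$ to share the same limit, so their $\Pi_Y$-projections share the same limit $\sigma_{Y,+}$. The main obstacle in this plan is the crossing analysis of the previous paragraph: one must make rigorous the statement that the \emph{side} of $\sigma_X$ on which the perturbation passes $q_i$ is a well-defined combinatorial datum that selects a specific sheet of $\Pi_X$. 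This requires an explicit local model of $\cP_\Ga$ over a small bi-foliated rectangle around $q_i$ (namely, the universal cover of a once-punctured disk) together with a compatible model on the $\cP_Y$ side, reflecting how the Fried surgery re-identifies the sheets of this cover around the corresponding point of $\tilde\Ga_Y$.
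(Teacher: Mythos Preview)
Your approach is essentially the same as the paper's: lift the $\sigma_{X,t}$ to $\cP_\Ga$ via the covering $\Pi_X$, project by $\Pi_Y$, and invoke Proposition~\ref{p.rectangle-closure} for the limiting behaviour; you have simply unpacked in detail what the paper compresses into two sentences. One imprecision worth correcting: the one-sided limits of $\tilde\sigma_+$ at the crossing times $s_i$ do \emph{not} exist in $\cP_\Ga$ (near a puncture the universal cover of a punctured disk escapes to infinity), so $\tilde\sigma_+$ cannot be a continuous map $\RR\to\cP_\Ga$; what actually agree are the one-sided limits of $\Pi_Y\circ\tilde\sigma_+$ in $\cP_Y$, and this is exactly what Proposition~\ref{p.rectangle-closure} (applied to a thin one-sided rectangular neighbourhood of $\sigma_X$ whose interior avoids $\tilde\Ga_X$) provides.
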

\begin{proof} We construct $\sigma_{Y,t}$ by lifting $\sigma_{X,t}$ on $\cP_\Ga$, which is the universal cover of $\cP_X\setminus \tilde\Ga_X$, and then projecting the lifts by $\Pi_Y$ on $\cP_Y\setminus \tilde\Ga_Y$. 

The second part of the statement is a consequence of Proposition~\ref{p.rectangle-closure}. 
\end{proof}
\begin{defi}\label{d.correponding-pathes} Using the above notations, $\sigma_{Y,+}$ and $\sigma_{Y,-}$ are called \emph{the positive} and \emph{negative} (respectively) \emph{paths through $p_Y$ corresponding to $\sigma_X$. We can similarly define this notion in the case where $p_X\in \tilde\Ga_X$.} 
 
\end{defi}

\begin{figure}[h!]
\includegraphics[scale=0.60]{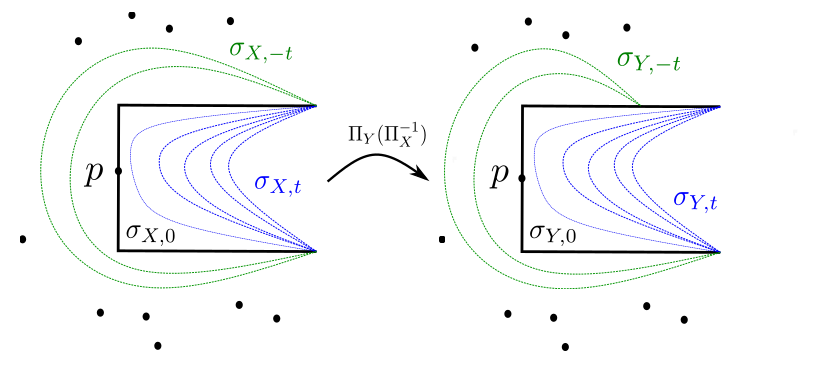}
\caption{In above picture the black points represent points in $\tilde \Ga_{X,Y}$ on which we've performed non-trivial surgeries}
\label{f.holonomy}
\end{figure}
It is fairly easy to see that, in general, $\sigma_{Y,+}$ and $\sigma_{Y,-}$ do not coincide. An example of such a case is given in figure \ref{f.holonomy}. In this example, we consider a black path, a family of green and blue paths all with the same endpoints in $\cP_X$. Because of the surgery performed on $p$, by applying $\Pi_Y \circ \Pi_X^{-1}$ to a blue and a green path, we obtain two paths in $\cP_Y$ that do not share the same endpoints. The previous is going to be made more precise in Proposition \ref{p.franchissement}.

%%%%%%%%%%%%%%%%%%%%%%%%%%%%%%%%%%%%%%%%%%%%%%%%%%%%%%%%%
\subsection{Comparison of holonomies: the main tool}
%%%%%%%%%%%%%%%%%%%%%%%%%%%%%%%%%%%%%%%%%%%%%%%%%%%%%%%

From this point on, unless explicitly said otherwise, if $\Phi$ is an Anosov flow acting on $M$, we orient $M$ in the following way: we arbitrarily choose an orientation of its stable and unstable directions and we consider the orientation on $M$ given by the stable direction, followed by the unstable direction, followed by the flow direction. 

Our main tool for comparing the holonomies of the foliations associated to $X$ and $Y$ is the next proposition:

\begin{prop}\label{p.franchissement} Let $\tilde \gamma \in\cP_X$ be a point corresponding to a periodic orbit $\gamma\in\Ga$.  Let $n$ be the characteristic number of the surgery performed on $\gamma$.

Consider two rectangles $R^+,R^-$ such that:
\begin{itemize}
\item $R^{\pm}\cap\tilde \Ga_X=\{\tilde \gamma\}$. 
 \item $\tilde \gamma$ belongs to the interior of the lower stable boundary component $J^s$ of $R^+$ (see figure \ref{f.rectanglesholonomy}) and to the interior of the upper stable boundary component $I^s$ of $R^-$. 

 \item The positively oriented stable segments $I^s,J^s$ satisfy $I^s=[a,b]^s$ and $J^s=[a,c]^s$, with $c= P_{\tilde\gamma}^{n}(b)$, where $P_{\tilde\gamma}$ is the first return map on $F^s(\tilde \gamma)$ and $F^u(\tilde\gamma)$.

 \end{itemize}
 If we denote $R^{\pm}=R^+ \cup R^-$, then for any connected component $R^{\pm}_{\Ga}$ of $\Pi_X^{-1}(R^\pm)$ the projection $\Pi_Y(R^{\pm}_\Ga)$ is a rectangle punctured at $\tilde \gamma$. 
 
\end{prop}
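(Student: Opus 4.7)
The strategy is to choose an arbitrary component $R^\pm_\Ga$, apply Proposition~\ref{p.rectangle-closure} to the lifts of $R^+$ and $R^-$ composing it, and use the hypothesis $c=P_{\tilde\gamma}^n(b)$ together with the explicit Fried twist to collapse the images in $\cP_Y$ into a single closed rectangle around $\tilde\gamma$; removing the one missing preimage point then yields the announced punctured rectangle.

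First I would observe that $R^+\cap R^-=[a,\min(b,c)]^s$ contains $\tilde\gamma$ in its interior, so $R^\pm\setminus\{\tilde\gamma\}$ is connected with $\pi_1\simeq\ZZ$, generated by a small loop around $\tilde\gamma$ that is non-trivial in $\pi_1(\cP_X\setminus\tilde\Ga_X)$. Consequently any connected component $R^\pm_\Ga$ of $\Pi_X^{-1}(R^\pm)$ is a copy of the universal cover of $R^\pm\setminus\{\tilde\gamma\}$, embedded in $\cP_\Ga$ as an infinite strip winding around a single ideal hole above $\tilde\gamma$; in it one finds countably many lifts $\hat R^+_k$ of $R^+$ alternating with lifts $\hat R^-_k$ of $R^-$, pairwise glued along lifts of the two stable half-segments on either side of $\tilde\gamma$. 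Proposition~\ref{p.rectangle-closure} applied to $R^+$ and $R^-$ says that $\Pi_Y\circ\Pi_X^{-1}$ extends to a bi-foliated homeomorphism on the closure of each $\hat R^\pm_k$, so each $\Pi_Y(\hat R^\pm_k)$ is an honest rectangle $R^\pm_{Y,k}\subset\cP_Y$ carrying $\tilde\gamma$ (now a regular point of $\cP_Y$, associated to the surgered orbit) on the appropriate stable side.

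The heart of the argument is to show that these countably many images coincide in pairs. This amounts to comparing the two ``once-around-$\tilde\gamma$'' deck transformations of the common cover $\cP_\Ga$: the generator $\tau_X$ of the $\Pi_X$-stabilizer of the ideal hole, and the generator $\tau_Y$ of the $\Pi_Y$-stabilizer. Unfolding Fried's construction -- blow-up of $\gamma$, foliation $\cG_n$ of slope $a+nb$ on the boundary torus, collapse -- on the orbit space yields the identity $\tau_Y=\tau_X\cdot\tilde P_{\tilde\gamma}^n$, where $\tilde P_{\tilde\gamma}$ is the natural lift to $\cP_\Ga$ of the first return map of Lemma~\ref{l.firstreturn}: each extra parallel $b$ in the new meridian $a+nb$ contributes exactly one iteration of $P_{\tilde\gamma}$. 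Thus the lifts in $\cP_\Ga$ of the endpoints $b\in I^s$ and $c=P_{\tilde\gamma}^n(b)\in J^s$ (both on the side of $\tilde\gamma$ opposite to $a$) lie in the same $\tau_Y$-orbit, so their $\Pi_Y$-images coincide in $\cP_Y$. Combined with the shared endpoint $a$ (at which no surgery is performed and the identification is automatic), this forces the left and right unstable boundaries of each $R^+_{Y,k}$ to agree with those of the adjacent $R^-_{Y,\ell}$'s. Iterating identifies all the $R^\pm_{Y,k}$ in pairs and exhibits $\Pi_Y(R^\pm_\Ga)$ as a single closed rectangle of $\cP_Y$ containing $\tilde\gamma$ in its interior, minus $\tilde\gamma$ itself, which is excluded from the image because $\tilde\gamma\in\tilde\Ga_Y$.

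The main obstacle is the algebraic identity $\tau_Y=\tau_X\cdot\tilde P_{\tilde\gamma}^n$: extracting it cleanly from the explicit Fried construction requires a careful accounting of the reparametrization preserving $\cG_n$ and of the first return time on a transversal at $\tilde\gamma$, together with the sign convention that guarantees the exponent $+n$ (rather than $-n$). Once this identity is settled, the assembly of the $R^\pm_{Y,k}$ into a single punctured rectangle is purely formal.
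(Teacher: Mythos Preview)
Your argument is correct, but it is considerably more elaborate than the paper's. The paper does not unroll the full covering-space picture (countably many lifts $\hat R^\pm_k$, comparison of the two deck generators $\tau_X,\tau_Y$, collapsing in pairs). Instead it writes down a single concrete curve $\delta$ in $\cP_X$ --- following $\partial R^+\setminus\mathrm{Int}(J^s)$ from $c$ to $a$, then $\partial R^-\setminus\mathrm{Int}(I^s)$ from $a$ to $b$ --- projects it to a local section of $\gamma$ in $M$, and closes it up by the orbit segment from $b$ to $c$. By inspection this loop is freely homotopic in $M\setminus\gamma$ to one meridian plus $n$ parallels, i.e.\ exactly the new meridian after surgery; hence it is null-homotopic in $N$. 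Its lifts to $\cP_Y$ are therefore closed curves made of two stable and two unstable segments, and such a curve bounds a rectangle.

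Your identity $\tau_Y=\tau_X\cdot\tilde P_{\tilde\gamma}^{\,n}$ is precisely the deck-transformation translation of this homotopy statement (``new meridian $=$ old meridian $+$ $n$ parallels''), so the two proofs are equivalent in content. What the paper's route buys is economy: the whole argument is two sentences, and the ``main obstacle'' you flag --- extracting the relation from the Fried construction with the correct sign --- is handled implicitly by reading off the homotopy class of a single explicit loop rather than by comparing abstract generators. What your route buys is a more structural picture of how $\Pi_Y(R^\pm_\Ga)$ is assembled, which is not needed here but clarifies why the exponent is $+n$ on this side of $\tilde\gamma$ and $-n$ on the other (the content of Proposition~\ref{p.franchissement2}).
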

\begin{figure}[h!]
\includegraphics[scale=0.75]{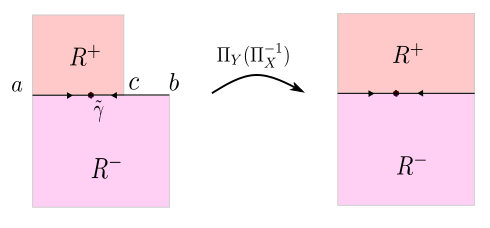}
\caption{}
\label{f.rectanglesholonomy}
\end{figure}
\begin{proof} 

Consider the closed curve $\delta$ on $\cP_X$ starting at the point $c$ following $\partial R^+\setminus Int (J^s)$ until $a$ then following $\partial R^-\setminus Int (I^s)$ until $b$.  Project $\delta$ on a local section of $\gamma$ and complete it by the orbit segment joining $b$ to $c$.  Then the obtained closed curve is freely homotopic, in $M\setminus \gamma$ to a meridian plus $n$ parallels of $\gamma$ that is to the new meridian after surgery. 

Thus $\delta$ is $0$-homotopic on the manifold $N$ carrying $Y$. Its lifts on $\cP_Y$ are closed curves consisting of $2$ stable and $2$ unstable segments, hence bounding a rectangle, which finishes the proof. 

\end{proof}

In Proposition~\ref{p.franchissement} one considers an orbit segment joining the points $b,c \in F^s_+(\tilde \gamma) $ by turning around $\tilde \gamma$ in the positive sense.  Analogous statements hold after changing the sign of the exponent of the first return map. Let us be more explicit, as this is crucial for our arguments. 
 
\begin{prop}\label{p.franchissement2}
Let $\tilde \gamma \in\cP_X$ be a point corresponding to a periodic orbit $\gamma\in\Ga$.  Let $n$ be the characteristic number of the surgery performed on $\gamma$.

Consider two rectangles $R^+,R^-$ such that:
\begin{itemize}
\item $R^{\pm}\cap\tilde \Ga_X=\{\tilde \gamma\}$. 
 \item $\tilde \gamma$ belongs to the interior of the lower stable boundary component $J^s$ of $R^+$ and to the interior of the upper stable boundary component $I^s$ of $R^-$. 

 \item The positively oriented stable segments $I^s,J^s$ satisfy $I^s=[b,a]^s$ and $J^s=[c,a]^s$, with $c= P_{\tilde\gamma}^{-n}(b)$, where $P_{\tilde\gamma}$ is the first return map on $F^s(\tilde \gamma)$ and $F^u(\tilde\gamma)$.

 \end{itemize}
 If we denote $R^{\pm}=R^+\cup R^-$, then for any connected component  $R^{\pm}_\Ga$ of $\Pi_\Ga^{-1}(R^\pm)$ the projection $\Pi_Y(R^{\pm}_\Ga)$ is a rectangle punctured at $\tilde \gamma$. 
 
\end{prop}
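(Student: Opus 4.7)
The plan is to mimic, step by step, the argument given for Proposition~\ref{p.franchissement}, the only difference being that the orientation of the stable boundary segments $I^s = [b,a]^s$ and $J^s = [c,a]^s$ is reversed with respect to that of the previous proposition. This reversal forces the loop one builds from $\partial R^+$ and $\partial R^-$ to encircle $\tilde\gamma$ in the opposite rotational sense, and this is exactly what is needed to match the exponent $-n$ appearing in $c = P_{\tilde\gamma}^{-n}(b)$.

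Concretely, I would first consider the closed curve $\delta'$ on $\cP_X$ obtained by starting at $c$, following $\partial R^+ \setminus \mathrm{Int}(J^s)$ up to $a$, and then following $\partial R^- \setminus \mathrm{Int}(I^s)$ down to $b$. Because $I^s$ and $J^s$ now run \emph{towards} $a$ (not away from it), $\delta'$ turns around $\tilde\gamma$ in the sense opposite to the one of Proposition~\ref{p.franchissement}. I would then close $\delta'$ up in $M\setminus\gamma$ by projecting it onto a local cross-section of $\gamma$ and adjoining the orbit segment joining $b$ to $c$, where this time the orbit segment uses $n$ iterates of $P_{\tilde\gamma}^{-1}$ instead of $P_{\tilde\gamma}$.

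The next step is homological bookkeeping in the boundary torus $T_\gamma$ of the tubular neighbourhood of $\gamma$: with respect to the meridian $a$ and parallel $b$ of $\pi_1(T_\gamma)$, the closed curve just constructed is freely homotopic to $-a - n b = -(a + nb)$. Since $a + nb$ is by definition the new meridian produced by the Fried surgery of characteristic number $n$, the class $-(a+nb)$ is its inverse; in particular the closed curve is null-homotopic in the manifold $N$ carrying $Y$. Therefore each lift of this curve to $\cP_Y$ is a closed loop consisting of two stable and two unstable segments, and thus bounds a rectangle. That rectangle is precisely $\Pi_Y(R^\pm_\Ga)$ punctured at $\tilde\gamma$, which is the desired conclusion.

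The only delicate point, and the place where I would be most careful, is the sign bookkeeping identifying the homotopy class of $\delta'$ (completed by the orbit segment) with $-(a+nb)$ rather than some other combination: one has to use Remark~\ref{r.orientation} to read off the induced transverse orientations on $I^s$ and $J^s$, and to check that reversing the orientations of both stable segments simultaneously reverses the rotation around $\tilde\gamma$ while the $-n$ iterates of $P_{\tilde\gamma}$ produce exactly $-n$ copies of the parallel. Once this sign check is done, everything else is a verbatim repetition of the proof of Proposition~\ref{p.franchissement}.
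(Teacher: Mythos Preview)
Your proposal is correct and follows essentially the same approach as the paper's own proof: both argue that the closed curve built from $\partial R^+$ and $\partial R^-$ (completed by the orbit segment from $b$ to $c=P_{\tilde\gamma}^{-n}(b)$) represents $-1$ meridian minus $n$ parallels in $\pi_1(T_\gamma)$, i.e.\ the inverse of the new meridian after surgery, hence is null-homotopic in $N$. Your write-up is simply more detailed about the sign bookkeeping than the paper, which merely remarks that the path is now negatively oriented.
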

\begin{proof}The proof is identical to the one of Proposition~\ref{p.franchissement}, except that in this case the path from $c$ to $b$ is negatively oriented: one obtains $-1$ meridian minus $n$ parallels for $X$, which is $-1$ meridian for $Y$. 
 
\end{proof}

Finally, let us note that the previous results hold independently of the sign of the eigenvalues of $\gamma$. 
%%%%%%%%%%%%%%%%%%%%%%%%%%%%%%%%%%%%%%%%%%%%%%%%%%%%%%%%
\subsection{Comparison of the holonomies in the quadrants: choosing the holonomies to be compared}
%%%%%%%%%%%%%%%%%%%%%%%%%%%%%%%%%%%%%%%%%%%%%%%%%%%%%%%%

Our next goal in this paper is to obtain the holonomies of $F^s_Y$ and $F^u_Y$ from the holonomies of $F^s_X$ and $F^u_X$, by using Proposition~\ref{p.franchissement}. In Section~\ref{ss.formula}, we will first describe the change of holonomies for the unstable holonomies in the $C_{+,+}$ quadrants and then we will explain how to adapt the previous statement in all the other quadrants.
 
Before doing that, we need to explain which holonomies of $F^u_Y$ and $F^u_X$ are going to be compared. 
 
 More precisely, consider a point $p_X\in \cP_X$, a point $q_X\in F^u_+(p_X)$ and the unstable holonomy $h^u_{X,p_X,q_X}$ from $F^s_{X,+}(p_X)$ to $F^s_{X,+}(q_X)$. We want to describe the effect of the surgery on this holonomy and to compare the new holonomy with $h^u_{X,p_X,q_X}$.  
 
 Consider the path $\sigma_X$ obtained by the concatenation of:
 \begin{itemize}\item the half stable leaf $F^s_{X,+}(p_X)$ (with the negative orientation) 
  \item the unstable segment $[p_X,q_Y]^u$
  \item the half stable leaf $F^s_{X,+}(q_X)$.
 \end{itemize}
 
 We fix a parametrization of $\sigma_X$ so that the path $\sigma_X$ becomes a map $\sigma_X\colon \RR\to \cP_X$. 
 
 \subsubsection{The easy case: no point of $\tilde \Ga$ on $\sigma_X$}
 Assume first that $\sigma_X$ is disjoint from $\tilde \Ga_X$. Consider a lift $p_\Ga\in\cP_{\Ga}$ of $p_X$ and let $p_Y\in \cP_Y$ be the projection of $p_{\Ga}$. 
 
 Now $\sigma_X$ has a well defined lift $\sigma_{\Ga}$ on $\cP_{\Ga}$ through $p_\Ga$ 
and consider $\sigma_Y$ the projection of $\sigma_{\Ga}$. 

In other words, we have  
$$\sigma_Y=\Pi_Y(\Pi^{-1}_X(\sigma_X))$$ 
and $\sigma_Y(t)=\Pi_Y(\Pi^{-1}_X(\sigma_X))$ will be called \emph{the corresponding point of $\sigma_X(t)$ in $\cP_Y$}.  
 
Thus $q_Y$ is the corresponding point of $q_X$ in $\cP_Y$ and it belongs to $F^u_+(p_Y)$. So the unstable holonomy $h^u_{Y,p_Y,q_Y}$ from $F^s_{Y,+}(p_Y)$ to $F^s_{Y,+}(q_Y)$ is well defined.

As every point in $F^s_+(p_X)$ (resp. in $F^s_+(q_X)$) has a corresponding point in $F^s_+(p_Y)$ (resp. in $F^s_+(q_Y)$), it makes sense to compare  $h^u_{X,p_X,q_X}$ with $h^u_{Y,p_Y,q_Y}$. 

\subsubsection{The general case} 

In the next section, we will need to compare holonomies of $X$ and $Y$ corresponding to stable leaves that intersect $\tilde \Ga_X$ and $\tilde \Ga_Y$, in other words, we will consider the case where $\sigma_X$ is not disjoint from $\tilde\Ga_X$. In this case, $\sigma_X$ no longer lifts on $\cP_\Ga$. We have seen in Section~\ref{ss.2ways} that one may associate different paths $\gamma_Y$ in $\cP_Y$ to a path $\gamma$ in $\cP_X$, depending, roughly speaking, on whether we choose to move at the right or at the left of $x$ for every $x$ in $\gamma_X\cap \tilde \Ga_X$. 

The aim of this subsection is to fix our choices for the segment $\sigma_X$.

Consider a point $p_X\in \cP_X$ and $p_Y\in \cP_Y$, which are obtained in one of the following ways:
\begin{itemize}\item either as the projections of a same point $p_{\Ga} \in \cP_{\Ga}$; this means in particular that $p_X\notin \tilde \Ga_X$. 
 \item or, if $p_X\in \tilde \Ga_X$, we consider a small rectangle $R_{X,+,+}$ admitting $p_X$ as its lower left corner and such that $R_{X,+,+} \cap \tilde\Ga_X=\{p_X\}$. We lift $R_{X,+,+}\setminus \{p_X\}$ on $\cP_\Ga$ and we project this lift on $\cP_Y$. One gets a rectangle 
 $R_{Y,+,+}$  punctured at its lower left corner, which is denoted by $p_Y\in \tilde \Ga_Y$. 
\end{itemize}

Given a point $q_X$ in $F^u_{X,+}(p_X)$, we defined previously a path
$\sigma_X$  obtained by the concatenation of $\sigma^1_{X}=F^s_{X,+}(p_X)$ (with the negative ortientation),  $\sigma^2_{X}=[p_X,q_Y]^u$ and $\sigma^3_X=F^s_{X,+}(q_X)$. 

The point $q_Y\in \cP_Y$ corresponding to $q_X$ will be the end point of the path $\sigma^2_{Y,+}$, defined in Section~\ref{ss.2ways} and whose origin is $p_Y$. 

We want to compare the unstable holonomy $h^u_{X,p_X,q_X}$ with the unstable holonomy  $h^u_{Y,p_Y,q_Y}$ for $Y$. 
 
In order to do that, we consider the path $\sigma_Y$ obtained by the concatenation of the three paths:
\begin{itemize}
\item the half stable leaf $\sigma^1_{Y,+}$ (which corresponds to  $F^s_{Y,+}(p_Y)$, negatively oriented)
\item the unstable segment  $\sigma^2_{Y,+}$ (joining $p_Y$ to $q_Y$)
\item the half stable leaf $\sigma^3_{Y,-}$ (which corresponds to $F^s_{Y,+}(q_Y)$, positively oriented)
\end{itemize}

The construction of the paths $\sigma^i_{Y,\pm}$ (see Section~\ref{ss.2ways})  induces a homeomorphism from $\sigma^i_X$ to $\sigma^i_Y$ (maping $\sigma^i_X(t)$ on $\sigma^i_Y(t)$.  By glueing these homeomorphisms together, we get a homeomorphism between $\sigma_X$ and $\sigma_Y$, mapping $\sigma_X(t)$ on $\sigma_Y(t)$.

%Consider the path $\sigma'_{X}$ defined as the concatenation of 
%$F^s_{X,+}(p_X)$ (with negative orientation), followed by the unstable positively oriented segment $[p_X,q_X]^u$ and the path $\sigma''_{X}$ consisting only of $F^s_{X,+}(q_X)$ positively oriented. 
 
%We remind that thanks to our choice of orientation of $M$ in the previous section, $\cP_X$ has a canonical orientation consisting of the stable direction, followed by the unstable one. Corollary~\ref{c.corresponding-pathes} associates $\sigma'_{X}$ to a path $\sigma'_{Y,+}$ containing $p_Y$ consisting of  $F^s_{Y,+}(p_Y)$ followed by a segment $[p_Y,q_Y]^u$ and also associates $\sigma''_{X}$ to a path $\sigma''_{Y,-}$ whose image is $F^s_{Y,+}(q_Y)$. 

%The concatenation of $\sigma'_{Y,+}$ and $\sigma''_{Y,-}$ defines a continuous path in $\cP_Y$ that we will denote by $\sigma_Y(t)$. We'll also denote by $\sigma_X$ the concatenation of $\sigma'_{X}$ followed by $\sigma''_{X}$.

For any point $z_X=\sigma_X(t)$, we define its \emph{corresponding point} as $z_Y:=\sigma_{Y}(t)$. 

\begin{rema}Our choice to define $\sigma_Y$ as the concatenation of the paths $\sigma^1_{Y,+}$  $\sigma^2_{Y,+}$  and $\sigma^3_{Y,-}$ may look surprising. It corresponds to the projection on $\cP_Y$ of the lifts of a sequence of (continuous) paths $\sigma_n$ disjoint from $\tilde \Ga_X$ and approching $\sigma_X$ from above on $F^s_{X,+}(p_X)$, from the right on $[p_X,q_X]^u$ and from above on $F^s_{X,+}(q_X)$.  In particular the paths $\sigma_n$ are contained in $C_{+,+}(p_X)$ and cross $F^s_{X,+}(q_X)$.

 In terms of the holonomy $h^u_{Y,p_Y,q_Y}$, this means that 
 we consider that the segments $[y,h^u_{Y,p_Y,q_Y}(y)]^u$ do not cross $F^s_{Y,+}(p_Y)$ but cross $F^s_{Y,+}(q_Y)$. (Another choice would not change the holonomy  $h^u_{Y,p_Y,q_Y}$ itself, but would change the parametrization of the path $\sigma_Y$). 
 
 This choice is convenient for composing holonomies. 
\end{rema}

%%%%%%%%%%%%%%%%%%%%%%%%%%%%%%%%%%%%%%%%%%%%%%%%%%%%%%%%
\subsection{Comparison of the holonomies in the quadrants: the formula\label{ss.formula}}
%%%%%%%%%%%%%%%%%%%%%%%%%%%%%%%%%%%%%%%%%%%%%%%%%%%%%%%%
 
We are now ready to compare the holonomies $h^u_{X,p_X,q_X}$ and $h^u_{Y,p_Y,q_Y}$. 

\begin{theo}\label{t.newholonomies} With the notations above, let $x_Y\in F^s_{Y,+} (p_Y)$. 
Then 
$$h^u_{Y,p_Y,q_Y}(x_Y)=y_Y$$
if and only if there exist $\ell\in\NN$ and two finite sequences 
$t_i\in \RR$ and $x_i\in\cP_X$ with $i\in\{0,\dots, \ell\}$ such that 

\begin{enumerate}
 \item $x_0=x_X$ and $x_l=y_X$ 
 \item $\sigma_X(t_0)=p_X$, $\sigma_X(t_\ell)=q_X$
 \item $t_i<t_{i+1}$ for $i\in\{0,\dots \ell-2\}$ and $t_{\ell-1} \leq t_{\ell}$, 
 therefore $\sigma_X(t_i)\in [p_X,q_X]^u$. We denote $q_{X,i}=\sigma_X(t_i)$,
 \item for $i\in\{1,\dots,\ell-1\}$ there exists $\mu_i\in\tilde \Ga_X$ (see figure \ref{f.rectanglesholonomy}) such that the point $q_{X,i}$  belongs to $F^s_{X,-}(\mu_i)$ and the point $x_i$ belongs to $F^s_{X,+}(\mu_i)$. We denote by $k_i$ the corresponding characteristic number of the surgery and we take $k_0=0$.  
 \item $\{x_1\}=F_X^u(x_{0})\cap F_X^s(q_{X,1})$ and $\{x_{i+1}\}=F_X^u(P^{k_i}_{\mu_i}(x_{i}))\cap F_X^s(q_{X,i+1})$ (where $P_{\mu_i}$ is the first return map of $X$ associated to $\mu_i$ see Lemma~\ref{l.firstreturn}) for $i \in\{1,\dots,\ell-1\}$. 
  
  \item  let $R_i$ for $i\in \{0,\dots,\ell-1\}$ be the rectangle ($R_{\ell-1}$ can be degenerated) bounded by the segments 
 $[q_{X,i},q_{X,i+1}]^u$, 
 $[q_{X,i+1}, x_{i+1}]^s$, $[q_{X,i}, P^{k_i}_{\mu_i}(x_i)]^s$ and 
 $[P^{k_i}_{\mu_i}(x_i),x_{i+1}]^u$ . 
 Then the interior of $R_i$ is disjoint from $\tilde \Ga_X$.

\end{enumerate}

\end{theo}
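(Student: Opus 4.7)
The plan is to derive both directions of the equivalence from the two structural tools established earlier: Proposition~\ref{p.rectangle-closure}, which canonically identifies a closed rectangle in $\cP_X$ with interior disjoint from $\tilde\Ga_X$ with a rectangle in $\cP_Y$, and Proposition~\ref{p.franchissement}, which describes via the first-return map $P^{k_i}_{\mu_i}$ how the foliations of $\cP_Y$ glue the two sides of a surgery orbit $\mu_i\in\tilde\Ga_X$. The data $(t_i,x_i,\mu_i,k_i)$ of the statement should be read as a combinatorial encoding of a finite chain of rectangles $R_0,\dots,R_{\ell-1}$ in $\cP_X$, with interiors avoiding $\tilde\Ga_X$, that together realize the unstable holonomy from $x_Y$ to $y_Y$ in $\cP_Y$ after identifications at each $\mu_i$.

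\smallskip

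For sufficiency, given such data I will process the rectangles in order of increasing $i$. Since the interior of $R_i$ is disjoint from $\tilde\Ga_X$ by~(6), Proposition~\ref{p.rectangle-closure} lifts $R_i$ to a rectangle $R_{i,\Ga}\subset\cP_\Ga$ and projects it to a rectangle $R_{Y,i}\subset\cP_Y$ conjugating the bi-foliations; in particular, the $Y$-unstable holonomy across $R_{Y,i}$ sends the point corresponding to $P^{k_i}_{\mu_i}(x_i)$ (or $x_0$ when $i=0$, where $k_0=0$) to the point corresponding to $x_{i+1}$. At each internal vertex $\mu_i$ with $1\le i\le\ell-1$, the rectangles $R_{i-1}$ and $R_i$ sit on opposite sides of $F^s_X(\mu_i)$ and abut $\mu_i$ from opposite stable sides; this is precisely the geometric configuration of Proposition~\ref{p.franchissement}, which asserts that in $\cP_Y$ the points $x_i$ and $P^{k_i}_{\mu_i}(x_i)$ on $F^s_X(\mu_i)$ project to the same point of a single stable leaf of $F^s_Y$. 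Composing these local identifications from $i=0$ to $i=\ell-1$ carries $x_Y=x_0$ to $x_\ell=y_Y$, proving $h^u_{Y,p_Y,q_Y}(x_Y)=y_Y$.

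\smallskip

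For necessity, assume $h^u_{Y,p_Y,q_Y}(x_Y)=y_Y$. The open rectangle $R_Y\subset\cP_Y$ bounded by $[p_Y,x_Y]^s$, $[p_Y,q_Y]^u$, $[q_Y,y_Y]^s$ and $[x_Y,y_Y]^u$ lifts canonically to a rectangle $R_\Ga\subset\cP_\Ga$ through the chosen basepoints. Its image $\Pi_X(R_\Ga)$ is not a single rectangle in $\cP_X$ because $\Pi_X$ is ramified over $\tilde\Ga_X$; rather, it decomposes into finitely many closed sub-rectangles $R_i$ (finite by compactness of $\overline{R_\Ga}$), with interiors disjoint from $\tilde\Ga_X$, and with each consecutive pair meeting along a stable segment containing exactly one point $\mu_i$ of $\tilde\Ga_X$. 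Reading the unstable corners of these sub-rectangles yields the sequences $t_i$ and $q_{X,i}$, and reading the stable corners yields the $x_i$'s; the gluing relation between consecutive pieces, again by Proposition~\ref{p.franchissement}, forces the recursion in~(5).

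\smallskip

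The main obstacle is the bookkeeping at the crossings $\mu_i$: one must verify that the projections to $\cP_X$ of the pieces of $R_\Ga$ meet $\tilde\Ga_X$ only along shared stable boundaries (not in interiors), and that the orientation conventions fixed in Section~\ref{ss.2ways} (the choice of $\sigma^1_{Y,+}$, $\sigma^2_{Y,+}$, $\sigma^3_{Y,-}$) put the pair $R_{i-1}$, $R_i$ in the geometric position prescribed by Proposition~\ref{p.franchissement}, so that the exponent appearing in~(5) is $+k_i$ rather than $-k_i$ (the latter being what Proposition~\ref{p.franchissement2} would produce in the mirror configuration). Once these two points are settled, both directions of the equivalence reduce to iterated applications of the two named propositions.
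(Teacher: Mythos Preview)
Your sufficiency argument is correct and is essentially the paper's proof: the induction along the chain $R_0,\dots,R_{\ell-1}$, lifting each $R_i$ via Proposition~\ref{p.rectangle-closure} and gluing consecutive pieces at $\mu_i$ via Proposition~\ref{p.franchissement}, is exactly what the paper means by ``a simple induction argument using Proposition~\ref{p.franchissement}.''

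Your necessity argument has a genuine gap. You assert that the rectangle $R_Y\subset\cP_Y$ ``lifts canonically to a rectangle $R_\Ga\subset\cP_\Ga$,'' but this fails precisely in the interesting case: the interior of $R_Y$ contains the points $\mu_{i,Y}\in\tilde\Ga_Y$ (the $\cP_Y$-images of the $\mu_i$), so $R_Y\setminus\tilde\Ga_Y$ is a multiply punctured disk and is not simply connected. The covering $\Pi_Y\colon\cP_\Ga\to\cP_Y\setminus\tilde\Ga_Y$ therefore has no single-sheeted lift of $R_Y$; its preimage is an infinite cover, so your appeal to ``compactness of $\overline{R_\Ga}$'' to obtain finitely many pieces is unfounded.

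The fix, which is what the paper actually does, is to build the sequences directly in $\cP_X$ rather than attempting to lift from $\cP_Y$. Starting from $x_0=x_X$, push the rectangle with base $[p_X,x_0]^s$ upward along $F^u_X$ until a point $\mu_1\in\tilde\Ga_X$ first appears on its top stable side; this determines $q_{X,1}$ and $x_1$, and condition~(6) holds by construction. Proposition~\ref{p.franchissement} then forces the replacement $x_1\mapsto P^{k_1}_{\mu_1}(x_1)$, and one repeats. This procedure is deterministic and computes $h^u_{Y,p_Y,q_Y}(x_Y)$ step by step; it terminates in finitely many steps exactly when the holonomy is defined, yielding both directions of the equivalence at once. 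Equivalently, you may first slice $R_Y$ along the stable leaves through the finitely many points of $R_Y\cap\tilde\Ga_Y$ into sub-rectangles whose interiors are disjoint from $\tilde\Ga_Y$, and lift each piece separately; what you cannot do is lift $R_Y$ in one shot.
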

\begin{proof}
If $q_X$ doesn't belong to the negative stable manifold of a point $\mu$ on which we've performed surgery, the above theorem is obtained by a simple induction argument using Proposition \ref{p.franchissement}. 

Otherwise, we can use a simple induction argument to calculate the holonomy from $F^s_{X,+}(p_X)$ to $F^s_{X,+}(q^-_X)$, where $q^-_X \in [p_X,q_X]^u$ and satisfies the hypothesis of the previous case. $q^-_X$ can be taken as close as we want to $q_X$.  By Proposition \ref{p.franchissement} and because of our choice of $\sigma_X$ and $\sigma_Y$, we have that changing the surgery on $\mu$ would change the parametrization of the path $\sigma''_{Y,-}$. Therefore, in order to compute the holonomy from $F^s_{X,+}(q^-_X)$ to $F^s_{X,+}(q_X)$ we must  apply Proposition \ref{p.franchissement} for two rectangles $R^-$ and $R^+$, where $R^+$ is degenerated. 
\end{proof}
\begin{figure}[h!]
\includegraphics[scale=0.85]{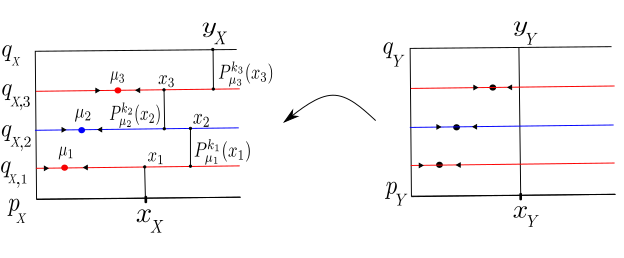}
\caption{In the above figure we performed negative surgeries along the red periodic points and positive along the blue ones. Every time we hit a stable manifold of either a blue or red point the holonomy is respectfully contracted or expanded.}
\label{f.rectanglesholonomy}
\end{figure}
Let us make some remarks about the previous theorem:
\begin{rema}In the above theorem, $\sigma_X$ and $\sigma_Y$ play the roles of local coordinates on each bi-foliated plane. Changing the definition of the above coordinates would naturally change the statement of the theorem and therefore the computation of the holonomy. 
\end{rema}
\begin{rema}
\begin{itemize}\item \quad The same statement holds for the holonomies in the $C_{-,-}$ quadrant by changing $F^s_{X,+}$ and $F^u_{X,+}$ to $F^s_{X_-}$ and $F^u_{X,-}$. In fact, it is enough to apply Theorem~\ref{t.newholonomies} after changing the orientation of both foliations $F^s_X$ and $F^u_X$. This change preserves the orientation of the manifold and hence preserves the characteristic numbers $k_i$ of the surgery. 
\item An analogous statement holds in the $C_{+,-}$ $C_{-,+}$ quadrants, but one needs to change the sign of the characteristic numbers, therefore to change $k_i$ to $-k_i$. 

For that, we apply Theorem~\ref{t.newholonomies} after changing only one of the two orientations of $F^s_X$ and $F^u_X$, thus changing the orientation of $M$ and eventually the orientation of the meridian.  For this new orientation, the characteristic number of the surgery changed sign.
\item Once again Theorem \ref{t.newholonomies} holds independently of the sign of the eigenvalues of the periodic points in $\tilde \Ga_X$

\end{itemize}
\end{rema}

%%%%%%%%%%%%%%%%%%%%%%%%%%%%%%%%%%%%%%%%%%%%%%%%%%%%%%%%%%%%%%
\subsection{The special case where $X$ is a suspension: calculating the holonomies as a dynamical game}\label{ss.game}
%%%%%%%%%%%%%%%%%%%%%%%%%%%%%%%%%%%%%%%%%%%%%%%%%%%%%%%%%%%%%

In this section, we assume that $X$ is the suspension flow of a hyperbolic Anosov diffeomorphism $f_A$, where $A\in SL(2,\ZZ)$ is a hyperbolic matrix with positive eigenvalues $0<\lambda^{-1}<1<\lambda$ (taking negative eigenvalues changes nothing to our arguments).

In this particular case, the bi-foliated plane is trivial, so the holonomies are also trivial in $\cP_X$ and the first return maps are simple to understand.  This will simplify significantly the statement of Theorem~\ref{t.newholonomies}. 

We perform a linear change of coordinates on $\cP_X=\RR^2$ so that $F^s_X$ is the horizontal foliation and $F^u_X$ is the vertical folation. In these coordinates,  $A$ is the linear map 
$$\cA=\left(\begin{array}{cc}
             \lambda^{-1}&0\\
             0&\lambda
            \end{array}\right)$$

We consider now:
\begin{itemize}\item two finite sets $\cX$ and $\cY$ of $\TT^2$ which are disjoint and $f_A$ invariant. Every point $x$ in $\cX\cup\cY$ is periodic and we denote by $\tau(x)$ its period. Notice that $\tau$ is invariant by $f_A$. 
 \item two functions $m\colon \cX\to\NN$ and $n\colon \cY\to \NN$ which are 
 $f_A$-invariant. 

\end{itemize}

By a convenient abuse of language, we'll still denote by $\cX$ and $\cY$ the periodic orbits of the vector field $X$. In this way, the functions $m$ and $n$ become  integer functions on this finite set of orbits of $X$.

We denote by $\tilde \cX$ and $\tilde \cY$ the lifts of $\cX$ and $\cY$ on $\cP_X$, which is canonically identified with the universal cover of the torus $\TT^2$.  We still denote by $\tau$, $m$ and $n$ the lifts of the previous functions. 

In the previous section, we defined the first return map $P_{ x}$ associated to a point $x\in\cP_X$ corresponding to a periodic orbit of $X$. In our setting, the first return map associated to a point $ x\in\tilde\cX\cup\tilde\cY$ is the affine map having $ x$ as its unique fixed point and $\cA^{\tau(x)}$ as its linear part. 

$$P_{x}= p\mapsto \cA^{\tau(x)}(p-x) + x$$
            
We denote by $Y$ the vector field obtained from $X$ by performing surgeries with characteristic numbers $m$ on the orbits in $\cX$ and $-n$ on the orbits in $\cY$.

We denote 
$$\mu= m\cdot \tau \mbox{ and } \nu=n\cdot \tau.$$

The aim of this section is to express Theorem~\ref{t.newholonomies} in this particular setting. 

Consider a point $p=p_X=(p^s,p^u)\in\RR^2$. We want to describe the holonomies of $F^s_Y$ and $F^u_Y$ in the quadrants $C_{\pm,\pm}(p_Y)$, where $p_Y$ is the projection on $\cP_Y$ of a lift of $p_X$ on the universal cover of $\cP_X\setminus (\tilde \cX\cup\tilde \cY)$.  Let us start from the $C_{+,+}$ quadrant, in order to avoid useless formalism. 

\subsubsection{On the $C_{+,+}$ quadrants}

Consider $r>0$, $t_0>0$, a point $q=(p^s, p^u+r)$ in the positive unstable manifold of $p$, a point $z_0=(p^s+t_0, p^u)$ in the positive stable manifold of $p$ and $p_Y,z_{0,Y}$ their corresponding points in $\cP_Y$.  One would like to know if the holonomy $h^u_{p,q}$ of $F^u_Y$ 
 from the positive stable manifold of $p_Y$ to the positive stable manifold of $q_Y$ is defined on $z_{0,Y}$ and if this is the case, what is its value. 
 
In order to answer the previous question, one considers the set of points $z_s=(p^s+t_0, p^u+s)$ in $\cP_X$, with $0<s < s_1 \leq r$, where $s_1$ is the smallest positive real for which there exists a point $\gamma_1=(p^s+u_1,p^u+s_1)\in\tilde \cX\cup\tilde\cY$, with $0<u_1<t_0$. If such an $s_1$ doesn't exist, then the holonomy from $W^s_+(p)$ to $W^s_+(q)$ is defined on $(p^s+t_0, p^u)$ and its value is $(p^s+t_0, p^u+r)$

If such an $s_1$ exists (see figure \ref{f.jeuholonomiesuspension}), then one defines $z_s=(p^s+t_1, p^u+s)$, with $s\in [s_1,s_2)$ where 
\begin{itemize}
\item $t_1= u_1+\lambda^{-\mu(\gamma_1)}(t_0-u_1)$ if $\gamma_1\in\tilde \cX$ and 
\item $t_1= u_1+\lambda^{\nu(\gamma_1)}(t_0-u_1)$ if $\gamma_1\in\tilde \cY$  
\item $s_2$ is the smallest positive number in $(s_1,r]$  such that $z_t$ crosses the positive stable manifold of a point $\gamma_2=(p^s+u_2,p^u+s_2)\in\tilde \cX\cup\tilde\cY$, with $0<u_2<t_1$. 
\end{itemize}

\begin{figure}[h!]
\includegraphics[scale=0.85]{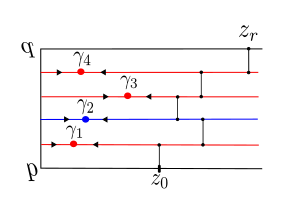}
\caption{In the above figure the periodic points on which we performed positive surgery are represented by blue and the others by red. }
\label{f.jeuholonomiesuspension}
\end{figure}
 Analogously, if such an $s_2$ doesn't exist then then the holonomy from $W^s_+(p)$ to $W^s_+(q)$ is defined on $(p^s+t_0, p^u)$ and its value is $(p^s+t_1, p^u+r)$. If $s_2$ exists, then one defines $z_s=(p^s+t_2, p^u+s)$, $s\in [s_2,s_3)$ where 
\begin{itemize}
\item $t_2= u_2+\lambda^{-\mu(\gamma_2)}(t_1-u_2)$ if $\gamma_2\in\tilde \cX$ and 
\item $t_2= u_2+\lambda^{\nu(\gamma_2)}(t_1-u_2)$ if $\gamma_2\in\tilde \cY$  
\item $s_3$ is the smallest positive number in $(s_2,r]$  such that $z_t$ crosses the positive stable manifold of a point $\gamma_3=(p^s+u_3,p^u+s_3)\in\tilde \cX\cup\tilde\cY$, with $0<u_3<t_2$.
\end{itemize}

We define by induction the sequences $t_i$, $s_{i+1}$, $u_{i+1}$, $\gamma_{i+1}$:  
 $z_s=(p^s+t_i, p^u+s)$, $s\in [s_i,s_{i+1})$ where 
\begin{itemize}
\item $t_i= u_i+\lambda^{-\mu(\gamma_i)}(u_i-t_{i-1})$ if $\gamma_i\in\tilde \cX$ and 
\item $t_i= u_i+\lambda^{\nu(\gamma_i)}(u_i-t_{i-1})$ if $\gamma_i\in\tilde \cY$  
\item $s_{i+1}$ is the smallest positive number in $(s_i,r]$ such that $z_t$ crosses the positive stable manifold of a point $\gamma_{i+1}=(p^s+u_{i+1},p^u+s_{i+1})\in\tilde \cX\cup\tilde\cY$, with $0<u_{i+1}<t_i$.
\end{itemize}

Now, 
\begin{itemize}
 \item either this process is repeated infinitely many times. In this case, the holonomy $h^u_{p,q}$ is not defined at the point $z_0$. 
 \item or the process ends when for some $i\in \mathbb{N}$, $s_i$ is not defined. In this case, $h^u_{p,q}$ is defined at the point $z_0$ and  
 $$h^u_{p,q}(z_0)= z_r$$ 
\end{itemize}

In this game, one sees that 
\begin{itemize}\item the points in $\tilde{\cX}$ induce a contraction of the horizontal coordinate of $z_s$, increasing the chances of the holomomy to be defined on $z_0$
\item \emph{a contrario} the points  in $\tilde{\cY}$ induce an expansion of the horizontal coordinate of $z_s$ making it in this way more likely to meet the positive stable manifold of new points in $\tilde \cX\cup\tilde \cY$. If the new points are in $\tilde{\cY}$ the expansion continues. This explains why, after surgeries, the quadrant $C_{+,+}(p)$ may be no more complete for $Y$.  This is what happens if $\tilde{\cX}$ is empty, which was already shown in \cite{Fe1}. 
\end{itemize}

When playing the previous game, an important tool emerges: if $2$ successive points $\gamma_i$ and $\gamma_{i+1}$ both belong to $\tilde \cX$ (resp. $\tilde \cY$), then there is a rectangle admitting $\gamma_i$ and $\gamma_{i+1}$ as corners, which is disjoint from $\tilde \cY$ (resp. from $\tilde \cX$).  This rectangle will be the main object of Section~\ref{s.rectangle}.  The existence or not of such rectangles is what determines the different cases that we consider in our study. 

\subsubsection{In the $C_{-,-}$ quadrants}

 The game in the $C_{-,-}$ quadrants is identical: crossing the negative stable manifold of a point in $\tilde{\cY}$ (resp. $\tilde{\cX}$)  induces an expansion (resp. contraction).

\subsubsection{In the $C_{-,+}$ and $C_{+,-}$ quadrants }

In the $C_{+,-}$ and $C_{-,+}$ quadrants, the descrition of the game is similar, but the roles of $\tilde{\cX}$ and $\tilde{\cY}$ are interchanged (the unique difference in the formulas is the sign before $\mu$ and $\nu$): 

\begin{itemize}
\item $t_i= u_i+\lambda^{+\mu(\gamma_i)}(u_i-t_{i-1})$ if $\gamma_i\in\tilde \cX$ 
\item $t_i= u_i+\lambda^{-\nu(\gamma_i)}(u_i-t_{i-1})$ if $\gamma_i\in\tilde \cY$  
\end{itemize}

Thus in these quadrants crossing the stable (positive or negative, according to the quadrant) separatrix of a point in $\tilde{\cY}$ induces a contraction and crossing the separatrix of a point in $\tilde{\cX}$ induces an expansion.

%%%%%%%%%%%%%%%%%%%%%%%%%%%%%%%%%%%%%%%%%%%%%%%%%%%%%%%%%
%%%%%%%%%%%%%%%%%%%%%%%%%%%%%%%%%%%%%%%%%%%%%%%%%%%%%%%%%
\section{Surgeries on the geodesic flow and $\RR$-covered Anosov flows}\label{s.geodesic}
%%%%%%%%%%%%%%%%%%%%%%%%%%%%%%%%%%%%%%%%%%%%%%%%%%%%%%%%%%
%%%%%%%%%%%%%%%%%%%%%%%%%%%%%%%%%%%%%%%%%%%%%%%%%%%%%%%%%
The main goal of this section is to prove  Theorem~\ref{t.geodesic}: surgeries on a set of periodic orbits associated to disjoint simple closed geodesics do not change the bi-foliated plane.

We start by formulating a general criterion for preserving the $\RR$-covered character of Anosov flows after surgeries. 

%%%%%%%%%%%%%%%%%%%%%%%%%%%%%%%%%%%%%%%%%%%%%%%%%%%%%%%%%%%%%%%%%%
\subsection{$\RR$-covered Anosov flows}
%%%%%%%%%%%%%%%%%%%%%%%%%%%%%%%%%%%%%%%%%%%%%%%%%%%%%%%%%%%%%%%%%%
We remind that for any finite set of periodic orbits $\Gamma$, ${\cS}urg(X, \Gamma)$ denotes the set of Anosov flows obtained by $X$ by performing surgeries on $\Gamma$ up to orbital equivalence. In this section, $X$ is either a suspension or a positively twisted Anosov flow.  In other words, the bi-foliated plane $(\cP_X, F^s_X,F^u_X)$ is either trivial or conjugated to the restriction of the trivial (horizontal/vertical) foliations of $\RR^2$ to the strip $\{(x,y)\in \RR^2, |x-y|<1\}$. 

According to Corollary~\ref{c.caractR}, our hypothesis is equivalent to the following property:  for any $x\in\cP_X$ the quadrants $C_{+,+}(x)$ and $C_{-,-}(x)$ have the complete intersection  property (in other words they are complete). 

For every $x\in \cP_X$, let us denote 
$$\De_+(x)=\{y\in \cP_X, F^s_+(x)\cap F^u_-(y)\neq \emptyset \mbox{ and }F^u_+(x)\cap F^s_-(y)\neq \emptyset\}$$ 
$$\De_-(x)=\{y\in \cP_X, F^s_-(x)\cap F^u_+(y)\neq \emptyset \mbox{ and }F^u_-(x)\cap F^s_+(y)\neq \emptyset\}$$

These sets are included respectively in $C_{+,+}(x)$ and $C_{-,-}(x)$.  Our hypothesis is equivalent to the fact that for any $x\in \cP_X$, $\De_+(x)$ and $\De_-(x)$ are conjugated to the trivially bi-foliated plane.

By Propositions~\ref{p.caractR} and~\ref{p.caractR1} we obtain the desired criterion: 

\begin{coro}\label{c.caractR} Let $X$ be an Anosov flow which is $\RR$-covered positively twisted. Let $\Ga$ be a finite set of periodic orbits of $X$ 
 
Assume that for all  $x\in\tilde \Ga_X$ corresponding to $\gamma\in\Ga$ 
$$ \De_+(x)\cap \tilde\Ga_X=\emptyset=\De_-(x)\cap \tilde \Ga_X.$$

Then every $Y \in {\cS}urg(X,\Ga)$ is $\RR$-covered positively twisted. 
\end{coro}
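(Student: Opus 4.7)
The natural strategy is to invoke the criterion established just above, namely that if every $C_{+,+}$ and $C_{-,-}$ quadrant in a bi-foliated plane is complete, then the flow is either trivially bi-foliated or $\RR$-covered positively twisted. Applied to $\cP_Y$, this reduces the corollary to two tasks: showing that completeness of all $C_{+,+}(y)$ and $C_{-,-}(y)$ quadrants survives the surgery, and ruling out the trivially bi-foliated alternative for $Y$. By the obvious symmetry between the hypotheses on $\De_+$ and $\De_-$, I would carry out the completeness argument only for $C_{+,+}$ quadrants of $\cP_Y$.

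Fix $y_Y \in \cP_Y$ and a corresponding point $y_X \in \cP_X$ (directly if $y_Y \notin \tilde\Ga_Y$, or via the canonical identification $\tilde\Ga_Y \leftrightarrow \tilde\Ga_X$ if $y_Y \in \tilde\Ga_Y$). Pick $z_Y \in F^u_{Y,+}(y_Y)$ and $w_Y \in F^s_{Y,+}(y_Y)$; the plan is to show that the unstable holonomy $h^u_{Y,y_Y,z_Y}$ is defined at $w_Y$. I would apply Theorem~\ref{t.newholonomies} to describe this holonomy by running the dynamical game along the unstable segment from $y_X$ to a corresponding $z_X$ in $\cP_X$: the game produces a finite or infinite sequence of surgery encounters $\mu_1, \mu_2, \dots \in \tilde\Ga_X$, intermediate points $x_i$, and rectangles $R_i$ whose interiors are disjoint from $\tilde\Ga_X$ and whose structure is governed by the first-return maps $P_{\mu_i}$.

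The key step is to show that the sequence of encounters has length at most one. Conditions (4)--(6) of Theorem~\ref{t.newholonomies} place $\mu_{i+1}$ in the closed $(+,+)$-quadrant based at $\mu_i$; since $X$ is positively twisted $\RR$-covered, $C_{+,+}(\mu_i)$ is complete, so the two stable and two unstable segments produced by the game close up into a rectangle with $\mu_i$ as one corner, forcing $\mu_{i+1} \in \De_+(\mu_i)$. The hypothesis $\De_+(\mu_i) \cap \tilde\Ga_X = \emptyset$ then forbids such a $\mu_{i+1}$, and the sequence terminates. A one-encounter game yields a well-defined holonomy value, since the first-return map $P_{\mu_1}$ is a homeomorphism of $F^s(\mu_1)$ mapping the relevant stable segments to each other; hence $C_{+,+}(y_Y)$ is complete. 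The symmetric argument with $\De_-$ gives completeness of $C_{-,-}(y_Y)$.

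The main obstacle I expect is the careful verification that two consecutive encounters $\mu_i, \mu_{i+1}$ really do force $\mu_{i+1} \in \De_+(\mu_i)$: one must unpack conditions (4)--(6) of Theorem~\ref{t.newholonomies} to track how the transported point $P^{k_i}_{\mu_i}(x_i)$ and the successor $\mu_{i+1}$ are positioned relative to $\mu_i$ (including orientation issues when $\mu_i$ has negative stable eigenvalue), and to check that the completeness granted by $X$ being positively twisted $\RR$-covered really closes the rectangle on the side we need. Ruling out the trivially bi-foliated alternative for $Y$ is a separate topological aside: a suspension Anosov flow lives on a torus bundle over $S^1$, a class of manifolds that is destroyed by any nontrivial surgery on a nontrivial Anosov flow, and when all characteristic numbers vanish one has $Y = X$ and the conclusion is immediate.
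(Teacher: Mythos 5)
Your reduction to completeness of all $C_{+,+}$ and $C_{-,-}$ quadrants of $\cP_Y$ is the right frame, but the key step fails: in the holonomy game of Theorem~\ref{t.newholonomies}, two consecutive encounters $\mu_i,\mu_{i+1}$ do \emph{not} force $\mu_{i+1}\in\De_+(\mu_i)$. The rectangle $R_i$ of condition (6) has its lower stable side $[q_{X,i},P^{k_i}_{\mu_i}(x_i)]^s$ straddling $\mu_i$ (since $q_{X,i}\in F^s_{X,-}(\mu_i)$ while $P^{k_i}_{\mu_i}(x_i)\in F^s_{X,+}(\mu_i)$), so $\mu_i$ is an interior point of that side, not a corner of the quadrant picture, and $\mu_{i+1}$, which lies in the interior of the upper side, may sit on either side of $F^u_X(\mu_i)$ in the product structure of $R_i$. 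If $\mu_{i+1}$ lies between the axis $F^u_X(p_X)$ and $F^u_X(\mu_i)$, then $F^u_{X,+}(\mu_i)$ meets $F^s_X(\mu_{i+1})$ only on its positive half, so $\mu_{i+1}\notin\De_+(\mu_i)$ (and one checks $\mu_i\notin\De_\pm(\mu_{i+1})$ as well), and the hypothesis yields no contradiction; the game may then continue through arbitrarily many encounters drifting towards the axis, which is precisely the mechanism by which surgeries destroy completeness in Section~\ref{ss.game}. Note also that your first encounter $\mu_1$ is not constrained at all, because the hypothesis only controls $\De_\pm(x)$ for $x\in\tilde\Ga_X$ and your base point is arbitrary; so "at most one encounter" is doubly unjustified. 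The hypothesis bites exactly when the corner of the quadrant is itself a point $x\in\tilde\Ga_X$: then the first rectangle $R_0$ has $x$ as a corner, its lower side lies in $F^s_{X,+}(x)$, and any first encounter $\mu_1$ would satisfy $F^u_{X,-}(\mu_1)\cap F^s_{X,+}(x)\neq\emptyset$ and $F^s_{X,-}(\mu_1)\cap F^u_{X,+}(x)\neq\emptyset$, i.e.\ $\mu_1\in\De_+(x)$, which is excluded. This gives unchanged holonomies, hence completeness, only at the surgery points (this is Proposition~\ref{p.caractR1}); completeness at all other points of $\cP_Y$ is then obtained in the paper by a separate propagation argument (Proposition~\ref{p.caractR}): assume a quadrant $C_{+,+}(x_Y)$ is incomplete, take the first point where the holonomy fails, locate the first element of $\tilde\Ga_Y$ whose positive stable separatrix is crossed, and use the already established completeness at that point to contradict minimality. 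Your proposal is missing this second mechanism, and the direct game argument cannot replace it.

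A secondary problem is your dismissal of the trivially bi-foliated alternative: the claim that a nontrivial surgery necessarily destroys the torus-bundle structure (equivalently, cannot produce a suspension) is not something you can invoke; whether surgery on a non-suspension Anosov flow can yield a suspension is essentially Shannon's question, treated as open in this paper. Any honest treatment of that alternative must argue differently (compare the indirect argument via Fenley's result used for Corollary~\ref{c.twisted}).
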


\begin{prop}\label{p.caractR} Let $X$ be an Anosov flow which is either a suspension or $\RR$-covered positively twisted. Let $\Ga$ be a finite set of periodic orbits of $X$ and $Y \in {\cS}urg(X,\Ga)$.

Assume that for every $x\in \tilde \Ga_Y$ the quadrants $C_{+,+}(x)$ and $C_{-,-}(x)$ are complete. Then $Y$ is either a suspension or is $\RR$-covered positively twisted. 
\end{prop}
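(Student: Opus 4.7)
\textbf{Proof proposal for Proposition~\ref{p.caractR}.} The plan is to verify the hypotheses of the second characterization in Corollary~\ref{c.caractR}, namely that every $C_{+,+}(p_Y)$ and every $C_{-,-}(p_Y)$ is complete in $\cP_Y$; this Corollary will then immediately yield that $Y$ is either a suspension or positively twisted $\RR$-covered. A symmetry argument lets us focus on $C_{+,+}$: simultaneously reversing the orientations of $F^s_Y$ and $F^u_Y$ preserves the orientation of $\cP_Y$, the sign of the characteristic numbers of the surgeries, and the completeness hypothesis on $\tilde \Ga_Y$ (swapping the roles of $C_{+,+}$ and $C_{-,-}$).

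Fix $p_Y\in\cP_Y$, $q_Y\in F^u_{Y,+}(p_Y)$ and $x_Y\in F^s_{Y,+}(p_Y)$; we must show that $h^u_{Y,p_Y,q_Y}$ is defined at $x_Y$. Using the common cover $\cP_\Ga$ of Section~\ref{s.chirurgies}, lift the three points consistently and project down to $\cP_X$ to obtain $p_X,q_X,x_X$. Since $X$ is a suspension or positively twisted $\RR$-covered, Remark~\ref{r.Rcoveredquadrants} tells us that $C_{+,+}(p_X)$ is complete in $\cP_X$, so $h^u_{X,p_X,q_X}$ is defined at $x_X$ with a target value $y_X$. The holonomy of $Y$ can then be reconstructed from the holonomy of $X$ via Theorem~\ref{t.newholonomies}: it suffices to exhibit a finite sequence of corrections $(t_i,x_i,\mu_i)$ through points $\mu_i\in\tilde\Ga_X$ meeting the path joining $x_X$ to $y_X$.

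We build this sequence by running the dynamical game of Section~\ref{ss.game}: starting from $x_X$, move in the positive unstable direction, and whenever the trajectory crosses $F^s_{X,+}(\mu_i)$ for some $\mu_i\in\tilde\Ga_X$, apply the first-return map $P^{k_i}_{\mu_i}$ and continue from the corrected position. The hypothesis of the proposition is used precisely at each such correction: each $\mu_i$, viewed as a point of $\tilde\Ga_Y$, has a complete $C_{+,+}$ quadrant in $\cP_Y$, which ensures that the corrected position still admits a well-defined next step of the game and does not generate a new obstruction to completion.

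The main obstacle is therefore to show that the game terminates in finitely many steps, producing the finite witnessing sequence required by Theorem~\ref{t.newholonomies}. Termination should follow by combining three ingredients: (i) the rectangles $R_i$ produced by the game have interiors disjoint from $\tilde\Ga_X$ and are stacked monotonically in the unstable direction along the finite segment $[p_X,q_X]^u$; (ii) the target point $y_X\in F^s_{X,+}(q_X)$ is fixed and finite; and (iii) the completeness assumed at each $\mu_i\in\tilde\Ga_Y$ bounds the horizontal overshoot of each jump, preventing the trajectory from running away from $F^s_{X,+}(q_X)$. Once termination is secured, Theorem~\ref{t.newholonomies} gives that $h^u_{Y,p_Y,q_Y}(x_Y)$ is defined, so $C_{+,+}(p_Y)$ is complete, which together with the symmetric argument for $C_{-,-}$ concludes the proof.
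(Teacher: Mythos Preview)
Your approach has a genuine gap at exactly the spot you flag as the ``main obstacle'': you do not actually prove termination of the game, and the three ingredients you list do not combine to give it. Point (iii) is the weak link: the completeness of $C_{+,+}(\mu_i)$ is a statement about holonomies \emph{in $\cP_Y$}, not a bound on the horizontal coordinate of the game you are running \emph{in $\cP_X$}. When the surgery at $\mu_i$ has negative characteristic number the correction $P^{k_i}_{\mu_i}$ is an expansion, pushing the running point further to the right and potentially meeting arbitrarily many more positive stable separatrices of points of $\tilde\Ga_X$ before reaching $F^s_X(q_X)$; nothing you wrote rules this out. (A side issue: the explicit game of Section~\ref{ss.game} is only set up for $X$ a suspension, whereas the proposition also covers $X$ positively twisted $\RR$-covered; you would have to argue directly with Theorem~\ref{t.newholonomies}.)

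The paper avoids the termination problem entirely by a minimal-counterexample trick. Arguing by contradiction, take the \emph{first} point $y_Y$ on $F^s_{Y,+}(p_Y)$ at which the unstable holonomy towards $F^s_{Y,+}(q_Y)$ fails, and let $\tilde\gamma_X\in\tilde\Ga_X$ be the \emph{first} surgery point whose positive stable separatrix is crossed by $F^u_{X,+}(y_X)$ inside the strip between $F^u_X(p_X)$ and $F^u_X(y_X)$ (such a point exists by Theorem~\ref{t.newholonomies}, since otherwise the holonomy would be unaffected). The unstable leaf of $\tilde\gamma_X$ meets $F^s_{X,+}(p_X)$ at some $y_0$ strictly between $p_X$ and $y_X$; by minimality of $y_Y$ the $Y$-holonomy is defined at $y_{0,Y}$, hence $F^u_{Y,+}(\tilde\gamma_Y)$ already reaches $F^s_{Y,+}(q_Y)$. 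Now both $F^u_{Y,+}(y_Y)$ and $F^s_{Y,+}(q_Y)$ enter the quadrant $C_{+,+}(\tilde\gamma_Y)$, and a \emph{single} application of the completeness hypothesis there forces $F^u_{Y,+}(y_Y)\cap F^s_{Y,+}(q_Y)\neq\emptyset$, a contradiction. The minimality is what replaces your unfinished termination argument: instead of controlling an infinite game, one reroutes through one complete quadrant at a surgery point and is done.
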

In other words, the completeness of the $C_{+,+}$ and $C_{-,-}$ quadrants at the points where we performed the surgery guarantees the completeness of every $C_{+,+}$ and $C_{-,-}$ quadrant.

\begin{prop}\label{p.caractR1} Let $X$ be an Anosov flow which is either a suspension or $\RR$-covered positively twisted. Let $\Ga$ be a finite set of periodic orbits of $X$ and $Y \in {\cS}urg(X,\Ga)$.
 
Assume that there is  $x\in\tilde \Ga_X$ corresponding to $\gamma\in\Ga$ such that 
$$ \De_+(x)\cap \tilde\Ga_X=\emptyset$$

Then for any $y\in\tilde\Ga_Y$ corresponding to $\gamma$ one has that 
$C_{+,+}(y)$ is complete. 
The same statement holds if we change $\De_+(x)$ to $\De_-(x)$ and $C_{+,+}(y)$ to $C_{-,-}(y)$. 
\end{prop}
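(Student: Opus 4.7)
The plan is to apply Proposition~\ref{p.rectangle-closure} to the region $\De_+(x)$ in order to transfer completeness of $C_{+,+}(x)$ in $\cP_X$ to completeness of $C_{+,+}(y)$ in $\cP_Y$; the proof will then consist in pulling back the intersection point given by completeness on the $X$-side to the $Y$-side via the extended homeomorphism.

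First I would observe that, since $X$ is a suspension or is positively twisted, every quadrant $C_{+,+}$ in $\cP_X$ is complete, and hence by its very definition $\De_+(x)$ is a closed, trivially bi-foliated quadrant-like region with corner $x$, having $F^s_+(x)$ and $F^u_+(x)$ as its two boundary arcs issued from $x$. The hypothesis $\De_+(x)\cap\tilde\Ga_X=\emptyset$ (for the open part; $x$ itself sits on the boundary) says exactly that the interior of $\De_+(x)$ is disjoint from $\tilde\Ga_X$, so Proposition~\ref{p.rectangle-closure} applies: selecting a suitable connected component of the lift of $\mathrm{Int}(\De_+(x))$ to $\cP_\Ga$, the map $\Pi_Y\circ \Pi_X^{-1}$ extends on $\De_+(x)$ to a homeomorphism $\Phi\colon \De_+(x)\to \De_Y\subset\cP_Y$ onto a trivially bi-foliated closed region, preserving $F^s$ and $F^u$ together with their transverse orientations. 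Choosing the lift whose boundary abuts the prescribed point $y\in\tilde\Ga_Y$, one arranges $\Phi(x)=y$, and $\Phi$ then identifies the boundary arcs $F^s_+(x),F^u_+(x)$ of $\De_+(x)$ with the corresponding half-leaves $F^s_+(y),F^u_+(y)$ in $\cP_Y$.

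With this identification available, completeness of $C_{+,+}(y)$ follows by a short pull-back calculation. Given $z_1\in F^s_+(y)$ and $z_2\in F^u_+(y)$, set $z_i'=\Phi^{-1}(z_i)$. Completeness of $C_{+,+}(x)$ in $\cP_X$ yields a (unique) point $w'\in F^u_+(z_1')\cap F^s_+(z_2')$, and a direct check from the definition shows $w'\in \De_+(x)$: indeed $z_1'\in F^s_+(x)\cap F^u_-(w')$ and $z_2'\in F^u_+(x)\cap F^s_-(w')$. Then $w:=\Phi(w')\in\De_Y\subset C_{+,+}(y)$ realizes a point of $F^u_+(z_1)\cap F^s_+(z_2)$, establishing completeness of $C_{+,+}(y)$. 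The analogous statement for $\De_-(x)$ and $C_{-,-}(y)$ is obtained verbatim by reversing the orientations of both $F^s_X$ and $F^u_X$, which does not affect either the characteristic numbers of the surgeries (as noted after Theorem~\ref{t.newholonomies}) or the hypotheses of the proposition.

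The step I expect to be most delicate is the coherent bookkeeping of lifts, namely choosing the connected component of the lift of $\mathrm{Int}(\De_+(x))$ to $\cP_\Ga$ whose closure contains a lift of $\gamma$ that $\Pi_Y$ sends to the prescribed $y$, and then verifying that the extension of $\Pi_Y\circ\Pi_X^{-1}$ provided by Proposition~\ref{p.rectangle-closure} indeed sends $x$ to $y$ in an orientation-preserving way. To handle the clause \emph{for any $y$ corresponding to $\gamma$}, I would use the $\pi_1(M)$-equivariance of the construction: since $\De_+(\theta_X(g)\cdot x)=\theta_X(g)\cdot \De_+(x)$, the hypothesis is satisfied for every lift of $\gamma$ in $\tilde\Ga_X$, and the equivariant version of Proposition~\ref{p.rectangle-closure} then delivers the conclusion at every corresponding $y\in\tilde\Ga_Y$. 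Once the lift question is settled, the rest of the argument is a formal consequence of the definitions of $\De_\pm(x)$ and of completeness of a quadrant.
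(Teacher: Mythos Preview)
Your approach is correct and is essentially the same argument as the paper's, just invoked through a different (and in fact more primitive) tool from Section~\ref{s.chirurgies}. The paper phrases the proof as a one-line application of Theorem~\ref{t.newholonomies}: since $\De_+(x)\cap\tilde\Ga_X=\emptyset$, the positive unstable leaves through points of $F^s_{X,+}(x)$ never cross a positive stable separatrix of a surgery point before reaching $F^s_{X,+}(q_X)$, so the integer $\ell$ in Theorem~\ref{t.newholonomies} is $1$ and the holonomies in $C_{+,+}$ are literally unchanged by the surgery. You instead apply Proposition~\ref{p.rectangle-closure} directly to $\De_+(x)$ and transport completeness through the resulting bifoliated homeomorphism. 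These are the same observation: the $\ell=1$ case of Theorem~\ref{t.newholonomies} is precisely the content of Proposition~\ref{p.rectangle-closure} applied to the relevant rectangles, so your route is if anything slightly more economical.

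The one point you flag as delicate---that $\Phi$ carries $F^s_+(x)$ and $F^u_+(x)$ onto the \emph{full} half-leaves $F^s_+(y)$ and $F^u_+(y)$, not merely into them---does require a word. It holds because a complete half-leaf of $F^{s}_X$ through $x$, once punctured at $x$, lifts to a complete half-leaf in $\cP_\Ga$ and hence projects by $\Pi_Y$ onto a complete half-leaf of $F^s_Y$ abutting a point of $\tilde\Ga_Y$ over $\gamma$; this is exactly the identification set up in Section~3.4 that underlies the statement of Theorem~\ref{t.newholonomies}. With that granted, your pull-back computation of completeness is fine, and your handling of the ``for any $y$'' clause via $\pi_1$-equivariance is the right move.
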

\begin{proof} [Proof of Proposition~\ref{p.caractR1}]  This is a straightforward consequence of Theorem~\ref{t.newholonomies}, where the number $\ell$ in the statement is $1$: the positive unstable leaves $F^u_{X,+}(z)$ with $z\in F^s_{X,+}(x)$ do not meet any positive stable separatrix of an element in $\tilde \Gamma_X$. Therefore, the holonomies in $\De_+(x)$ are not affected at all by the surgeries on $\Gamma$. 
\end{proof}

Corollary~\ref{c.caractR} is a straightforward consequence of Propositions~\ref{p.caractR} and~\ref{p.caractR1}.  We are therefore only going to prove Proposition~\ref{p.caractR}.

\begin{proof}[Proof of Proposition~\ref{p.caractR}] Assume, by contradiction, that there is $x_Y\in \cP_Y$  such that the quadrant  $C_{+,+}(x_Y)$ (or $C_{-,-}(x_Y)$)  has not the complete intersection property.  In other words, there is $z_Y\in F^u_{Y,+}(x_Y)$ such that the holonomy $h^u_{Y,x_Y,z_Y}$ is not defined on the whole $F^s_{Y,+}(x_Y)$.  However, by transversality, the holonomy is defined on an open interval containing $x_Y$ so there exists a first point $y_Y$ on which the holonomy is not defined.

Consider the triple of corresponding points $x_X$, $z_X\in F^u_{X,+}(x)$, $y_X\in F^s_{X,+}(x_X)$. 

As the holonomy is not defined at $y_Y$, Theorem~\ref{t.newholonomies} implies that $F^u_{X,+}(y_X)$ intersects the positive stable separatrix of a point in $\tilde \Ga_X$ between  $F^u_{X,+}(x_X)$  and $F^u_{X,+}(y_X)$  (see figure \ref{f.proposition41}).  Let $\tilde \gamma_X$ be the first such point. There exists a rectangle $R$, whose interior is disjoint from $\tilde\Ga_X$ and whose boundary consists of $[x_X,y_X]^s$, a segment of $F^u_{X,+}(x_X)$,  a segment of $F^u_{X,+}(y_X)$ and a segment of $F^s_X(\gamma_X)$ containing $\gamma_X$.  Hence, there is a point $y_0$ in $F^u_X(\tilde \gamma_X)\cap F^s_{X,+}(x_X)$ belonging to $(x_X,y_X)^s$.   

This implies that the holonomy map $h^u_{Y,x_Y,z_Y}$  is defined on  $y_{0,Y}$, hence $F^u_{Y,+} (\tilde \gamma_Y)$ cuts  $F^s_{Y,+}(z_Y)$. 

In other words, both $F^u_{Y,+}(y_Y)$ and $F^u_{Y,+}(z_Y)$ eventually enter the quadrant $C_{+,+}(\tilde\gamma_Y)$.   By assumption, $C_{+,+}(\tilde \gamma_Y)$ is complete, so $F^u_{Y,+}(y_Y)\cap F^s_{Y,+}(z_Y)\neq \emptyset$ which contradicts our initial hypothesis.   
\begin{figure}[h!]
\includegraphics[scale=0.60]{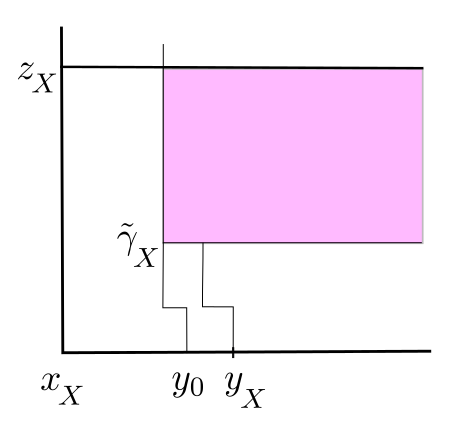}
\caption{}
\label{f.proposition41}
\end{figure}
\end{proof}

%%%%%%%%%%%%%%%%%%%%%%%%%%%%%%%%%%%%%%%%%%%%%%%%%%%%%%%%%%%%%%
\subsection{The geodesic flow}
%%%%%%%%%%%%%%%%%%%%%%%%%%%%%%%%%%%%%%%%%%%%%%%%%%%%%%%%%%%%%%%

Theorem~\ref{t.geodesic} is now a straightforward consequence of the following proposition:

\begin{prop}\label{p.geodesic}Let $S$ be a hyperbolic surface. Let $\Ga=\{\gamma_1^+,\gamma_1,^-,\dots ,\gamma_k^+, \gamma_k^-\}$ be orbits of the geodesic flow $X$ corresponding to closed simple dijoints geodesics $(c_1,\dots, c_k)$.  Then for any $\tilde \gamma$ in $\tilde \Ga_X$ one has 

$$\De_+(\tilde \gamma)\cap \tilde\Ga_X=\emptyset=\De_-(\tilde \gamma)\cap \tilde \Ga_X.$$
\end{prop}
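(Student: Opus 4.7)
The plan is to identify the bi-foliated plane $(\cP_X,F^s_X,F^u_X)$ of the geodesic flow with the universal cover of the space of oriented geodesics of $\HH^2$, to compute $\De_\pm(\tilde\gamma)$ explicitly in this model, and finally to invoke the defining geometric property of disjoint simple closed geodesics.

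In a suitable parametrization, the bi-foliated plane of the geodesic flow is modeled by the strip
$$\cS=\{(x,y)\in\RR^2:0<y-x<1\},$$
where $\partial_\infty\HH^2=S^1=\RR/\ZZ$ is normalized so that $(x,y)\in\cS$ represents the oriented geodesic of $\HH^2$ with backward endpoint $x\mod{1}$ and forward endpoint $y\mod{1}$. Stable (resp.\ unstable) leaves are the horizontal (resp.\ vertical) lines of $\cS$, corresponding to the common forward (resp.\ backward) endpoint, and $F^s_+$ (resp.\ $F^u_+$) is oriented by increasing $x$ (resp.\ $y$). Under this identification every $\tilde\gamma'\in\tilde\Ga_X$ is an oriented lift in $\HH^2$ of one of the $c_i$, up to deck translation in $\cP_X$.

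A direct computation from the definition, imposing the strip condition on each of the two intersection points appearing in $\De_\pm(\tilde\gamma)$, yields for $\tilde\gamma=(a,b)$
$$\De_+(\tilde\gamma)=(a,b)\times(b,a+1),\qquad \De_-(\tilde\gamma)=(b-1,a)\times(a,b),$$
two open rectangles inside $\cS$. Reading these modulo $\ZZ$, membership of $(a',b')$ in $\De_+(\tilde\gamma)\cup\De_-(\tilde\gamma)$ is equivalent to the pairs $\{a,b\}$ and $\{a',b'\}$ being \emph{linked} on $S^1$, equivalently, to the oriented geodesics $(a,b)$ and $(a',b')$ crossing in $\HH^2$. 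This is the key geometric step, where the specific structure of the bi-foliated plane of the geodesic flow is translated into a condition on the boundary at infinity.

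The hypothesis on $\Ga$ now enters: since $c_1,\ldots,c_k$ are simple and pairwise disjoint on $S$, any two distinct lifts in $\HH^2$ of geodesics from $\{c_1,\ldots,c_k\}$ are disjoint geodesics of $\HH^2$, hence have non-linked ideal endpoints. Combined with the previous step, this gives $\tilde\gamma'\notin\De_+(\tilde\gamma)\cup\De_-(\tilde\gamma)$ whenever $\tilde\gamma,\tilde\gamma'$ come from distinct lifts in $\HH^2$. The main -- and essentially only -- point that then needs explicit verification is the handling of the degenerate configurations where $\tilde\gamma$ and $\tilde\gamma'$ project to the same lift in $\HH^2$, namely $\tilde\gamma'=\tilde\gamma$, non-trivial deck translates $(a+n,b+n)$ in $\cP_X$, and orientation-reversed lifts $(b+n,a+1+n)$ for $n\in\ZZ$. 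In each of these cases $\tilde\gamma'$ shares an ideal endpoint with $\tilde\gamma$, so the corresponding point of $\cS$ falls on one of the boundary lines $\{x=a\}$, $\{x=b\}$, $\{y=b\}$, $\{y=a+1\}$ of the rectangles above (and symmetrically for $\De_-$), hence strictly outside the open rectangles $\De_\pm(\tilde\gamma)$, which completes the argument.
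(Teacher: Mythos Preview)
Your proof is correct and follows essentially the same approach as the paper: the paper reduces the proposition to Lemma~\ref{l.geodesic}, which identifies $\De_+(\tilde\gamma)$ with the set of oriented geodesics in $\DD$ crossing $\gamma_{\tilde\gamma}$ transversely and positively, and then invokes the disjointness of the lifts of the $c_i$. Your explicit computation of $\De_\pm(\tilde\gamma)$ as open rectangles in the strip model and the translation into linking of ideal endpoints is exactly this identification carried out in coordinates; your additional verification of the degenerate cases ($\ZZ$-translates and orientation-reversed lifts) makes explicit a point the paper leaves implicit.
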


Propsition~\ref{p.geodesic} is itself a straighforward consequence of the next lemma: 
\begin{lemm}\label{l.geodesic} Consider $x\in\cP_X$ and $\gamma_x$ the corresponding geodesic in the Poincar\'e disk $\DD$.  Then $\De_{+}(x)$ is identified to the set of geodesics cutting transversely and positively $\gamma_x$. 
 
\end{lemm}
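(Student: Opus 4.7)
The plan is to explicitly compute $\De_+(x)$ in coordinates adapted to the geodesic flow, and then read off the geometric meaning on $\DD$.

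First I would identify $\cP_X$ with the universal cover of the space of oriented geodesics in $\DD$. An oriented geodesic is determined by its pair of endpoints on $\partial\DD \simeq \RR/2\pi\ZZ$; the resulting space $(\partial\DD \times \partial\DD) \setminus \Delta$ is an open annulus, whose universal cover is the strip
\[
\cP_X \;\simeq\; \{(u,v)\in \RR^2 \;:\; u < v < u+2\pi\},
\]
where $u,v$ are lifts to $\RR$ of the backward and forward endpoints of the geodesic. In these coordinates the stable leaves are $\{v=\text{const}\}$ and the unstable leaves are $\{u=\text{const}\}$, recovering the twisted $\RR$-covered model from Section~\ref{ss.RnonR}.

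Next I would fix the orientations of $F^s_X$ and $F^u_X$ (those induced from $T^1(S)$) so that $F^s_X$ points in the direction of increasing $u$ and $F^u_X$ in the direction of increasing $v$. For $x$ corresponding to $(u_0, v_0)$, this yields
\[
F^s_+(x)=\{(u,v_0)\,:\, u_0<u<v_0\},\qquad F^s_-(x)=\{(u,v_0)\,:\, v_0-2\pi<u<u_0\},
\]
\[
F^u_+(x)=\{(u_0,v)\,:\, v_0<v<u_0+2\pi\},\qquad F^u_-(x)=\{(u_0,v)\,:\, u_0<v<v_0\}.
\]

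The key computation is to unpack $\De_+(x)$ for $y=(u_1,v_1)$. The condition $F^s_+(x)\cap F^u_-(y)\neq\emptyset$ translates into $u_0<u_1<v_0$ and $v_0<v_1$, while $F^u_+(x)\cap F^s_-(y)\neq\emptyset$ translates into $v_0<v_1<u_0+2\pi$ and $u_0<u_1$. Combining, $y\in\De_+(x)$ if and only if
\[
u_0 \;<\; u_1 \;<\; v_0 \;<\; v_1 \;<\; u_0+2\pi.
\]
Thus $\De_+(x)$ is the open square in $\cP_X$ bounded by the four half-leaves $F^{s,u}_{\pm}(x)$.

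Finally I would read this cyclic chain of inequalities back on $\partial\DD$: the condition says that the four endpoints $u_0, u_1, v_0, v_1$ appear in this cyclic order on $\partial\DD$, which is exactly the classical linking condition for two geodesics of $\DD$ to intersect transversely. Moreover, the specific order (first $u_0$, then $u_1$, then $v_0$, then $v_1$) fixes the sign of the intersection: $\gamma_y$ enters on the side of $\gamma_x$ containing the arc $(u_0,v_0)$ and exits on the other side, which corresponds to the positive transverse crossing convention (while $\De_-(x)$ produces the opposite cyclic order and hence the opposite sign). The main obstacle I expect is precisely this last point: carefully matching the chosen orientations of $F^s_X, F^u_X$ on $\cP_X$ (induced from the ambient orientations of $T^1(S)$ and the flow direction) with the geometric notion of a positive transverse intersection of geodesics on $\DD$. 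Once this orientation matching is in place, the identification asserted by the lemma follows directly from the displayed inequalities.
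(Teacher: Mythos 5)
Your proposal is correct and follows essentially the same route as the paper: identify points of $\cP_X$ with (lifts of) oriented geodesics via their endpoint pairs on $\partial\DD$, note that stable/unstable leaves correspond to fixing the forward/backward endpoint, and observe that the defining intersection conditions for $\De_+(x)$ amount exactly to the linking of the endpoints of $\gamma_y$ with those of $\gamma_x$, i.e.\ to a transverse crossing of a fixed sign. The only difference is presentational: you carry out the computation explicitly in strip coordinates on the universal cover and make the half-leaf signs visible, whereas the paper states the same correspondence directly on the space of geodesics and leaves the orientation bookkeeping implicit.
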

\begin{proof} The geodesics $\sigma\subset \DD$ crossing positively a given geodesic $\gamma\subset \DD$ are in one to one correspondance with the set of pairs of points $(\alpha(\sigma),\omega(\sigma))\in\partial\DD$, which belong in different connected components of $\DD\setminus\{ \alpha(\gamma),\omega(\gamma)\}$.  Now $\sigma$ is the unique intersection point between the stable manifold of the geodesic associated to $(\alpha(\gamma), \omega(\sigma))\in F^u(\gamma)$ and the unstable manifold of the geodesic associated to $(\alpha(\sigma), \omega(\gamma))\in F^s(\gamma)$. 
 
\end{proof}

%%%%%%%%%%%%%%%%%%%%%%%%%%%%%%%%%%%%%%%%%%%%%%%%%%%%%%%%%%%%%%%%%
\section{Surgeries preserving the branching structure of a non-$\RR$-covered Anosov flow}\label{s.nonR}
%%%%%%%%%%%%%%%%%%%%%%%%%%%%%%%%%%%%%%%%%%%%%%%%%%%%%%%%%%%%%%%%%
%%%%%%%%%%%%%%%%%%%%%%%%%%%%%%%%%%%%%%%%%%%%%%%%%%%%%%%%%%%%%%%%

The aim of this section is to make an observation, which is almost clear after reading  \cite{Fe2}\footnote{ S. Fenley told us that, at the time he wrote \cite{Fe2} he was aware of this result, but did not publish it.  We thought that the present paper would may be a good place for publishing it.}. The results of this section generalise Theorems \ref{t.pivot0} and \ref{t.cS0}.

Given an Anosov vector field $X$, Fenley proved in \cite{Fe2} that:
\begin{enumerate}
 \item The leaves of $F^s_X$ which are not separated  correspond to finitely many periodic orbits of $X$.  Let us denote this set ${\cS}^s(X)= {\cS}^s_+(X)\cup{\cS}^s_-(X)$, where ${\cS}^s_+(X)$ and ${\cS}^s_-(X)$ correspond to the leaves, which are not separated from above or from below respectively.  The sets ${\cS}^s_+(X),{\cS}^s_-(X)$ are not necessarily disjoint. 
 \item Similarly, the leaves of $F^u_X$ which are not separated  correspond to a finite set of periodic orbits of $X$ denoted by
 $${\cS}^u(X)= {\cS}^u_+(X)\cup{\cS}^u_-(X).$$
 \item The  set of stable leaves in $\cP_X$, which are not separated from below from a given leaf $L^s_0$ are ordered as an interval of $\ZZ$, so let us denote them $\{L_i, i\in I\subset \ZZ\}$.  To each pair $L_i,L_{i+1}$ of successive non separated leaves, one can associate a point in $\cP_X$ called the pivot.
 Each pivot corresponds to a periodic orbit of $X$ (called a pivot periodic orbit) and the set of pivots is finite. Let us denote 
 $$Piv(X)=Piv^s_+(X)\cup Piv^s_-(X)\cup Piv^u_+(X)\cup Piv^u_-(X)$$ 
 the set of pivot periodic orbits of $X$. 

\end{enumerate}

The first observation is that performing surgeries on the pivots does not change at all the branching structure:

\begin{theo}\label{t.pivot} Let $X$ be a non-$\RR$-covered Anosov flow, $Piv(X)$ its set of periodic pivots and  $Y \in {\cS}urg(X,Piv(X))$. Then, under the natural identification of the orbits of $Y$ with the orbits of $X$, one has : 
$$Piv^{s/u}_\pm(Y)=Piv^{s/u}_\pm(X) \mbox{ and } \cS^{s/u}_{\pm}(Y)=\cS^{s/u}_{\pm}(X)  $$ 
 
\end{theo}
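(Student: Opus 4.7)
\bigskip
\noindent\textbf{Proof proposal.}

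The plan is to transfer Fenley's branching data from $\cP_X$ to $\cP_Y$ using the common cover $\cP_\Ga$ with $\Ga=\cP iv(X)$ together with the surgery correspondence $\Pi_Y\circ\Pi_X^{-1}$, and then to verify that pivots (in both the stable and unstable foliations, from above and from below) are preserved. Recall first that the surgery defines a natural bijection between the orbits of $X$ and those of $Y$: outside $\Ga$ it is induced by the identity $M\setminus\Ga=N\setminus\Ga$, and along $\Ga$ it matches each surged orbit in $X$ with the unique periodic orbit of $Y$ that replaces it. I will use this identification implicitly throughout, and denote by $\hat L$ the leaf of $F^{s/u}_Y$ through the orbit of $Y$ corresponding to a leaf $L$ of $F^{s/u}_X$.

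First I would recall from \cite{Fe2} the geometric witness of non-separation and of the pivot associated to it. If $L_1,L_2\in F^s_X$ are consecutive leaves non-separated from below, then there exists a periodic orbit $p\in\cP iv^s_-(X)$ such that $F^u_X(p)$ is a common lower limit of sequences of unstable leaves meeting $L_1$ and $L_2$. This configuration is witnessed by a pair of (possibly degenerate) rectangles $R_1,R_2$ sharing $p$ as a common corner, whose other corners lie on $L_1$ and $L_2$ respectively, and whose interiors contain no point of $\tilde\Ga_X$. Conversely, such a witness forces non-separation of $L_1,L_2$ with pivot $p$. Analogous witnesses characterize $\cS^s_+$, $\cS^u_\pm$, $\cP iv^s_+$ and $\cP iv^u_\pm$.

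Next I would transfer these witnesses across the surgery. Pick a lift of $R_1\cup R_2$ to $\cP_\Ga$; by Proposition~\ref{p.rectangle-closure} applied separately to (closures of) the interiors, and by Proposition~\ref{p.franchissement} at the common corner $p$, the projection on $\cP_Y$ is a pair of rectangles $\hat R_1,\hat R_2$ sharing the point $\hat p\in\tilde\Ga_Y$ corresponding to $p$ as a common corner, with the remaining corners lying on $\hat L_1$ and $\hat L_2$. Thus the witnessing configuration survives in $\cP_Y$: $\hat L_1,\hat L_2$ are non-separated from below in $F^s_Y$ and $\hat p$ is the corresponding pivot. Since any flow obtained by a surgery of characteristic number $n$ can be undone by a surgery of characteristic number $-n$ on the image orbit, the reverse implication follows by the same argument applied to $Y\to X$. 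Running this for every type $s/u$ and $\pm$ gives $\cP iv^{s/u}_\pm(Y)=\cP iv^{s/u}_\pm(X)$ and $\cS^{s/u}_\pm(Y)=\cS^{s/u}_\pm(X)$.

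The main obstacle will be the bookkeeping at the pivot itself: the surgery genuinely changes the stable and unstable leaves through $p$, so one must be careful that the \emph{new} leaves $\hat L_i^s(p)$ and $\hat L_i^u(p)$ retain the correct branching role. The resolution is precisely that Propositions~\ref{p.franchissement} and~\ref{p.rectangle-closure} describe exactly how rectangles with a corner at a surged orbit transform: they remain rectangles punctured at the same corner, and their stable/unstable sides are glued across $p$ in $\cP_Y$ via the new first-return dynamics. A secondary technical point is that $\Ga=\cP iv(X)$ may contain several orbits, so several pivots may lie on the boundary of the same non-separation region; but because Fenley's witness involves only a \emph{finite} cycle of segments around a single pivot, the argument can be applied one pivot at a time, with all other pivots staying in the interior of a chain of rectangles handled by Proposition~\ref{p.rectangle-closure}.
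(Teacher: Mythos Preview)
Your overall strategy --- transfer Fenley's lozenge picture from $\cP_X$ to $\cP_Y$ via the common cover $\cP_\Ga$ and Proposition~\ref{p.rectangle-closure}, then use reversibility of surgery for the other inclusion --- is exactly the approach the paper takes. But there is a genuine gap at the crucial step.

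You assert, as if it were part of Fenley's description, that the witnessing regions $R_1,R_2$ have interiors \emph{disjoint from} $\tilde\Ga_X=\widetilde{Piv(X)}$. Fenley's result in \cite{Fe2} only gives the pair of adjacent lozenges; it says nothing about whether \emph{other} pivots sit inside them. That disjointness is precisely what must be proved, and it is the whole content of the paper's Lemma~\ref{l.pivot} (``every pivot is disjoint from the interior of any pair of lozenges''). The argument there is short but not automatic: one checks combinatorially that a pivot in the interior of another lozenge pair would force one pair to ``contain a missing corner'' of the other, contradicting the embedding. Without this lemma your transfer step breaks down, because Proposition~\ref{p.rectangle-closure} requires the interior to miss $\tilde\Ga_X$. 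Note also that your argument as written never uses the hypothesis $\Ga=Piv(X)$ in any essential way; as stated it would apply to surgery along an arbitrary finite set of periodic orbits, which is certainly false.

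Your final paragraph, where you say the other pivots ``stay in the interior of a chain of rectangles handled by Proposition~\ref{p.rectangle-closure}'', has this exactly backwards: a pivot in the interior is precisely what obstructs the proposition. What you need is the opposite statement, and that is Lemma~\ref{l.pivot}. (A minor point: the witnesses are lozenges --- open regions bounded by four half-leaves, with two ideal corners --- not compact rectangles; your use of Proposition~\ref{p.franchissement} ``at the common corner $p$'' is thus not quite the right tool, though Proposition~\ref{p.rectangle-closure} does apply to the closure of each lozenge once you know its interior avoids $\tilde\Ga_X$.)
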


Furthermore, performing surgeries on the set of orbits corresponding to lower-non-separated stable leaves  may only change the upper-non-separated periodic stable leaves and their pivots, but not the other branching leaves or pivots: 

\begin{theo}\label{t.cS} Let $X$ be a non-$\RR$-covered Anosov flow and  $Y \in {\cS}urg(X,{\cS}^s_-(X))$.  Then, under the natural identification of the orbits of $Y$ with the orbits of $X$ one has : 
$$Piv(Y)^u_\pm=Piv^u_\pm(X) \mbox{ and } \cS^{u}_{\pm}(Y)=\cS^{u}_{\pm}(X)$$ 
and
$$Piv(Y)^s_-=Piv^s_-(X) \mbox{ and } \cS^{s}_{-}(Y)=\cS^{s}_{-}(X).$$ 
\end{theo}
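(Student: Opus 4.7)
The plan is to apply the surgery--holonomy framework of Section~\ref{s.chirurgies} with $\Ga = \cS^s_-(X)$, together with Fenley's structural description of branching leaves in \cite{Fe2}. The starting observation is that an orbit $\gamma \in \cS^s_-(X)$ has a very constrained local quadrant structure at its lift $\tilde\gamma \in \cP_X$: by the lemma relating non-separation to quadrant incompleteness (Section~\ref{ss.RnonR}), non-separation from below of $F^s_X(\tilde\gamma)$ with another stable leaf forces incompleteness in exactly the two quadrants ``on the side of'' the non-separation at $\tilde\gamma$, while the two opposite quadrants retain the complete intersection property. Since Fried surgery is a local modification encoded, through Theorem~\ref{t.newholonomies}, by inserting powers of the first return map $P_{\tilde\gamma}$ whenever a path crosses a stable separatrix of $\tilde\gamma$, any change in holonomy happens only in paths that witness the ``branching'' direction already present at $\tilde\gamma$ in $X$.

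First I would recall from \cite{Fe2} the precise local model at $\tilde\gamma \in \widetilde{\cS^s_-(X)}$ and pin down the two quadrants that remain complete. Next, applying Theorem~\ref{t.newholonomies} in each of its four quadrant-versions, I would follow the scheme of Propositions~\ref{p.caractR} and~\ref{p.caractR1} to show that the dynamical game computing holonomy from a point $x \in \cP_Y$ terminates iff the corresponding game for $X$ does, as long as the starting data lies in one of the preserved quadrants at each $\tilde\gamma \in \tilde\Ga$. Translating to the branching language: a pair $(M_1, M_2)$ of non-separated unstable leaves in $F^u_X$ is characterized by a specific holonomy failure in quadrants that are preserved by our surgery, so the corresponding pair in $F^u_Y$ witnesses the same non-separation. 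Invertibility of Fried surgery (a characteristic-$n$ surgery composed with a characteristic-$(-n)$ surgery returns to $X$) yields the converse, giving $\cS^u_\pm(Y) = \cS^u_\pm(X)$. The equality $Piv^u_\pm(Y) = Piv^u_\pm(X)$ then follows because pivots are determined locally by pairs of consecutive non-separated leaves. The lower-stable statements $\cS^s_-(Y) = \cS^s_-(X)$ and $Piv^s_-(Y) = Piv^s_-(X)$ are handled by the symmetric argument in the quadrants that govern \emph{lower} stable non-separation, which again lie among the quadrants at $\tilde\gamma$ on which the surgery has controlled effect.

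The main obstacle will be step two: verifying that, along a path potentially crossing the stable separatrices of many different orbits in $\tilde\Ga$, the cumulative twists introduced by the sequence of surgeries do not conspire to produce a new holonomy failure in the ``preserved'' directions. Since each individual twist is a power of a first-return map and these powers can a priori accumulate, one must use in an essential way that every surgered orbit lies in $\cS^s_-(X)$, so the incompleteness it carries is always on the same ``side'' and never invades the $C$-quadrants responsible for unstable branching or lower-stable branching. This amounts to a careful repackaging of the combinatorial pivot structure of \cite{Fe2} with the surgery-comparison tools of Section~\ref{s.chirurgies}, with particular attention to orientations when translating Theorem~\ref{t.newholonomies} between the four quadrants.
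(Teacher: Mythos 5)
There is a genuine gap, and it sits exactly where you place your ``main obstacle'': you never supply the idea that prevents the surgeries on $\cS^s_-(X)$ from affecting the unstable branching and the lower stable branching, you only announce that it should follow from ``careful repackaging''. The paper's proof does not control accumulated twists in a holonomy game at all. It uses Fenley's description of a pair of consecutive non-separated leaves as a \emph{pair of adjacent lozenges} in $\cP_X$, and the key geometric input is Lemma~\ref{l.pivot}: a pivot never lies in the interior of any such lozenge pair, and a periodic point of a lower-non-separated stable leaf can lie in the interior of a lozenge pair only if that pair comes from stable leaves non-separated from \emph{above}. Consequently the lifts of the surgered orbits are disjoint from the interiors of every lozenge pair encoding unstable branching or lower stable branching, and Proposition~\ref{p.rectangle-closure} then shows that $\Pi_Y\circ\Pi_X^{-1}$ extends over these regions to a homeomorphism conjugating the bi-foliations; the branching data (the pairs of non-separated leaves together with their pivots) is therefore transported verbatim to $\cP_Y$, and reversibility of the surgery gives the converse inclusion. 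Without a statement of this disjointness your step two is exactly the unproved assertion, since a priori the stable separatrices of orbits of $\cS^s_-(X)$ could meet the regions witnessing unstable or lower stable non-separation, and Theorem~\ref{t.newholonomies} alone gives no reason why the inserted powers of first-return maps leave those witnesses intact.

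Two further points would also need repair. First, your claimed local model at $\tilde\gamma\in\widetilde{\cS^s_-(X)}$ --- incompleteness in exactly the two quadrants ``on the side of'' the non-separation and completeness of the two opposite ones --- is not established in the paper and is dubious in general: the lemma on non-separated leaves produces incomplete quadrants at points on the transversal segments, not at $\tilde\gamma$ itself, and an orbit of $\cS^s_-(X)$ may simultaneously belong to $\cS^s_+(X)$ or be a corner of several lozenges, so no such clean two-complete/two-incomplete dichotomy is available as a starting point. Second, even granting preservation of quadrant completeness in the spirit of Propositions~\ref{p.caractR} and~\ref{p.caractR1}, that invariant is too coarse for the conclusion: the theorem asserts equality of the \emph{sets} $\cS^u_\pm$, $\cS^s_-$ and of the pivot orbits under the natural identification, which requires matching each specific pair of non-separated leaves (and its pivot) in $\cP_X$ with the corresponding pair in $\cP_Y$; mere completeness or incompleteness of quadrants does not identify which leaves are non-separated from which. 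This is precisely what the lozenge formulation, combined with Lemma~\ref{l.pivot} and Proposition~\ref{p.rectangle-closure}, accomplishes.
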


The proof of both theorems is based on \cite{Fe2}. 
Fenley showed that, if $L_1$ and $L_2$ are consecutive stable leaves of $F^s_X$ not separated from below then:
\begin{itemize}\item there is $\gamma$ in $\pi_1(M)$ fixing both leaves $L_1$ and $L_2$. Each of those leaves contain a fixed point $x_i$ for the action of $\gamma$ on $\cP_X$. 
\item there is a proper embedding of $[-1,1]^2\setminus\{(-1,-1),(0,1),(1,-1)\}$ in $\cP_X$ uniquely associated to the pair $(L_1,L_2)$ and conjugating the trivial foliations with $F^s_X$ and $F^u_X$. 
 \item the image of this embedding is a \emph{pair of adjacent lozenges}, according to the terminology in \cite{Fe2} (see figure \ref{f.nonseparated}). The points $(-1,1),(1,1)$ correspond to $x_1,x_2$ and the point $(0,-1)$, whose positive unstable leaf ends at the missing point $(0,1)$ is called the \emph{pivot} associated to $L_1$, $L_2$ and is unique, hence also a fixed point of $\gamma$. 
\end{itemize}
\begin{figure}[h!]
\includegraphics[scale=0.60]{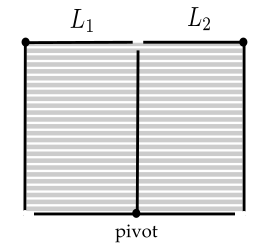}
\caption{}
\label{f.nonseparated}
\end{figure}

Now Theorem ~\ref{t.pivot} and Theorem~\ref{t.cS} result from the following lemma:

\begin{lemm}~\label{l.pivot}
 \begin{itemize}\item Every pivot is disjoint from the interior of any pair of lozenges associated to a pair of successive non-separated stable or unstable leaves. 
  \item  Let $x$ be a periodic point in a non-separated from below stable leaf.  If $x$ belongs to the interior of a pair of adjacent lozenges, then this pair is associated to a pair of stable leaves, which are not separated from above. 
 \end{itemize}

\end{lemm}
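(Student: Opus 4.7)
The proof of both parts relies on the structural description of pairs of adjacent lozenges and pivots developed by Fenley in \cite{Fe2}. I would take as input the following facts: each pair $\mathcal{L}$ of adjacent lozenges is uniquely associated to a pair of successive non-separated leaves (in $F^s_X$ or $F^u_X$) and to a unique pivot $p$; the element $\gamma \in \pi_1(M)$ generating the stabilizer of $p$ preserves $\mathcal{L}$ set-wise and fixes the two corner periodic points on the non-separated pair; and the interior of $\mathcal{L}$ is trivially bi-foliated, so that no branching of $F^s_X$ or $F^u_X$ occurs strictly inside.

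For Part 1, I would argue by contradiction. Suppose a pivot $q$ lies in $\mathrm{Int}(\mathcal{L})$, with $\mathcal{L}$ associated, say, to stable leaves $L_1, L_2$ non-separated from below, pivot $p$, and corners $x_1 \in L_1$, $x_2 \in L_2$. First I would rule out $q \in F^u_+(p)$: by Lemma~\ref{l.centralizer}, the unique periodic orbit on a given weak unstable leaf is the one through $p$ itself, so $q \in F^u_+(p)$ would force $q = p$, contradicting that $p$ is a corner while $q$ is interior. Hence $q$ lies strictly in one of the two sub-lozenges, say the one with corners $p, x_1$. Next I would exploit $\gamma$-invariance: the orbit $\{\gamma^n q\}_{n \in \ZZ}$ is an infinite sequence of pivots (all lifts of the same periodic orbit as $q$) contained in $\mathrm{Int}(\mathcal{L})$, and the hyperbolic-like action of $\gamma$ on the closed sub-lozenge forces $\gamma^n q \to p$ as $n \to +\infty$ (say). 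Each $\gamma^n q$ carries its own associated non-separated pair $\gamma^n(L_1^q, L_2^q)$, and by continuity of the pivot/branching correspondence these pairs converge to $(L_1, L_2)$. Using the finiteness of pivot orbits and the corresponding discreteness of non-separated pairs in $\cP_X$ established in \cite{Fe2}, I would argue that $\gamma^n(L_1^q, L_2^q) = (L_1, L_2)$ for some large $n$, whereupon uniqueness of the pivot associated to a given non-separated pair forces $\gamma^n q = p$, contradicting $\gamma^n q \in \mathrm{Int}(\mathcal{L})$.

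For Part 2, let $x$ be periodic with $F^s_X(x) \in \cS^s_-(X)$ and $x \in \mathrm{Int}(\mathcal{L})$. I would proceed by eliminating two of the three possible types of $\mathcal{L}$. If $\mathcal{L}$ is of non-separated unstable type, tracing $F^s_X(x)$ through the lozenge coordinates shows that its extensions across the $F^u$-boundary components of $\mathcal{L}$ are incompatible with branching from below. If $\mathcal{L}$ is of non-separated-from-below stable type, with non-separated pair $L_1, L_2$ at the ``top'', I would analyze the position of $F^s_X(x)$ relative to $L_1, L_2$ and combine this with Part 1 applied to the pivot of the branching pair containing $F^s_X(x)$: any such pivot would either coincide with a configuration forbidden by the orientation of $\mathcal{L}$ or have to lie in $\mathrm{Int}(\mathcal{L})$, each case giving a contradiction. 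Only the non-separated-from-above stable type remains consistent.

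The main obstacle will be the discreteness step in Part 1 -- showing that a convergent sequence of non-separated pairs $\gamma^n(L_1^q, L_2^q) \to (L_1, L_2)$ is eventually constant. This should follow from Fenley's classification together with the finiteness of pivot orbits, but demands a careful choice of topology on the set of non-separated pairs. A secondary technical point in Part 2 is the consistent tracking of ``above'' versus ``below'' in the embedded lozenge coordinates, since this orientation convention is precisely what the dichotomy of Part 2 is about.
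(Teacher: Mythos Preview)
Your dynamical approach via the $\gamma$-action and convergence has a genuine gap, and the paper's route is both simpler and avoids it.

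The paper argues purely geometrically from the explicit embedded model $[-1,1]^2\setminus\{(-1,-1),(0,1),(1,-1)\}$. If a pivot $q$ of some pair $\cL'$ lies in the interior of a pair $\cL$, then $q$ is the corner $(0,-1)$ of its own embedded model, and one traces the boundary leaves of $\cL'$ inside $\cP_X$. A short case check shows that one of the three \emph{missing} corners of $\cL'$ (or of $\cL$) would have to fall in the interior of the other pair; but missing corners are ideal points not in $\cP_X$, while the interior of a pair of lozenges lies in $\cP_X$. This is the whole argument, and the same check handles Part~2.

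Your argument, by contrast, needs two claims that are not justified. First, the convergence $\gamma^n(L_1^q,L_2^q)\to(L_1,L_2)$: the leaves $L_1^q,L_2^q$ sit at the far corners of $\cL'$, not inside $\cL$, and there is no reason their $\gamma$-orbits should converge to $L_1,L_2$ rather than escape or accumulate elsewhere. Second, and more seriously, even granting convergence, the ``discreteness'' step fails as stated. Finiteness of pivot \emph{orbits} in $M$ does not give discreteness of pivots or of non-separated leaves in $\cP_X$: these are $\pi_1(M)$-orbits of finitely many leaves, and such orbits are typically dense in the leaf space. So a convergent sequence of non-separated pairs need not be eventually constant. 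You flagged this as the main obstacle, but it is in fact a real hole, not a technicality, and filling it would likely force you back to the geometric picture anyway.

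For Part~2 your sketch is too vague to evaluate independently and relies on Part~1, so it inherits the same problem. I recommend replacing the dynamical argument by the direct ``missing point'' check in the model coordinates; it is a handful of cases and each is immediate once drawn.
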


\begin{proof}
 Assuming by contradiction that a pivot of a pair of adjacent lozenges is contained in the interior of another pair, one checks that in every case one of the pairs needs to \emph{contain one of the missing points} of the other pair, which is impossible. 
 
 The same happens if one periodic point associated to a stable leaf non-separated from bellow belongs to the interior of the union of two adjacent lozenges associated to unstable non-separated leaves or to stable leaves non-separated from bellow. 
\end{proof}

%%%%%%%%%%%%%%%%%%%%%%%%%%%%%%%%%%%%%%%%%%%%%%%%%%
\section{Domination of the contracting holonomies}\label{s.rectangle}
%%%%%%%%%%%%%%%%%%%%%%%%%%%%%%%%%%%%%%%%%%%%%%%%%%%

For the rest of this section, we fix a hyperbolic matrix $A\in SL(2,\ZZ)$ with positive trace and eigenvalues $\lambda, \lambda^{-1}$ satisfying $0<\lambda^{-1}<1<\lambda$ (once again similar arguments apply in the case where $A$ has negative eigenvalues). We denote by $X$ the Anosov flow which is the suspension of $f_A$ and its associated mapping torus $M=M_A$. We fix an orientation on the stable and unstable directions $E^s, E^u$ of $A$, which defines an orientation on the corresponding foliations on $\cP_X$.  

We begin by proving the Lemma \ref{l.orbitesfinies}, stating that for any finite $f_A$ invariant set $\cX\subset \TT^2$, there exist finitely many orbits of primitive $\cX$-rectangles, for the action of the group generated by $A$ and the integer translations.  
\begin{proof} [Proof of Lemma \ref{l.orbitesfinies}] We do the proof for positive $\cX$ rectangles. 

Using the $f_A$-invariance, one can choose an $\cX$-rectangle $R$ in each orbit so that the ratio between the lengths of the stable and unstable sides is contained in $[1,\lambda^2)$. 

Using the integer translations, one can also assume that the first point of the increasing diagonal of $R$ is in $[0,1)^2$. 

Consider the endpoint $e(R)$ in $X$ of the increasing diagonal of $R$. As $\tilde \cX$ is discrete, if the set of such rectangles $R$ is infinite one gets that $e(R)$ tends to infinity.  In this case, as the ratio of the lengths of the stable and unstable sides is bounded, $R$'s area also tends to infinity and as a consequence of this $R$ contains in its interior an arbitrary number of points in $\tilde \cX$, which contradicts the primitive assumption on $R$. 
\end{proof}

%%%%%%%%%%%%%%%%%%%%%%%%%%%%%%%%%%%%%%%%%%%%%%%%%%%%%%%%%
\subsection{If no $\cX$-rectangle is disjoint from $\tilde \cY$ the contracting holonomies dominate}
%%%%%%%%%%%%%%%%%%%%%%%%%%%%%%%%%%%%%%%%%%%%%%%%%%%%%%%%
In Section~\ref{ss.game}, we presented the holonomies as a dynamical game, where \emph{crossing the positive stable separatrices of points in $\tilde \cX$ or in $\tilde \cY$ leads to either an expansion or a contraction}.  The holonomy will not always be defined when the expansion is strong. We also noticed that, in order to get two successive expansions, one needs to have a $\cX$-rectangle disjoint from $\tilde \cY$.  When no $\cX$-rectangle is disjoint from $\tilde \cY$, the expansion due to the points in $\tilde \cX$ can be neutralized by a sufficiently strong contraction associated to the points in $\tilde \cY$.  That is exactly what we prove in Theorem \ref{t.twisted}.

Theorem~\ref{t.twisted} consists in proving that the hypothesis \emph{no positive primitive  $\cX$-rectangle disjoint from $\tilde \cY$} implies that the contractions in the $C_{+,+}$ and $C_{-,-}$ quadrants due to (sufficiently strong) positive surgeries on $\cY$ dominate any surgery on $\cX$. 
The contractions in the $C_{+,-}$ and $C_{-,+}$ quadrants due to negative surgeries on $\cX$ cannot at the same time dominate the surgeries performed on $\cY$, which leads to a dynamical proof of the lemma \ref{l.asymetric} (which can also be proven geometrically).

%the contractions in the  quadrants one gets when crossing the positive and negative, respectively, stable separatrices of point in $\tilde\cY$ dominated whatever happens when crossing the separatrices of points in $\tilde\cX$.

If there are neither positive nor negative $\cX$-rectangles disjoint from $\tilde \cY$ then one gets: 

\begin{coro}Let $\cX,\cY$ be two finite $f_A$ invariant disjoint sets. Assume that there are no primitive $X$-rectangles disjoint from $\tilde \cY$. Then there is $N>0$ such that, if $Y \in {\cS}urg(X_A,\cX,\cY,\ast, (n_j)_{j\in J})$ where the $n_j$ are of the same sign and of absolute value greater than $N$, then $Y$ is $\RR$-covered twisted (positively or negatively, according to the sign of the $n_j$).
\end{coro}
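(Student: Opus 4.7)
The plan is to reduce the corollary to two applications of Theorem~\ref{t.twisted}: one in the positive sign case as stated, and one in the negative sign case obtained by the symmetry noted immediately after the theorem. Choosing $N$ to be the maximum of the two thresholds then yields the desired uniform constant, with the sign of twist correctly matching the common sign of the $n_j$.

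The real content is upgrading the hypothesis, which is phrased in terms of \emph{primitive} $\cX$-rectangles, to the hypothesis of Theorem~\ref{t.twisted}, which asks that \emph{every} positive $\cX$-rectangle meet $\tilde\cY$ (and symmetrically for negative). For this I would argue that any positive $\cX$-rectangle $R$ contains a primitive positive $\cX$-sub-rectangle. In the coordinates where $F^s_A$ is horizontal and $F^u_A$ is vertical, write the corners of the increasing diagonal as $a=(x_0,y_0)$ and $b=(x_1,y_1)\in\tilde\cX$ with $x_0<x_1$ and $y_0<y_1$. If $c=(x_c,y_c)\in\tilde\cX$ lies in the interior of $R$, then $x_0<x_c<x_1$ and $y_0<y_c<y_1$, and the rectangle with corners $a$ and $c$ is again a positive $\cX$-rectangle, strictly smaller. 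Since $R$ is compact and $\tilde\cX$ is discrete (it is the union of finitely many $\ZZ^2$-orbits), the set $R\cap\tilde\cX$ is finite, so iterating this construction terminates at a primitive positive $\cX$-rectangle $R'\subset R$. By hypothesis $R'$ meets $\tilde\cY$, hence so does $R$. The identical argument shows that every negative $\cX$-rectangle meets $\tilde\cY$.

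Once this is established, Theorem~\ref{t.twisted} produces some $N_+>0$ such that every flow $Y\in{\cS}urg(X_A,\cX,\cY,\ast,(n_j)_{j\in J})$ with $n_j\geq N_+$ for all $j$ is $\RR$-covered positively twisted. Applying the symmetric version (exchanging the roles of positive and negative) to the negative-rectangle statement yields $N_->0$ such that the same setup with $n_j\leq -N_-$ gives an $\RR$-covered negatively twisted flow. Setting $N=\max(N_+,N_-)$ concludes the proof.

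The main obstacle is not really conceptual but one of careful bookkeeping: one must verify that the sub-rectangle built in the reduction step genuinely inherits the correct diagonal orientation (automatic from the strict inequalities $x_0<x_c$ and $y_0<y_c$), and that Theorem~\ref{t.twisted} can indeed be invoked in both signs simultaneously with no compatibility issue between the two thresholds. Both of these are straightforward, so beyond the reduction from primitive to arbitrary rectangles the result essentially reads off from Theorem~\ref{t.twisted} and its orientation-reversing twin.
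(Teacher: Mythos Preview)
Your proposal is correct and follows essentially the same route as the paper, which presents the corollary as an immediate consequence of Theorem~\ref{t.twisted} applied in both signs (cf.\ the proof of Lemma~\ref{l.epsilon}, where the paper uses without comment that a point of $\tilde\cY$ in every primitive $\cX$-rectangle forces one in every $\cX$-rectangle). One small point worth making explicit in your reduction: when you pass from a non-primitive positive $\cX$-rectangle $R$ to a smaller one, you assume the extra point $c\in\tilde\cX\cap R$ lies in the \emph{interior}; this is automatic because periodic points of $f_A$ are rational, the eigendirections of $A$ have irrational slope, and hence no two distinct points of $\tilde\cX$ can lie on a common stable or unstable leaf, so $c$ cannot sit on a side of $R$.
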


This is particularly interesting in view of lemma \ref{l.epsilon}, which proves that the hypothesis \emph{no $\cX$-rectangle disjoint from $\tilde \cY$} holds frequently. More particularly, according to the lemma, for any $f_A$-invariant finite set $\cX$, there is $\varepsilon>0$ such that every $\varepsilon$-dense finite invariant set $\cY$ intersects every $\cX$-rectangle. 
\begin{proof}[Proof of Lemma \ref{l.epsilon}]
 This is a simple consequence of the fact that there are finitely many orbits of primitive $\cX$-rectangles. Fix a finite family of $\cX$-rectangles containing one rectangle of every orbit. 
 The lift on $\RR^2$ of an $\epsilon$-dense set $\cY$ will have a point in each of these finitely many rectangles, when $\epsilon$ is small enough.  The lift $\tilde\cY$ is invariant by integer translations. If furthermore $\cY$ is  $f_A$-invariant, one gets that the lift  $\tilde \cY$ contains a point in each primitive $\cX$-rectangle, hence in every $\cX$-rectangle.  
\end{proof}

\begin{rema}\label{r.epsilon} If in Lemma~\ref{l.epsilon} one chooses $\varepsilon>0$ very small, then $\tilde \cY$ will have an abundance of points in any $\cX$-rectangle and even in any $\frac 1K$-homothetic subrectangle for any arbitrary choice of $K>1$. 
\end{rema}

The frequency of crossing the separatrices of points in $\cY$ can counterbalance a possible lack of strength of the contractions associated to $\cY$, which is the basic idea behind the proof of Theorem \ref{t.Epsilon}.

%%%%%%%%%%%%%%%%%%%%%%%%%%%%%%%%%%%%%%%%%%%%%%
%\section{Non separated orbits}
%%%%%%%%%%%%%%%%%%%%%%%%%%%%%%%%%%%%%%%%%%%

Our aim from now on is to prove the Theorems~\ref{t.twisted} and~\ref{t.Epsilon}: assuming the lack of positive $\cX$-rectangles disjoint from $\tilde\cY$, strong positive surgeries along $\cY$ generate $\RR$-covered positively twisted Anosov flows. 

In order to do so, we will use the fact that a flow $Y$ is $\RR$-covered (trivially or positively twisted) if every $C_{+,+}$ and $C_{-,-}$ quadrant at every point of $\cP_Y$ is complete. Then we will discard the trivial case, getting the positive twist property. 

We start by proving the completeness at all points in $\tilde\cX$.

%%%%%%%%%%%%%%%%%%%%%%%%%%%%%%%%%%%%%%%%%%%%%%%%%%%%%%%%%%%%%%
\subsection{Completeness of the quadrants $C_{+,+}(x)$ and $C_{-,-}(x)$ for  $x\in\tilde\cX$, for large surgeries on $\cY$.}
%%%%%%%%%%%%%%%%%%%%%%%%%%%%%%%%%%%%%%%%%%%%%%%%%%%%%%%%%%%%%%

In this section we'll prove the following result:

\begin{prop}\label{p.complete} Assume that $\cX$ and $\cY$ are two finite invariant sets such that there are no positive $\cX$-rectangles disjoint from $\tilde \cY$. 
Then there is $N>0$ such that if $Y\in {\cS}urg(X_A,\cX,\cY,\ast, (n_j)_{j\in J})$ and $n_j> N$, then at every $x\in \tilde \cX$ the quadrant $C_{+,+}(x)$ is complete. The same holds for the quadrant $C_{-,-}(x)$. 
\end{prop}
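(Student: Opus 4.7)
The plan is to analyze the dynamical game described in Section~\ref{ss.game} applied to the quadrant $C_{+,+}(x)$ at a point $x\in\tilde\cX$. By Theorem~\ref{t.newholonomies}, completeness of $C_{+,+}(x)$ is equivalent to asserting that, for every initial horizontal distance $t_0>0$ and every target height $r>0$, the game terminates after finitely many crossings of the stable separatrices of points $\gamma_i=(x^s+u_i,x^u+s_i)\in\tilde\cX\cup\tilde\cY$. Since $\tilde\cX\cup\tilde\cY$ is discrete and the heights $s_i$ lie in $[0,r]$, it is enough to show that the sequence of horizontal positions $\{t_i\}$ stays uniformly bounded. The case of $C_{-,-}(x)$ reduces to $C_{+,+}(x)$ by simultaneously reversing the orientations of $F^s$ and $F^u$, an operation which preserves both the orientation of $M$ and the characteristic numbers of the surgeries.

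First I would translate the hypothesis into a combinatorial constraint on the game: if $\gamma_i$ and $\gamma_{i+1}$ are two consecutive hits, both in $\tilde\cX$, then $u_{i+1}\leq u_i$. Indeed, if $u_{i+1}>u_i$ then $R:=(x^s+u_i,x^s+u_{i+1})\times(x^u+s_i,x^u+s_{i+1})$ is a positive $\cX$-rectangle; since $u_{i+1}<t_i$ (the hit condition), $R$ is contained in the game strip $[0,t_i]\times(s_i,s_{i+1})$. As no $\cY$-hit occurs between $\gamma_i$ and $\gamma_{i+1}$, the strip contains no point of $\tilde\cY$, so $R$ would be disjoint from $\tilde\cY$, contradicting the hypothesis. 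Consequently, in any maximal run of consecutive $\cX$-hits without an intervening $\cY$-hit the coordinates $u_i$ are non-increasing; by discreteness of $\tilde\cX$ in any bounded strip, such runs have uniformly bounded length.

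The quantitative step consists in showing that, for $N$ large enough, the $\cY$-contractions dominate the $\cX$-perturbations. Each hit updates $t$ by an affine map $t\mapsto u_i+\alpha_i(t-u_i)$, where $\alpha_i$ takes finitely many values $\lambda^{-m(\gamma)\tau(\gamma)}$ (for $\gamma\in\cX$) and satisfies $\alpha_i\leq\lambda^{-N\tau_{\min}}=:\varepsilon_N$ (for $\gamma\in\cY$, with $\tau_{\min}:=\min_{\gamma\in\cY}\tau(\gamma)$), so $\varepsilon_N\to 0$ as $N\to\infty$. Using the $\ZZ^2$-invariance and hence relative density of $\tilde\cY$, large values of $t_i$ force a $\cY$-hit within a bounded increment of $s$; combined with the combinatorial bound on $\cX$-runs, this yields, for $\varepsilon_N$ small enough, a uniform upper bound on $\{t_i\}$, and hence termination of the game. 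The main obstacle is precisely this quantitative domination: because the affine updates have different base points $u_i$, one cannot directly multiply the factors $\alpha_i$. The combinatorial constraint together with the discreteness of $\tilde\cX$ are what permit isolating a bounded-length block of $\cX$-perturbations, after which a single sufficiently strong $\cY$-contraction resets $t$ close to the corresponding $\cY$-point's horizontal coordinate and absorbs the accumulated expansion.
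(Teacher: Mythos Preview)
Your combinatorial observation is correct and is a genuine consequence of the hypothesis: two consecutive $\tilde\cX$-hits $\gamma_i,\gamma_{i+1}$ in the game must satisfy $u_{i+1}\leq u_i$, for otherwise $[u_i,u_{i+1}]\times[s_i,s_{i+1}]$ would be a positive $\cX$-rectangle disjoint from $\tilde\cY$. However, the rest of the argument has a real gap. The proposition requires an $N$ depending only on $(\cX,\cY)$, \emph{uniform over all surgeries on $\cX$}. Your strategy of letting $\cX$-expansions occur and then ``absorbing'' them with a subsequent $\cY$-contraction cannot produce such an $N$: the expansion factor $\lambda^{-m(\gamma)\tau(\gamma)}$ at an $\cX$-hit is unbounded in the characteristic number $m(\gamma)$, so the strength of the $\cY$-contraction needed to compensate (and with it $N$) would depend on the $\cX$-surgery. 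Saying that a strong $\cY$-contraction ``resets $t$ close to the $\cY$-point's coordinate'' does not help, because that coordinate $v$ satisfies only $v<t_{i-1}$, which is exactly the quantity you have lost control of. Likewise, the claim that $\cX$-runs have \emph{uniformly} bounded length is unsupported: the $u_i$ are non-increasing reals, not integers, and the bound coming from discreteness of $\tilde\cX$ in $(0,u_{i_0}]\times[s_{i_0},r]$ depends on $r$.

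The paper's argument avoids this difficulty by showing that, for $N$ large enough, the game never triggers an $\cX$-expansion at all. One introduces $\mu(t)$, the base of the largest primitive $(\cX,x)$-rectangle with base in $[0,t]$, and $\delta(t)<\mu(t)$, the leftmost horizontal coordinate of a $\tilde\cY$-point inside that rectangle $R(t)$. The first potential $\cX$-hit going up from $(t,0)$ is exactly the corner $(\mu(t),\rho(t))$ of $R(t)$; but before reaching its height $\rho(t)$ the game crosses every $\tilde\cY$-point in $R(t)$, in particular one at horizontal $\delta(t)$. The key Lemma (using only the equivariance of $\mu,\delta$ under multiplication by $\lambda^{\pi}$, hence finitely many cases) gives an $N$ such that the single contraction centred at $\delta(t)$ already yields $\mu\bigl(\delta(t)+\lambda^{-n}(t-\delta(t))\bigr)=\mu(\delta(t))<\mu(t)$ for all $n\ge N$. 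Thus the new position $t_1$ satisfies $t_1<\mu(t)$: when the game reaches height $\rho(t)$ it is to the \emph{left} of the $\cX$-point and crosses its negative, not positive, stable separatrix. Iterating, $\mu(t_i)$ is strictly decreasing and the whole game takes place without ever being affected by the $\cX$-surgery --- which is why $N$ can be chosen independently of it. Your observation is the shadow of this mechanism (indeed, pushed one step further it shows the very first hit must be a $\cY$-hit), but you then head towards compensation rather than avoidance, and that route does not close.
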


Consider $x\in \tilde \cX$.  A \emph{primitive positive $(\cX,x)$-rectangle} is a primitive positive $\cX$-rectangle whose increasing diagonal has its origin on $x$. 

We identify $W^s_+(x)$ with $[0,+\infty)$ in an affine way. Any primitive positive $(\cX,x)$-rectangle $R$ is uniquely determined by its \emph{base segment} $[0,\mu_R]:=R\cap W^s_+(x)$. 

To every $t\in[0,+\infty)$ one associates the primitive  $(\cX,x)$-rectangle $R(t)$ with the largest base segment, which does not contain $t$ in its interior. The base segment of this rectangle is equal to $[0,\mu(t)]$, where 
$\mu(t)= \max\{\mu_R\leq t\}$. 

Similarly, one can define the primitive  $(\cX,x)$-rectangle with the smallest base segment containing $t$ in its interior and $\nu(t)=\min\{\mu(R)>t\}$. $\mu$ and $\nu$ are well defined thanks to the lemma \ref{l.orbitesfinies}.

One denotes $\rho(t)$ the positive number such that the point $(\mu(t),\rho(t))\in \tilde \cX\cap R(t)$ is the endpoint of the positive diagonal of the rectangle $R(t)$. 

By assumption on $\cX,\cY$, the rectangle $R(t)$ contains a point of $\tilde Y$ in its interior. We consider the smallest first coordinate of such a point; more precisely, 
we denote by $\delta(t)$ the smallest $r>0$ such that there is $(r,s)\in \tilde Y\cap R(t)$.  Clearly we have 
$$0<\delta(t)<\mu(t)$$

\begin{lemm}\label{l.complete}   Recall that  $\lambda>1$ is the expansive eigenvalue of $A$. 
 There is $N$ such that for every $t>0$ and every $n\geq N$ one has 
 
 $$\mu\left(\delta(t)+ \lambda^{-n}\left(t-\delta(t)\right)\right)=\mu(\delta(t)).$$
\end{lemm}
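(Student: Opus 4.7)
The plan is to reformulate the equality as an inequality about how much $\delta(t)+\lambda^{-n}(t-\delta(t))$ exceeds $\delta(t)$, and then to bound the relevant ratio by exploiting the dynamics of the first return map. Since $\mu$ is constant on each interval $[\mu_R,\nu_R)$ of consecutive primitive base lengths, and since $\delta(t)\in[\mu(\delta(t)),\nu(\delta(t)))$, the equality in the statement is equivalent to the strict inequality
\[
\lambda^{-n}(t-\delta(t))<\nu(\delta(t))-\delta(t).
\]
Writing $R=R(t)$ and noting that $\delta(t)=:\delta_R$ depends only on $R$ (not on $t\in[\mu_R,\nu_R)$), together with $t-\delta(t)<\nu_R-\delta_R$, it suffices to establish a uniform bound
\[
M\;:=\;\sup_R\frac{\nu_R-\delta_R}{\nu(\delta_R)-\delta_R}<\infty
\]
taken over all primitive positive $(\cX,x)$-rectangles $R$; then any $N$ with $\lambda^N>M$ works.

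The key observation I will use is that this ratio is invariant under the first return map $T_x\colon y\mapsto A^{\tau(x)}(y-x)+x$. Because $\bar x$ is $f_A$-periodic of period $\tau(x)$, the translation part of $T_x$ lies in $\ZZ^2$, so $T_x\in\langle A,\ZZ^2\rangle$ and in particular preserves both $\tilde\cX$ and $\tilde\cY$. It also fixes $x$ and rescales $W^s_+(x)$ by the factor $\lambda^{-\tau(x)}$. Consequently $T_x$ permutes the primitive positive $(\cX,x)$-rectangles and, on this set, all four quantities $\mu_R,\nu_R,\delta_R,\nu(\delta_R)$ rescale by the common factor $\lambda^{-\tau(x)}$. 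Thus the displayed ratio depends only on the $T_x$-orbit of $R$.

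To conclude, I will argue that there are only finitely many $T_x$-orbits of primitive positive $(\cX,x)$-rectangles. By Lemma~\ref{l.orbitesfinies} there are finitely many $\langle A,\ZZ^2\rangle$-orbits of primitive $\cX$-rectangles. Since $\langle T_x\rangle$ is the full stabiliser of $x$ in $\langle A,\ZZ^2\rangle$, a short orbit-stabiliser computation shows that the intersection of each such $\langle A,\ZZ^2\rangle$-orbit with the set of rectangles of corner $x$ is either empty or a single $\langle T_x\rangle$-orbit. Hence our ratio takes only finitely many values, all positive (since $\delta_R<\nu(\delta_R)\le\mu_R<\nu_R$), so $M<\infty$ and the lemma follows.

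The most delicate point I expect is the orbit-stabiliser bookkeeping in the final paragraph, along with the verification that $\nu(\delta_R)$ (and not just $\mu_R$) transforms correctly under $T_x$; this uses the fact that the entire discrete set of primitive base lengths on $W^s_+(x)$, and not merely the individual value $\mu_R$, is $T_x$-invariant.
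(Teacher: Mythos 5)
Your argument is correct and follows essentially the same route as the paper: both exploit the self-similarity of the data on $W^s_+(x)$ under the affine return dynamics fixing $x$ (scaling by a power of $\lambda$, the paper via equivariance of $\mu,\delta$ under multiplication by $\lambda^{\pi}$ and restriction of $t$ to $[1,\lambda^{\pi}]$, you via invariance of the ratio under $T_x$ and finiteness of $\langle T_x\rangle$-orbits deduced from Lemma~\ref{l.orbitesfinies}), reducing to finitely many configurations and then choosing $N$ so that $\lambda^{-N}$ beats the worst gap. Your version is merely a more quantitative bookkeeping of the same idea, and the delicate points you flag (transformation of $\nu(\delta_R)$ and the orbit–stabiliser step) do check out.
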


\begin{proof}
 Just notice that the functions $\mu,\delta$ are equivariant under the multiplication by $\lambda^{\pi}$, where $\pi$ is a common multiple of the periods of points in $\cX,\cY$. 
 
 Therefore we only need to prove the lemma for $t$ in the interval $[1,\lambda^\pi]$; therefore, for a finite number of intervals $[\mu(t),\nu(t))$. 
 
 For $n$ large enough $\delta(t)+ \lambda^{-n}\left(\nu(t)-\delta(t)\right)$ is very close to $\delta(t)$ (see figure \ref{f.prooflemma61}) and since the function $\mu$ is constant on an interval of the form $[\delta(t), \delta(t)+\epsilon]$, we get the desired result.  
\end{proof}

\begin{figure}[h!]
\includegraphics[scale=0.70]{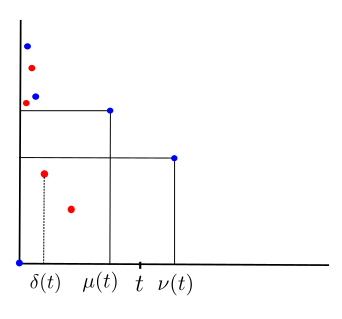}
\caption{In the above figure red points represent points in $\tilde \cY$ and blue ones points in $\tilde \cX$}
\label{f.prooflemma61}
\end{figure}
\begin{proof}[Proof of Proposition~\ref{p.complete}]  Fix $N$ given by lemma~\ref{l.complete}. Let $Y\in {\cS}urg(X_A,\cX,\cY,\ast, (n_j)_{j\in J})$ with $n_j> N$. Consider the positive unstable separatrix through the point $t$ (see figure \ref{f.proposition61}).  One follows it starting at $t$. 
The first point of $\tilde \cX\cap C_{+,+}(x)$ whose positive stable separatrix we meet is the point $x_0=(\mu(t),\rho(t))$.  However, before that, we meet the positive separatrices of all the points in $\tilde \cY\cap R(t)$ and maybe of some points of $\tilde Y$ outside $R(t)$.  The holonomy of the vector field $Y$ obtained by surgery consists in changing each time the intersection point by multiplying the distance to the point in $\tilde \cY$ by a factor $\lambda^{-n}$ (where $n$ is the product of the characteristic number by the period) which by hypothesis is smaller than $\lambda^{-N}$. Thus, according to Lemma~\ref{l.complete}, by playing the holonomy game we reach the stable manifold  of $x_0=(\mu(t),\rho(t))$ at a point $(t_1,\rho(t))$ with 
$$\mu(t_1)=\mu(\delta(t))<\mu(t).$$

In particular, $(t_1,\rho(t))$ is on the negative stable separatrix of $x_0$ and therefore is not affected by the surgery on that point. 

We proceed by following the positive unstable separatrix of the point $(t_1,\rho(t))$. Let $x_1=(\mu(t_1),\rho(t_1))$ be the first point of $\tilde X\cap C_{+,+}(x)$ whose positive stable separatrix meets the positive unstable separatrix of $(t_1,\rho(t))$.

Again, by assumption there are points of $\tilde Y$ in $R(t_1)$. In particular, since $\mu(t_1)=\mu(\delta(t))<\delta(t)$ the points in $\tilde Y\cap R(t_1)$ are not in $R(t)$. 

By playing the holonomy game, we will reach the stable manifold of $x_1=(\mu(t_1),\rho(t_1))$ at a point $(t_2,\rho(t_1))$ with $\mu(t_2)<\mu(\delta(t_1))<\mu(t_1)$.  Thus this point is on the negative sepatratrix of $x_1$  and is not affected by the surgery on $X$ and we proceed in the same way.

By finite induction, we obtain a primitive rectangle $R(t_i)$ in the orbit of $R(t)$ and after this the procedure will become periodic modulo iteration by a power of $A$. 
In particular, while we play the holonomy game, the positive unstable separatrix of $t$ will come closer and closer to $F^u_X(0,0)$. 

This shows that the positive unstable separtrix of $t$ intersects the positive stable separatrix of every point in the positive unstable separatrix of $(0,0)$ in $\cP_Y$. Therefore, the $(+,+)$-quadrant at the point $(0,0)$ is complete, which ends the proof. 

Notice that the notion of positive $X$-rectangle is the same for the quadrants $(+,+)$ and $(-,-)$.  Therefore the same argument proves the completeness of the quadrant $C_{-,-}(x)$ for  $x\in\tilde\cX$. 
\end{proof}
\begin{figure}[h!]
\includegraphics[scale=0.80]{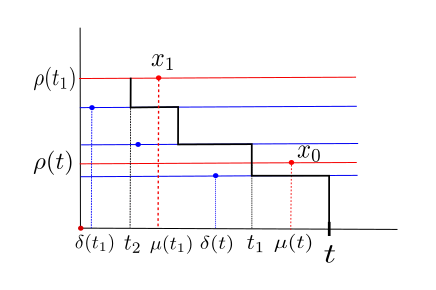}
\caption{In the above figure red points represent points in $\tilde \cX$ and blue ones points in $\tilde \cY$}
\label{f.proposition61}
\end{figure}
%%%%%%%%%%%%%%%%%%%%%%%%%%%%%%%%%%%%%%%%%%%%%%%%%%%%%%%%%
\subsection{Completeness of the quadrants $C_{+,+}(x)$ and $C_{-,-}(x)$ for  $x\in\tilde\cX$: replacing strong surgeries by the $\varepsilon$-density of $\cY$}
%%%%%%%%%%%%%%%%%%%%%%%%%%%%%%%%%%%%%%%%%%%%%%%%%%%%%%%%%

\begin{prop}\label{p.complete-epsilon} Assume that $\cX$ is a finite $f_A$-invariant set.  Then there is a $\varepsilon>0$ such that for any $\varepsilon$-dense finite $f_A$-invariant set $\cY$, one has the following property:  if $Y\in {\cS}urg(X_A,\cX,\cY,\ast, (n_j)_{j\in J})$ and $n_j>0$, then for every $x\in \tilde \cX$ the quadrants $C_{+,+}(x)$  and $C_{-,-}(x)$ are complete.
\end{prop}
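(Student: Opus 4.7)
The plan is to adapt the proof of Proposition~\ref{p.complete}, replacing its single strong contraction at the leftmost point of $\tilde{\cY}\cap R(t)$ by the cumulative effect of many weak contractions at points of $\tilde{\cY}$ clustered near the left edge of $R(t)$. The abundance of such points is precisely what $\varepsilon$-density of $\cY$ provides (cf.\ Remark~\ref{r.epsilon}), while $f_A$-invariance combined with Lemma~\ref{l.orbitesfinies} reduces the problem to a finite family of primitive-rectangle orbits and makes the required estimates uniform.

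First I would fix a finite family $R_1,\ldots,R_K$ of representatives of orbits of primitive positive $(\cX,x)$-rectangles under $\langle A\rangle\ltimes\ZZ^2$. For each $R_i$ with diagonal endpoint $(\mu_i,\rho_i)$, let $\nu_i>\mu_i$ denote the next jump of $\mu$, set $\eta_i=\tfrac{1}{4}(\nu_i-\mu_i)(1-\lambda^{-1})$, and write $\delta_i$ for the smallest horizontal coordinate of a point of $\tilde{\cY}\cap R_i$. The heart of the argument is the following iterative estimate in the holonomy game of Section~\ref{ss.game}: processing in order of increasing height the points of $\tilde{\cY}$ with horizontal coordinate in $[\delta_i,\delta_i+\eta_i]$ contracts the current horizontal $t^{(j-1)}$ toward its own horizontal $r^{(j)}\in[\delta_i,\delta_i+\eta_i]$ by a factor at most $\lambda^{-1}$ (since $n_j\geq 1$ and the period $\tau_j\geq 1$). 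Setting $a^{(j)}:=t^{(j)}-\delta_i$, one obtains the recursion
\[
a^{(j)}\;\leq\;\eta_i+\lambda^{-1}\,a^{(j-1)},
\]
hence by induction $a^{(J)}\leq\eta_i\lambda/(\lambda-1)+\lambda^{-J}(t-\delta_i)=\tfrac{1}{4}(\nu_i-\mu_i)+\lambda^{-J}(t-\delta_i)$. For $J$ large enough (depending only on $R_i$), this is $<\nu_i-\mu_i$, forcing $t^{(J)}<\nu_i$ and hence $\mu(t^{(J)})=\mu(\delta_i)<\mu_i$: this is the substitute for Lemma~\ref{l.complete}.

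It then remains to choose $\varepsilon$ so that $\varepsilon$-density of $\cY$ guarantees at least $J$ points of $\tilde{\cY}$ at pairwise distinct heights inside each strip $[\delta_i,\delta_i+\eta_i]\times[0,\rho_i]$. Packing disjoint balls of radius $\varepsilon$ stacked vertically in the strip shows that any $\varepsilon<\min_i\min(\eta_i/2,\rho_i/(2J))$ works, and this minimum is positive by finiteness of the family of representatives. With such $\varepsilon$ fixed, the induction on phases of the proof of Proposition~\ref{p.complete} carries over verbatim, yielding completeness of $C_{+,+}(x)$; the case $C_{-,-}(x)$ is identical by the symmetry of the notion of positive $\cX$-rectangle. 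The main obstacle is the compound estimate: one must also verify that points of $\tilde{\cY}$ processed outside the strip do not sabotage the iteration, but since every contraction is toward a horizontal coordinate $\geq\delta_i$ with factor $<1$, each such step strictly decreases $t^{(j)}$ and can only improve the bound on $a^{(j)}$, so their presence is harmless.
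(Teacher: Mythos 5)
Your strategy---replacing the single strong contraction used in Proposition~\ref{p.complete} by the compounded effect of many contractions of factor at most $\lambda^{-1}$ coming from points of $\tilde\cY$ clustered in a thin vertical strip---is genuinely different from the paper's proof. The paper instead fixes an auxiliary $\varepsilon_0$-dense orbit $\cY_0$ and the $N$ of Proposition~\ref{p.complete}, sets $\mu=N\cdot\pi(\cY_0)$, and chooses $\varepsilon$ so small that every primitive positive $(\cX,x)$-rectangle contains points of $\tilde\cY$ of period greater than $\mu$, located to the left of the unstable leaves of $\tilde\cY_0$ and outside all larger-base primitive rectangles; since the twist is the characteristic number times the period, one such point already contracts at least as strongly as a characteristic-number-$N$ surgery on $\cY_0$, and one concludes by comparing the holonomy game of $Y$ with that of the reference flow $Y_0$. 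Your accumulation-of-weak-contractions idea could in principle replace this, but as written the key estimate does not deliver what the induction needs.

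What the induction requires at each stage is that, when the moving unstable leaf reaches the height $\rho_i$ of the corner $x_0=(\mu_i,\rho_i)\in\tilde\cX$, its horizontal coordinate be strictly less than $\mu_i$, so that it meets the negative rather than the positive stable separatrix of $x_0$ and is therefore unaffected by the arbitrary (possibly strongly expanding) surgery on $\cX$; this is exactly what $\mu(t_1)=\mu(\delta(t))<\mu(t)$ achieves in the paper. Your bound gives only $t^{(J)}\leq\delta_i+\tfrac14(\nu_i-\mu_i)+\lambda^{-J}(t-\delta_i)$, hence $t^{(J)}<\nu_i$, and the inference ``hence $\mu(t^{(J)})=\mu(\delta_i)$'' is false: $\mu_i$ itself is a base endpoint lying in $(\delta_i,\nu_i)$, so $t^{(J)}$ may perfectly well land in $(\mu_i,\nu_i)$, where $\mu(t^{(J)})=\mu_i$ and, worse, the leaf then crosses $F^s_{X,+}(x_0)$ and the uncontrolled surgery on $\cX$ destroys the whole scheme. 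Nor does the corrected target $t^{(J)}<\mu_i$ follow from your choice $\eta_i=\tfrac14(\nu_i-\mu_i)(1-\lambda^{-1})$: the ratio $\nu_i/\mu_i$ takes finitely many values but can be large depending on $\cX$, so $\delta_i+\tfrac14(\nu_i-\mu_i)$ may exceed $\mu_i$. The strip width must be calibrated on $\mu_i$ (more precisely on a gap of the set of base endpoints of primitive rectangles, as in the constant $\epsilon$ appearing in the proof of Lemma~\ref{l.complete}), not on $\nu_i-\mu_i$. A second, smaller gap: at stage $i+1$ the contracting points must lie at heights above $\rho(t_i)$, since the leaf has already swept the lower heights, so the packing count must be carried out in the upper part of the new rectangle; this is precisely what the paper's auxiliary lemma enforces by requiring the chosen points to avoid all larger-base primitive $(\cX,x)$-rectangles, and your ``carries over verbatim'' step skips it.
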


Using the fact that the orbits of primitive  $\cX$-rectangles are finitely many, one gets $\varepsilon_0>0$ such that every $\varepsilon_0$-dense periodic orbit $\cY_0$ intersects every $\cX$-rectangle. 
We fix such a $\cY_0$. 

Fix $N>0$ given by Proposition~\ref{p.complete}. We denote by $\mu$ the product of $N$ by the period of $\cY_0$:
$$\mu=N\cdot \pi(\cY_0)$$

\begin{lemm}There is $\varepsilon>0$ such that, for every $\varepsilon$-dense $f_A$-invariant finite set $\cY$,  for any $x\in\tilde\cX$ and for any primitive positive $(\cX,x)$-rectangle $\cR$ there are at least $\pi(\cY_0)$  points $y\in \tilde\cY\cap \cR$ with the following properties: 
\begin{itemize}
 \item the period of $y$ is greater than $\mu$
 \item $y$ belongs to the connected component of $\cR\setminus \bigcup_{y_0\in\tilde \cY_0\cap R} F^u_X(y_0)$ which is bounded on one side by $F^u_X(x)$. 
 \item $y$ belongs to $\cR\setminus \bigcup \cR_i$, where the $\cR_i$ are the positive primitive $\cX,x$-rectangles having a strictly larger base. 
\end{itemize}
 
\end{lemm}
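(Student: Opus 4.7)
The plan is to use the action of $A$ and integer translations to reduce to finitely many model pictures, exhibit inside each a fixed open sub-rectangle of positive area on which all three conditions hold, and then use the $\varepsilon$-density of $\cY$ to force many points into it while discarding the few of low period. First I would invoke Lemma~\ref{l.orbitesfinies}: the group $G$ generated by $A$ and integer translations has only finitely many orbits of primitive positive $\cX$-rectangles. Since $G$ preserves $\tilde\cX$, $\tilde\cY_0$, the foliations $F^s_X, F^u_X$, and the period of periodic orbits, the three conditions required on $y$ are $G$-equivariant. It therefore suffices to verify the conclusion on finitely many representatives $(x^{(1)},\cR^{(1)}),\ldots,(x^{(k)},\cR^{(k)})$ and transport by $G$.

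Fix one representative $(x,\cR)$ and work in affine coordinates at $x$ in which $F^s_X, F^u_X$ are horizontal and vertical, so $\cR=[0,\mu_\cR]\times[0,h_\cR]$. Any primitive $(\cX,x)$-rectangle $\cR_i$ with base $\mu_i>\mu_\cR$ must have strictly smaller height $h_i<h_\cR$: otherwise the $\tilde\cX$-corner $(\mu_\cR,h_\cR)$ of $\cR$ would lie in the interior of $\cR_i$, contradicting primitivity. By $\ZZ^2$-invariance and discreteness of $\tilde\cX$, the set of second coordinates of $\tilde\cX$-points below $h_\cR$ is finite modulo horizontal integer translations, so the supremum $h_{\max}:=\sup_i h_i$ is attained and satisfies $h_{\max}<h_\cR$. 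Hence
$$\cR\setminus\bigcup_i\cR_i \;=\; [0,\mu_\cR]\times(h_{\max},h_\cR].$$
The set $\cY_0$, being $\varepsilon_0$-dense, meets $\cR$, and its leftmost point $y_0^*\in\tilde\cY_0\cap\cR$ has first coordinate $s_*$ with $0<s_*<\mu_\cR$ (using $\cY_0\cap\cX=\emptyset$); the component of $\cR\setminus\bigcup_{y_0\in\tilde\cY_0\cap\cR}F^u_X(y_0)$ bounded by $F^u_X(x)$ is then $(0,s_*)\times[0,h_\cR]$. So the required sub-region contains the open rectangle
$$\Omega \;:=\; (0,s_*)\times(h_{\max},h_\cR]$$
of positive area, and these areas are bounded below uniformly over the finitely many representatives.

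To finish, let $F_\mu\subset\TT^2$ be the (finite) set of $f_A$-periodic points of period at most $\mu$, and let $M$ be a uniform upper bound for $\#(\tilde F_\mu\cap\Omega^{(j)})$ across the representatives. I would then choose $\varepsilon>0$ small enough that each $\Omega^{(j)}$ contains $\pi(\cY_0)+M$ pairwise disjoint Euclidean discs of radius $\varepsilon$, which is possible since the $\Omega^{(j)}$ have uniformly positive dimensions. Any $\varepsilon$-dense $f_A$-invariant set $\cY$ meets each such disc, so $\#(\cY\cap\Omega^{(j)})\geq\pi(\cY_0)+M$; at most $M$ of these points lie in $\tilde F_\mu$, leaving at least $\pi(\cY_0)$ points of period strictly larger than $\mu$ and satisfying all three required properties. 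The equivariance from the first step then extends the conclusion to an arbitrary primitive positive $(\cX,x)$-rectangle. The main obstacle is the second step, namely verifying that $\Omega$ has nonempty interior uniformly, i.e.\ the strict inequality $h_{\max}<h_\cR$ together with uniform positive lower bounds on $s_*$ and on $h_\cR-h_{\max}$ across the finite list of representatives.
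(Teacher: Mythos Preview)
Your approach is correct and follows the paper's outline closely: reduce via Lemma~\ref{l.orbitesfinies} to finitely many representative rectangles, exhibit in each a fixed sub-region of positive area where the three constraints hold, and then invoke $\varepsilon$-density. The paper's proof is much terser but does exactly this.

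Two remarks. First, your justification that $h_{\max}=\sup_i h_i$ is attained is not right as written: in the eigenbasis the unstable coordinates of $\tilde\cX$-points are dense in $\RR$, so ``finitely many second coordinates modulo horizontal integer translations'' is false. The correct argument is the nesting of primitive $(\cX,x)$-rectangles: if $\mu_i<\mu_j$ then $h_i>h_j$ (else one contains the other's $\tilde\cX$-corner), so $h_{\max}$ is realised by the primitive rectangle with smallest base exceeding $\mu_\cR$, which exists precisely because the function $\nu(t)=\min\{\mu_R>t\}$ is well defined (this is the paper's earlier observation, itself a consequence of Lemma~\ref{l.orbitesfinies}). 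Once you fix this, the uniform positive lower bounds on $s_*$ and $h_\cR-h_{\max}$ that you flag as ``the main obstacle'' are automatic, since you are only checking finitely many representatives.

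Second, your treatment of the period condition differs from the paper's. You count the at most $M$ low-period points in $\Omega$ and over-produce by that amount; the paper instead observes that the unstable segment $F^u_X(x)\cap\cR$ contains no periodic point of period $\le\mu$ other than $x$ (irrational slope), so any periodic point $\varepsilon$-close to that segment and inside $\Omega$ automatically has period $>\mu$. Both work; yours is slightly more bookkeeping, the paper's is more geometric but relies on the irrationality of the eigendirection.
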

\begin{proof}We choose a rectangle $R$ in each orbit of primitive $\cX$-rectangles.  We just need to prove the statement for this finite collection of $(\cX,x)$-rectangles $\cR$. One still gets a finite collection of rectangles by considering  $\cR\setminus \bigcup \cR_i$ where $\cR_i$ are the positive primitive $(\cX,x)$-rectangles having a strictly larger base. 

The second announced property is granted by  the $\varepsilon$ density. Furthermore if $\varepsilon$ is sufficiently small, any periodic point $\varepsilon$-close to $F^u_X(x)\cap R$ has period greater than $\mu$.  
\end{proof}

\begin{proof}[Proof of Proposition~\ref{p.complete-epsilon}]. 
We'll compare the holonomy game for a vectorfield $Y$ obtained by positive surgeries on $\cY$ and the vector field $Y_0$ obtained by surgeries along $\cY_0$ of characteristic number $N$. More specifically, we will check that the holonomy of $Y$ is more contracting than the one of $Y_0$, which will finish the proof.

In the proof of Proposition~\ref{p.complete}, we defined by induction a sequence of points $x_i=(\mu(t_i),\rho(t_i))$, which are the successive points of $\tilde \cX\cap C_{+,+}(x)$ appearing in the game for $Y_0$.  

It's easy to check that under the hypotheses of the previous lemma, $$h^u_{Y,(0,0),x_0}(0,t)<h^u_{Y_0,(0,0),x_0}(0,t)$$In order to control the holonomy of $t$ it is therefore enough to control the holonomy of the point $(t_1,\rho(t))$.  
 
\end{proof}

%%%%%%%%%%%%%%%%%%%%%%%%%%%%%%%%%%%%%%%%%%%%%%%%%%%
\subsection{Completeness of the $(+,+)$-quadrants at every point}
%%%%%%%%%%%%%%%%%%%%%%%%%%%%%%%%%%%%%%%%%%%%%%%%%%%%%%%%%%%
The aim of this section is to deduce, from Proposition~\ref{p.complete} that every $(+,+)$ and $(-,-)$ quadrant  at any point of $P_Y$ is complete. 
\begin{prop}\label{p.complete-partout} Let $\cX,\cY$ de two disjoint finite $f_A$ invariant sets and any vector field $Y\in{\cS}urg(X_A,\cX,\cY,\ast, (n_j)_{j\in J})$ with $n_j\geq 0$. 

If for any $x\in \tilde \cX$ the quadrant $C_{+,+}(x)$ is complete, then for any $z\in\cP_Y$ the quadrant $C_{+,+}(z)$ is also complete.  
\end{prop}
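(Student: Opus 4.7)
My plan is to reduce Proposition~\ref{p.complete-partout} to Proposition~\ref{p.caractR} by factoring the surgeries producing $Y$ into two successive stages. I would introduce the intermediate Anosov flow $Y_{\cY}\in\cS urg(X_A,\cY)$ obtained from the suspension $X_A$ by performing only the $\cY$-surgeries, with their given characteristic numbers $(n_j)_{j\in J}$. Since all $n_j\ge 0$, Fenley's theorem from \cite{Fe1} (recalled in Section~\ref{ss.RnonR}) implies that $Y_{\cY}$ is either the suspension $X_A$ itself (when every $n_j$ vanishes) or an $\RR$-covered positively twisted Anosov flow (as soon as some $n_j>0$); in both alternatives $Y_{\cY}$ fulfils the structural hypothesis of Proposition~\ref{p.caractR}. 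By the commutativity of Fried surgeries along disjoint orbits recorded in Section~\ref{s.basic} and using $\cX\cap\cY=\emptyset$, the flow $Y$ is orbitally equivalent to the one obtained from $Y_{\cY}$ by performing the $\cX$-surgeries with characteristic numbers $(m_i)_{i\in I}$, so $Y\in\cS urg(Y_{\cY},\cX)$. The orbits of $\cX$ survive the first-stage surgeries and embed canonically into both $\cP_{Y_{\cY}}$ and $\cP_Y$.

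I would then apply Proposition~\ref{p.caractR} to the pair $(Y_{\cY},Y)$ with $\Ga=\cX$. A close reading of the proof of Proposition~\ref{p.caractR} shows that the $C_{+,+}$ and $C_{-,-}$ quadrants are treated symmetrically and independently: supposing by contradiction a failure of completeness in $C_{+,+}$ at some $z\in\cP_Y$, one picks a first failure $y_Y$ on $F^s_{Y,+}(z_Y)$, uses Theorem~\ref{t.newholonomies} and the rectangle argument to produce a single point $\tilde{\gamma}\in\tilde{\Ga}_Y$, and closes the contradiction from completeness of $C_{+,+}(\tilde{\gamma}_Y)$ alone. One therefore obtains the one-sided refinement: \emph{completeness of $C_{+,+}$ at every point of $\tilde{\Ga}_Y$ implies completeness of $C_{+,+}$ everywhere in $\cP_Y$}. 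In the situation $(Y_{\cY},Y,\cX)$ at hand, $\tilde{\Ga}_Y=\tilde{\cX}$, and the needed hypothesis is exactly what Proposition~\ref{p.complete-partout} assumes, so the desired conclusion follows.

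The step I expect to demand the most attention is justifying this one-sided refinement of Proposition~\ref{p.caractR}, since its statement formally couples the $C_{+,+}$ and $C_{-,-}$ quadrants; one must verify that the contradiction argument for $C_{+,+}$ never invokes $C_{-,-}$-information, which is indeed the case but deserves to be stated explicitly. Should any unforeseen entanglement appear, the fallback plan is simply to repeat the proof of Proposition~\ref{p.caractR} verbatim, with $X$ replaced by $Y_{\cY}$ and $\Ga$ by $\cX$, invoking only the $C_{+,+}$-hypothesis at the single point $\tilde{\gamma}_Y$ where it is needed.
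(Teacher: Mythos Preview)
Your approach is correct and genuinely different from the paper's. The paper argues directly via the holonomy game in the suspension's bi-foliated plane $\cP_{X_A}$: assuming a first failure point $t$ on $W^s_+(p)$, it observes that since the characteristic numbers on $\cY$ are non-negative, every crossing of a $\tilde\cY$-separatrix in the $C_{+,+}$ quadrant is a contraction; hence the holonomy game could only fail to terminate if the unstable leaf of $t$ crosses the positive stable separatrix of some $x\in\tilde\cX$, and from there the rectangle-and-completeness argument of Proposition~\ref{p.caractR} is rerun in $\cP_{X_A}$.

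You instead factor the surgery in two stages, invoke Fenley's result from \cite{Fe1} to conclude that the intermediate flow $Y_{\cY}$ is a suspension or $\RR$-covered positively twisted, and then apply the one-sided $C_{+,+}$ refinement of Proposition~\ref{p.caractR} with base flow $Y_{\cY}$ and $\Ga=\cX$. Your observation that the $C_{+,+}$ and $C_{-,-}$ halves of that proof are decoupled is accurate: the contradiction for an incomplete $C_{+,+}(z)$ only ever appeals to completeness of $C_{+,+}(\tilde\gamma_Y)$ at the single $\tilde\gamma\in\tilde\Ga$ produced by the rectangle argument. Your route is more modular and makes the dependence on \cite{Fe1} explicit, while the paper's route is self-contained within the holonomy game and does not need to appeal to Fenley's theorem as a black box; in effect the paper re-proves, in this special situation, the piece of \cite{Fe1} that you import.
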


As a straightforward consequence of Propositions~\ref{p.complete}, \ref{p.complete-epsilon} and \ref{p.complete-partout} one gets:

\begin{coro}\label{c.complete}
 With the hypothesis of Proposition~\ref{p.complete} or Proposition~\ref{p.complete-epsilon} the quadrants  $C_{+,+}(z)$ and  $C_{-,-}(z)$ are complete for any point $z\in\cP_Y$.
\end{coro}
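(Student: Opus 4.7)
The plan is to argue by contradiction using the dynamical game of Section~\ref{ss.game}, which computes holonomies of $Y$ via iterated crossings of stable separatrices of points in $\tilde\Gamma_X = \tilde\cX\cup\tilde\cY$: if some $C_{+,+}(z)$ were incomplete, the associated game would fail to terminate, and I would show that the first $\tilde\cX$-crossing it makes forces a contradiction with the assumed completeness at $\tilde\cX_Y$-points.

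First I would assume, for contradiction, that $C_{+,+}(z_0)$ is incomplete for some $z_0\in\cP_Y$, so that there exist $y\in F^u_{Y,+}(z_0)$ and a first $w^{*}\in F^s_{Y,+}(z_0)$ where the holonomy $h^u_{Y,z_0,y}$ fails to be defined. Transporting to $\cP_X$ via corresponding points, the dynamical game launched from $w^{*}_X$ with target height $r$ (the height of $y$) would then run forever, producing an infinite sequence of crossings at distinct obstacles $\gamma_i = (u_i, s_i)\in\tilde\Gamma_X$, with strictly increasing heights $s_i < r$ and horizontal positions $t_i$. Since $\tilde\Gamma_X$ is the discrete lift of a finite set in $\TT^2$ and the $s_i$ stay bounded, the $u_i$ (and hence the $t_i$) cannot remain bounded.

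The next step is to exploit the sign hypothesis: under $n_j \geq 0$, every $\tilde\cY$-crossing contracts $t_{i-1} - u_i$ by a factor $\lambda^{-n_j\tau_j}\leq 1$, so unboundedness of $t_i$ forces infinitely many $\tilde\cX$-crossings with strictly negative characteristic number, the sole possible source of expansion in $C_{+,+}$. Let $\gamma_{i_1} = x^{*}\in\tilde\cX$ be the first such $\tilde\cX$-crossing. Because $t_{i_1} = u_{i_1} + \lambda^{-m\tau}(t_{i_1-1} - u_{i_1}) > u_{i_1}$, the game continues from the point $(t_{i_1}, s_{i_1})$ lying on $F^s_{X,+}(x^{*})$, strictly inside $C_{+,+}(x^{*})$.

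Finally I would identify the tail of the game (from step $i_1$ onward) with the dynamical game computing an unstable holonomy $h^u_{Y, x^{*}_Y, \tilde y_Y}$, where $\tilde y_Y \in F^u_{Y,+}(x^{*}_Y)$ corresponds to height $r$. Because the rules of the game are local at each obstacle (Theorem~\ref{t.newholonomies}), restarting at $(t_{i_1}, s_{i_1})$ yields exactly the game for the holonomy starting at the point of $F^s_{Y,+}(x^{*}_Y)$ that corresponds to $(t_{i_1}, s_{i_1})$; non-termination of this tail would therefore mean that this holonomy is undefined there, contradicting the assumed completeness of $C_{+,+}(x^{*}_Y)$. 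I expect the main obstacle to be precisely this last identification, which amounts to tracking carefully the correspondences between $\cP_X$, $\cP_Y$, and the cover $\cP_\Gamma$ across the surgery at $x^{*}$, so as to ensure that the tail game and the target point $\tilde y_Y$ really do define a holonomy inside $C_{+,+}(x^{*}_Y)$.
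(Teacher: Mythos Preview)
Your overall strategy---assume some $C_{+,+}(z)$ is incomplete, observe that $\tilde\cY$-crossings alone cannot sustain the game (they are contractions), locate a first $\tilde\cX$-point $x^{*}$ whose positive stable separatrix is crossed, and invoke completeness at $x^{*}$---is exactly the paper's strategy. In the paper the corollary is literally one line: it is a direct consequence of Proposition~\ref{p.complete-partout}, and that proposition is where the actual argument lives.

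The gap in your write-up is the ``tail identification'' step, and it is not merely a bookkeeping issue between $\cP_X$, $\cP_Y$ and $\cP_\Gamma$. Completeness of $C_{+,+}(x^{*}_Y)$ tells you that $F^u_{Y,+}$ of your point hits every stable leaf that crosses $F^u_{Y,+}(x^{*}_Y)$; it says nothing about the original target leaf $F^s_{Y}(y_Y)$ unless you first know that this leaf meets $F^u_{Y,+}(x^{*}_Y)$. There is no a priori reason it should: your point $\tilde y_Y$ on $F^u_{Y,+}(x^{*}_Y)$ at ``height~$r$'' need not lie on the same $F^s_Y$-leaf as $y_Y$. So the contradiction you announce does not follow without an extra case.

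The paper's proof of Proposition~\ref{p.complete-partout} handles precisely this by working directly in $\cP_Y$ and choosing $t$ \emph{minimal} on $F^s_{Y,+}(p)$ with $W^u_+(t)\cap W^s_+(r)=\emptyset$. One then locates $x\in\tilde\cX_Y$ with $W^u_+(t)$ crossing $W^s_+(x)$ (as you do), and splits into two cases. If $W^s_+(r)$ crosses $W^u_+(x)$, completeness of $C_{+,+}(x)$ forces $W^u_+(t)\cap W^s_+(r)\neq\emptyset$, a contradiction. If not, one uses that before reaching $W^s(x)$ the game from $t$ meets only $\tilde\cY$-points (contractions), so the holonomy from $[p,t]^s$ to $W^s(x)$ is defined and its image contains $x$; hence $W^u_-(x)$ meets $F^s_{Y,+}(p)$ at some $t_0<t$ with $W^u_+(t_0)=W^u(x)$ missing $W^s_+(r)$, contradicting the minimality of $t$. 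This second case is exactly what is missing from your sketch.
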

\begin{proof}[Proof of Proposition~\ref{p.complete-partout}]
Assume by contradiction that there is $p\in \cP_Y$ for which the quadrant $C_{+,+}(p)$ is not complete. Therefore, there is a stable positive separatrix $W^s_+(r)$ with $r\in W^u_+(p)$ and $t_0\in W^s_+(p)$ such that the positive separatrix $W^u_+(t_0)$ does not intersect $W^s_+(r)$. 

Note that the set of unstable leaves cutting a stable leaf is open, as the intersections are transversal. Thus the leaves which don't cut $W^s_+(r)$ form a closed set, that doesn't contain $p$. Therefore, there is a smallest $t>0$ for which $W^u_+(t)$ does not cut $W^s_+(r)$.

If this unstable leaf does not cut the positive stable separatrix of a point in $\tilde X$ all the holonomies are defined and are smaller than $t$. Hence, by our initial hypothesis, there is $x\in\tilde X$ such that $W^u_+(t)$ cuts $W^s_+(x)$ and thus enters the quadrant $C_{+,+}(x)$.

If $W^s_+(r)$ crosses $W^u_+(x)$ (see figure \ref{f.corollary62}), then as the quadrant $C_{+,+}(x)$ is by assumption complete, it cuts $W^u_+(t)$ contradicting the hypothesis.  
\begin{figure}[h!]
\includegraphics[scale=0.55]{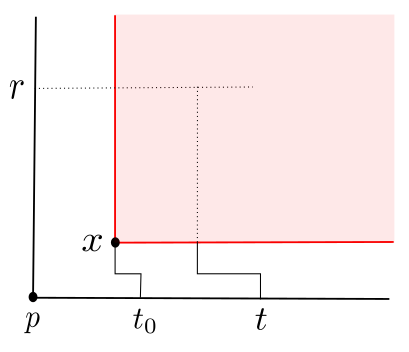}
\caption{}
\label{f.corollary62}
\end{figure}

Therefore, $W^s_+(r)$ cannot cross $W^u_+(x)$. Now the holonomy from $[0,t]$ to $W^s(x)$ is well defined and contains $x$ in its image: so $W^u_-(x)$ cuts $W^s_+(p)$ at some point $t_0<t$, which means that $W^s_+(t_0)$ does not cut $W^s_+(r)$. This contradicts the minimality of $t$, which ends the proof.
 
\end{proof}

%%%%%%%%%%%%%%%%%%%%%%%%%%%%%%%%%%%%%
\subsection{$\RR$-covered}
%%%%%%%%%%%%%%%%%%%%%%%%%%%%%%%%%%%%%
Using Corollary~\ref{c.complete} one gets

\begin{coro}\label{c.twisted}
 Under the hypotheses of Proposition~\ref{p.complete} or \ref{p.complete-epsilon} the flow $Y$ is $\RR$-covered and positively twisted. 
\end{coro}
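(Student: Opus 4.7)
The plan is to combine Corollary~\ref{c.complete}, which guarantees completeness of every $C_{+,+}(z)$ and $C_{-,-}(z)$ quadrant of $\cP_Y$, with the characterisation of $\RR$-covered and twisted flows recalled in Section~\ref{s.prelim}. First I would deduce $\RR$-coveredness via the criterion that $Y$ fails to be $\RR$-covered only when there exist two adjacent (distinct, non-opposite) incomplete quadrants: a short case-check shows that the four pairs of adjacent signs in $\{-,+\}^2$ each contain at least one of $(+,+)$ and $(-,-)$, so Corollary~\ref{c.complete} forbids this configuration and forces $Y$ to be $\RR$-covered.

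Next I would invoke Remark~\ref{r.Rcoveredquadrants}, which classifies $\RR$-covered flows into suspensions (trivially bi-foliated), positively twisted, and negatively twisted. The negatively twisted case is immediately excluded, since there $C_{+,+}$ and $C_{-,-}$ would be undertwisted, hence incomplete, contradicting Corollary~\ref{c.complete}. The proof then reduces to ruling out the suspension case, which is where I expect the only real difficulty.

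My plan for this last step is to exhibit an undertwisted quadrant $C_{+,-}(y)$ at some $y\in\tilde\cY$. Fix such a $y$, which carries a positive surgery of characteristic number $n_y\in\NN^{*}$. By the sign rules for the dynamical game of Section~\ref{ss.game} adapted to the $C_{+,-}$ quadrant, a positive surgery on a point of $\tilde\cY$ contributes an expansion factor $\lambda^{n_y\tau(y)}>1$ at every crossing of its stable separatrix. Using the $f_A$-invariance of $\tilde\cY$, such crossings accumulate along the positive unstable leaf of $y$, and iterating the game I would compound these expansions until the horizontal coordinate escapes any given stable interval, forcing the unstable holonomy at some point of $F^s_{Y,+}(y)$ to have bounded domain. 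The main obstacle is that $\cX$-surgeries with negative characteristic number contract in $C_{+,-}$ and may partially neutralise these expansions; this is precisely what the two hypotheses control, either by taking $N$ large enough in Proposition~\ref{p.complete} so that a single $\cY$-expansion dominates any $\cX$-crossing, or by using the $\varepsilon$-density of $\cY$ in Proposition~\ref{p.complete-epsilon} to produce enough $\cY$-crossings at every scale to overwhelm the $\cX$-contributions. Once undertwist is established at a single point, the transitivity of $Y$ and the density of orbits of the $\pi_1$-action on $\cP_Y$ propagate it, confirming that $Y$ is positively twisted rather than a suspension.
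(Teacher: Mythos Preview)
Your first two steps are fine and essentially match the paper: Corollary~\ref{c.complete} forces all $C_{+,+}$ and $C_{-,-}$ quadrants to be complete, so $Y$ is $\RR$-covered and cannot be negatively twisted.

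The gap is in your third step, where you try to rule out the suspension case by directly exhibiting an undertwisted $C_{+,-}(y)$ quadrant. Two things go wrong. First, the hypothesis of Proposition~\ref{p.complete} concerns \emph{positive} $\cX$-rectangles; in the $C_{+,-}$ quadrant the relevant objects controlling successive $\cX$-crossings are \emph{negative} $\cX$-rectangles, so the hypothesis gives you no a~priori control on $\cX$-contractions there. Second, and more seriously, your claim that ``taking $N$ large enough\ldots so that a single $\cY$-expansion dominates any $\cX$-crossing'' cannot work: in both Propositions~\ref{p.complete} and~\ref{p.complete-epsilon} the integer $N$ (resp.\ $\varepsilon$) is fixed \emph{before} the $\cX$-surgeries are chosen, and the conclusion is required to hold for \emph{arbitrary} characteristic numbers on $\cX$. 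So the $\cX$-contractions in $C_{+,-}$ can be made as strong as you like, and there is no way to dominate them by a bound depending only on $N$ or $\varepsilon$.

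The paper's argument sidesteps this entirely. It observes that the proof of completeness of the $C_{+,+}$ and $C_{-,-}$ quadrants in Propositions~\ref{p.complete} and~\ref{p.complete-epsilon} never used the values of the surgeries on $\cX$; the conclusion is therefore uniform over all choices of $\cX$-surgery. If $Y$ were a suspension, one could perform an additional negative surgery on an orbit of $\cX$, obtaining a flow $Y'$ which by \cite{Fe1} is negatively twisted $\RR$-covered. But $Y'$ still lies in ${\cS}urg(X_A,\cX,\cY,\ast,(n_j)_{j\in J})$ with the same $n_j$, so Corollary~\ref{c.complete} forces its $C_{+,+}$ quadrants to be complete, contradicting negative twist. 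This uniformity-plus-Fenley trick is the missing idea.
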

\begin{proof} An Anosov flow for which every $(+,+)$ and $(-,-)$ quadrant is complete is $\RR$-covered and not negatively twisted. By our proofs of propositions \ref{p.complete} or \ref{p.complete-epsilon} the completeness of the $(+,+)$ and $(-,-)$ quadrants does not depend on the surgeries performed on $\cX$. However, if $Y$ were non-twisted, a negative surgery on $\cX$ would create a negatively twisted $\RR$-covered field. So $Y$ needs to be twisted. 
 
\end{proof}

This ends the proof of Theorems~\ref{t.twisted} and~\ref{t.Epsilon} and hence of Theorem~\ref{t.epsilon} too.

%%%%%%%%%%%%%%%%%%%%%%%%%%%%%%%%%%%%%%%%
\subsection{On the existence of rectangles: proof of Lemma~\ref{l.asymetric}}
%%%%%%%%%%%%%%%%%%%%%%%%%%%%%%%%%%%%%%%%
\begin{proof} Assume that every positive $\cX$-rectangle contains points in $\tilde\cY$ and every negative $\cY$-rectangle contains point in $\tilde \cX$. Then strong negative surgeries on $\cX$ and positive on $\cY$ would induce flows which are $\RR$-covered with all quadrants complete (according to Proposition~\ref{p.complete}), that is, with trivial bi-foliated planes. This persists under further negative surgeries on $\cX$, contradicting \cite{Fe1}. 
\end{proof}
%%%%%%%%%%%%%%%%%%%%%%%%%%%%%%%%%%%%%%%%%%%%%%%%%%%%%%%%%%%%%%
\section{Domination of expanding holonomies: strings of $\cX$-rectangles disjoint from $\tilde\cY$}\label{s.string}
%%%%%%%%%%%%%%%%%%%%%%%%%%%%%%%%%%%%%%%%%%%%%%%%%%%%%%%%%%%%%%
Our aim in this section on is to prove Theorems~\ref{t.nonRcovered}, \ref{t.notRcoveredatall} and \ref{t.string}. We start by proving Theorem \ref{t.string}.

Using the previous theorem, in order to prove Theorems \ref{t.nonRcovered} and \ref{t.notRcoveredatall}, we will need two things:
\begin{itemize}
\item The following theorem, which is going to be proved later in this section:  
\begin{theo}\label{t.separation} For any hyperbolic matrix $A\in SL(2,\ZZ)$ there are two periodic orbits $\cX$ and $\cY$ such that there exist positive $\cX$-rectangles disjoint from $\tilde \cY$ and negative $\cY$-rectangles disjoint from $\tilde\cX$. 
\end{theo}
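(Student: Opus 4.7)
\emph{The plan.} I will take $\cX$ to be a single periodic orbit of $f_A$ of sufficiently large period and construct $\cY$ as a second periodic orbit of $f_A$, using a Markov partition / specification argument for the Anosov diffeomorphism $f_A$. Fix eigencoordinates on $\RR^2 \simeq \cP_{X_A}$ with $v_s, v_u$ along the stable and unstable eigenlines of $A$. Since $A \in SL(2,\ZZ)$ is hyperbolic its eigenslopes are irrational, so every nonzero $w \in \ZZ^2$ has both eigencoordinates nonzero. Pick once and for all $w^+ \in \ZZ^2$ with both eigencoordinates of the same sign and $w^- \in \ZZ^2$ with eigencoordinates of opposite signs (both exist, e.g. among $(1,0),(0,1),(1,-1)$ after possibly reorienting $v_s$ or $v_u$).

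\emph{Choice of $\cX$ and the open good region.} I take $\cX$ to be any periodic orbit of $f_A$ of period $N$; then $\tilde \cX$ has density $N$ in $\RR^2$. Lemma~\ref{l.orbitesfinies} combined with this density estimate allows me to pick primitive positive and negative $\cX$-rectangles $R^+$ and $R^-$, with diagonals parallel to $w^+$ and $w^-$ respectively, of area of order $1/N$. For $N$ large enough I obtain
\[ \mathrm{area}(\pi(R^+)) + \mathrm{area}(\pi(-R^-)) < 1, \]
so that the open set $V := \TT^2 \setminus \bigl(\pi(R^+) \cup \pi(-R^-)\bigr)$ is non-empty. The point of this definition is that, for any $y\in V$ and any lift $\tilde y \in \RR^2$, the rectangle with corners $\tilde y$ and $\tilde y + w^-$ is a negative rectangle disjoint from $\tilde \cX$: the condition $y\notin\pi(-R^-)$ is exactly the open condition that no integer translate of a point of $\tilde\cX$ lies in this rectangle.

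\emph{Construction of $\cY$.} It remains to produce a periodic orbit $\cY$ of $f_A$ with $\cY \cap \pi(R^+) = \emptyset$ and $\cY \cap V \neq \emptyset$: indeed $\tilde\cY \cap R^+ = \emptyset$ then gives the positive $\cX$-rectangle disjoint from $\tilde \cY$, and the rectangle between a lift of any $y\in \cY\cap V$ and its translate by $w^-$ provides the negative $\cY$-rectangle disjoint from $\tilde \cX$ (by $\ZZ^2$-invariance of $\tilde \cY$). I fix a Markov partition of $f_A$ fine enough that $\pi(R^+)$ is covered by a union of Markov rectangles and that some Markov rectangle is contained in $V$, and I restrict the associated subshift of finite type by forbidding the symbols meeting $\pi(R^+)$. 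For $\pi(R^+)$ sufficiently small this restricted subshift retains positive topological entropy and admits a cycle through the symbol associated with the $V$-rectangle, yielding the required periodic orbit.

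\emph{Main obstacle.} The delicate point is the last paragraph: one must show that, for $\cX$ of sufficiently large period, the Markov combinatorics survives the removal of the symbols meeting $\pi(R^+)$, and that a cycle through the $V$-symbol still exists in the restricted graph. One may alternatively bypass Markov partitions and appeal to the specification / shadowing property of the mixing Anosov diffeomorphism $f_A$: build a periodic $\delta$-pseudo-orbit staying inside the closed set $\TT^2 \setminus \pi(R^+)$ and passing through a point of $V \cap (\TT^2\setminus \pi(R^+))$ (which is non-empty by the area bound), and close it up by shadowing to obtain $\cY$. Either route reduces the theorem to standard hyperbolic-dynamics arguments once the area estimates of the second step are in place; finally, by Remark~\ref{r.periodicpoint}, the single positive $\cX$-rectangle and single negative $\cY$-rectangle produced this way automatically give rise to rectangles with any prescribed origin in $\tilde \cX$ or $\tilde \cY$, which is the form in which the conclusion will be used in the sequel.
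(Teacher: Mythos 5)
Your second step contains the decisive error, and it sinks the ``negative $\cY$-rectangle'' half of the argument. You claim that for $y\notin\pi(-R^-)$ the rectangle with corners $\tilde y$ and $\tilde y+w^-$ is disjoint from $\tilde\cX$, i.e.\ that the set of bad basepoints is the single small rectangle $\pi(-R^-)$ of area $O(1/N)$. That is false: writing $B$ for the box spanned by $0$ and $w^-$, the box $\tilde y + B$ meets $\tilde\cX$ exactly when $\tilde y\in\tilde\cX-B$, so the bad set in $\TT^2$ is $\bigcup_{x\in\cX}\pi(x-B)$, a union of $N$ translates of a box whose area $|s(w^-)|\,|u(w^-)|$ is a \emph{constant} depending only on $A$ and $w^-$, not on $N$ (you cannot shrink it, since $w^-$ is a fixed integer vector and the invariant quadratic form is bounded away from $0$ on $\ZZ^2\setminus\{0\}$). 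So increasing the period $N$ of $\cX$ makes this bad set larger, not smaller; once $\cX$ is dense at a scale finer than $B$ (which is what large period pushes you toward, and is exactly the mechanism of Lemma~\ref{l.epsilon}), \emph{every} rectangle of the fixed shape $B$ contains points of $\tilde\cX$, so no negative $\cY$-rectangle with diagonal $w^-$ disjoint from $\tilde\cX$ exists at all, whatever $\cY$ is. The quantifiers are not symmetric to the positive half: there you fix one rectangle $R^+$ and vary the full lattice-orbit $\tilde\cY$ (that half is fine, and your Markov/specification construction of a periodic orbit avoiding the small closed set $\pi(R^+)$ is standard), whereas here the rectangle moves with $\tilde y$ and the obstruction is the whole $\ZZ^2$-saturated set $\tilde\cX$. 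A secondary, unrepaired point: the existence of \emph{primitive} $\cX$-rectangles of area $O(1/N)$ with diagonals parallel to the prescribed vectors $w^\pm$ does not follow from Lemma~\ref{l.orbitesfinies} or the density count (shrinking to a primitive rectangle destroys the diagonal direction), though this is minor compared to the first issue.

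The way out is not to use a rectangle of fixed integer shape for $\cY$, but to confine the two orbits to disjoint invariant sets so that each orbit has rectangles with both corners on itself staying in a region the other orbit never visits. This is what the paper does: starting from two distinct periodic points and finitely many transverse homoclinic intersections it produces two disjoint hyperbolic basic sets $\La_+$, $\La_-$, then takes $\gamma_\pm$ to be periodic orbits $\varepsilon$-dense in $\La_\pm$; a small positive (resp.\ negative) $\gamma_+$-rectangle with corners near $\La_+$ lies in a neighbourhood disjoint from $\La_-$, hence from $\tilde\gamma_-$, and symmetrically for $\gamma_-$. Your specification idea can be salvaged along these lines (use it to build $\cY$ inside a horseshoe disjoint from a neighbourhood of $\cX$-rectangles \emph{and} containing two of its own points in negative position spanning a box avoiding $\tilde\cX$), but as written the ``good region $V$'' step is a genuine gap, not a technicality.
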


\item To replace the hypothesis of large characteristic numbers in Theorem~\ref{t.string} by a large period as in the case of Theorems~\ref{t.epsilon} and \ref{t.Epsilon}.  
\end{itemize}

%%%%%%%%%%%%%%%%%%%%%%%%%%%%%%%%%%%%%%%%%%%%%%%%%%%%%%%%%%%%%%%%%
%\subsection{If there are $\cX$-rectangle disjoint from $\cY$}. 
%%%%%%%%%%%%%%%%%%%%%%%%%%%%%%%%%%%%%%%%%%%%%%%%%%%%%%%%%%%%%%%%%

%%%%%%%%%%%%%%%%%%%%%%%%%%%%%%%%%%%%%%%%%%%%%%%%%%%%%%%%%%%%%%
%\begin{defi} \label{d.positive-separation} Let $X,Y$ be two finite, disjoint, $f_A$-invariant subset of $\TT^2$, and let $\tilde X,\tilde Y$ denote their lifts on $\RR^2$.  We say that $X$ is \emph{positively separated from $Y$ by rectangles} if for every $\tilde x\in \tilde X$ their are two rectangles $R^+(\tilde x)$ and $R^-(\tilde x)$ so that 
%\begin{itemize}
% \item $R^+(\tilde x)$ and $R^-(\tilde x)$ are disjoint from $\tilde Y$
% \item $\tilde x$ is the first point of the increasing diagonal $\De^+(R^+(\tilde x))$ and is the last point of the increasing diagonal $\De^+(R^-(\tilde x))$
% \item both end points of both $\De^+(R^+(\tilde x))$ and $\De^+(R^-(\tilde x))$ belong to $\tilde X$. 
%\end{itemize}

%We define in the same way \emph{$X$ is negatively separated from $Y$ by rectangles}, by substituting the increasing diagonals by the decreasing diagonals. 
%\end{defi}

%\begin{defi} We say that $(X,Y)$ are separated by rectangles if $X$ is positively separate from $Y$ by rectangles and $Y$ is negatively separated from $X$ by rectangles. 
%\end{defi}

%%%%%%%%%%%%%%%%%%%%%%%%%%%%%%%%%%%%%%%%%%%%%%%%%%%%%%%%%%%%%%%%%%%%%%%
\subsection{Main step for Theorem~\ref{t.string}: undertwisted quadrants } 
%%%%%%%%%%%%%%%%%%%%%%%%%%%%%%%%%%%%%%%%%%%%%%%%%%%%%%%%%%%%%%%%%%%%%%%
Let $\cX$ be a finite $f_A$-invariant set. A \emph{string of positive $\cX$-rectangles} or a \emph{positive $\cX$-string} is a family of positive $\cX$-rectangles $R_i$ indexed by $\NN$ such that for any $k\in\NN$ the intersection $R_i\cap R_{i+1}$ is the endpoint of the increasing diagonal of $R_i$ and the initial point of the increasing diagonal of $R_{i+1}$.  The origin of the increasing diagonal of $R_0$ is called \emph{the origin of the string}. 

\begin{rema}
Let $\cX$ be a periodic orbit for $f_A$ and $\cY$ a finite $f_A$-invariant set disjoint from $\cX$. The existence of a positive (resp. negative) $\cX$-string with origin $x$ disjoint from $\tilde \cY$ is equivalent to the existence of a positive (resp. negative) $\cX$-rectangle disjoint from $\tilde \cY$.
\end{rema}

Theorem~\ref{t.string} is a consequence of the following technical result:
\begin{theo}\label{t.incomplete}Let $A\in SL(2,\ZZ)$ be a hyperbolic matrix, $\cX$ and $\cY$ finite $f_A$-invariant sets such that there exists a positive $\cX$-string disjoint from $\tilde \cY$. 

Then there is $n>0$ such that for any $Y \in {\cS}urg(X_A,\cX,\cY,(m_i)_{i\in I}, \ast)$ with $m_i<-n$, there is $x\in\tilde \cX$ such that the quadrants $C_{+,+}(x)$ and $C_{-,-}(x)$ are incomplete (i.e. undertwisted). 
\end{theo}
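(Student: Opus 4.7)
The approach is to play the dynamical game of Section~\ref{ss.game} in the quadrant $C_{+,+}(x_0)$, where $x_0$ is the origin of the positive $\cX$-string $R_0,R_1,\ldots$ disjoint from $\tilde\cY$, and to exhibit a point $z\in F^u_+(x_0)$ such that the domain $\cD(h^u_{Y,x_0,z})$ is a bounded interval of $F^s_+(x_0)$. This characterization implies that the $(+,+)$-quadrant at $x_0$ is undertwisted. The case of $C_{-,-}(x_0)$ is then symmetric, since by Section~\ref{ss.formula} negative surgeries on $\cX$ produce expansions of the holonomies in both $C_{+,+}$ and $C_{-,-}$ quadrants.

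After subdividing each rectangle of the string at any interior $\tilde\cX$-point (finite in number by discreteness), we may assume the string consists of primitive positive $\cX$-rectangles. Let $x_i=(s_i,\rho_i)$ be the endpoints of the increasing diagonals. For $t_0$ slightly above $s_1$, the game started at $(t_0,0)$ meets the stable separatrix of $x_{i+1}$ at height $\rho_{i+1}$ and maps $t-s_{i+1}$ to $\lambda^{|m_{i+1}|\tau(x_{i+1})}(t-s_{i+1})\geq\lambda^{n\tau_{\min}}(t-s_{i+1})$, where $\tau_{\min}=\min_{\cX}\tau$. Inside each primitive $R_i$, the game can only cross stable separatrices of points in $\tilde\cX\cup\tilde\cY$ whose stable coordinate is strictly less than $s_i$ (by primitivity and by the string's disjointness from $\tilde\cY$). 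Using the $A$- and $\ZZ^2$-invariance of $\tilde\cX\cup\tilde\cY$ together with the finiteness of orbits of primitive rectangles (Lemma~\ref{l.orbitesfinies}), the cumulative multiplicative effect of these auxiliary crossings on $t-s_i$ over the strip $(\rho_i,\rho_{i+1})$ is bounded below by a positive constant $c$ independent of $i$ and of $t$, once $t$ is large. Choosing $n$ large enough that $c\,\lambda^{n\tau_{\min}}>\lambda^2$ yields the uniform geometric growth $t_{i+1}\geq\lambda^{2}\,t_i$ along the string.

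The key consequence of this unbounded expansion of the game's horizontal coordinate is, via Theorem~\ref{t.newholonomies}, that the leaf $F^u_+(y_Y)$ in $\cP_Y$ (with $y$ corresponding to $(t_0,0)$) fails to meet the positive stable half-leaf $F^s_+(z_Y)$ for a suitably chosen $z\in F^u_+(x_0)$: intuitively, $F^u_+(y_Y)$ winds $|m_i|$ times about each $x_{i,Y}$ and outruns any stable half-leaf at bounded height. Formally, one shows that for every sufficiently large $t_0$ in a bounded interval, the game produces a path in $\cP_Y$ that does not intersect $F^s_+(z_Y)$; by monotonicity of the unstable holonomy in $t_0$, the domain $\cD(h^u_{Y,x_0,z})$ is contained in a bounded interval of $F^s_+(x_0)$. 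The hardest part is to make rigorous the passage from the geometric growth of the game in $\cP_X$-coordinates to the non-intersection statement in $\cP_Y$; this requires carefully tracking the monodromy of the corresponding paths through the common cover $\cP_\Gamma$ and exploiting the uniform lower bound $c$ produced from Lemma~\ref{l.orbitesfinies}, while ensuring that the $\tilde\cY$-contractions in each strip cannot overcome the $\tilde\cX$-expansions at the string points.
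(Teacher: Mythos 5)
There is a genuine gap here --- in fact two, and they sit exactly where the paper has to work hardest. First, your uniform lower bound $c$ on the ``cumulative multiplicative effect of the auxiliary crossings'' cannot exist as stated. The theorem must hold for \emph{all} surgeries on $\cY$ (that is the meaning of the $\ast$ in ${\cS}urg(X_A,\cX,\cY,(m_i)_{i\in I},\ast)$), with $n$ chosen beforehand; and in the $C_{+,+}$ quadrant a positive surgery of large characteristic number on $\cY$ produces an arbitrarily strong contraction at each crossing of a positive stable separatrix of a point of $\tilde\cY$. Your game runs along a vertical leaf whose horizontal coordinate $t_i$ grows without bound, so at heights in $(\rho_i,\rho_{i+1})$ it is no longer inside the string rectangle $\De_i$ but sweeps the region between $\De_i$ and the current position; the hypothesis that the string is disjoint from $\tilde\cY$ says nothing about that region, which in general does contain points of $\tilde\cY$, and primitivity of the $\De_i$ does not help. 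A single such crossing destroys any bound on $c$ that is independent of the $\cY$-surgeries. The paper's proof is organized precisely to avoid this: it first replaces the string by a \emph{staircase} of rectangles $R_i$ whose left unstable sides are stacked along $F^u_+(x)$ (Lemma~\ref{l.exists staircase}, using the affine maps $T_z\circ A^n$ and the finiteness of orbits of primitive rectangles to keep the ratios $\ell^s(R_{i+1})/\ell^s(R_i)$ bounded), then adjoins the $\cY$-safe zones $S_{i,\cY}$, and only tracks the holonomy of the bounded segments $\partial^{s,low}(R_{i,\cY})$; along these segments the unstable leaves cross no stable separatrix of a point of $\tilde\cY$ to the right of $F^u(x)$, the only crossing being at the next string point, so the surgeries on $\cY$ never enter the estimate.

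Second, even granting the growth $t_{i+1}\geq\lambda^2 t_i$, you have not produced the point $z\in F^u_+(x_0)$ at which incompleteness is witnessed, and with the string left in its original position you cannot in general: the heights $\rho_i$ of the string points need not stay bounded (for the natural strings obtained by iterating the first-return map at the endpoint, as in Remark~\ref{r.existencestrings}, they grow geometrically), so below any fixed finite height there are only finitely many string crossings and the game terminates; unbounded growth of the horizontal coordinate alone does not bound the domain of any holonomy $h^u_{Y,x_0,z}$ with $z$ at finite height. This is precisely the step you defer as ``the hardest part'', and it is not a routine verification --- it is the content of Lemmas~\ref{l.exists staircase} and~\ref{l.stairacase implies incomplete}: because $\cY\neq\emptyset$, a staircase disjoint from $\tilde\cY$ has bounded axis, so the leaves $F^s(q_i)$ accumulate on a finite leaf $F^s(q)$ with $q\in F^u_+(x)$, and the covering property $\partial^{s,low}(R_{i+1,\cY})\subset h_{i,Y}(\partial^{s,low}(R_{i,\cY}))$, valid once $\lambda^{|\tau(x)|}$ beats the bounded ratio $\bigl(\ell^s(\De_{i+1})+\ell^s(S_{i+1})\bigr)/\ell^s(S_i)$, forces the domain of the holonomy from $F^s_+(x)$ to $F^s_+(q)$ to lie in the bounded segment $\partial^{s,low}(R_{0,\cY})$. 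Without an analogue of the staircase realignment (or some other device that both shields the game from $\tilde\cY$ and produces a finite accumulation leaf), your argument does not close.
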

The important point in the statement of Theorem~\ref{t.incomplete} is that the conclusion does not depend on the surgeries performed (or not) on the orbits of the points in $\cY$.  

\begin{proof}[Proof of Theorem~\ref{t.string} assuming Theorem~\ref{t.incomplete}]
The hypotheses of Theorem~\ref{t.string} allow us to apply Theorem~\ref{t.incomplete} for the quadrants $C_{+,+}$ and $C_{-,-}$, but also the quadrants $C_{+,-}$ and $C_{-,+}$ by exchanging the roles of $\cX$ and $\cY$.   Therefore there is $n>0$ such that for all surgeries on $\cX$ and $\cY$ with negative characteristic numbers on $\cX$ and positive on $\cY$, all of absolute value greater than $n$, there exist a point $x\in\tilde \cX$ and a point $y\in\tilde\cY$  such that the quadrants 
$C_{+,+}(x)$ and $C_{+,-}(y)$ are both undertwisted.  This implies that $Y$ is not $\RR$-covered, which finishes the proof. 
\end{proof}

\subsection{Proof of Theorem~\ref{t.incomplete}: $\cX$-staircase disjoint from $\tilde\cY$}

\par{We consider $\RR^2$ endowed with a (linear) base in which $\cF^s$ is the horizontal foliation, $\cF^u$ is the vertical foliation and the origin $(0,0)$ belongs to $\tilde X$. We denote by $\lambda>1$ and $\lambda^{-1}<1$ the two eigenvalues of $A$. Again, if $A$ has negative eigenvalues our following arguments still apply. Fix $m$ to be the twist function for $X$ and $n$ the twist function for $Y$, that is the product of the characteristic number by the period for $f_A$. We have that $m(f_A(x))= m(x)$ and $n(f_A(x))= n(x)$.}

Recall that the stable and unstable foliations are oriented.  Hence, every rectangle $R$ has a top side $\partial^{s,up}R$, a bottom side $\partial^{s,low}R$, a right side $\partial^{u,right}R$ and a left side $\partial^{u,left} R$

A \emph{vertical }(resp. \emph{horizontal})  \emph{subrectangle} of $R$ is a rectangle $R_0\subset R$ such that $\partial^u(R_0)\subset \partial^u(R)$ (resp. $\partial^s(R_0)\subset\partial^s(R)$). 

Furthermore, we'll say that a horizontal subrectangle is a \emph{right horizontal subrectangle} if 
$$\partial^{u,right}(R_0)=\partial^{u,right}(R).$$

Given a rectangle $R$ we denote by $\ell^s(R)$ the length of its stable (top or bottom) sides and $\ell^u(R)$ the length of the 
unstable (right or left) sides.

\begin{defi}\label{d.staircase}
Fix $x\in\tilde X$. We say that an infinite sequence of rectangles $\cR_0,\cR_1,....$ is \emph{a positive $\cX-$staircase with origin at $x\in\tilde X$  in $C_{+,+}(x)$} (or a \emph{$(\cX,x,+,+)$-staircase}) if the following conditions are satisfied (see figure \ref{f.staircase}):
\begin{enumerate}
\item All rectangles $\cR_n$ are contained in $C_{+,+}(x)$.
\item There is a positive  $\cX$-string $\{\De_i\}_{i\in\NN}$ with origin at $x$ such that $\De_i$ is a right horizontal subrectangle of $R_i$ for every $i>0$ and $R_0=\De_0$.  
\item $ \partial^{s,up}\cR_m \subset \partial^{s,low} \cR_{m+1}$
\item $$\frac{\ell^s(\De_{i+1}) }{\ell^{s}(\De_{i})} \mbox{ is bounded for }i\in\NN.$$
\end{enumerate}

In a similar way, one can define a positive staircase inside $C_{-,-}(x)$ and negative staircases in  $C_{+,-}(x)$ and $C_{-,+}(x)$. 
\end{defi}

\begin{rema} By definition, the left unstable sides $\partial^{u,left}(R_i)$ of all the rectangles $R_i$ in a positive $(\cX,x,+,+)$-staircase $\cR=\{R_i\}_{i\in\NN}$ are segments on $F^u_+(x)$ which are adjacent (disjoint interior but sharing an endpoint with the next one), whose the union is an interval $\cI^u(\cR)$ on $F^u_+(x)$ starting at $x$:
$$\cI^u(\cR)=\bigcup_{i=0}^\infty \partial^{u,left}(R_i).$$

We say that $\cI^u(\cR)$ is \emph{the axis} of the staircase $\cR$
\end{rema}

\begin{rema} One also has that
$$\ell^s(R_i)=\sum_{j=0}^i\De_j.$$
\end{rema}

\begin{figure}[h!]
\includegraphics[scale=0.60]{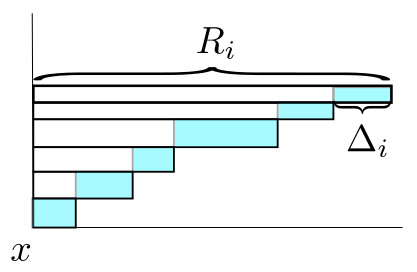}
\caption{}
\label{f.staircase}
\end{figure}

Theorem~\ref{t.incomplete} is a simple consequence of the next two lemmas:

\begin{lemm}\label{l.exists staircase}
Using the previous notations, if there is a positive $\cX$-string disjoint from $\tilde \cY$ with origin at $x\in\tilde \cX$, then there exists a positive $\cX$-staircase disjoint from $\tilde\cY$ with origin at $x$ inside $C_{+,+}(x)$.
\end{lemm}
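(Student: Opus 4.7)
The plan is to construct the staircase as the canonical ``left extension'' of a carefully chosen positive $\cX$-string with origin $x$, disjoint from $\tilde\cY$. A quick uniqueness observation: given any positive $\cX$-string $\{\Delta_i\}_{i\in\NN}$ with vertices $y_i\in\tilde\cX$ (with $y_0=x$ and $y_{i+1}$ the upper-right corner of $\Delta_i$), the constraints $R_0=\Delta_0$, $R_i\subset C_{+,+}(x)$ and $\partial^{s,up}R_m\subset\partial^{s,low}R_{m+1}$ propagate inductively the property that the left unstable side of each $R_i$ lies on $F^u_X(x)$. Consequently the only candidate staircase associated to $\{\Delta_i\}$ is
\[
R_i=[x_x,\,x_{y_{i+1}}]\times[y_{y_i},\,y_{y_{i+1}}],
\]
and conditions~(1), (2), (3) of Definition~\ref{d.staircase} then hold automatically. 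The lemma therefore reduces to exhibiting a string $\{y_i\}$ for which (a) each $\Delta_i$ is disjoint from $\tilde\cY$, (b) each ``left shadow'' $E_i:=R_i\setminus\Delta_i=[x_x,x_{y_i}]\times[y_{y_i},y_{y_{i+1}}]$ is also disjoint from $\tilde\cY$, and (c) the successive widths $\ell^s(\Delta_i)=x_{y_{i+1}}-x_{y_i}$ have bounded ratio.

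I would build $\{y_i\}$ inductively. Having constructed $y_0=x,\dots,y_i$, define $L_i$ as the infimum of the $y$-coordinates of the points of $\tilde\cY$ lying in the half-strip $[x_x,x_{y_i}]\times(y_{y_i},+\infty)$, with $L_i=+\infty$ if this set is empty. The hypothesis of the lemma, together with Remark~\ref{r.periodicpoint} applied to the $f_A$-orbit of $y_i$, supplies a positive $\cX$-rectangle $\Delta$ with origin $y_i$ disjoint from $\tilde\cY$; iterating $\Delta$ by the inverse first return map $P_{y_i}^{-1}$ (whose linear part $A^{-\tau(y_i)}$ contracts vertically by $\lambda^{-\tau(y_i)}$ and expands horizontally by $\lambda^{\tau(y_i)}$) yields rectangles of arbitrarily small vertical extent, still disjoint from $\tilde\cY$ since $P_{y_i}$ preserves both $\tilde\cX$ and $\tilde\cY$. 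Taking $y_{i+1}$ to be the upper-right corner of such an iterate chosen so that $y_{y_{i+1}}<L_i$ enforces (b); (a) is then automatic by $P_{y_i}$-invariance of $\tilde\cY$, and (c) is obtained by restricting attention to primitive $\cX$-rectangles, whose orbits under the group generated by $A$ and integer translations are finite by Lemma~\ref{l.orbitesfinies}.

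The main obstacle is to reconcile conditions~(b) and~(c) at each inductive step: satisfying~(b) may force many applications of $P_{y_i}^{-1}$, and every such application widens the rectangle horizontally by the factor $\lambda^{\tau(y_i)}$, which could in principle blow up the ratio $\ell^s(\Delta_{i+1})/\ell^s(\Delta_i)$. The key remark for defeating this tension is that both $\tilde\cY$ and the set of primitive $\cX$-rectangles are discrete and $A$-invariant, so that the scale $L_i-y_{y_i}$ compared to the characteristic vertical extent of a primitive $\cX$-rectangle with origin $y_i$ is controlled, along the $A$-orbit of $y_i$, by a finite combinatorial datum. This bounds the iteration counts $n_i$ needed at every step and hence the ratios in~(c). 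Assembling the resulting rectangles $R_i$ into the candidate staircase then gives the required positive $\cX$-staircase with origin $x$ in $C_{+,+}(x)$, disjoint from $\tilde\cY$.
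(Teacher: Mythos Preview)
Your plan has the right architecture --- build the staircase inductively from a string, using the affine first-return maps to rescale --- but the step you flag as ``the main obstacle'' is not actually overcome. Your ceiling $L_i$ is the height of the lowest $\tilde\cY$-point in the strip $[x_x,x_{y_i}]\times(y_{y_i},+\infty)$, a strip whose width is the \emph{entire} accumulated sum $\sum_{j<i}\ell^s(\Delta_j)$. Hence $L_i-y_{y_i}$ is not determined by local data at $y_i$ along its $A$-orbit; it depends on the whole history $y_0,\dots,y_i$. The assertion that this quantity, compared to the vertical scale of a primitive $\cX$-rectangle at $y_i$, is ``controlled by a finite combinatorial datum'' is therefore unjustified, and without it you have no bound on the number of applications of $P_{y_i}^{-1}$, so no control on $\ell^s(\Delta_{i+1})/\ell^s(\Delta_i)$. (There is also a minor issue: your appeal to Remark~\ref{r.periodicpoint} to produce a $\tilde\cY$-free $\cX$-rectangle at an arbitrary $y_i\in\tilde\cX$ only works orbit by orbit, and nothing forces your inductively chosen $y_i$ to lie in the orbits visited by the original string when $\cX$ has several periodic orbits.)

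The paper's construction fixes exactly this defect by replacing your history-dependent ceiling with a marker that travels with each rectangle. Starting from the given string $\{\Delta_i\}$, it first enlarges each $\Delta_i$ leftward to a rectangle $D_i$ whose interior is still disjoint from $\tilde\cY$ but which carries a point of $\tilde\cY$ on its left unstable side; finiteness of primitive-rectangle orbits gives a uniform bound $c<\ell^s(D_i)/\ell^s(\Delta_i)<C$. Then, having built $R_i$ with upper-right corner $x_{i+1}$, one transports $D_{i+1}$ to $x_{i+1}$ and iterates by $\phi_{x_{i+1}}^{-1}$ until the rectangle \emph{first} meets $F^u(x)$; cutting along $F^u(x)$ gives $R_{i+1}$. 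Disjointness from $\tilde\cY$ is now automatic because $R_{i+1}$ sits inside an affine image of the interior of $D_{i+1}$, and the ratio bound is immediate because the $\tilde\cY$-marker on the left side of $D_{i+1}$ forces the ``just crossed'' width to lie between $\ell^s(R_i)$ and $\lambda^n\ell^s(R_i)$. The moral: attach the $\tilde\cY$-obstruction to the rectangle before iterating, rather than trying to dodge an obstruction computed from the accumulated picture.
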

\begin{lemm}\label{l.stairacase implies incomplete}
If for some point $x \in \tilde{\cX}$ there exists a positive $\cX$-staircase $(R_i)_{i \in \mathbb{N}}$ disjoint from $\tilde \cY$ with origin at $x$ in $C_{+,+}(x)$, then there exists $N'>0$ such that for any $Y \in {\cS}urg(X_A,\cX,\cY,(m_i)_{i\in I},\ast)$ with $m_i<-N'$ the $C_{+,+}(x)$ quadrant for $Y$ is undertwisted (i.e. incomplete).
\end{lemm}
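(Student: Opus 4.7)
The approach is to play the dynamical game of Section~\ref{ss.game} for the unstable holonomy of $Y$ in the quadrant $C_{+,+}(x)$, and to exhibit a point $(t_0, 0) \in F^s_+(x)$ whose positive unstable leaf in $\cP_Y$ fails to meet $F^s_+((0,r))$ for a suitable target height $r$. Using the coordinates of Section~\ref{ss.game}, write the staircase corners as $x_i = (x_i^s, x_i^u)$ (with $x_{-1}=x=(0,0)$), and set $a_i = x_i^s - x_{i-1}^s$, $b_i = x_i^u - x_{i-1}^u$, $u_\infty = \sum_{i\geq 0} b_i$, and $\tau_i$ the period of $x_i$ under $f_A$. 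The construction used in Lemma~\ref{l.exists staircase} (which exploits $f_A^{-1}$-iterates of the given $\cX$-string to contract the unstable widths) should be arranged so that $u_\infty < +\infty$.

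The key observation is that in the $C_{+,+}$-game, crossing the positive stable separatrix of an $\cX$-corner $x_i$ carrying a surgery characteristic number $m_i<-N'$ multiplies the horizontal distance to $x_i$ by $\lambda^{-m_i\tau_i} > \lambda^{N'}$, an arbitrarily large expansion factor. Choose $t_0 = x_0^s + \eta$ with small $\eta > 0$ so that the first event triggers at $x_0$. A straightforward induction, using the bounded-ratio condition $a_{i+1}/a_i \leq C$ from Definition~\ref{d.staircase}, shows that if $N'$ is chosen so that $\lambda^{N'}$ dominates $C$ (and the lower bound on the periods $\tau_i$), then the game's horizontal coordinate $t_i$ remains strictly greater than $x_i^s$ after each event; hence the event at the next corner $x_{i+1}$ also triggers. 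In this way the game accumulates infinitely many events at heights $u_0<u_1<\cdots$ converging to $u_\infty$.

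A delicate point is that, once expanded, the horizontal coordinate $t_i$ may step outside the staircase, so the game can also encounter $\tilde\cY$-events coming from points of $\tilde\cY$ lying outside $\bigcup R_i$ (the interior of the staircase is $\tilde\cY$-free by hypothesis). Each such $\tilde\cY$-event contributes a multiplicative factor to the horizontal distance whose modulus is bounded uniformly in terms of the fixed $\cY$-surgery data; by enlarging $N'$ the $\cX$-expansions continue to dominate, preserving the inductive triggering of the $x_i$-events. Taking $r > u_\infty$, the game has infinitely many events at heights $u_i < u_\infty < r$ and never terminates before height $r$. Equivalently, $F^u_{Y,+}(t_0, 0) \cap F^s_{Y,+}(0, r) = \emptyset$, which shows that $C_{+,+}(x)$ is undertwisted for $Y$.

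The hardest part of the argument is Step 3: controlling the $\tilde\cY$-interference so that the cumulative effect of $\tilde\cY$-events between consecutive $x_i$-crossings does not destroy the inductive lower bound $t_i>x_i^s$. Here one crucially uses the disjointness of the staircase from $\tilde\cY$ to localize the $\tilde\cY$-points to a region separated from the left sides $\partial^{u,\mathrm{left}}R_i$, so that their contribution to the horizontal expansion is absorbed into the much larger $\cX$-expansion factor $\lambda^{N'}$. A symmetric argument, reversing the orientations of both $F^s_X$ and $F^u_X$ (which preserves the class of positive $\cX$-rectangles and hence the existence of the staircase), yields the analogous undertwisted statement for $C_{-,-}(x)$.
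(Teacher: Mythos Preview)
Your overall strategy is sound and close to the paper's, but Step~3 has a genuine gap. The characteristic numbers of the surgeries along $\cY$ are \emph{not} fixed: the symbol $\ast$ in ${\cS}urg(X_A,\cX,\cY,(m_i)_{i\in I},\ast)$ means that $N'$ must be chosen \emph{before} the $\cY$-surgeries and must work for all of them simultaneously. Hence the multiplicative factor contributed by a single $\tilde\cY$-event can be an arbitrarily strong contraction $\lambda^{-n_j\pi_j}$ with $n_j$ as large as one likes, and no enlargement of $N'$ can dominate this. Your sentence ``bounded uniformly in terms of the fixed $\cY$-surgery data'' is therefore not justified.

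The paper circumvents this problem by introducing, for each $R_i$, a \emph{safe zone} $S_i$: one pushes the right unstable side of $R_i$ to the right until it first meets $\tilde\cY$, obtaining a rectangle $R_{i,\cY}=R_i\cup S_i$ whose interior is still disjoint from $\tilde\cY$. The key point is that the unstable holonomy of the right endpoint of $\partial^{s,low}R_{i,\cY}$ up to the level $q_{i+1}$ stays inside $R_{i,\cY}$ and therefore meets \emph{no} $\tilde\cY$-separatrix at all; the only event is the single $\tilde\cX$-expansion at $x_{i+1}$, and the result is manifestly independent of the $\cY$-surgeries. One then checks that the image of $\partial^{s,low}R_{i,\cY}$ contains $\partial^{s,low}R_{i+1,\cY}$ provided $\lambda^{|\tau(x_{i+1})|}>C$, where $C$ bounds the ratio $(\ell^s(\De_{i+1})+\ell^s(S_{i+1}))/\ell^s(S_i)$; this uses the bounded-ratio property of the staircase together with the finiteness of orbits of primitive $\cX$-rectangles to bound $\ell^s(S_i)/\ell^s(\De_i)$. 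Your single-point approach can in fact be repaired with this same observation: any $\tilde\cY$-contraction can only pull the horizontal coordinate toward a $\tilde\cY$-point, and every $\tilde\cY$-point in the strip between heights $q_i$ and $q_{i+1}$ lies at horizontal coordinate at least $\ell^s(R_i)+\ell^s(S_i)$, since $R_{i,\cY}$ is $\tilde\cY$-free. But this is exactly the safe-zone insight, and it is what is missing from your write-up.
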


%%%%%%%%%%%%%%%%%%%%%%%%%%%%%%%%%%%%%%%%%%%%%%%%%%%%%%%%%%%%%%%%%%%
\subsection{$\cX$-strings and $\cX$-staircase disjoint for $\tilde{\cY}$}
%%%%%%%%%%%%%%%%%%%%%%%%%%%%%%%%%%%%%%%%%%%%%%%%%%%%%%%%%%%%%%%%%%%

\begin{proof}[Proof of Lemma~\ref{l.exists staircase}]
Let $\tilde \cX$ and $\tilde \cY$ be two finite invariant sets and assume that $\{\De_i\}_{i\in\NN}$ is a positive $\cX$-string  disjoint from $\tilde{\cY}$ with origin at a point $x_0\in\tilde\cX$.  We can assume without loss of generality that the $\De_i$ are primitive. 

For any $i$ there is a unique rectangle, denoted by $D_i$, such that 
\begin{itemize}
 \item $\De_i$ is a right horizontal subrectangle of $D_i$
 \item the interior of $D_i$ is disjoint from $\tilde\cY$
 \item $D_i$ contains a point of $\tilde\cY$ on its left unstable side $\partial^{u,left}D_i$.
\end{itemize}
In other words, starting with $\De_i$ we push to the left its left unstable side until we touch a point in $\tilde \cY$ for the first time.

\begin{clai} There are $1<c<C$ such that for every $i$ one has 
$$c<\frac{\ell^s(D_i)}{\ell^s(\De_i)} <C$$
\end{clai}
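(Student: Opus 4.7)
The plan is to exploit the finiteness statement of Lemma~\ref{l.orbitesfinies} together with the observation that the ratio $\ell^s(D_i)/\ell^s(\De_i)$ is invariant under a natural symmetry group acting on the bi-foliated plane, so that it takes only finitely many values, each of them strictly greater than $1$.

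First I would introduce the group $G$ acting on $\RR^2$ generated by the linear map $A$ and the integer translations. Since $\cX$ and $\cY$ are both $f_A$-invariant subsets of $\TT^2$, their lifts $\tilde\cX,\tilde\cY$ are $G$-invariant, and the construction of $D_i$ from $\De_i$ (push the left unstable side of $\De_i$ to the left until a point of $\tilde\cY$ appears on it, while keeping the interior disjoint from $\tilde\cY$) is $G$-equivariant. By Lemma~\ref{l.orbitesfinies}, the set of primitive positive $\cX$-rectangles has only finitely many $G$-orbits. Moreover, in the chosen coordinates $A$ is the diagonal matrix with eigenvalues $\lambda^{-1},\lambda$, so it scales every stable length by the common factor $\lambda^{-1}$, while integer translations are isometries; hence the ratio $\ell^s(D_i)/\ell^s(\De_i)$ is $G$-invariant and takes only finitely many values $r_1,\dots,r_N$ as $i$ varies. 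Setting $C:=\max_j r_j+1$ already gives a finite upper bound.

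For the strict lower bound I would use the hypothesis of Lemma~\ref{l.exists staircase}: the string $\{\De_i\}$ is disjoint from $\tilde\cY$, so in particular no point of $\tilde\cY$ lies on the left unstable side of any $\De_i$. In defining $D_i$ one must therefore push the left side strictly to the left, which forces $\ell^s(D_i)>\ell^s(\De_i)$ and hence $r_j>1$ for every $j$. Taking $c:=\min_j r_j$ then yields $1<c<C$, as required. The only substantive point in the argument is the $G$-equivariance of the assignment $\De_i\mapsto D_i$, which reduces to the $f_A$-invariance of $\cY$ and the $\ZZ^2$-invariance of its lift; once that is in place, the bounds follow immediately from Lemma~\ref{l.orbitesfinies}.
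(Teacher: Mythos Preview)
Your proof is correct and follows essentially the same approach as the paper: finiteness of $G$-orbits of primitive $\cX$-rectangles (Lemma~\ref{l.orbitesfinies}) combined with $G$-invariance of the ratio. You spell out more than the paper does, in particular the justification that each ratio is strictly greater than $1$ (which the paper leaves implicit), and the $G$-equivariance of the map $\De_i\mapsto D_i$. One cosmetic point: setting $c:=\min_j r_j$ gives $c\le$ ratio rather than the strict inequality in the statement, but since $\min_j r_j>1$ you can simply take any $c\in(1,\min_j r_j)$.
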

\begin{proof} There are finitely many orbits of primitive $\cX$-rectangles and therefore there are also finitely many orbits of associated rectangles $D_i$.  The ratio in the statement is invariant under the action of $A$ and of integer translations, which gives the desired result. 
\end{proof}

We denote by $x_i^0$ the origin of $\De_i$ for every $i\in \NN$ ($x_0=x_0^0$).  By assumption all the $x_i^0$  belong to $\tilde \cX$.

As $\cX$ is finite and $f_A$-invariant, every point is periodic. 
Let $n>0$ denote a common period of all points in $\cX$ and $T_z$ be the translation by $z\in \tilde \cX$ in $\cP_{X_A}$. By definition of $n$, for every $z\in \tilde \cX$, $\phi_z:=T_z\circ A^n$ is the (unique) lift of $f_A^n$  having $z$ as a fixed point. $\phi_z$ is affine and its derivative on $\RR^2$ is $A^n$ in the canonical coordinates. In the $F^s_X,F^u_X$ coordinates, the derivative of 
$\phi_z$ is 
$$
\left(
\begin{array}{cc}
 \lambda^{-n}&0\\
 0&\lambda^n
\end{array}
\right)
$$

We are ready to define by induction the rectangles $R_i$ of the staircase and we start by fixing $R_0=\De_0$. 

Assume that $R_i$ has been defined and denote by $x_{i+1}$ the endpoint of its increasing diagonal.  Consider $g_i$ in the group generated by $A$ and the integer translations such that $x_{i+1}= g_i(x_{i+1}^0)$. 

Consider $g_i(D_{i+1})$. It is a rectangle whose bottom stable side contains $x_{i+1}$. We consider the orbit of $g_i(D_{i+1})$ by $\phi_{i+1}:=\phi_{x_{i+1}}$, which consists of rectangles containing $x_{i+1}$ in their bottom stable side. 

\begin{clai} For large $k>0$,  $\phi_{i+1}^k(g_i(D_{i+1}))$ is disjoint from 
$F^u_X(x_0)$ and $\phi_{i+1}^{-k}(g_i(D_{i+1}))$ is not disjoint from $F^u_X(x_0)$.
\end{clai}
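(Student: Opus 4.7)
The plan is to exploit the hyperbolic dynamics of $\phi_{i+1}$, which fixes $x_{i+1}$ and acts on $\RR^2$ in the $(F^s_X,F^u_X)$-coordinates with derivative $\mathrm{diag}(\lambda^{-n},\lambda^n)$. Since $\phi_{i+1}$ is affine and preserves the stable and unstable foliations, it sends rectangles to rectangles, contracting the stable extent by $\lambda^{-n}$ and expanding the unstable extent by $\lambda^n$, all about the fixed point $x_{i+1}$.

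First I would record that $g_i(D_{i+1})$ is a rectangle containing $x_{i+1}$ in its bottom stable side, and that this bottom side extends \emph{strictly} to the left of $x_{i+1}$ by some length $a>0$. Indeed, $\De_{i+1}$ is a right horizontal subrectangle of $D_{i+1}$ whose origin coincides with $x_{i+1}^0$, and by construction the left unstable side of $D_{i+1}$ is obtained by sliding the left side of $\De_{i+1}$ until it meets a point of $\tilde\cY$; such a point exists because $\tilde\cY$ is $\ZZ^2$-invariant and non-empty, and disjointness of the $\cX$-string from $\tilde\cY$ forces the displacement to be strictly positive. Since $g_i$ is a composition of $A$ and an integer translation, it remains an affine map preserving the foliations, so $g_i(D_{i+1})$ still has $x_{i+1}$ on its lower stable side with a left extension of some length $a>0$ (a scaled version of $\ell^s(D_{i+1})-\ell^s(\De_{i+1})$).

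Next, since $\phi_{i+1}$ fixes $x_{i+1}$ and contracts the stable direction uniformly by $\lambda^{-n}$, the rectangle $\phi_{i+1}^k(g_i(D_{i+1}))$ still has $x_{i+1}$ on its bottom side, and this bottom side now extends to the left of $x_{i+1}$ by exactly $\lambda^{-nk}\,a$. On the other hand, by the induction hypothesis $R_i\subset C_{+,+}(x_0)$ has its left unstable side on $F^u_X(x_0)$, so $x_{i+1}$ (the endpoint of the increasing diagonal of $R_i$) sits at a fixed positive horizontal distance $b:=\ell^s(R_i)>0$ to the right of $F^u_X(x_0)$.

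Choosing $k$ large enough that $\lambda^{-nk}a<b$ ensures $\phi_{i+1}^k(g_i(D_{i+1}))$ is confined to the strict right of $F^u_X(x_0)$ and hence disjoint from it. Conversely, for $k$ large, $\lambda^{nk}a>b$, so the left boundary of $\phi_{i+1}^{-k}(g_i(D_{i+1}))$ sits strictly to the left of $F^u_X(x_0)$ while $x_{i+1}$ remains to the right; by connectedness the rectangle then meets $F^u_X(x_0)$. No serious obstacle is expected: the only subtlety is the positivity $a>0$, which reduces to the elementary fact that $\tilde\cY\ne\emptyset$ combined with the disjointness hypothesis on the $\cX$-string.
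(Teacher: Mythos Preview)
Your argument is correct and is exactly the computation the paper has in mind; in fact the paper states this claim without proof, relying implicitly on the hyperbolic contraction/expansion about $x_{i+1}$ that you make explicit. Your identification of the two relevant quantities---the left overhang $a>0$ of $g_i(D_{i+1})$ past $x_{i+1}$ (positive because the string $\{\De_j\}$ is disjoint from $\tilde\cY$, so one must slide strictly) and the horizontal distance $b=\ell^s(R_i)>0$ from $x_{i+1}$ to $F^u_X(x_0)$---and the comparison $\lambda^{\pm nk}a$ versus $b$ is precisely what justifies the claim, since intersection with the vertical line $F^u_X(x_0)$ depends only on the stable projection of the rectangle.
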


Let $k_{i+1}=\min\{k| \phi_{i+1}^{-k}(g_i(D_{i+1}))\cap  F^u_X(x_0)\neq \emptyset\}$ and
 $$h_{i+1}=\phi_{i+1}^{-k_{i+1}}\circ g_i$$

By construction (see figure \ref{f.lemma71}) $F^u_X(x_0)$ cuts  $h_{i+1}(D_{i+1})$ in two horizontal sub-rectangles: $R_{i+1}$ is the right subrectangle. Notice that $R_{i+1}$ is disjoint from $\tilde \cY$ as it's included in the image of $D_i\setminus\partial^{u,left}(D_i)$ by $h_{i+1}$.

This defines by induction a family of rectangles $\{R_i\}$ satisfying all the conditions of  Definition~\ref{d.staircase} except possibly (4). 
\begin{figure}[h!]
\includegraphics[scale=0.60]{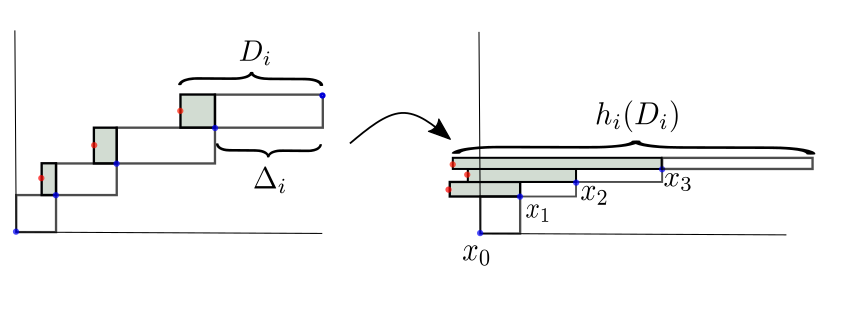}
\caption{}
\label{f.lemma71}
\end{figure}
It remains to check that $\frac{\ell^s(R_{i+1}) }{\ell^{s}(R_{i})}$ is bounded. Recall that $R_{i+1}$ is a right horizontal subrectangle of $h_{i+1}(D_{i+1})$ and that $D_{i+1}$ admits $\De_{i+1}$ as a right horizontal subrectangle.  Let us denote by $a_{i+1}=\ell^s(\De_{i+1})$ and $b_{i+1}=\ell^s(\tilde\De_{i+1})$ where   $\tilde \De_{i+1}=\overline{D_{i+1}\setminus \De_{i+1}}$. 

Because of the invariance of the ratio $\frac{a_i}{b_i}$ under the action of integer translations or $A$ and thanks to the finiteness of orbits of primitive $\cX$-rectangles, one gets that $\frac{a_i}{b_i}$ and $\frac{b_i}{a_i}$ are bounded.  

Now recall that $h_{i+1}$ is obtained by composing $g_i$ with $\phi^{-k_{i+1}}_{i+1}$, where $k_{i+1}$ is the minimal integer $k$ for which $\phi_{i+1}^{-k}\circ g_i(D_{i+1})$ meets $F^u_+(x_0)$. This implies (see figure) \ref{f.lemma71} that 

$$\sum_{j=0}^{i} \ell^s(h_{j}(\De_j))<\ell^s(h_{i+1}(\tilde \De_{i+1}))<\lambda^n. \sum_{j=0}^{i} \ell^s(h_{j}(\De_j))$$.

Let $\ell_i=\ell^s(h_i(\De_i))$ and $\tilde \ell_i=\ell^s(h_i(\tilde\De_i))$.  Then $\frac{\ell_i}{\tilde\ell_i}=\frac{a_i}{b_i}$ (because $h_i$ is affine), so this ratio and its inverse are bounded. 

By the previous inequality we have that $\frac{\tilde \ell_{i+1}}{\sum_{j=0}^i \ell_j}\in[1,\lambda^n]$. Hence, there is $C>0$ so that $\frac{\ell_{i+1}}{\sum_{j=0}^i \ell_j}\in[\frac1C,C]$. Finally, using the fact that $\ell^s(R_i)=\sum_{j=0}^i \ell_j$ we have that $\frac{\ell^s(R_{i+1})}{\ell^s(R_i)}$ is bounded from above. Therefore, $(R_i)_{i\in \mathbb{N}}$ verifies property (4) of the definition of a staircase and is indeed a positive $\cX$-staircase disjoint from $\tilde \cY$.
\end{proof}

\begin{rema} If $\cY$ is a non empty finite invariant set then for any 
finite invariant $\cX$ any $\cX$-staircase $\cR=\{R_i\}$ disjoint from $\tilde \cY$ has an axis of bounded length:
$$\ell(\cI^u(\cR))<\infty.$$
 
\end{rema}

%%%%%%%%%%%%%%%%%%%%%%%%%%%%%%%%%%%%%%%%%%%%%%%%%%%%%%%%%%%%%
\subsection{Staircase and the holonomy game: proof of lemma \ref{l.stairacase implies incomplete}}
%%%%%%%%%%%%%%%%%%%%%%%%%%%%%%%%%%%%%%%%%%%%%%%%%%%%%%%%%%%%%

 In this small section, we give the proof of lemma \ref{l.stairacase implies incomplete}, thus completing the proof of theorems \ref{t.incomplete} and \ref{t.string}. The proof is going to be the result of three fairly simple observations given in the form of lemmas.
 
 Let $\cR=\{R_i\}$ be a $(\cX,x,+,+)$-staircase disjoint from $\tilde \cY$ for some $x\in\tilde \cX$.
 For any $i$ we denote by $R_{i,\cY}$ the unique rectangle with the following properties:
 \begin{itemize}
  \item $\partial^{u,left} R_{i,\cY}=\partial^{u,left}R_i\subset F^u_+(x)$
  \item $R_{i,\cY}\cap\tilde{\cY}=\partial^{u,right}R_{i,\cY}\cap\tilde{\cY}\neq \emptyset$
 \end{itemize}
 In other words, one pushes the right side of $R_i$ to the right until it intersects $\tilde{\cY}$ for the first time. 
 
 We denote $S_{i,\cY}=\overline{R_{i,\cY}\setminus R_i}$. It is a right horizontal subrectangle of $R_{i,\cY}$ called the \emph{$\cY$-safe zone of $R_i$}. 
 
Once again, because of the finiteness of the number of orbits of $\cX$-rectangles, one gets that the ratio $\frac{\ell^s(S_{i,\cY})}{\ell^s(\De_i)}$ takes a finite number of values (in particular this ratio and its inverse are bounded).

For any $i$ let 
$$q_i=\partial^{s,low}R_i\cap F^u_+(x) \mbox{ and } q=\lim q_i\in F^u_+(x)$$

For any $Y\in \cS urg(A,\cX,\cY)$ we denote by 
$h_{i,Y}\colon F^s_+(q_i)\to F^s_+(q_{i+1})$ the holonomy of $F^u_Y$.

\begin{lemm} Using the above notations one has
\begin{itemize}
 \item if $(t,q_i)\in\partial^{s,low}(R_i)$ then 
 $$h_{i,Y}(t,q_i)=(t,q_{i+1})$$
 \item if $(t,q_i)\in\partial^{s,low}(S_{i,\cY})$ then
 $$h_{i,Y}(t,q_i)=(t_{i+1}+ \lambda^{-\tau(x_{i+1})}(t-t_{i+1}),q_{i+1})$$
 where $x_{i+1}=(t_{i+1},q_{i+1})=\partial^{s,low}(R_{i+1})\cap \tilde \cX$ and $\tau(x_{i+1})=m(x_{i+1})\cdot \pi(x_{i+1})$ is the twist number associated to $x_{i+1}$ , where $m(x_{i+1})$ is the characteristic number of the surgery at the orbit corresponding at $x_{i+1}$ and $\pi(x_{i+1})$ is its period. 
\end{itemize} 
\end{lemm}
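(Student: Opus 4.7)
The plan is to derive both formulas as direct applications of the holonomy game of Section~\ref{ss.game}. I would first choose coordinates on $\cP_{X_A}$ so that $x=(0,0)$, $F^s_{X_A}$ is horizontal and $F^u_{X_A}$ is vertical; then $q_i=(0,y_i)$, $q_{i+1}=(0,y_{i+1})$ with $y_i<y_{i+1}$, $R_i=[0,t_{i+1}]\times[y_i,y_{i+1}]$, and $x_{i+1}=(t_{i+1},y_{i+1})$ is the top-right corner of $R_i$. In these coordinates, $h_{i,Y}(t,q_i)$ is computed by running the game of Section~\ref{ss.game} along the vertical segment $\{t\}\times[y_i,y_{i+1}]$: an interaction occurs precisely when that segment meets the positive stable separatrix of a surgery point $\gamma=(u,s)\in\tilde\cX\cup\tilde\cY$ with $0<u<t$ and $y_i<s\leq y_{i+1}$.

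Next, I would establish the structural fact that neither the open rectangle $(0,t_{i+1})\times(y_i,y_{i+1})$ nor the interior of $S_{i,\cY}$ contains any surgery point. The $\tilde\cY$-exclusion is immediate: $R_i\cap\tilde\cY=\emptyset$ by the staircase hypothesis, and the interior of $R_{i,\cY}$ is $\tilde\cY$-free by the very definition of $R_{i,\cY}$. For the $\tilde\cX$-exclusion, primitivity of $\De_i$ handles its own interior, and the construction of Lemma~\ref{l.exists staircase} can be refined so that $D_i$ is obtained by pushing the left side of $\De_i$ only until it first meets $\tilde\cX\cup\tilde\cY$, rather than just $\tilde\cY$. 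By Lemma~\ref{l.orbitesfinies} applied to the finite $f_A$-invariant set $\cX\cup\cY$, the ratio $\ell^s(D_i)/\ell^s(\De_i)$ still takes finitely many values, so all four conditions defining the staircase survive.

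Both bullets now follow by inspection. For $(t,q_i)\in\partial^{s,low}R_i$ (i.e.\ $0<t\leq t_{i+1}$), no surgery point has coordinates $(u,s)$ with $0<u<t$ and $y_i<s<y_{i+1}$; at the final height $s=y_{i+1}$ the only candidate $\tilde\cX$-point to the left of the segment would be $x_{i+1}$, but $t_{i+1}\geq t$ disqualifies it. The game therefore produces no rescaling, giving $h_{i,Y}(t,q_i)=(t,q_{i+1})$. For $(t,q_i)\in\partial^{s,low}S_{i,\cY}$ (i.e.\ $t_{i+1}<t$, with $t$ smaller than the abscissa of the $\tilde\cY$-point on the right side of $R_{i,\cY}$), again no interaction occurs at intermediate heights; but at $s=y_{i+1}$ the segment now crosses the positive stable separatrix of $x_{i+1}$, since $t_{i+1}<t$. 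The $C_{+,+}$ formula of Section~\ref{ss.game} for a point of $\tilde\cX$ with twist number $\tau(x_{i+1})=m(x_{i+1})\cdot\pi(x_{i+1})$ gives the unique rescaling $t\mapsto t_{i+1}+\lambda^{-\tau(x_{i+1})}(t-t_{i+1})$ centered at $t_{i+1}$, which is exactly the announced formula.

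The main (minor) obstacle is the structural step: verifying that the refined construction of Lemma~\ref{l.exists staircase} really does eliminate all $\tilde\cX$-points from the interior of each $R_i$ while still producing a valid staircase. Once that is in place, each of the two formulas is a one-line reading of the holonomy game.
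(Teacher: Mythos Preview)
Your argument is exactly the paper's: both read the two bullets directly off the holonomy game of Section~\ref{ss.game}, the first because no separatrix of a surgery point is crossed inside $R_i$, the second because only the separatrix of $x_{i+1}$ is crossed, at the terminal height $q_{i+1}$. The paper's proof is two sentences asserting precisely this.

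You are in fact more scrupulous than the paper on one point. The paper simply asserts that the relevant unstable leaves meet no positive stable separatrix of a point of $\tilde\cX\cup\tilde\cY$ to the right of $F^u(x)$, whereas you notice that only the $\tilde\cY$-exclusion is built into Definition~\ref{d.staircase} and into $R_{i,\cY}$, and you propose to refine Lemma~\ref{l.exists staircase} by pushing $\partial^{u,left}\De_i$ until the first point of $\tilde\cX\cup\tilde\cY$ rather than of $\tilde\cY$. That patch is correct for $R_i$. However, you state but do not supply the analogous fix for the safe zone: as written, $R_{i,\cY}$ is obtained by pushing $\partial^{u,right}R_i$ until the first $\tilde\cY$-point, so the interior of $S_{i,\cY}$ may contain $\tilde\cX$-points, spoiling the second formula. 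The symmetric patch---push right until the first point of $\tilde\cX\cup\tilde\cY$---closes this, and the bounded-ratio estimate $\ell^s(S_i)/\ell^s(\De_i)$ survives by the same finiteness-of-orbits argument. For the downstream use in Lemma~\ref{l.stairacase implies incomplete} this is in any case harmless: stray $\tilde\cX$-points only add expansion in the $C_{+,+}$ game when the characteristic numbers on $\cX$ are negative.
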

\begin{proof}Just notice that for point in $\partial^{s,low}(R_i)$ their positive unstable leaf reaches $F^s_+(q_{i+1})$ without crossing any positive stable leaf of a point in $\tilde \cX\cup \tilde \cY$ at the right of $F^u(x)$, so the holonomy is not affected by the surgeries. 
For the points in $\partial^{s,low}(S_i)$, the unique moment when they cross a positive stable leaf of a point in $\tilde \cX\cup \tilde \cY$ at the right of $F^u(x)$ is precisely when they reach $F^s_+(q_{i+1})$: they cross the positive stable leaf of $x_{i+1}$ leading to the announced formula. 
\end{proof}

\begin{lemm}If for every $i$ we  have 
$$\partial^{s,low}(R_{i+1,\cY})\subset h_{i,Y}(\partial^{s,low}(R_{i,\cY}))$$
then $C_{+,+}(x)$ is undertwisted. 
\end{lemm}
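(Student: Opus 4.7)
The plan is to show that $C_{+,+}(x)$ is undertwisted by producing $p \in F^s_+(x)$ and $z \in F^u_+(x)$ with $F^u_+(p) \cap F^s_+(z) = \emptyset$. First I would carry out a compactness argument to produce the witness $p$: for each $i\geq 0$, let $T_i \subset \partial^{s,low}(R_{0,\cY})$ denote the set of points at which the composition $h_{i-1,Y} \circ \cdots \circ h_{0,Y}$ is defined and takes values in $\partial^{s,low}(R_{i,\cY})$, with the convention that $T_0 = \partial^{s,low}(R_{0,\cY})$. The containment $\partial^{s,low}(R_{j+1,\cY}) \subset h_{j,Y}(\partial^{s,low}(R_{j,\cY}))$ supplied by the hypothesis says precisely that the preimage under $h_{j,Y}$ of any point of $\partial^{s,low}(R_{j+1,\cY})$ already lies in $\partial^{s,low}(R_{j,\cY})$, so a backward induction yields $T_i \neq \emptyset$ for every $i$. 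Each $T_i$ is closed (being the preimage of a closed set under continuous holonomies), the sets are nested $T_{i+1}\subset T_i$, and $T_0$ is compact; hence $\bigcap_i T_i \neq \emptyset$. Pick $p$ in this intersection and set $p_i = h_{i-1,Y} \circ \cdots \circ h_{0,Y}(p) \in \partial^{s,low}(R_{i,\cY})$: by construction all the $p_i$ sit on the same half-leaf $F^u_+(p)$.

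Next I would extract the relevant limits in $\cP_Y$. The points $q_i$ form an increasing sequence on $F^u_+(x)$ and, since $\cY$ is non-empty, the remark immediately preceding this subsection says the axis $\cI^u(\cR)$ has finite total length; hence $q_i \to q$ for some $q\in F^u_+(x)$. Pick any $z$ on $F^u_+(x)$ strictly beyond $q$. Because $\partial^{s,low}(R_{i,\cY})$ is bounded in $\cP_Y$ and $F^u_+(p)$ is a properly embedded leaf in $\cP_Y$, the sequence $p_i$ converges to a point $p_\infty \in F^u_+(p)$.

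Finally I would conclude that $F^u_+(p) \cap F^s_+(z) = \emptyset$. Were the intersection non-empty, the unstable holonomy $h^u_{x,y}(p)$ would be defined for every $y$ in the arc $[x,z]^u$ and, by continuity in the base point, would satisfy $h^u_{x,q}(p) = \lim h^u_{x,q_i}(p) = \lim p_i = p_\infty$; in particular $p_\infty$ would belong to $F^s_+(q)$. The heart of the argument, and the main obstacle, is to show that this fails, i.e.\ that the stable leaves $F^s_Y(q)$ and $F^s_Y(p_\infty)$ are in fact distinct (non-separated) leaves of $\cP_Y$. Using the holonomy-game formula recalled just before the lemma, with $\tau(x_{i+1}) = m(x_{i+1})\pi(x_{i+1}) < 0$ thanks to the strong negative surgeries on $\cX$, every step $h_{i,Y}$ acts by strict expansion $\lambda^{-\tau(x_{i+1})} > 1$ on the portion of $\partial^{s,low}(R_{i,\cY})$ to the right of the $\cX$-corner $x_{i+1}$; the lemma's hypothesis tells us these expansions accumulate on our iterates indefinitely. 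Comparing with the trivial identity holonomy of the underlying suspension $X$, the cumulative horizontal discrepancy between $p_i$ and the straight preimage on $F^s(q_i)$ persists in the limit, forcing $F^s_Y(p_\infty) \neq F^s_Y(q)$. For $z$ slightly beyond $q$ on $F^u_+(x)$, the half-leaf $F^s_+(z)$ lies on the same branch as $F^s_Y(q)$, whereas $F^u_+(p)$ accumulates only on the branch $F^s_Y(p_\infty)$, so the two cannot intersect and $C_{+,+}(x)$ is undertwisted.
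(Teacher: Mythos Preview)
Your approach has a genuine gap at the convergence step, and the final paragraph does not actually resolve the obstacle you identify.

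First, the justification ``$\partial^{s,low}(R_{i,\cY})$ is bounded in $\cP_Y$'' does not give convergence of $p_i$: each such segment is compact, but their lengths $\ell^s(R_{i,\cY})\geq \ell^s(R_i)$ tend to infinity (this is immediate from the staircase construction, where $\ell^s(R_i)=\sum_{j\leq i}\ell^s(\De_j)$ with ratios bounded below). So $p_i\in\partial^{s,low}(R_{i,\cY})$ places no uniform constraint on the horizontal coordinate of $p_i$, and there is no reason for the sequence to converge along $F^u_+(p)$. In fact your intersection $\bigcap_i T_i$ always contains the base point $x$ itself, since $h_{i,Y}(q_i)=q_{i+1}$; choosing $p=x$ gives $p_i=q_i\to q$, which produces no witness at all. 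Second, even granting a limit $p_\infty$, your argument that $F^s_Y(p_\infty)\neq F^s_Y(q)$ is only a heuristic: ``cumulative horizontal discrepancy persists in the limit'' is precisely the statement to be proved, not a proof of it.

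The paper's argument runs in the opposite direction and avoids both issues. Rather than tracking a single point forward, it observes that by the hypothesis and an immediate induction the \emph{image} of the fixed compact segment $\partial^{s,low}(R_{0,\cY})$ under the composed holonomy to $F^s_+(q_i)$ contains $\partial^{s,low}(R_{i,\cY})$, whose length tends to infinity. Hence the unstable holonomy from $F^s_+(x)$ to $F^s_+(q)$ sends this bounded segment onto all of $F^s_+(q)$; since the holonomy is an order-preserving homeomorphism onto its image, its domain is therefore contained in $\partial^{s,low}(R_{0,\cY})$, and any point of $F^s_+(x)$ beyond that segment witnesses incompleteness. The key fact you need, and never use, is exactly that $\ell^s(R_{i,\cY})\to\infty$.
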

\begin{proof}In this case the image by the holonomy of $F^u_Y$ of 
$\partial^{s,low}(R_{0,\cY})$ on $F^s_+(q_i)$ contains the segment $\partial^{s,low}(R_{i,\cY})$ whose length tends to infinity. 
Thus the holonomy from $F^s_+(x)$ to $F^s_+(q)$ takes $\partial^{s,low}(R_{0,\cY})$ to the whole $F^s_+(q)$, so the domain of that holonomy is contained in $\partial^{s,low}(R_{0,\cY})$, which finishes the proof. 
\end{proof}

Recall that the ratios $\frac{\ell^s(\De_{i+1})}{\ell^s(\De_i)}$, $\frac{\ell^s(S_{i+1})}{\ell^s(\De_{i+1})}$, 
$\frac{\ell^s(\De_i)}{\ell^s(S_i)}$ are bounded, therefore there is $C>0$ such that for every $i$ one has 
$$\frac{\ell^s(\De_{i+1})+\ell^s(S_{i+1})}{\ell^s(S_i)}<C$$

\begin{lemm}Let for all $x\in \cX$, $m(x)$ be the characteristic number of the surgery associated to $x$. Assume that all the $m(x)$ are negative and of large absolute value so that the product $\tau(x)=m(x)\pi(x)$ (where $\pi$ is the period function) satisfies for every $x\in\cX$ 
$$\lambda^{|\tau(x)|}>C$$

Then for every $i$ one gets $\partial^{s,low}(R_{i+1,\cY})\subset h_{i,Y}(\partial^{s,low}(R_{i,\cY}))$ and by the previous lemma $C_{+,+}(x)$ is incomplete.  
\end{lemm}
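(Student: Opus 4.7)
The plan is a direct length comparison on the bottom stable sides of $R_{i,\cY}$ and $R_{i+1,\cY}$, plugging the holonomy formula from the preceding lemma into the staircase arithmetic. First I would parametrize each of the stable leaves $F^s(q_i)$ and $F^s(q_{i+1})$ by its horizontal (stable) coordinate, taking $t=0$ on the vertical line $F^u_+(x)$. The staircase geometry then identifies $\partial^{s,low}(R_i)$ with $[0,t_{i+1}]$ and $\partial^{s,low}(S_{i,\cY})$ with $[t_{i+1},\,t_{i+1}+\ell^s(S_{i,\cY})]$, where $t_{i+1}$ is the stable coordinate of $x_{i+1}$; analogously, $\partial^{s,low}(R_{i+1,\cY})$ corresponds to $[0,\,t_{i+2}+\ell^s(S_{i+1,\cY})]$ on $F^s(q_{i+1})$. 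A small observation I would record is that, because $\De_{i+1}$ is the right horizontal subrectangle of $R_{i+1}$ stretching from the vertical line through $x_{i+1}$ to the right side of $R_{i+1}$, one has $t_{i+2}-t_{i+1}=\ell^s(\De_{i+1})$.

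Next I would apply the previous lemma to the two pieces separately. On $[0,t_{i+1}]$ the map $h_{i,Y}$ is the identity in the stable coordinate, so its image is $[0,t_{i+1}]$. On $[t_{i+1},\,t_{i+1}+\ell^s(S_{i,\cY})]$ it is the affine map $t\mapsto t_{i+1}+\lambda^{-\tau(x_{i+1})}(t-t_{i+1})$. Since by hypothesis $m(x_{i+1})<0$, one has $\tau(x_{i+1})<0$, so this affine map is an \emph{expansion} of ratio $\lambda^{-\tau(x_{i+1})}=\lambda^{|\tau(x_{i+1})|}>C$. Concatenating the two pieces, $h_{i,Y}(\partial^{s,low}(R_{i,\cY}))$ corresponds to the interval $[0,\,t_{i+1}+\lambda^{|\tau(x_{i+1})|}\ell^s(S_{i,\cY})]$ on $F^s(q_{i+1})$.

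The desired containment $\partial^{s,low}(R_{i+1,\cY})\subset h_{i,Y}(\partial^{s,low}(R_{i,\cY}))$ then reduces to the single inequality
$$\lambda^{|\tau(x_{i+1})|}\,\ell^s(S_{i,\cY})\;\geq\;(t_{i+2}-t_{i+1})+\ell^s(S_{i+1,\cY})\;=\;\ell^s(\De_{i+1})+\ell^s(S_{i+1,\cY}),$$
which follows at once by combining the hypothesis $\lambda^{|\tau(x_{i+1})|}>C$ with the preceding bound $\ell^s(\De_{i+1})+\ell^s(S_{i+1,\cY})<C\,\ell^s(S_{i,\cY})$. Invoking the preceding lemma then yields that $C_{+,+}(x)$ is undertwisted. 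There is no serious obstacle here; the conceptual content lies in the fact that the computation is completely insensitive to surgeries on $\cY$ (by design, the interior of $S_{i,\cY}$ avoids $\tilde\cY$), so only the negative surgeries on $\cX$ enter the length balance.
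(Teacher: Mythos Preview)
Your argument is correct and is exactly the computation the paper has in mind: the constant $C$ is introduced just before the lemma precisely so that the inequality $\lambda^{|\tau(x_{i+1})|}\ell^s(S_i)>C\,\ell^s(S_i)>\ell^s(\De_{i+1})+\ell^s(S_{i+1})$ drops out immediately from the holonomy formula of the preceding lemma. The paper does not spell out this step, but your parametrization, the identification $t_{i+2}-t_{i+1}=\ell^s(\De_{i+1})$, and the resulting length inequality are the intended verification.
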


An example of a holonomy game that satisfies the hypotheses of the previous lemma is given in figure \ref{f.lemma75}.
\begin{figure}[h!]
\includegraphics[scale=0.60]{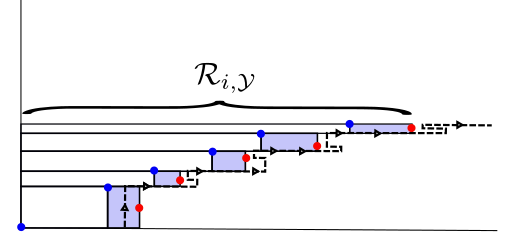}
\caption{In the above picture red (resp. blue) points represent points in $\tilde \cY$ (resp. $\tilde \cX$), white rectangles represent the staircase $\cR_i$ and the blue one the safety zones $S_i$. }
\label{f.lemma75}
\end{figure}
By combining the three previous lemmas, we obtain the proof of Lemma~\ref{l.stairacase implies incomplete}.

\subsection{Abundance of pairs $(X,Y)$ with strings of rectangles} 
%%%%%%%%%%%%%%%%%%%%%%%%%%%%%%%%%%%%%%%%%%%%%%%%%%%%%%%%%%%%%%%%

As the proof of theorem \ref{t.string} has reached its end, we continue by proving theorem \ref{t.separation}, as it was announced in the beginning of section 7. In fact, in this small section we prove a much stronger result, resembling a lot Theorem~\ref{t.notRcoveredatall}. 

Consider two distinct periodic orbits $\cX$ and $\cY$ and the points $x\in\tilde \cX$ and $y\in\tilde \cY$. We remind that
\begin{rema}\label{r.existencestrings}
 If there is a positive $\cX$-rectangle $R$ disjoint from $\tilde \cY$, then there is a a positive $\cX$-string  disjoint from $\tilde \cY$ based at $x$.
\end{rema}

In order to prove the next corollary, we will use the following classical fact from ergodic theory. 

\begin{lemm}\label{l.basic} Let $f$ be a diffeomorphism of a compact surface, $p$ a hyperbolic periodic saddle point and $q_1,\dots,q_k$ transverse homoclinic intersections between a stable separatrix in $W^s(Orb(p))$ and an unstable separatrix in $W^u(Orb(p))$. Denote by $K$ the union of the orbit of $p$ and of the orbits of the $q_i$, which is an invariant compact set. 

Then for any neighbourhood $U$ of $K$, there is a hyperbolic basic set $\La_U$
(i.e. transitive and with local product structure) satisfying $K \subsetneq \La_U \subsetneq U$
\end{lemm}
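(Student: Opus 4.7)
The plan is to produce $\La_U$ as the maximal invariant set of a suitable finite union of small rectangles around $K$ on which a high iterate of $f$ admits a Markov structure, that is, by the classical Birkhoff--Smale construction applied simultaneously to all the homoclinic points $q_1,\dots,q_k$. Replacing $f$ by a power and $p$ by an orbit representative, we may assume that $p$ is a fixed point and that $W^s_{loc}(p)$, $W^u_{loc}(p)$ are the two separatrices on which the $q_i$ lie (the case of several separatrices is a notational variant). We fix a small linearizing rectangle $R_0$ centred at $p$ with sides contained in $W^s_{loc}(p)$ and $W^u_{loc}(p)$, chosen small enough that every excursion of the $q_i$ back to $R_0$ stays inside $U$.

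Next, for each $i$ choose integers $m_i,n_i>0$ such that $f^{-m_i}(q_i)\in R_0$ sits on $W^u_{loc}(p)$ and $f^{n_i}(q_i)\in R_0$ sits on $W^s_{loc}(p)$, and set $N$ larger than $\max_i (m_i+n_i)$. Using the hyperbolic dynamics near $p$ and the transversality of each homoclinic intersection, one constructs thin vertical substrips $V_1,\dots,V_k\subset R_0$ (each transverse to $W^s(p)$ and passing near $f^{-m_i}(q_i)$) together with a vertical substrip $V_0\subset R_0$ containing $p$, with the property that $f^N(V_i)$ is a horizontal substrip of $R_0$ fully crossing it along the unstable direction, for every $i=0,1,\dots,k$. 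The family $\{V_0,V_1,\dots,V_k\}$ is then a Markov partition for $f^N$ on its maximal invariant set $\La_U^{(N)}$; by the standard symbolic coding, $f^N|_{\La_U^{(N)}}$ is conjugate to the full shift on $k+1$ symbols, and we take $\La_U$ to be the $f$-orbit of $\La_U^{(N)}$.

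From this symbolic description one reads off all the required properties. Hyperbolicity and local product structure follow from the invariant cone fields near $p$ combined with the rectangular geometry of the Markov partition; transitivity holds because the full shift is transitive; the inclusion $\La_U\subset U$ is guaranteed by the small choice of $R_0$ and of the $V_i$ (all finitely many iterates $f^j(V_i)$, $0\le j< N$, lie in $U$); and the strict inclusion $K\subsetneq \La_U$ comes from the fact that the shift on $k+1$ symbols contains uncountably many orbits visiting several symbols in non-periodic patterns, none of which belong to the countable set $K$. Finally, $\La_U\neq U$ trivially since $U$ is open of positive Lebesgue measure while $\La_U$ is a Cantor-like zero-measure hyperbolic set.

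The main technical obstacle is the geometric preparation of the Markov partition: one must simultaneously arrange that the thin vertical strips $V_1,\dots,V_k$ are pairwise disjoint, that each image $f^N(V_i)$ fully crosses $R_0$ in the unstable direction, and that no parasitic intersections $f^j(V_i)\cap V_\ell$ (for $0<j<N$) destroy the Markov property. This is exactly where the transversality of every homoclinic intersection $q_i$ is used, together with $\lambda$-lemma-type estimates that ensure the vertical strips can be shrunk arbitrarily thin while still producing horizontal images crossing $R_0$. Once this geometric setup is in place, the rest of the argument is a routine application of the classical theory of hyperbolic basic sets.
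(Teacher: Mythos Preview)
The paper does not prove this lemma; it introduces it as ``the following classical fact from ergodic theory'' and gives no argument. Your proposal is precisely the classical Birkhoff--Smale horseshoe construction, and it is correct.

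Two minor comments. First, your worry that ``parasitic intersections $f^j(V_i)\cap V_\ell$ for $0<j<N$ destroy the Markov property'' is misplaced: the Markov property you use is for $f^N$, and the intermediate $f$-iterates of the $V_i$ play no role there. What you genuinely need (and do ensure) is only that these intermediate iterates remain inside $U$, so that the $f$-saturation $\La_U=\bigcup_{j=0}^{N-1} f^j(\La_U^{(N)})$ stays in $U$. Second, to get $K\subset\La_U$ you should state explicitly that each homoclinic orbit is captured by the coding: arrange that $f^{-m_i}(q_i)\in V_i$ and that $N$ is large enough so that all other $f^N$-iterates of $f^{-m_i}(q_i)$ lie in $V_0$; then the $f^N$-itinerary is $\dots 0,0,i,0,0,\dots$, hence $f^{-m_i}(q_i)\in\La_U^{(N)}$ and the whole $f$-orbit of $q_i$ lies in $\La_U$. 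This is routine, but your phrase ``passing near $f^{-m_i}(q_i)$'' leaves it slightly implicit.
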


We're ready to prove the main result of this section.

\begin{coro}\label{c.notRcoveredatall} Let $\cE\subset \TT^2$ be a finite $f_A$-invariant set.  Then there are periodic orbits $\gamma_+$, $\gamma_-$ such that there exist positive and negative $\gamma_+,\gamma_-$-rectangles disjoint respectfully from $\gamma_-\cup \cE$ and $\gamma_+\cup \cE$. 

Hence, as a result of theorem \ref{t.incomplete} there exists $n>0$ such that every flow $Y$ obtained from $X_A$ by surgeries on $\cE$ and by two surgeries of distinct signs along $\gamma_+$ and $\gamma_-$ with characteristic numbers of absolute value greater than $n$, is not $\RR$-covered. 
\end{coro}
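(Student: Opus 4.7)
The plan is to produce $\gamma_+$ and $\gamma_-$ by invoking Lemma~\ref{l.basic} on carefully chosen hyperbolic data, and then to deduce the non-$\RR$-covered conclusion from Theorem~\ref{t.incomplete} applied in both orientations, as in the proof of Theorem~\ref{t.string}.

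First, since $\cE$ is finite, I would pick two periodic points $p_+,p_-\in\TT^2\setminus\cE$ of $f_A$ with disjoint orbits. Because $f_A$ is Anosov and topologically mixing, each $p_\pm$ admits transverse homoclinic intersections accumulating on itself in every open quadrant of the universal cover, so I would select homoclinic points $q_+^\omega$ of $p_+$ ($\omega\in\{+,-\}$) close to $\tilde p_+$ with lifts $\tilde q_+^\omega\in C_{+,\omega}(\tilde p_+)$, and analogous points $q_-^\omega$ for $p_-$. A generic choice makes the orbits $Orb(p_\pm), Orb(q_\pm^+), Orb(q_\pm^-)$ and $\cE$ pairwise disjoint, so the compact invariant sets $K_\pm:=Orb(p_\pm)\cup Orb(q_\pm^+)\cup Orb(q_\pm^-)$ are pairwise disjoint and both disjoint from $\cE$. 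I then choose pairwise disjoint open neighborhoods $U_\pm\supset K_\pm$ also disjoint from $\cE$, arranging that each $U_\pm$ contains a small box $B_\pm$ around $p_\pm$ in the stable/unstable affine coordinates, large enough to enclose the ``virtual rectangles'' with corners $\tilde p_\pm,\tilde q_\pm^\omega$.

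Next, Lemma~\ref{l.basic} supplies hyperbolic basic sets $\La_\pm$ with $K_\pm\subsetneq\La_\pm\subsetneq U_\pm$. By density of periodic orbits in a basic set together with its local product/shadowing structure, there exist periodic orbits $\gamma_\pm\subset\La_\pm$ containing points arbitrarily close to $p_\pm$ and to each $q_\pm^\omega$. For a close enough approximation, the appropriate lifts in $\tilde\gamma_\pm$ yield both a positive and a negative $\gamma_\pm$-rectangle contained in $B_\pm$; since $B_\pm\subset U_\pm$ is disjoint from $U_\mp\supset\gamma_\mp$ and from $\cE$, each $\gamma_+$-rectangle obtained this way is disjoint from $\widetilde{\gamma_-\cup\cE}$ and each $\gamma_-$-rectangle from $\widetilde{\gamma_+\cup\cE}$. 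This proves the first assertion of the corollary.

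The second assertion then follows by combining Remark~\ref{r.existencestrings} with Theorem~\ref{t.incomplete}: the positive $\gamma_+$-rectangle, fed into Theorem~\ref{t.incomplete} with $(\cX,\cY)=(\gamma_+,\gamma_-\cup\cE)$, provides an $n_1>0$ such that any surgery on $\gamma_+$ with characteristic number $<-n_1$ makes the quadrant $C_{+,+}(x_+)$ incomplete for some $x_+\in\tilde\gamma_+$, irrespective of the surgeries on $\gamma_-\cup\cE$; the symmetric version (exactly as in the proof of Theorem~\ref{t.string}) applied to the negative $\gamma_-$-rectangle produces an $n_2>0$ so that any surgery on $\gamma_-$ with characteristic number $>n_2$ makes $C_{+,-}(y_-)$ incomplete for some $y_-\in\tilde\gamma_-$, and the other distinct-sign combination is covered by the second pair of rectangles. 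In each case the two incomplete quadrants are adjacent, so $Y$ is not $\RR$-covered. The delicate step is the localization of $\La_\pm$: the intermediate iterates of each $q_\pm^\omega$ wander through $\TT^2$ and may meet $\cE$ or the opposite $K_\mp$ if the homoclinic point is chosen badly; the genericity making the $K_\pm$ pairwise disjoint and disjoint from $\cE$, together with the shadowing structure provided by Lemma~\ref{l.basic}, is what allows the construction to close up.
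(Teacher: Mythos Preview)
Your proof is correct and follows essentially the same route as the paper: choose two periodic points away from $\cE$, fatten each (together with chosen homoclinic orbits) into a small hyperbolic basic set $\La_\pm$ via Lemma~\ref{l.basic} inside disjoint neighborhoods avoiding $\cE$, pick $\gamma_\pm\subset\La_\pm$ with enough density to produce the required positive and negative rectangles, and then apply Theorem~\ref{t.incomplete} in both orientations as in the proof of Theorem~\ref{t.string}. The only cosmetic differences are that the paper takes four homoclinic points per $\sigma_\pm$ (one for each pair of separatrices) where you take two, and the paper phrases the choice of $\gamma_\pm$ via $\varepsilon$-density in $\La_\pm$ rather than via direct approximation of $p_\pm$ and $q_\pm^\omega$; neither affects the argument.
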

\begin{proof} Choose $\sigma_+ \notin \cE$ and $\sigma_- \notin \cE$ two distinct periodic points and $q_{i,\pm}$, $i=1\dots 4$, four homoclinic intersections between the two stable and the two unstable separatrices of $\sigma_\pm$. We denote by $K_\pm$ the compact obtained by the union of $\sigma_\pm$ and the orbits of the $q_{i,\pm}$. 

We choose neighbourhoods $U_\pm$ of $K_\pm$ such that $U_+\cap U_{-}=U_+\cap \cE= U_{-}\cap \cE= \emptyset$. We denote by $\La_\pm$ the hyperbolic basic sets contained in $U_\pm$ and containing $K_\pm$ given by Lemma~\ref{l.basic}. 

There is $\varepsilon>0$ such that every periodic orbit $\gamma_+$ in $\La_+$, which is $\varepsilon$-dense in $\La_+$ admits positive and negative  $\gamma_+$-rectangles disjoint from $\cE$ and from $\La_-$.  By remark \ref{r.existencestrings}, there exist positive and negative $\gamma_+$-strings disjoint from $\cE$ and $\La_-$. 

In the same way, for $\varepsilon>0$ small enough, any periodic orbit $\gamma_-\subset \La_-$, which is $\varepsilon$-dense admits positive and negative strings disjoint from $\cE$ and from $\La_+$. 

Then Theorem~\ref{t.incomplete} implies that, large negative surgeries along $\gamma_+$ induce incomplete $C_{+,+}$ quadrants (at any point of the bi-foliated plane associated to $\gamma_+$) independently of the surgeries we perform on $\gamma_-\cup \cE$.  In the same way large positive surgeries along $\gamma_-$ induce incomplete $C_{+,-}$ quadrants at any point associated to $\gamma_-$.

Therefore, by performing large positive surgeries along $\gamma_-$ and large negative surgeries along $\gamma_+$ one obtains a non $\RR$-covered flow $Y$, independently of the surgeries performed along $\cE$. 
\end{proof}

In order to prove Theorems~\ref{t.nonRcovered} and \ref{t.notRcoveredatall}, we would like to remove the ``large enough'' hypothesis in Corollary~\ref{c.notRcoveredatall}.

%%%%%%%%%%%%%%%%%%%%%%%%%%%%%%%%%%%%%%%%%%%%%%%%%%%%%%%%%%%%%%%%%%%%
\subsection{Replacing large characteristic numbers by large periods}
%%%%%%%%%%%%%%%%%%%%%%%%%%%%%%%%%%%%%%%%%%%%%%%%%%%%%%%%%%%%%%%%%%%
The aim of this section is to go from the proof of Theorems \ref{t.incomplete} and \ref{t.string} to the proof of Theorems \ref{t.nonRcovered} and \ref{t.notRcoveredatall}.  As Theorem~\ref{t.notRcoveredatall} clearly implies Theorem~\ref{t.nonRcovered}, we will only prove Theorem~\ref{t.notRcoveredatall}, that is, for any  finite set $\cE$ of periodic orbits there exist two periodic orbits $\gamma_+$ and $\gamma_-$ such that any $Y\in\cS urg(X_A,\cE\cup \gamma_+\cup\gamma_-)$ for which the surgeries along $\gamma_+$ and $\gamma_-$ are of different signs, is not $\RR$-covered. 

%%%%%%%%%%%%%%%%%%%%%%%%%%%%%%%%%%%%%%%%%%%%%%%%%%%
\subsubsection{Choosing a staircase and a safety zone}
%%%%%%%%%%%%%%%%%%%%%%%%%%%%%%%%%%%%%%%%%%%%%%%%%%%

As in the proof of Corollary~\ref{c.notRcoveredatall} we first 
build two disjoint hyperbolic basic sets $\La_+$ and $\La_-$ in $\TT^{2}$ that don't intersect $\cE$.  Then, for $\varepsilon_0>0$ sufficiently small, any $\varepsilon_0$-dense (in $\La_\pm$) periodic orbit  $\sigma_\pm\subset \La_\pm$ admits a positive and a negative $\sigma_\pm$-string
disjoint from $\tilde \cE \cup \tilde \La_\mp$. 

\begin{rema}\label{r.boundary}A classical fact in hyperbolic dynamical systems on surfaces is that hyperbolic  basic sets $\La$ admit at most finitely many \emph{periodic boundary points} (see for instance \cite{BL}), that is, points which are not accumulated by points in $\La$ in each of their $4$ quadrants.  

Therefore, if $\varepsilon$ is taken very small, the orbits $\sigma_+$ and $\sigma_-$ are not boundary periodic points of $\La_+$ and $\La_-$, respectively.  
\end{rema}

In this case, one could build in each quadrant $C_{\pm\pm}(\sigma_+)$ positive or negative (according to the quadrant) staircases for $\sigma_+$ disjoint from $\tilde \cE \cup \tilde\La_\mp$. In order to avoid heavy notations, we will just consider the $\sigma_+$-staircase $\{R_i\}$ in the $C_{+,+}$ quadrant and its associated positive $\sigma_+$-string $\{\De_i\}$.  Finally, we extend the staircase by its safety zone $S_i$ associated to $\cE\cup \La_-$. 

In the same way, one may construct in each quadrant of $\sigma_-$ a staircase disjoint from $\cE\cup\La_+$ and its safety zone. 

So, until now in each quadrant $C_{+,+}(x)$ with $x\in \tilde{\sigma_+}$ (where $\tilde{\sigma_+}$ is the lift of $\sigma_+$ in the bi-foliated plane) we've constructed the following:
\begin{itemize}\item $\cR=\{R_i\}$ the rectangles of a staircase disjoint from $\cE\cup \La_-$, whose left unstable sides are adjacent segments on $F^u_+(x)$, whose union is a bounded interval $\cI^u(\cR)$.  The ratio $\frac{\ell^s(R_{i+1})}{\ell^s(R_i)}$ is bounded and bounded away from $1$. 
 \item $(\De_i)$ a positive $\sigma_+$-string with origin at $x$. The $\De_i$ are right horizontal subrectangles of the $R_i$ and the ratios $\frac{\ell^s(R_{i})}{\ell^s(\De_i)}$, $\frac{\ell^s(\De_{i+1})}{\ell^s(\De_i)}$ and their inverses are bounded. Once again we may assume that the $\De_i$ are primitive $\sigma_+$-rectangles.
 \item the rectangles $S_i$, whose left unstable side is the right stable side of the $R_i$ and $\De_i$. Their intersection with $\cE\cup\La_+$ is contained in their right side and finally the ratio $\frac{\ell^s(R_i)}{\ell^s(S_i)}$ is bounded with bounded inverse.  
\end{itemize}

%%%%%%%%%%%%%%%%%%%%%%%%%%%%%%%%%%%%%%%%%%%%%%%%%%%%%%%%%%%%%%%%%%%
\subsubsection{Choosing the periodic orbits $\gamma_+$ and $\gamma_-$}
%%%%%%%%%%%%%%%%%%%%%%%%%%%%%%%%%%%%%%%%%%%%%%%%%%%%%%%%%%%%%%%%%%%%

Note that $\partial^{s,low}(\De_{i+1})$ and $\partial^{s,up}(S_i)$ are two segments in the same stable leaf $F^s_+(q_{i+1})$ that are adjacent to the same segment  $\partial^{s,up}(\De_i)$. 
  
Furthermore, their lengths have a bounded ratio $\frac{\ell^s(\De_{i+1})}{\ell^s(S_i)}$. 
  
For every $i$  and every $\rho\in(0,1)$ we define by  
 $J_{i+1,\rho}\subset \partial^{s,low}(\De_{i+1})$, the segement adjacent to
$\partial^{s,up}(\De_i)$ of length
$$\ell(J_{i+1})=\rho\cdot\ell^s(\De_{i+1}).$$

Then there is $0<\rho<1$ small enough with the following property: 
$$J_{i+1,2\rho}\subset \partial^{s,up}(S_i) \text{  for all }i$$
 
Let $\De_{i,\rho}$ denote the left horizontal subrectangle of $\De_i$, whose 
bottom stable side is $J_{i,\rho}$ (see figure \ref{section76}). 
\begin{figure}[h!] 
\includegraphics[scale=0.80]{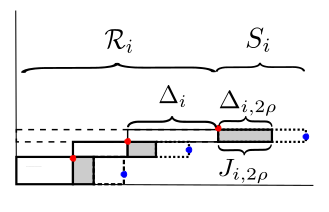}
\caption{}
\label{section76}
\end{figure}

\begin{clai} For any $n>0$ and  $\epsilon>0$ small enough, any $f_A$ periodic orbit $\gamma_+\subset\La_+$  which is $\varepsilon$-dense (in $\La_+$), has period greater than $n$ and has more than $n$ points in $\De_{i,\rho}$ for any $i$.  
\end{clai}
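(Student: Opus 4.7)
The plan is to decouple the two conclusions and argue each separately. The period lower bound is essentially topological: since $\La_+$ is a hyperbolic basic set containing transverse homoclinic intersections (the points $q_{i,+}$), it has positive topological entropy, and in particular its $\varepsilon$-covering number tends to infinity as $\varepsilon\to 0$. An $\varepsilon$-dense periodic orbit of period $p$ has support of cardinality $p$, which must be at least this covering number; choosing $\varepsilon$ small therefore forces $p>n$.

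For the lower bound on $|\tilde\gamma_+\cap\De_{i,\rho}|$, I would first apply Lemma~\ref{l.orbitesfinies} to $\sigma_+$ to reduce to finitely many cases. The primitive positive $\sigma_+$-rectangles form finitely many orbits under the group $G$ generated by $A$ and the integer translations; pick representatives $\De^{(1)},\dots,\De^{(k)}$, one in each orbit. The assignment $\De\mapsto\De_\rho$ depends only on $\De$ through a fixed ratio of stable side-lengths and is therefore $G$-equivariant: $g\cdot\De_\rho=(g\cdot\De)_\rho$ for every $g\in G$. (Here I use that $A$ has positive eigenvalues, so preserves the orientation of both eigen directions and hence of the diagonals, and that $\rho$ is a pure ratio unchanged by an affine map.) Since $\tilde\gamma_+$ is $G$-invariant (as $\gamma_+$ is $f_A$-invariant and $\tilde\gamma_+$ is by construction $\ZZ^2$-invariant), the cardinality $|\tilde\gamma_+\cap\De_{i,\rho}|$ depends only on the $G$-orbit class of $\De_i$, so it suffices to establish the bound on each of the finitely many rectangles $\De^{(j)}_\rho$.

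For a fixed $j$, the lower-left corner of $\De^{(j)}_\rho$ is a lift of a point of $\sigma_+$, which by Remark~\ref{r.boundary} is not a periodic boundary point of $\La_+$. Consequently, the local quadrant at this corner (which is the $C_{+,+}$-quadrant and coincides with a small neighborhood of the corner inside $\De^{(j)}_\rho$) is accumulated by points of $\La_+$; in particular, the interior of $\De^{(j)}_\rho$ contains infinitely many points of the lift of $\La_+$. Fix $n$ such distinct points $p^{(j)}_1,\dots,p^{(j)}_n$ and choose $\varepsilon_j>0$ small enough that the balls $B(p^{(j)}_\ell,\varepsilon_j)$ are pairwise disjoint and contained in the interior of $\De^{(j)}_\rho$. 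Any $\varepsilon_j$-dense periodic orbit $\gamma_+\subset\La_+$ must then meet each such ball, producing at least $n$ distinct points of $\tilde\gamma_+$ in $\De^{(j)}_\rho$.

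Taking $\varepsilon$ to be the minimum of $\varepsilon_0$, the finitely many $\varepsilon_j$, and the threshold guaranteeing period greater than $n$, yields the claim. The only technical point that requires some care is the $G$-equivariance of $\De\mapsto\De_\rho$; after that, the key geometric input---namely that $\sigma_+$ sees $\La_+$ in each of its four local quadrants---is exactly what Remark~\ref{r.boundary} was set up to provide.
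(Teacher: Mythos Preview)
Your argument is correct and follows essentially the same line as the paper: reduce to finitely many primitive $\sigma_+$-rectangles via the group generated by $A$ and integer translations, use Remark~\ref{r.boundary} to guarantee that the interior of each $\De_{i,\rho}$ meets $\tilde\La_+$, and then invoke $\varepsilon$-density. The only organizational difference is that you decouple the period bound and prove it via an $\varepsilon$-covering-number argument, whereas the paper simply observes that the period of $\gamma_+$ is at least the number of points of $\tilde\gamma_+$ landing in a single $\De_{i,\rho}$, so the second conclusion already implies the first.
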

\begin{proof} Up to the action of the group generated by $A$ and the integer translations, we just need to check the claim on finitely many primitive  $\sigma_+$-rectangles. Thanks to Remark~\ref{r.boundary}, we asked that $\sigma_+$ be accumulated by points in $\La_+$ in each quadrant, so the interior of $\De_{i,\rho}$ intersects $\La_+$.  The $\varepsilon$-density implies that for $\varepsilon$ small enough $\De_{i,\rho}$ also contains arbitrarily many points in $\tilde \gamma_+$; the period of $\gamma_+$ is clearly greater than this number.  
\end{proof}

Using the above notations and hypotheses, the aim of this section is to prove
\begin{prop}\label{p.undertwisted} If $n>0$ is large enough then for any vector field $Y \in\cS urg(X,\cE\cup\La_+\cup\La_-)$ for which the characteristic numbers of the surgeries along $\La_+$ are non-positive and equal to $m<0$ on $\gamma_+$, the quadrant $C_{+,+}(x)$ ($x\in \tilde{\sigma_+}$) is incomplete (undertwisted)
\end{prop}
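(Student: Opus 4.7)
The plan is to adapt the proof of Lemma~\ref{l.stairacase implies incomplete}, in which a single strong expansion drove the holonomy game, to the current setting where the same effect will be produced by the cumulative expansion of many crossings of positive stable separatrices of points of $\tilde\gamma_+$. The number and strength of these crossings is supplied by the claim just preceding this proposition: for $n$ large enough, any $\varepsilon$-dense periodic orbit $\gamma_+\subset \Lambda_+$ has period $\pi(\gamma_+)\geq n$ and at least $n$ lifts in each $\Delta_{i,\rho}$.

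First, I will exploit the construction of the safety zones: the interior of $R_i\cup S_i$ is disjoint from $\tilde\cE\cup\tilde\Lambda_-$. Hence, when playing the holonomy game of Section~\ref{ss.game} inside $R_i\cup S_i$, the only crossings of positive stable separatrices occur at points of $\tilde\Lambda_+$, whose characteristic numbers are non-positive by hypothesis. Each such crossing in the $C_{+,+}$ quadrant therefore contributes either the identity or an expansion of the horizontal coordinate; no contraction can occur, regardless of the surgeries performed on $\tilde\cE\cup\tilde\Lambda_-$.

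The core step is then to establish, for every $i$, the analog of the second sub-lemma in the proof of Lemma~\ref{l.stairacase implies incomplete}:
\[
\partial^{s,low}(R_{i+1}\cup S_{i+1})\;\subset\; h_{i,Y}\bigl(\partial^{s,low}(R_i\cup S_i)\bigr).
\]
For a starting $s$-coordinate $t$ strictly to the left of the horizontal extent of $\Delta_i$, no crossing occurs inside $R_i\cup S_i$ and $h_{i,Y}$ acts as the identity, so the left part of $R_{i+1}$'s bottom is covered automatically. For $t$ just greater than the leftmost $s$-coordinate $u_1$ of a $\tilde\gamma_+$-point in $\Delta_{i,\rho}$, the first crossing applies the affine map $T_1(t)=u_1+\lambda^{|m|\pi(\gamma_+)}(t-u_1)$. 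Since $\lambda^{|m|\pi(\gamma_+)}\geq\lambda^n$ is arbitrarily large, $T_1(t)$ already exceeds the next $u_k$, triggering further expansions. An elementary induction shows that all $n$ scheduled crossings of $\tilde\gamma_+$-points actually occur, so the composite map is affine with derivative at least $\lambda^{n|m|\pi(\gamma_+)}$. Choosing $n$ so that this derivative exceeds the (uniformly bounded, thanks to the finiteness of orbits of primitive $\sigma_+$-rectangles) ratio $(\ell^s(\Delta_{i+1})+\ell^s(S_{i+1}))/\ell^s(\Delta_i)$, the monotonicity of $h_{i,Y}$ forces its image to cover all of $\partial^{s,low}(R_{i+1}\cup S_{i+1})$.

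With this inclusion in hand for every $i$, the iteration argument of the third sub-lemma in the proof of Lemma~\ref{l.stairacase implies incomplete} applies verbatim: the image of $\partial^{s,low}(R_0\cup S_0)$ under the composed holonomies to $F^s_+(q_i)$ contains $\partial^{s,low}(R_i\cup S_i)$, whose stable length tends to infinity, while the axis $\cI^u(\cR)$ of the staircase is bounded. Consequently, the holonomy from $F^s_+(x)$ to $F^s_+(q)$ has bounded domain, which is exactly the statement that $C_{+,+}(x)$ is incomplete for $Y$. The main obstacle is the bookkeeping of the cumulative expansion coming from crossings at distinct centers $u_k$ arranged in an a priori unknown vertical order: this reduces to elementary affine arithmetic, once one uses that the horizontal spread of the $u_k$ is bounded by $\ell^s(\Delta_i)$, uniformly in $i$, by the finiteness of orbits of primitive $\sigma_+$-rectangles.
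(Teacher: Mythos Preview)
Your overall strategy mirrors the paper's: play the holonomy game inside $R_i\cup S_i$, observe that all crossings there come from $\tilde\Lambda_+$ and hence are expansions, and deduce that the bottom segments nest under the holonomy. However, your execution has two genuine gaps.

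First, the target inclusion $\partial^{s,low}(R_{i+1}\cup S_{i+1})\subset h_{i,Y}\bigl(\partial^{s,low}(R_i\cup S_i)\bigr)$ is too strong to be provable with these tools. Once the game exits $R_i\cup S_i$ to the right, it may meet positive stable separatrices of points in $\tilde\cE\cup\tilde\Lambda_-$, whose characteristic numbers are arbitrary; these can be contractions. All you can say is that such contraction centers lie on or to the right of $\partial^{u,right}S_i$ (this is exactly how the safety zone $S_i$ was defined), so the trajectory cannot re-enter $R_i\cup S_i$; but you cannot force it to reach the right end of $\partial^{s,low}(S_{i+1})$, which may lie well to the right of $\partial^{s,up}S_i$. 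This is precisely why the paper introduces $\rho$ and the segments $J_i=[q_i,t_i]^s$ with $t_i$ the right end of $J_{i,2\rho}$: the choice of $\rho$ guarantees $t_{i+1}\in\partial^{s,up}(S_i)$, so ``exits and cannot re-enter'' already yields $h_{i,Y}(t_i)\geq t_{i+1}$ (or undefined). The nested intervals are the $J_i$, not the full $\partial^{s,low}(R_i\cup S_i)$.

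Second, your derivative argument is not sound as written. The holonomy is only piecewise affine, and a point ``just greater than $u_1$'' need not be pushed past $u_2$ after one expansion, since $T_1(t)-u_1=\lambda^{|m|\pi(\gamma_+)}(t-u_1)$ can be arbitrarily small; so the induction showing all $n$ crossings occur breaks down. The paper avoids this entirely: it tracks the single point $t_i$, which by construction sits at horizontal distance at least $\rho\,\ell^s(\Delta_i)$ from every $\tilde\gamma_+$-point in $\Delta_{i,\rho}$, and uses \emph{one} crossing. Since $\pi(\gamma_+)\geq n$, a single expansion already multiplies that distance by $\lambda^{|m|\pi(\gamma_+)}\geq\lambda^n$, and choosing $n$ with $\lambda^n>\bigl(\ell^s(R_i)+\ell^s(S_i)\bigr)/\bigl(\rho\,\ell^s(\Delta_i)\bigr)$ (a uniformly bounded ratio) forces the exit. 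The ``at least $n$ points'' part of the preceding claim is not actually needed; only ``at least one point'' together with the period bound $\pi(\gamma_+)\geq n$ is used.
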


Notice that (for the first time in this paper) we perform a surgery on a periodic orbit $\gamma_+$ and we calculate the holonomy on a quadrant at a different periodic point $\sigma_+$. 

\begin{proof}[Proof of Proposition~\ref{p.undertwisted}]

Proposition~\ref{p.undertwisted} is a direct consequence of the next lemma
\begin{lemm}\label{l.undertwisted} Under the hypotheses of Proposition~\ref{p.undertwisted}, if we denote $J_i=[q_i, t_i]^s$, where  $t_i$ is the right endpoint of $J_{i,2\rho}$, then for $n$ large enough, the unstable holonomy $h^u_{Y,q_i,q_{i+1}}\colon F^s_+(q_i)\to F^s_i(q_{i+1})$ satisfies one of the two following:

\begin{itemize}
 \item $h^u_{Y,q_i,q_{i+1}}(t_i)$ is not defined (so $C_{+,+}(x)$ is incomplete)
 \item $t_{i+1}\subset [q_{i+1},h^u_{Y,q_i,q_{i+1}}(t_i)]^s$. 
\end{itemize}
\end{lemm}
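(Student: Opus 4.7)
The plan is to trace the unstable leaf emanating from $(t_i,q_i)$ upward by running the dynamical game of Section~\ref{ss.game} for the vector field $Y$, and then to verify that either the horizontal coordinate escapes to infinity or ends up to the right of $t_{i+1}$ at height $q_{i+1}$. The key structural input is that, within the height band $[q_i,q_{i+1}]$, the region $R_i\cup S_i$ meets $\tilde{\cE}\cup\tilde{\La_-}$ only on the right boundary of $S_i$; hence as long as the leaf's horizontal position stays inside $R_i\cup S_i$, the only stable separatrices it can cross belong to points of $\tilde{\La_+}$. By the assumption that characteristic numbers on $\La_+$ are non-positive, every such crossing is an expansion (or is neutral) in the $(+,+)$ quadrant.

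I would next invoke the preceding claim: for $n$ large, the interior of $\De_{i,\rho}$ contains at least $n$ points of $\tilde{\gamma_+}$, each at horizontal distance at least $\rho\cdot\ell^s(\De_i)$ to the left of $t_i$ (by construction of $\rho$ and of $t_i$ as the right endpoint of $J_{i,2\rho}$). Each crossing of such a point multiplies the distance to that point by $L:=\lambda^{|m|\cdot\pi(\gamma_+)}$, and since $\pi(\gamma_+)\geq n$ can be made arbitrarily large by taking $\varepsilon$ small, $L$ grows unboundedly with $n$. Using the uniform bound $\ell^s(\De_{i+1})/\ell^s(\De_i)\leq C_1$ provided by the definition of a staircase, we have $t_{i+1}-t_i \leq 2\rho C_1\ell^s(\De_i)$, so a single such expanding crossing already pushes the horizontal coordinate past $t_{i+1}$ once $L$ exceeds an explicit constant depending only on $C_1$ and $\rho$.

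The argument now splits into two cases. In Case A, the trajectory never exits $R_i\cup S_i$ before reaching height $q_{i+1}$: every crossing is then an expansion, and the previous paragraph shows that the terminal horizontal position exceeds $t_{i+1}$. In Case B, the trajectory exits $R_i\cup S_i$ to the right at some height below $q_{i+1}$; by the choice of $\rho$ made at the outset (so that $J_{i+1,2\rho}\subset\partial^{s,up}(S_i)$, i.e.\ $\ell^s(S_i)\geq 2\rho\,\ell^s(\De_{i+1})$), the horizontal position at the moment of exit already exceeds $t_{i+1}$.

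The main obstacle lies in Case B, because once the leaf leaves the safety zone it may cross separatrices of points in $\tilde{\cE}\cup\tilde{\La_-}$, and those crossings can contract. This is controlled by observing that within any bounded horizontal range, only finitely many points of $\tilde{\cE}\cup\tilde{\La_-}$ carry surgery, their characteristic numbers and periods being part of the fixed data, so the total contraction available to counter the expansion is uniformly bounded; meanwhile the accumulated expansion from the $n$ crossings with $\tilde{\gamma_+}$ in $\De_{i,\rho}$ dominates as $n\to\infty$. For $n$ large enough (depending on the surgery data on $\cE\cup\La_-$), the net effect is that either the horizontal coordinate of the leaf diverges before height $q_{i+1}$ is reached, so that $h^u_{Y,q_i,q_{i+1}}(t_i)$ is undefined, or the leaf arrives at height $q_{i+1}$ at a horizontal position greater than $t_{i+1}$, which is exactly the alternative stated in the lemma.
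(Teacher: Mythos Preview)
Your Case~B argument contains a genuine gap. You write that ``the total contraction available to counter the expansion is uniformly bounded'' because the characteristic numbers and periods on $\cE\cup\La_-$ are ``part of the fixed data''. But they are not: Proposition~\ref{p.undertwisted} asserts incompleteness for \emph{every} $Y\in\cS urg(X,\cE\cup\La_+\cup\La_-)$ with the stated sign constraint on $\La_+$ only; the characteristic numbers on $\cE$ and on $\La_-$ are completely arbitrary, and $n$ must be chosen uniformly over all such choices. So no comparison of magnitudes (expansion from $\gamma_+$ versus contraction from $\cE\cup\La_-$) can work.

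The correct argument for Case~B is structural, not quantitative. In the holonomy game of Section~\ref{ss.game}, every step moves the horizontal coordinate toward (contraction) or away from (expansion) the horizontal coordinate $u_\gamma$ of a surgery point $\gamma$ lying strictly between $F^u(x)$ and the current position of the trajectory. A contracting step requires $\gamma$ to carry a positive characteristic number, hence $\gamma\notin\tilde\La_+$, hence $\gamma\in\tilde\cE\cup\tilde\La_-$. But by the very construction of the safety zone $S_i$, every point of $\tilde\cE\cup\tilde\La_-$ with height in $[q_i,q_{i+1}]$ and lying to the right of $F^u(x)$ has horizontal coordinate at least $\ell^s(R_i)+\ell^s(S_i)$, i.e.\ at least the right side of $S_i$. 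Therefore any contraction keeps the horizontal coordinate $\geq u_\gamma\geq$ right side of $S_i$. Once the trajectory has exited $R_i\cup S_i$, it can never re-enter, regardless of how strong the surgeries on $\cE\cup\La_-$ are. Since $t_{i+1}\in\partial^{s,up}(S_i)$ by the choice of $\rho$, the conclusion follows immediately: either the holonomy is undefined, or its value lies to the right of $t_{i+1}$.

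With this observation in hand your Case~A becomes superfluous: the paper simply shows that for $n$ large enough a single crossing with a point of $\tilde\gamma_+\cap\De_{i,\rho}$ already pushes the trajectory out of $R_i\cup S_i$ (the relevant bound is $\lambda^n>\frac{\ell^s(R_i)+\ell^s(S_i)}{\rho\,\ell^s(\De_i)}$, which is uniform in $i$), and the no-re-entry argument finishes the proof.
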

Proposition~\ref{p.undertwisted} follows because the length of $[q_i,t_i]^s$ tends to infinity as $i\to\infty$, hence the unstable holonomy from $F^s_+(x)$ to $F^s_+(q)$ (where $q=\lim q_i$ is the endpoint of $\cI^s(\cR)$) is not defined at $t_0$.  

We proceed to the proof of Lemma \ref{l.undertwisted}. Consider the holonomy game in $R_i \cup S_i$ : starting at a point $t\in F^s_+(q_i)$ let us follow its positive unstable leaf $F^u_+(t)$.  As the rectangle $R_i \cup S_i$ is disjoint from $\tilde \cE\cup \tilde\La_-$ and as the surgeries on periodic orbits in $\La_+$ have non-negative characteristic numbers all holonomies are expansions as long as the point remains inside $R_i\cup S_i$. 

If, while following the holonomy, we exit $R_i\cup S_i$ before reaching $F^s_+(q_{i+1})$ it's impossible to enter back later in the game: as a consequence, either the holonomy $h^u_{Y,q_i,q_{i+1}}$ is not defined at $t$ or $t_{i+1}\in[q_{i+1},h^u_i(t)]$, by our choice of $\rho$. 

Therefore, we just need to check that the point $t_i$ exits $R_i\cup S_i$ before reaching $F^s_+(q_{i+1})$. As all the holonomies that affect it inside $R_i\cup S_i$ are all expansions, it is enough to prove that $t_i$ exits $R_i\cup S_i$ before reaching $F^s_+(q_{i+1})$ only thanks to the points in $\gamma_+\cap \De_{i,\rho}$. Each time the unstable manifold of $t_i$ crosses the stable manifold of one of these points, the distance to this point is multiplied by $\lambda^{\pi(\gamma_+)|m|}\geq \lambda^n$.  This distance is at least $\rho \ell^s(\De_i)$ which is in bounded ratio 
with $\ell^s(R_i)+\ell^s(S_i)$. 

In order to get the desired property, it is enough to choose $n$ such that for every $i$ one has 

$$\lambda^n>\frac{\ell^s(R_i)+\ell^s(S_i)}{\rho\ell^s(\De_i)},$$
which concludes the proof of the lemma, and hence of Proposition~\ref{p.undertwisted}. 

\end{proof}

%%%%%%%%%%%%%%%%%%%%%%%%%%%%%%%%%%%%%%%%%%%%%%%%%%%%%%%%
\subsubsection{Concluding the proof of Theorem~\ref{t.notRcoveredatall}}
%%%%%%%%%%%%%%%%%%%%%%%%%%%%%%%%%%%%%%%%%%%%%%%%%%%%%%

Now the proof of Theorem~\ref{t.notRcoveredatall} just consists in applying Proposition~\ref{p.undertwisted} in the quadrants $C_{+,+} (x_+)$ and $C_{-,-}(x_-)$, where $x_+\in\sigma_+$ and $x_-\in\sigma_-$ for a common choice of a small $\varepsilon$ and of orbits $\gamma_+\subset \La_+$ and $\gamma_-\subset \La_-$, which are $\varepsilon$-dense in $\La_+$ and $\La_-$ respectively.

\begin{rema}Theorem~\ref{t.notRcoveredatall} only announces the existence of a pair of orbits $\gamma_+$ and $\gamma_-$.  In the previous proof, we've established slightly more, that is, for any two disjoint basic sets $\La_+$ and $\La_-$, which are also disjoint from the arbitrary given set $\cE$, there is $\epsilon >0$ such that the Theorem~\ref{t.notRcoveredatall} holds for any $\gamma_+\subset \La_+$ and $\gamma_-\subset \La_-$ which are $\varepsilon$-dense in $\La_+$ and $\La_-$ respectively. 
 
\end{rema}
\section{Surgeries on $2$ periodic orbits}\label{s.deux}
%%%%%%%%%%%%%%%%%%%%%%%%%%%%%%%%%%%%%%%%%%%
%%%%%%%%%%%%%%%%%%%%%%%%%%%%%%%%%%%%%%%%%%

The aim of this final section is to give an overview of the vector fields obtained from a suspension flow $X_A$ by performing surgeries on exactly $2$ periodic orbits.  In other words, using the the previous notations, $\cX$ and $\cY$ are each a single periodic orbit.

There are, in theory, sixteen different cases, according to the existence or non-existence of positive or negative, $\cX$-rectangles disjoint from $\tilde \cY$ or $\cY$-rectangles disjoint from $\tilde \cX$. 

However, Lemma~\ref{l.asymetric} implies that if there are no positive $\cX$-rectangles disjoint from $\tilde \cY$ then there are negative $\cY$-rectangles disjoint from $\tilde \cX$. 

Therefore, up to interchanging positive and negative or $\cY$ and $\cX$, there are the following $4$ cases which will be examined separately in the next pages. 

\begin{enumerate}
\item there are positive and negative $\cX$-rectangles disjoint from $\tilde \cY$ and $\cY$-rectangles disjoint from $\tilde \cX$. 
\item there are no $\cX$-rectangles (neither positive or negative) disjoint from $\tilde \cY$ (so there are positive and negative $\cY$-rectangles disjoint from $\tilde \cX$). 
\item there are no negative $\cX$ rectangles disjoint from $\tilde \cY$ and no negative $\cY$-rectangles disjoint from $\tilde \cX$. 
\item there are no positive $\cX$ rectangles disjoint from $\tilde \cY$, but there are rectangles in the $3$ other categories. 
\end{enumerate}

In each case we will consider the vector field $Z_{m,n}$ obtained by an $(m,n)$ surgery along $\cX$ and $\cY$ and discuss what we know about the bi-foliated plane of $Z_{m,n}$, according to the position of $(m,n)$ in the lattice $\ZZ^2$.

\subsection{Case 1: Assuming the existence of separating rectangles of all types}

Consider $\ZZ^2$ and  the vector field $Z_{m,n}$ obtained by an $(m,n)$ surgery along
$\cX$ and $\cY$. 

Then 
\begin{itemize}
 \item if $n,m$ have the same sign (or one of them vanishes) then  $Z_{m,n}$ is $\RR$-covered twisted in the  direction of that sign, according to \cite{Fe1}
 
\item if $n,m$ have opposite signs and are large enough, then $Z_{m,n}$ is not $\RR$-covered according to Theorem \ref{t.string}. 

\item if $n,m$ have opposite signs and one of them is large enough, then using lemma \ref{l.exists staircase} some quadrant is incomplete: $Z_{n,m}$ is either non $\RR$-covered or twisted. 

\end{itemize}

This case can be realised by considering sets periodic points following the homoclinic intersections of two fixed periodic points, as it was already done in the proof of Corollary \ref{c.notRcoveredatall}. In many cases it could be checked that only the behaviours $1$ and $2$ are possible. 

\subsection{Case 2: no $\cX$-rectangle disjoint from $\tilde \cY$}

This case can be realised as follows: take any $\cX\subset \TT^2$ and choose $\varepsilon >0$ small enough such that any $\varepsilon$-dense periodic orbit $\cY$ intersects the interior of every $\cX$-rectangle.

In this case,
\begin{itemize}
 
\item if $n$ is large enough in absolute value then using Theorem \ref{t.epsilon}, $Z_{m,n}$ is $\RR$-covered twisted according to the sign of $n$.  
\item if $n$ vanishes, then $Z_{m,0}$ is $\RR$-covered twisted according to the sign of $m$ (\cite{Fe1})

\end{itemize}

\subsection{case 3: no negative $\cX$-rectangles disjoint from $\tilde \cY$ and no negative $\cY$-rectangles disjoint from $\tilde \cX$}
Contrary to the previous cases, we don't have canonical examples, where this case is realised. Nevertheless, one could check that if $\cX=(0,0)$, $\cY=(0,1/2)$ and 
$$A=\left(\begin{array}{cc} 
            3&2\\
            4&3
           \end{array}\right)$$
(see figure \ref{norectangles}) there are no negative $\cX$-rectangles disjoint from $\tilde \cY$ and no negative $\cY$-rectangles disjoint from $\tilde \cX$. Our proof of this fact consists in understanding the nature of the continued fractions associated to the slopes of the eigendirections and thus goes beyond the purposes of this paper.  
In this case,
\begin{itemize}
 
\item if $n$ or $m$ is negative and large enough in absolute value then using Theorem \ref{t.twisted}, $Z_{m,n}$ is $\RR$-covered twisted negatively.  
\item if $n$ and $m$ have the same sign then $Z_{m,n}$ is $\RR$-covered twisted according to the sign of $m$ or $n$ (\cite{Fe1})

\end{itemize}
\begin{figure}[h!] 
\includegraphics[scale=0.80]{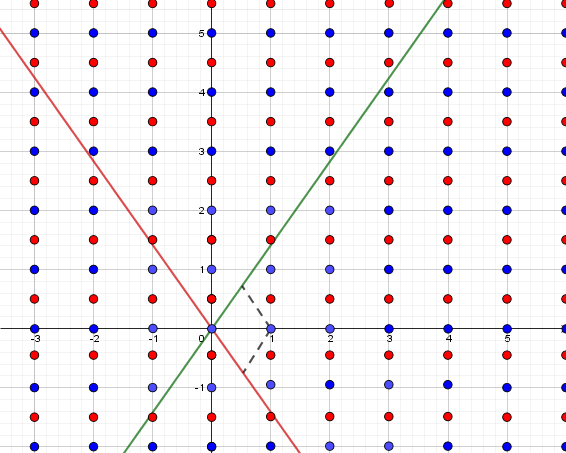}
\caption{In the above figure red points represent lifts of the point $(0,\frac 12)$, blue points lifts of $(0,0)$, the red line is the stable eigendirection and the green one the unstable}
\label{norectangles}
\end{figure}
\subsection{case 4: no positive $X$-rectangles disjoint from $\tilde \cY$, but all other rectangles}
We haven't been able to come up with an example, satisfying the hypotheses of this case, but it seems possible to us that an example similar to the one of case 3 makes this case also realisable. 
%%%%%%%%%%%%%%%%%%%%%%%%%%%%%%%%%%%%%%%%%
\section{Explicit examples}\label{s.explicit}
%%%%%%%%%%%%%%%%%%%%%%%%%%%%%%%%%%%%%%%%%%%%%%%

In this section we consider more specifically, the orbits of $(0,0)$ and $(\frac12,\frac12)$. For any $A\in SL(2,\ZZ)$ the point $(0,0)$ is a fixed point of $f_A$, but for the point $(\frac12,\frac12)$ there are $3$ possibilities: 
\begin{itemize}
 \item either $(\frac12,\frac12)$ is a fixed point
 \item or $(\frac12,\frac12)$ is a periodic point of period $2$
 \item or it is a periodic point of period $3$, whose orbit is exactly
 $$\{(0,\frac12),(\frac12,0),(\frac12,\frac12)\}.$$
\end{itemize}

For instance $(\frac12,\frac12)$ is a periodic of period $3$ (resp. $2$) for every matrix of the form
$$A_k=\left(\begin{array}{cc} 
            k&k-1\\
            1&1
           \end{array}\right)$$
           with $k\in 2\NN^*$ (resp. $k\in 2\NN+3$)

\begin{rema}
 Given any matrix $A\in SL(2,\ZZ)$, any positive or negative $(0,0)$-rectangle contains a point of $\{(0,\frac12),(\frac12,0),(\frac12,\frac12)\}+\ZZ^2$
\end{rema} 
 Indeed, a $(0,0)$-primitive rectangle does not contain any other integer points and has a diagonal whose endpoint is an integer point.  Therefore, the middle point of that diagonal cannot be an integer point, hence it  belongs to $\{(0,\frac12),(\frac12,0),(\frac12,\frac12)\}+\ZZ^2$. Using the previous remark and by applying Theorem \ref{t.twisted} we have the following result:

\begin{coro}Given any matrix $A\in SL(2,\ZZ)$, consider any vector field $Y$ obtained from $X_A$ by performing surgeries on the orbits corresponding to the set $\{(0,0), (0,\frac12),(\frac12,0),(\frac12,\frac12)\}$ such that the characteristic numbers associated to the points $\{(0,\frac12),(\frac12,0),(\frac12,\frac12)\}$ have the same sign $\omega\in\{+,-\}$ and are large enough. 

Then $Y$ is $\RR$-covered and $\omega$-twisted. 
\end{coro}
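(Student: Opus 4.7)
The plan is to realize this corollary as a direct application of Theorem~\ref{t.twisted} (and its symmetric negative counterpart) with a well-chosen pair of disjoint finite $f_A$-invariant sets. Set
\[
\cX = \{(0,0)\} \quad \text{and} \quad \cY = \bigl(\{(0,\tfrac12),(\tfrac12,0),(\tfrac12,\tfrac12)\} + \ZZ^2\bigr)/\ZZ^2.
\]
Note that $\cX$ is always the single fixed orbit of $(0,0)$, while $\cY$ consists of either one, two, or three periodic orbits of $f_A$, depending on whether $(\tfrac12,\tfrac12)$ is fixed, of period $2$, or of period $3$; in every case $\cY$ is a finite $f_A$-invariant set disjoint from $\cX$. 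So we are in the framework of ${\cS}urg(X_A,\cX,\cY,\ast,(n_j)_{j\in J})$.

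Next I would verify the hypothesis of Theorem~\ref{t.twisted}: every positive $\cX$-rectangle meets $\tilde{\cY}$. This is exactly the content of the Remark preceding the corollary, which is itself immediate from the observation that a $(0,0)$-primitive rectangle has integer corners but no integer interior or boundary points other than its two diagonal endpoints, so the midpoint of its diagonal must have half-integer coordinates and hence belongs to $\tilde{\cY}$. The same argument applies to negative $\cX$-rectangles (the midpoint of the decreasing diagonal is also half-integer), so every negative $\cX$-rectangle also contains a point of $\tilde{\cY}$.

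I would then invoke Theorem~\ref{t.twisted} in its positive form: there exists $N_+>0$ such that for any $Y\in{\cS}urg(X_A,\cX,\cY,\ast,(n_j)_{j\in J})$ with all $n_j\ge N_+$ the flow is $\RR$-covered and positively twisted. Invoking the symmetric statement, mentioned immediately after Theorem~\ref{t.twisted}, with negative rectangles and negatively twisted, gives an $N_-$ for the analogous negative result. Setting $N=\max(N_+,N_-)$ and taking the common sign $\omega\in\{+,-\}$ of the characteristic numbers on $\cY$, with each absolute value $\ge N$, one obtains that $Y$ is $\RR$-covered and $\omega$-twisted. Crucially, Theorem~\ref{t.twisted} places no constraint on the characteristic number of the surgery on $\cX$ (the asterisk in the notation), so the surgery on the orbit of $(0,0)$ is arbitrary, matching the statement of the corollary.

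There is essentially no obstacle here: the whole content is to notice that the pair $(\cX,\cY)$ satisfies the "$\tilde{\cY}$ hits every $\cX$-rectangle" hypothesis of Theorem~\ref{t.twisted}, which is exactly the preceding Remark. The only mild subtlety worth flagging is that the decomposition of $\cY$ into $f_A$-orbits depends on $A$, but Theorem~\ref{t.twisted} is stated for arbitrary finite $f_A$-invariant sets and so applies uniformly in all three cases (period $1$, $2$, or $3$ for $(\tfrac12,\tfrac12)$).
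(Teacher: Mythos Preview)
Your proposal is correct and follows exactly the paper's own approach: the paper introduces the corollary with the words ``Using the previous remark and by applying Theorem~\ref{t.twisted} we have the following result'', and you have simply spelled out that one-line argument in detail, including the symmetric negative case. There is nothing to add.
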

Also by our above remark, the triples $(X_{A_k},(0,0),\{(0,\frac12),(\frac12,0),(\frac12,\frac12)\})$ with $k\in 2\NN^*$ provide infinitely many examples that realise the case (2) of Section~\ref{s.deux}. 

Consider now the matrix $B_k=A_k^3$ when $k\in 2\NN^*$ and $B_k=A_k^2$ when $k\in 2\NN+3$. 

\begin{lemm}For any $k$, the Anosov map $f_{B_k}$ admits positive and negative $(0,0)$-rectangles disjoint from $\widetilde{(\frac12,\frac12)}$ and positive and negative $(\frac12,\frac12)$-rectangles disjoint from $\widetilde{(0,0)}$.
 
\end{lemm}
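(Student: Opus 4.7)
The plan is to exhibit four explicit rectangles and verify their properties by a short calculation in eigencoordinates. First I would observe that both $(0,0)$ and $(\frac12,\frac12)$ are fixed points of $f_{B_k}$: by construction $B_k$ is a power of $A_k$ whose exponent equals the $f_{A_k}$-period of $(\frac12,\frac12)$. Hence $\widetilde{(0,0)}=\ZZ^2$ and $\widetilde{(\frac12,\frac12)}=(\frac12,\frac12)+\ZZ^2$, and since $B_k$ is a power of $A_k$ the two maps share stable and unstable foliations, so I may work with those of $A_k$. Writing $\lambda=\lambda_k>1$ for the expanding eigenvalue and using eigenvectors $v^u=(\lambda-1,1)$ and $v^s=(\lambda^{-1}-1,1)$, the change of coordinates $(a,b)\mapsto av^u+bv^s$ makes rectangles axis-aligned. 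A short computation gives eigencoordinates $(p,-p)$, $(a_0,b_0)$, $(x_h,y_h)$ for $(1,0)$, $(0,1)$, $(\frac12,\frac12)$ respectively, with $p=\frac{1}{\lambda-\lambda^{-1}}$, $a_0=\frac{1}{\lambda+1}$, $b_0=\frac{\lambda}{\lambda+1}$, $x_h=\frac{2-\lambda^{-1}}{2(\lambda-\lambda^{-1})}$ and $y_h=\frac{\lambda-2}{2(\lambda-\lambda^{-1})}$. All these quantities are positive (since $\lambda\geq\lambda_2>2$ for all $k$ in our families), and one has the key identities $x_h+y_h=\frac12$ and $a_0+b_0=1$.

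I then consider the four rectangles whose diagonals are $(0,0)\to(0,1)$, $(0,0)\to(1,0)$, $(\frac12,-\frac12)\to(\frac12,\frac12)$ and $(-\frac12,\frac12)\to(\frac12,\frac12)$. Reading signs off the eigencoordinates, the first is a positive and the second a negative $(0,0)$-rectangle. The last two are obtained from the first two by the integer translations $(\frac12,-\frac12)$ and $(-\frac12,\frac12)$; they are accordingly a positive and a negative $(\frac12,\frac12)$-rectangle, and their disjointness from $\ZZ^2$ is equivalent to the disjointness from $(\frac12,\frac12)+\ZZ^2$ of the eigencoordinate boxes $R_1=[0,a_0]\times[0,b_0]$ and $R_2=[0,p]\times[-p,0]$.

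Finally, the verification. A translate $(\frac12,\frac12)+(m,n)\in(\frac12,\frac12)+\ZZ^2$ has eigencoordinates $(x_h+mp+na_0,\,y_h-mp+nb_0)$. Summing the two inclusion constraints for $R_1$ gives $0\leq\frac12+n\leq1$, forcing $n=0$; the remaining one-dimensional constraint then requires $mp\in[-x_h,\,a_0-x_h]$, but $x_h>a_0$ (equivalent to $1+\lambda^{-1}>0$) and $p>x_h$ (equivalent to $\lambda^{-1}>0$), so no integer $m$ works. For $R_2$ the sum yields $-p\leq\frac12+n\leq p$; since $p<\frac12$ (equivalent to $\lambda>1+\sqrt{2}$, valid as $\lambda\geq\lambda_2\approx2.618$), there is no integer $n$ at all. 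The essentially only technical obstacle is checking these elementary inequalities uniformly in $k$, which is immediate from $\lambda_k\geq\lambda_2$; the four rectangles then simultaneously satisfy all the required properties.
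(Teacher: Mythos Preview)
Your proof is correct and follows essentially the same approach as the paper: both exhibit the rectangles with diagonals $(0,0)\to(1,0)$ and $(0,0)\to(0,1)$ and then obtain the $(\tfrac12,\tfrac12)$-rectangles by a half-integer translation. The only difference is one of style: the paper argues disjointness geometrically from the fact that $E^u$ has slope in $(0,1)$ and $E^s$ has slope steeper than $-1$, whereas you carry out an explicit eigencoordinate computation; and your labeling of which rectangle is ``positive'' versus ``negative'' is swapped relative to the paper's, which is harmless since the lemma asserts the existence of both types.
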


\begin{proof} Notice that the foliations of $A_k$ and $B_k$ coincide.  We denote  
$$F^s_k=F^s_{B_k}=F^s_{A_k}\mbox{ and } F^u_k=F^u_{B_k}=F^u_{A_k}.$$ 
Because $A_k$ has positive coefficients its unstable direction is inside $C_{+,+}\cup C_{-,-}$ and its stable direction in $C_{+,-}\cup C_{-,+}$. 

By looking at the image of the $C_{+,+}$ quadrants one gets that the unstable direction $E^u$ is between the increasing (usual) diagonal of $\RR^2$ and the $x$-axis. In the same way, by looking at the inverse image of the $C_{+,-}$ quadrant, one checks that the stable direction $E^s$ is between the decreasing diagonal and the $y$-axis. 

One deduces by the previous observations that the $(0,0)$-rectangle admitting $[0,1]\times \{0\}$ as a diagonal is a positive primitive $(0,0)$-rectangle disjoint from $\widetilde{(\frac12,\frac12)}$. 
In the same way, the $(0,0)$-rectangle admitting $\{0\}\times [0,1]$ as a diagonal is a negative primitive $(0,0)$-rectangle disjoint from $\widetilde{(\frac12,\frac12)}$.

Finally, the translated by $(\frac12,\frac12)$ positive and negative $(0,0)$-rectangles disjoint from $\widetilde{(\frac12,\frac12)}$ are respectfully positive and negative $(\frac12,\frac12)$-rectangles disjoint from $\widetilde{(0,0)}$, which ends the proof. 
 
\end{proof}

The triples $(X_{B_k},(0,0),(\frac12,\frac12))$ provide infinitely many examples that realise the situation (1) of Section~\ref{s.deux}.

\vspace{.5cm}

{\bf Christian Bonatti} bonatti@u-bourgogne.fr\\ {\bf Ioannis Iakovoglou}ioannis.iakovoglou@ens-lyon.fr\\  Universit\'{e} de
Bourgogne,\\ Institut de Math\'{e}matiques de Bourgogne, UMR 5584 du
CNRS, BP 47 870,\\ 21078, Dijon Cedex, France. \vspace{.5cm}


\begin{thebibliography}{MM}

\bibitem[A]{A} Dmitry V. Anosov, \emph{Geodesic flows on closed Riemannian manifolds of negative curvature}, Trudy Mat. Inst. Steklov., 90, 1967, 3-210; Proc. Steklov Inst. Math., 90 (1967), 1-235

\bibitem[Ba]{Barbot} Barbot, Thierry. \emph{G\'eom\'etrie transverse des flots d'Anosov}, Thesis, Ecole Normale Sup\'erieure de Lyon (1992)
\bibitem[Ba1]{Ba1} Barbot, Thierry. \emph{Caract\'erisation des flots d'Anosov en dimension 3 par leurs feuilletages faibles},
Ergodic Theory Dynam. Systems 15 (1995) 247-270.

\bibitem[Ba2]{Ba2} Barbot, Thierry. \emph{Flots d'Anosov sur les vari\'et\'es graph\'ees au sens de Waldhausen},
Ann. Inst. Fourier (Grenoble) 46 (1996) 1451-1517.

\bibitem[Ba3]{Ba3} Barbot, Thierry. \emph{Mise en position optimale de tores par rapport \`a un flot d'Anosov}. (French) [Optimal positioning of tori with respect to an Anosov flow] Comment. Math. Helv. 70 (1995), no. 1, 113-160.
 
\bibitem[Ba4]{Ba4} Barbot, Thierry. \emph{Generalizations of the Bonatti-Langevin example of Anosov flow and their classification up to topological equivalence}. Comm. Anal. Geom. 6 (1998), no. 4, 749-798.

\bibitem[BaFe]{BaFe}
Barbot, Thierry; Fenley, S\'ergio R. \emph{Pseudo-Anosov flows in toroidal manifolds}. Geom. Topol. 17 (2013), no. 4, 1877-1954.

\bibitem[BeBo]{BeBo} B\'eguin, Fran\c cois, Bonatti Christian. \emph{Flots de Smale en dimension 3: pr\'esentations finies de voisinages invariants d'ensembles selles}. (French) [Smale flows in dimension 3: finite presentations of invariant neighborhoods of saddle sets] Topology 41 (2002), no. 1, 119-162.

\bibitem[BeBoYu]{BeBoYu} B\'eguin, Fran\c cois; Bonatti, Christian; Yu, Bin. \emph{A  spectral-like decomposition for transitive Anosov flows in dimension three}, preparing.

\bibitem[BoLa1]{BoLa1}  Bonatti, Christian; Langevin, R\'emi.
\emph{Un exemple de flot d'Anosov transitif transverse ?  un tore et non conjugu\'e \`a une suspension.} (French)
[An example of a transitive Anosov flow transversal to a torus and not conjugate to a suspension]
Ergodic Theory Dynam. Systems 14 (1994), no. 4, 633-643.

\bibitem[BoLa2]{BL}
Bonatti, C.; Langevin, R. \emph{Diff\'eomorphismes de Smale des surfaces.} (French) [Smale diffeomorphisms of surfaces]
With the collaboration of E. Jeandenans. Ast\'erisque No. 250 (1998), viii+235 pp.

\bibitem[Br]{Br}  Brunella, Marco. \emph{Separating the basic sets of a nontransitive Anosov flow.}
Bull. London Math. Soc. 25 (1993), no. 5, 487-490.

\bibitem[Bru]{Bru} Brunella, Marco. \emph{Expansive flows on three-manifolds.}, Presso la S.I.S.S.A. di Trieste (1992)
%\bibitem[Bri]{Bri} Brittenham, Mark \emph{Essential laminations in Seifert-fibered spaces.} Topology 32 (1993), no. 1, 61??C85.


\bibitem[Ch1]{Christy-thesis}   Christy, Joe. \emph{Anosov Flows on Three Manifolds}, PhD Thesis, University of California, Berkeley (1984).

\bibitem[Ch2]{Ch} Christy, Joe. \emph{Branched surfaces and attractors.} I. Dynamic branched surfaces.
Trans. Amer. Math. Soc. 336 (1993), no. 2, 759-784.

\bibitem[ClPi]{CP} Clay, Adam; Pinsky, Tali \emph{Three manifolds that admit infinitely many Anosov flows } In preparation. 


\bibitem[DeSh]{DS} Dehornoy, Pierre; Shannon, Mario \emph{ALMOST  EQUIVALENCE  OF  ALGEBRAIC  ANOSOV  FLOWS} Preprint arXiv:1910.08457

%\bibitem[FaMa]{FaMa}
%B, Farb;  D, Margalit,
%A Primer on Mapping Class Groups   http://press.princeton.edu/titles/9495.html

%\bibitem[Fe]{Fe}
%Fenley, Sergio R. \emph{Quasigeodesic Anosov flows and homotopic properties of flow lines.} J. Differential Geom. 41 (1995), no. 2, 479-514.

\bibitem[Fe1]{Fe1} Fenley, Sergio. \emph{ Anosov flows in 3-manifolds}, Ann. of Math. (2) 139 (1) (1994) 79-115.

\bibitem[Fe2]{Fe2}Fenley, Sergio. \emph{The structure of branching in Anosov flows of 3-manifolds}, Comment. Math. Helv. 73 (2) (1998) 259-297.

\bibitem[FrWi]{FrWi} Franks, John; Williams, Bob. \emph{Anomalous Anosov flows.}
Global theory of dynamical systems (Proc. Internat. Conf., Northwestern Univ., Evanston, Ill., 1979), pp. 158-174,
Lecture Notes in Math., 819, Springer, Berlin, 1980.

\bibitem[Fri]{Fri} Fried, David. \emph{Transitive Anosov flows and pseudo-Anosov maps.} Topology 22 (1983), no. 3, 299-C303.

\bibitem[GHS]{GHS} Ghrist, Robert; Holmes, Philip; Sullivan, Michael. \emph{Knots and Links in Three-dimension Flows}. Lectures Notes in Mathematics 1654. Springer Verlag, 1997.

\bibitem[Gh1]{Gh1} Ghys, \'Etienne. \emph{Flots d'Anosov sur les 3-vari\'et?©s fibr\'ees en cercles},
Ergodic Theory Dynam. Systems 4 (1) (1984) 67-80.

\bibitem[Gh2]{Gh2} Ghys, \'Etienne. \emph{Flots d'Anosov dont les feuilletages stables sont diff\'erentiables,}
Ann. Sci. \'Ecole Norm. Sup. (4) 20 (2) (1987) 251-270.

\bibitem[Go]{Go} Goodman,S.  \emph{Dehn surgery on Anosov flows}, from: "Geometric dynamics", (J Palis, Jr, editor), Lecture Notes in Math. 1007, Springer, Berlin (1983) 300"1¤7307 

\bibitem[HT]{HT} Handel, M. ; Thurston, W. \emph{Anosov flows on new three manifolds.} Invent. Math. 59 (1980), no. 2, 95"1¤7103.

%\bibitem[Ha]{Ha}   Allen, Hatcher,\emph{Notes on basic 3-manifold topology}, http://www.math.cornell.edu/~hatcher/.

%\bibitem[Ma]{Ma}
%Marden, A. Outer circles. An introduction to hyperbolic 3-manifolds. Cambridge University Press, Cambridge, 2007. xviii+427 pp. ISBN: 978-0-521-83974-7

\bibitem[Has]{Has}
Hasselblatt, Boris \emph{Hyperbolic Dynamical Systems}. \\ 
\texttt{https://www.newton.ac.uk/preprints/NI00008.pdf}


\bibitem[MaSm]{MaSm}
Masur, Howard; Smillie, John. \emph{Quadratic differentials with prescribed singularities and pseudo-Anosov diffeomorphisms.} Comment. Math. Helv. 68 (1993), no. 2, 289-307.

\bibitem[Mi]{Mi} Minakawa, Hiroyuki \emph{Genus  one  Birkhoff  sections  for  suspension  Anosov  flows},talk  at  the  conference  \emph{Geometry,Dynamics, and Foliations 2013}, Tokyo.https://www.ms.u-tokyo.ac.jp/video/conference/2013GF/cf2013-038.html

%\bibitem[Mo]{Mo}
%Morgan, John W. \emph{Nonsingular Morse-Smale flows on 3 -dimensional manifolds.} Topology 18 (1979), no. 1, 41??C53.

\bibitem[NePa]{NePa} Newhouse, Sheldon; Palis, Jacob. \emph{Hyperbolic nonwandering sets on two-dimensional manifolds}
in : M. Peixoto (Ed.) Dynamical Systems, Salvador, 1973; pp. 293-301.

%\bibitem[PeHa]{PeHa}
%Penner, R. C.; Harer, J. L.\emph{ Combinatorics of train tracks.} Annals of Mathematics Studies, 125. Princeton University Press, Princeton, NJ, 1992. xii+216 pp.


\bibitem[Pl]{Pl}
Plante, J. F. \emph{Anosov flows, transversely affine foliations, and a conjecture of Verjovsky. }J. London Math. Soc. (2) 23 (1981), no. 2, 359-362.
\bibitem[PlaTh]{PlTh}
Plante, J.F.; Thurston, W.P.  \emph{Anosov flows and the fundamental group}, Topology Vol.11, pp. 147-150. Pergamon Press. 1972. 

\bibitem[Sm]{Sm}
 Smale, S. \emph{Differentiable dynamical systems}. Bull. Amer. Math. Soc. 73 1967 747-817.

\bibitem[Th]{Th}  Thurston, William P. \emph{Hyperbolic structures on 3 -manifolds, II: Surface groups and 3-manifolds which fiber over the circle}.
 arXiv:math/9801045.
\bibitem[Th1]{Th1} Thurston William P.\emph{ The Geometry and Topology of 3-Manifolds}, Lecture notes, Princeton University, 1982. 

\end{thebibliography}
\end{document}